\documentclass[11pt]{amsart} 

\usepackage{stmaryrd}
\usepackage{tikz-cd}
\usepackage[color=orange!10]{todonotes}
\usepackage{paralist}

\usepackage[USenglish]{babel}
\usepackage{csquotes}
\usepackage{enumerate}

\usepackage[width=5.65in,height=8.0in,centering]{geometry}
\usepackage[colorlinks,urlcolor=blue]{hyperref}

\usepackage{tikz}
\usetikzlibrary{arrows,cd}

\usepackage{dsfont,mathrsfs}
\usepackage{mathrsfs}

\usepackage[backend=biber,style=alphabetic,maxbibnames=4,doi=false,isbn=false,url=false]{biblatex}
\bibliography{blombda.bib}

\usepackage{mathtools}
\usepackage{amsmath,amssymb,amsthm}
\usepackage{thmtools,thm-restate}

\numberwithin{equation}{section}

\newcommand{\myendsymbol}{\ensuremath{\diamondsuit}}

\numberwithin{equation}{section}

\declaretheorem[
  style=definition,
  title=Example,
  qed={$\myendsymbol$},
  numberlike=equation
]{exa}

\declaretheorem[
  style=definition,
  title=Definition,
  qed={$\myendsymbol$},
  sibling=exa
]{dfn}

\declaretheorem[
  style=definition,
  title=Notation,
  qed={$\myendsymbol$},
  sibling=exa
]{ntn}

\declaretheorem[
  style=definition,
  title=Remark,
  qed={$\myendsymbol$},
  sibling=exa
]{rmk}

\declaretheorem[
  style=plain,
  title=Lemma,
  qed={},
  sibling=exa
]{lem}

\declaretheorem[
  style=plain,
  title=Theorem,
  qed={},
  sibling=exa
]{thmx}

\declaretheorem[
  style=plain,
  title=Corollary,
  qed={},
  sibling=exa
]{cor}

\declaretheorem[
  style=plain,
  title=Proposition,
  qed={},
  sibling=exa
]{prp}

\newcommand{\noproof}{\popQED{\ensuremath{\Box}}}

\DeclareMathOperator{\codim}{codim}   
\DeclareMathOperator{\diag}{{diag}}   
\DeclareMathOperator{\Hom}{Hom}       
\DeclareMathOperator{\id}{id}         
\DeclareMathOperator{\Proj}{{Proj}}   
\DeclareMathOperator{\rank}{rank}     
\DeclareMathOperator{\Spec}{{Spec}}   
\DeclareMathOperator{\Sym}{{Sym}}     
\DeclareMathOperator{\trop}{{trop}}   
\DeclareMathOperator{\Var}{{V}}       
\DeclareMathOperator{\nor}{nor}     
\DeclareMathOperator{\Nash}{Nash}     
\DeclareMathOperator{\relint}{relint} 
\DeclareMathOperator{\st}{star} 

\newcommand{\wt}{\widetilde}   
\newcommand{\ol}{\overline}    

\newcommand{\abs}[1]{\left|#1\right|}                 
\newcommand{\set}[1]{\left\{#1\right\}}               
\newcommand{\ideal}[1]{\left\langle#1\right\rangle}   
\newcommand{\mat}[1]{{\left[#1\right]}}               

\newcommand{\into}{\hookrightarrow}
\newcommand{\onto}{\twoheadrightarrow}

\newcommand{\irr}{\mathrm{irr}}     
\newcommand{\sm}{\mathrm{sm}}       
\newcommand{\nsm}{\mathrm{nsm}}     
\newcommand{\Frat}{\mathrm{Frat}}   
\newcommand{\trp}{\intercal}        

\renewcommand{\emptyset}{\varnothing}
\renewcommand{\AA}{\mathds{A}}   
\newcommand{\CC}{\mathds{C}}     
\newcommand{\DD}{\mathds{D}}    
\newcommand{\KK}{\mathds{K}}     
\newcommand{\LL}{{\mathds{L}}}   
\newcommand{\NN}{\mathds{N}}     
\newcommand{\PP}{\mathds{P}}     
\newcommand{\RR}{\mathds{R}}     
\newcommand{\TT}{{\mathds{T}}}   
\newcommand{\ZZ}{{\mathds{Z}}}   

\newcommand{\bF}{{\mathbf{F}}}   
\newcommand{\bG}{{\mathbf{G}}}   
\newcommand{\bS}{{\mathbf{S}}}   
\newcommand{\bT}{{\mathbf{T}}}   

\newcommand{\A}{{\mathscr{A}}}  
\newcommand{\B}{\mathcal{B}}    
\renewcommand{\L}{\mathscr{L}}  

\newcommand{\sF}{\mathscr{F}}   
\newcommand{\sO}{\mathscr{O}}   

\newcommand{\M}{\mathsf{M}}     
\newcommand{\U}{\mathsf{U}}     
\newcommand{\matM}{{\mathsf{M}}}
\newcommand{\matU}{{\mathsf{U}}}

\newcommand{\fraka}{{\mathfrak a}}
\newcommand{\frakb}{{\mathfrak b}}
\newcommand{\frakc}{{\mathfrak c}}
\newcommand{\frakm}{{\mathfrak m}}
\newcommand{\frakp}{{\mathfrak p}}
\newcommand{\frakq}{{\mathfrak q}}

\newcommand{\Ppt}[5]{{(#1\mathbin{:} #2\mathbin{:} #3\mathbin{:} #4\mathbin{:} #5)}}

\tikzset{square matrix/.style={
    matrix of nodes,
    column sep=-\pgflinewidth, row sep=-\pgflinewidth,
    nodes={draw,
      minimum height=0.5cm,
      anchor=center,
      text width=0.6cm,
      align=center,
      inner sep=0pt
    },
  },
  square matrix/.default=0.55cm
}

\title[Tropical resolutions of configuration hypersurfaces]{Tropical resolutions of\\ configuration hypersurfaces}

\author[D.~Bath]{Daniel Bath}
\address{\linebreak
  Daniel Bath\\
  Departement Wiskunde\\
  KU Leuven\\
  3001 Leuven, Belgium
}
\email{\href{mailto:dan.bath@kuleuven.be}{dan.bath@kuleuven.be}}
\thanks{DB was supported by FWO grant \#1282226N}

\author[G.~Denham]{Graham Denham}
\address{\linebreak
  Graham Denham\\
  Department of Mathematics, University of Western Ontario\\
  London, ON\\
  Canada N6A 5B7
}
\email{\href{mailto:gdenham@uwo.ca}{gdenham@uwo.ca}}
\thanks{GD was supported by NSERC of Canada}

\author[M.~Schulze]{Mathias Schulze}
\address{\linebreak
  Mathias Schulze\\
  Department of Mathematics, RPTU University Kaiserslautern-Landau\\ 
  67663 Kaiserslautern\\
  Germany
}
\email{\href{mailto:mschulze@rptu.de}{mschulze@rptu.de}} 
\thanks{}

\author[U.~Walther]{Uli Walther}
\address{\linebreak
  U.~Walther\\
  Department of Mathematics\\
  Purdue University\\
  West Lafayette, IN 47907\\
  USA
}
\email{\href{mailto:walther@purdue.edu}{walther@purdue.edu}}
\thanks{UW was supported by NSF grant DMS-2100288 and by
 Simons Foundation Collaboration Grant for Mathematicians \#580839 and SFI-MPS-TSM-00012928}

\keywords{Configuration, matroid, graph, incidence, determinantal, singularity, resolution, Feynman, Kirchhoff, Symanzik, Nash, sch\"on, conormal fan}

\subjclass[2020]{Primary 14N20, 32S45; Secondary 05C31, 14T20, 81Q30}

\begin{document}

\begin{abstract}
Configuration polynomials generalize the Kirchhoff polynomial of a graph, as well as the Symanzik polynomials that appear in the denominators of Feynman integrands.  The \emph{configuration hypersurfaces} cut out by such polynomials are typically highly singular, which poses a challenge for the evaluation of Feynman integrals even in simplified settings.

In this paper, we provide a two-step recipe for a resolution of singularities of any irreducible configuration hypersurface.  We first consider the normalization of the Nash blow-up, which we identify with an incidence variety introduced by Bloch~\cite{Blo09,Blo20}.  This variety is typically still not smooth, but it is the closure of a smooth subvariety of a torus.  The latter then admits a smooth, tropical compactification, using work of Tevelev.  
We construct explicitly such a compactification and a morphism to the normalized Nash blow-up for every configuration, described in terms of bipermutohedral matroid combinatorics introduced by Ardila, Denham and Huh~\cite{ADH23}.

Along the way, we find that the normalized Nash blow-up of the configuration hypersurface has strongly $F$-regular singularities in positive characteristic.  We deduce this by certifying $F$-rationality of its biprojective cone, and infer from it that the normalized Nash blow-up has rational singularities over the complex numbers. 
\end{abstract}


\maketitle
\setcounter{tocdepth}{2}
\tableofcontents

\section{Introduction}

Feynman diagrams encode particle interactions in high-energy physics, and
Feynman integrals can be used to compute the probabilities of specific
interactions.  Even in simplified settings that neglect mass and momenta,
evaluation of Feynman integrals poses serious challenges due to singularities and issues with convergence.  

The Feynman integrand is the square root of a rational function, the denominator of
which involves the \emph{(first) Symanzik polynomial} $\sum_{T\in\B(G)}
\prod_{e\not\in T}x_e$,
where $\B(G)$ denotes the set of spanning trees in
the Feynman graph $G$.
The convergence properties and evaluation techniques for the Feynman integral,
then, depend on the geometry of the \emph{graph hypersurface} cut out by
the Symanzik polynomial.  This hypersurface is typically highly singular, and
the main focus of this paper is to provide an explicit resolution of
its singularities, expressed entirely in terms of combinatorial data from
the Feynman graph. 

The search for a resolution of singularities for Feynman integrands was initiated by Bloch, Esnault and Kreimer~\cite{BEK06}, and one of their key insights was that the construction of 
Symanzik polynomials generalizes naturally from graphs to realizable matroids. More precisely,  they found that the
first and second Symanzik polynomials are both instances of \emph{configuration
polynomials} $\psi_W$; these are induced by the choice of a subspace $W$ inside a vector space $V$ with distinguished basis---compare Proposition \ref{prop:expand_psi}. The mathematical advantage of the larger world of configuration polynomials  is a greater flexibility; for example there is a general notion of duality that extends that of
planar graphs.  Our methods and results apply to this  more general setting and make use of the added features.

From now on, let $V\coloneqq \bigoplus_{i\in E}\KK\cdot x_i$ denote the vector space spanned by all edge variables  $x_i$ in the Feynman integral, and consider the Nash blow-up of the (projective) configuration hypersurface $X_W\subseteq \PP V$.
This approach was pioneered in \cite[\S4]{BEK06}, where they show it produces a resolution of singularities under a (unstated) hypothesis about a certain map being an embedding; for an elucidation see \cite{PattersonThesis}, and in particular \cite[Prop.~2.2.5, Thm.~4.4.1]{PattersonThesis}.
Alas, this approach rarely provides a resolution of singularities.
Indeed, we see in \S\S\ref{subsec:Lambda_smooth}, \ref{subsec:X_dual} that the Nash blow-up is often not even normal, and it can be  
smooth only when the configuration comes from a \emph{round matroid}.  Since the class
of round matroids intersects more or less vacuously with the configurations
coming from interesting Feynman diagrams, one must go further. 

In subsequent years, Bloch~\cite{Blo09,Blo20} introduced a 
complete intersection variety $\Lambda_W$  
of bihomogeneous quadrics inside $\PP V\times \PP V^\star$ mapping surjectively onto $X_W$.  We show (Theorem~\ref{thm:normalize}) that $\Lambda_W$
is in fact the normalization of the Nash blow-up, giving a commutative diagram
\begin{equation}\label{eq:intro_diagram}
\begin{tikzcd}[row sep=5pt]
  \PP V\times \PP V^\star\ar[r,"="] & \PP V\times \PP V^\star \ar[r,"p_2"] & \PP V^\star\\
  \Lambda_W\ar[twoheadrightarrow,r]\arrow[u, phantom, sloped, "\subseteq"]
  & \Nash(X_W)\ar[twoheadrightarrow,r] \arrow[u, phantom, sloped, "\subseteq"]
  & X_W \arrow[u, phantom, sloped, "\subseteq"]
\end{tikzcd}
\end{equation}
in which the maps in the lower row are birational. 




Unfortunately, the variety $\Lambda_W$ generally does not provide a resolution of singularities; however, the situation has markedly improved.  While $X_W$ can have singularities on the big torus in $\PP V$, the restriction $\Lambda_W^\circ$ of $\Lambda_W$ to the big  torus inside $\PP V\times \PP V^\star$
is always smooth (Proposition~\ref{prop:Lambda_toric}).  This means that, in order to resolve the
singularities of $\Lambda_W$ (and hence of $X_W$), we can make use of  the techniques of 
\emph{tropical compactifications}
developed by Tevelev~\cite{Tev07}  and refined by Hacking~\cite{Hac08}. 
Roughly speaking, one blows up (torus-equivariantly) the product of projective spaces to a new
toric variety $\PP(\Delta)$ while maintaining an isomorphism on the torus.
Under
suitable conditions (which we show to hold in our situation), the
closure $\wt\Lambda_W=\wt\Lambda_W(\Delta)$ of the smooth subvariety
$\Lambda_W^\circ$ in $\PP(\Delta)$ is guaranteed to be smooth as well, and one obtains an
embedded resolution
\[
\begin{tikzcd}[row sep=5pt]
  \PP(\Delta)\ar[twoheadrightarrow,r] & \PP V\times \PP V^\star\\
  \wt\Lambda_W\ar[twoheadrightarrow,r]\arrow[u, phantom, sloped, "\subseteq"]
  & \Lambda_W.\arrow[u, phantom, sloped, "\subseteq"]
\end{tikzcd}
\]
Tevelev's theory states that the components of the boundary
$\wt\Lambda_W\setminus\Lambda_W^\circ$ are restrictions of torus orbits in
$\PP(\Delta)$: as such, their incidence relations are the same as those of
the torus orbits and are governed by combinatorics of the associated matroid.

Tropical compactifications of $\Lambda_W^\circ$ are not unique; rather, they
are indexed by unimodular fan structures supported on its tropicalization.
We show that the tropicalization $\trop(\Lambda_W^\circ)$ is 
isomorphic to the support of the product of the Bergman fans of the
matroids of $W$ and its dual $W^\perp$ 
(Proposition~\ref{prop:trop_Lambda}).  This (linear) isomorphism $\mu$ comes
from a description of $\Lambda_W$, birationally, as the graph of the
Hadamard product of the projective linear spaces $\PP W$ and $\PP W^\perp$
(Proposition~\ref{prop:Lambda_toric}).

At this point, a combinatorialist
might view the problem to be solved, since these fan structures are
well-understood.  The remaining subtlety, however, is that the isomorphism so constructed does not respect the product structure of \eqref{eq:intro_diagram}; thus,
in order to obtain a tropical compactification $\wt\Lambda_W$ together with
a well-defined map to the configuration hypersurface, some additional
refinement is required. For this purpose, we make use of the bipermutohedral
fan $\Sigma_{E,E}$ introduced by Ardila, Denham and Huh~\cite{ADH23},
with which we recall in \S\ref{subsec:fans}.
The \emph{square conormal fan} $\Sigma_{-\M,\M^\perp}$ is the induced
subfan that refines a product of Bergman fans $(-\Sigma_{\M})\times
\Sigma_{\M^\perp}$ (Proposition~\ref{prop:conormal}).
We let $\wt\Delta_{\M}$ denote its image under the isomorphism
$\mu$.  In \S\ref{subsec:tropical_res}, we prove one of our main results:


\begin{thm}\label{thm:trop_res_intro}
  If $W$ is a configuration with connected matroid $\M$, 
  then the fan $\wt\Delta_{\M}$ gives a tropical compactification $\wt\Lambda_W$ such that $\wt\Lambda_W\to \Lambda_W\to X_W$ is a resolution of singularities, that is, a birational surjection with a smooth source.
\end{thm}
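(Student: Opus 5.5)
The plan is to assemble the theorem from the structural results quoted in the introduction, together with Tevelev's criterion for tropical compactifications.

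\emph{Step 1: the torus part.} By Proposition~\ref{prop:Lambda_toric}, the open part $\Lambda_W^\circ=\Lambda_W\cap(\text{torus of }\PP V\times\PP V^\star)$ is smooth and irreducible. Birationally it is the graph of the Hadamard product of $\PP W$ and $\PP W^\perp$. Under the monomial change of coordinates $\mu$ it is therefore identified with a product of two complements of essential hyperplane arrangements: the torus parts of $\PP W$ and of $\PP W^\perp$. For connected $\M$ the dual $\M^\perp$ is connected too, so both factors are irreducible, and hence so is their product.

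\emph{Step 2: tropicalization and the fan.} Proposition~\ref{prop:trop_Lambda} gives $\trop(\Lambda_W^\circ)=\mu\bigl(|(-\Sigma_\M)\times\Sigma_{\M^\perp}|\bigr)$. Proposition~\ref{prop:conormal} says that the square conormal fan $\Sigma_{-\M,\M^\perp}$ is a subfan of the bipermutohedral fan refining this product. Since $\Sigma_{E,E}$ is unimodular and $\mu$ is a lattice isomorphism, $\wt\Delta_\M$ is a unimodular fan supported exactly on $\trop(\Lambda_W^\circ)$.

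\emph{Step 3: smoothness of the closure.} Here I would apply Tevelev's theorem, in Hacking's refinement, for schön subvarieties. The subvariety $\Lambda_W^\circ$ is schön because, via $\mu$, it is a product of linear-space complements, and these are schön: every initial degeneration is again a linear-space complement, hence smooth. Tevelev's theorem then gives the following. The closure $\wt\Lambda_W$ of $\Lambda_W^\circ$ in the smooth toric variety $\PP(\wt\Delta_\M)$ is proper, because the support of the fan is all of the tropicalization. Moreover, the multiplication map $T\times\wt\Lambda_W\to\PP(\wt\Delta_\M)$ is smooth. It follows that $\wt\Lambda_W$ is smooth, since the ambient toric variety is smooth by unimodularity.

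\emph{Step 4: the morphism to $\Lambda_W$.} This is the step I expect to be the main obstacle. We need a toric morphism $\PP(\wt\Delta_\M)\to\PP V\times\PP V^\star$, that is, every cone of $\wt\Delta_\M$ must lie in a cone of the fan of $\PP V\times\PP V^\star$. The issue is that $\mu$ does not respect the product structure. I would first write $\mu$ explicitly: in the coordinates of the torus of $\PP V\times\PP V^\star$, a pair $(w,w')\in(-\Sigma_\M)\times\Sigma_{\M^\perp}$ should map roughly to the coordinatewise Hadamard sum together with one of the factors, i.e. $(w+w',w')$ up to sign conventions. The cones of $\Sigma_{E,E}$ are indexed by bisequences, and on each such cone the relative orders of the coordinates of $w$, $w'$ and $w+w'$ are constant. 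Checking this compatibility is the crux; I would prove it directly from the combinatorial description of bisequences.

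Granting that, the minimal coordinate of each factor is attained on a fixed index set, so each bipermutohedral cone maps linearly into a single cone of the product of the two fans of $\PP^{|E|-1}$, which is what we need. This toric morphism is proper and restricts to an isomorphism on tori. It therefore sends $\wt\Lambda_W$ onto the closure of $\Lambda_W^\circ$, and this closure equals $\Lambda_W$ because $\Lambda_W$ is irreducible: it is normal by Theorem~\ref{thm:normalize} and connected. Hence $\wt\Lambda_W\to\Lambda_W$ is a birational surjection.

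\emph{Conclusion.} Composing with $\Lambda_W\to X_W$, which is birational and surjective by Theorem~\ref{thm:normalize} and diagram~\eqref{eq:intro_diagram}, gives a birational surjection $\wt\Lambda_W\to X_W$ with smooth source, which is the required resolution of singularities.
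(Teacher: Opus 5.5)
Your outline follows the paper's route: Proposition~\ref{prop:Lambda_toric}, Proposition~\ref{prop:trop_Lambda}, the sch\"on argument of Theorem~\ref{thm:trop_res}, and then biprojectivity of $\wt\Delta_\M$ to factor through $\Lambda_W$ as in Corollary~\ref{cor:trop_res_factor}. However, the step you yourself call the crux is left unproved, and the statement you propose to prove there is false. The relative order of the coordinates of $w+w'$ is \emph{not} constant on cones of $\Sigma_{E,E}$, and not even on the square conormal fan. For a bisubset one has $S\cup T=E$, so $\delta(e_S+f_T)=e_S+e_T\equiv e_{S\cap T}$ modulo $e_E$. In Example~\ref{ex:delA3}, take the rays of the square biflats $124\subseteq E$ and $2\subseteq 24$, namely $e_{35}+f_E$ and $e_{1345}+f_{24}$. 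They span a cone of $\Sigma_{-\M,\M^\perp}$, since $124\supseteq 2$, $E\supseteq 24$ and $35\cup 4\neq E$. The map $\delta$ sends its interior onto $\set{ae_{35}+be_4\mid a,b>0}$, so the order of the third and fourth coordinates of $w+w'$ changes inside the cone. Thus $\delta$ is not a map of fans to $\Sigma_E$, and the argument from the combinatorics of bisequences you planned cannot work.

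What is true, and all that is needed, is the weaker property you invoke after ``granting'': $\delta$ is a map of fans $\Sigma_{E,E}\to\Gamma_E$. By the formula above, $\delta(\sigma_{\bS\vert\bT})$ lies in the cone of $\Gamma_E$ spanned by $\set{e_i\mid i\in\bigcup_k(S_k\cap T_k)}$. This index set is a proper subset of $E$ precisely by the biflag condition; this is the feature of $\Sigma_{E,E}$ from \cite{ADH23} used in Theorem~\ref{thm:normal_resolve}. Combine it with $\pi_2\circ\mu=\delta$ and $\pi_1\circ\mu=\pi_1$ (Lemma~\ref{lem:m_mu}), and with the refinements $\Sigma_{E,E}\to\Sigma_E\times\Sigma_E$ and $\Sigma_E\to\Gamma_E$. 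This gives the map of fans $\wt\Delta_\M\to\Gamma_E\times(-\Gamma_E)$.

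Two smaller corrections. First, the toric morphism $\PP(\wt\Delta_\M)\to\PP V\times\PP V^\star$ is not proper, since $\wt\Delta_\M$ is far from complete. Closedness of the image of $\wt\Lambda_W$ must instead come from properness of $\wt\Lambda_W$ itself, as in the proof of Theorem~\ref{thm:trop_res}. Second, the claim that an \emph{arbitrary} smooth fan structure on the tropicalization of a sch\"on variety yields a smooth sch\"on compactification is Luxton--Qu's theorem (Theorem~\ref{thm:schoen_fan}). Tevelev's results alone only cover refinements of a fan already known to give a tropical compactification.
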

\noindent
By the nature of a tropical compactification, the boundary structure of $\wt\Lambda_{\M}$ is described completely in terms of the fan $\Sigma_{-\M,\M^\perp}$.


The normalized Nash blow-up $\Lambda_W$ is in some sense minimal over $X_W$ but often not smooth, whereas our tropical
resolution is smooth, but often not minimal.  That is, the composition above is a
bijection on an open subset of $X_W$ which is, in general, properly contained
in its subscheme of regular points.  As a trade-off, however, the structure
of the resolution is entirely combinatorial, and its structure (in the
case of graph hypersurfaces) can be read directly from the Feynman diagram.

\bigskip

As noted, we show that the smoothness of $\Lambda_W$ is equivalent to the \emph{roundness} of the underlying matroid, which is the condition that all complements of (proper) flats span
the matroid. For graphical configurations, this means that our tropical
techniques for resolving $X_W$ by further resolving $\Lambda_W$ are
required in nearly all cases.  Nonetheless, $\Lambda_W$ always has a number
of very interesting properties, both from the geometric and the
algebraic viewpoint; the rest of the introduction summarizes our findings about $\Lambda_W$ (and related varieties) in and of themselves.  

If the matroid of $W$ is connected,
then $\Lambda_W$ is a rational image of a product of projective linear
spaces, via a Hadamard product construction and, in particular, is
irreducible (Corollary~\ref{cor:Lambda_facts}). Additionally, as
\cite{BEK06} already observed, $\Lambda_W$ is always a complete
intersection in $\PP V \times\PP V^\star$.

In \S\ref{subsec:Lambda_classes}, we observe a combinatorial formula for its motivic class in $K_0({\mathrm{Var}}_{\CC})$, irrespective of roundness:


\begin{thm}\label{thm:motive_intro}
If $\M$ denotes the matroid of a configuration $W$ defined over $\CC$ and
$\LL=[\AA^1]$ is the class of the affine line, then
\[
  [\Lambda_W] 
= \sum_{
    \substack{F\in\L_{\M} \\ F\neq E}}
\overline{\chi}_{\M/F}(\LL)\cdot(\LL^{\abs{E}-\rank(\M\setminus F)}-1)/(\LL-1)
\]
in Grothendieck's ring of varieties. Here, 
$\overline{\chi}_\M(t)$ denotes the reduced
characteristic polynomial of a matroid $\M$ and
$\L_{\M}$ is the lattice of flats of $\M$.
\end{thm}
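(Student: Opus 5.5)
The plan is to compute $[\Lambda_W]$ through the projection $p_2\colon\Lambda_W\to\PP V^\star$ rather than the birational map to $X_W$. The fibres of $p_2$ are projective linear spaces, and the image stratifies naturally by flats of $\M$.

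I use Bloch's description of $\Lambda_W$ as recalled earlier in the paper. Identify $V^\star$ with $V$ via the distinguished basis, and write $\star$ for the coordinatewise (Hadamard) product. Then
\[
\Lambda_W=\set{([x],[w])\in\PP V\times\PP W : x\star w\in W^\perp}.
\]
This is the incidence of a point $x$ with a kernel vector $w$ of the degenerate quadratic form $\sum_i x_iw_i^2$ on $W$. If the paper's normalization differs, for example by swapping the roles of $W$ and $W^\perp$ or of the two factors, the argument below adapts verbatim.

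\emph{Step 1 (stratify the base).} For $0\neq w\in W$, the zero set $F(w)=\set{i\in E: w_i=0}$ is a flat of $\M$, and $F(w)\neq E$. So $\PP W=\bigsqcup_{F\in\L_\M,\,F\neq E}\PP W_F^\circ$, where $\PP W_F^\circ$ consists of the $[w]$ with zero set exactly $F$. The subspace $W\cap\set{w_F=0}$ is a realization of the contraction $\M/F$. The locus $\PP W_F^\circ$ is the projectivized complement of the coordinate hyperplane arrangement in it, which is essential and realizes $\M/F$ on the ground set $E\setminus F$. By the standard fact that the class of a projectivized arrangement complement is the reduced characteristic polynomial evaluated at $\LL$, we get $[\PP W_F^\circ]=\overline\chi_{\M/F}(\LL)$. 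I would prove this by inclusion--exclusion and Möbius inversion over $\L_{\M/F}$, or cite it.

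\emph{Step 2 (fibres).} Fix $[w]\in\PP W_F^\circ$ and put $S=E\setminus F$. The condition $x\star w\in W^\perp$ does not involve $x_F$. On $S$ the map $x_S\mapsto x_S\star w_S$ is a linear automorphism of $\KK^S$. Hence the fibre is the projectivization of a linear space of dimension
\[
|F|+\dim\bigl(W^\perp\cap\KK^S\bigr)=|F|+|S|-\rank(\M\setminus F)=|E|-\rank(\M\setminus F),
\]
using that $W^\perp\cap\KK^S$ is the annihilator of the projection of $W$ to $\KK^S$, which realizes $\M|_S=\M\setminus F$. This dimension depends only on $F$. Over $\PP W_F^\circ$ the fibres therefore form the projectivization of the kernel of a constant-rank map of trivial bundles, namely $x\mapsto$ (class of $x\star w$ in $V/W^\perp$). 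That kernel is a vector subbundle, so the family is a Zariski-locally trivial $\PP^{|E|-\rank(\M\setminus F)-1}$-bundle. Its class is $[\PP W_F^\circ]\cdot(\LL^{|E|-\rank(\M\setminus F)}-1)/(\LL-1)$.

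\emph{Step 3.} Summing over the locally closed strata $p_2^{-1}(\PP W_F^\circ)$ gives the formula.

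The main point to check is that the locus cut out by $x\star w\in W^\perp$ really is $\Lambda_W$ as a reduced variety, with no extra components. It is a point-set equality, which suffices for classes in $K_0(\mathrm{Var}_\CC)$. The other point needing care is local triviality of the projective bundle. The constant-rank argument handles it, since a constant-rank map of vector bundles over a reduced base has a kernel that is a subbundle.
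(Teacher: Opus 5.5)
Your proposal is correct and follows the paper's proof. Both stratify $\PP W$ by the flat $F=F(w)$, use $[\PP(W/F)^\circ]=\overline{\chi}_{\M/F}(\LL)$, and show that $\Lambda_W$ over each stratum is a $\PP^{n_F-1}$-bundle with $n_F=\abs{E}-\rank(\M\setminus F)$; this is Proposition~\ref{prop:Lambda_strata}, where the bundle is even shown to be trivial. The differences are cosmetic: you get the fibre dimension by direct linear algebra on $x\star w\in W^\perp$ rather than by showing Bloch's sheaf $\sF$ is free of rank $n_F$ on $U_F$, and your M\"obius-inversion route to the projective stratum class avoids cancelling the factor $\LL-1$ from $[W^\circ]=(\LL-1)[\PP W^\circ]$, as the paper's one-line appeal to Aluffi's affine formula implicitly does.
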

\noindent In particular, we find that $[\Lambda_W] \in \ZZ[\LL]$ is an \emph{integer} class.
By contrast, $[X_W]$ generally is not an integer class but rather can be an arbitrarily complicated member of the Grothendieck ring of varieties, in a sense that is made precise in \cite{BelkaleBrosnan}.

\smallskip

Prompted by the minimal model program, many singularity classes have crystallized 
$Y$ that can be tested via a resolution of singularities
$\pi\colon \tilde Y\to Y$ in characteristic zero. Two very important
and successful classes of this type are that of \emph{rational
singularities}, requiring the identity $R\pi_*(\sO_{\tilde
  Y})=\sO_Y$, 
  and the weaker 
\emph{du Bois} property.  In characteristic $p>0$, from the seminal work
of Hochster and Huneke on tight closure, notions have emerged that rest on asymptotic numeric data of the behavior of
(iterates of) the Frobenius endomorphism. For example,
\emph{$F$-purity} asks that viewing any module in characteristic $p>0$
through the $p$-power map should preserve monomorphisms. This turns
out to be approximately asking for the Frobenius to be a split
morphism, and a strengthening is \emph{strong $F$-regularity} which
requires this splitting for the composition of the $p$-th power map
(or one of its iterates) with multiplication by an arbitrary
non-zerodivisor. A third property is \emph{$F$-rationality}, slightly
weaker than strong $F$-regularity and based on asymptotics of the
Frobenius on local cohomology.

Astonishingly, work of Smith, Hara and others has shown that these
properties in positive and zero characteristics respectively are very intimately
related; see
\S \ref{subsec-affinecone} for details of this
interplay.  Moreover,  even though the
characteristic $p$ approach requires ostensibly the checking of
infinitely many containments of ideals, in many situations it can
be done successfully in practice,
and  a number of
varieties have been certified as being du Bois or as having rational
singularities by showing the positive characteristic counterparts. Our
last major item in the introduction bears witness to this pattern.

Let $\hat \Lambda_W$ be the (affine) variety in $V\times V^\star$ defined by
the equations of the complete intersection that cut out $\Lambda_W$ in $\PP
V\times \PP V^\star$.  Then $\hat \Lambda_W$ is the union of two
reduced components, $\hat \Lambda_W=(W\times \{0\})\cup \hat
\Lambda_{W,0}$, and $\hat \Lambda_{W,0}$ is the biprojective cone of
$\Lambda_W$.  In Theorem~\ref{thm-cone-Frat}, we show:
\begin{thm} 
The cone $\hat \Lambda_{W,0}$ is $F$-rational in every positive
characteristic when the field is perfect.\footnote{or just $F$-finite}
Consequently,
\begin{enumerate}
\item $\hat \Lambda_{W,0}$ is Cohen--Macaulay and normal in all
  characteristics, and has rational singularities over the complex
  numbers.
\item $\Lambda_W$ is strongly $F$-regular in positive characteristic,
  and has all the properties listed for $\hat \Lambda_{W,0}$.
\end{enumerate}
\end{thm}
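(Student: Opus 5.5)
The plan is to certify $F$-rationality of the cone $\hat\Lambda_{W,0}$ through a Frobenius-splitting-type criterion adapted to rationality, namely Glassbrenner's criterion or the Fedder-type criterion for $F$-rationality of Gorenstein rings. First, $\hat\Lambda_{W,0}$ must be a complete intersection, hence Gorenstein. The affine scheme $\hat\Lambda_W\subseteq V\times V^\star$ is cut out by the bihomogeneous quadrics. These say that the pair $(x,y)$ satisfies $x\cdot y\in W^\perp$ for a Hadamard-type product, with $x\in W$ implicit. So $\hat\Lambda_W$ is a complete intersection of codimension $|E|$. Since $W\times\{0\}$ has dimension $\dim W$, which is smaller than $\dim\hat\Lambda_W = |E|+1$ when $\M$ is connected, I would first saturate: $\hat\Lambda_{W,0}$ is the closure of the part with $y\neq 0$. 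I would check directly that $\hat\Lambda_{W,0}$ is itself a complete intersection after eliminating linear coordinates. The ambient space is then $W\times V^\star$, cut out by $\dim W^{\perp\star}$-many bilinear equations, namely the components of the map $(x,y)\mapsto x\cdot y \bmod W^{\perp}$. Equivalently, the ideal is generated by $\rank\M^\perp$ bilinear forms in coordinates of $W$ and $V^\star$, and irreducibility (Corollary~\ref{cor:Lambda_facts}) together with the dimension count gives a complete intersection.

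For a Gorenstein graded domain $R=S/I$ with $I=(f_1,\dots,f_c)$, $F$-rationality follows by Fedder/Glassbrenner from exhibiting $c\in S$ not in any minimal prime of $I$ with $R_c$ regular, together with $c\,(f_1\cdots f_c)^{p-1}\notin\frakm^{[p]}$. If $R_c$ is merely $F$-rational, one instead uses $c\,(f_1\cdots f_c)^{p-1}\notin\frakm^{[p]}$ with $R$ $F$-injective and $c$ a test element; the $(f)^{p-1}$ part alone gives $F$-purity. For the regular locus I will take $c$ to be a product of coordinates. By Proposition~\ref{prop:Lambda_toric}, $\Lambda_W^\circ$ is smooth on the torus, so the cone is regular after inverting $\prod x_i\prod y_i$ (suitably chosen coordinates). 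The key computation is to choose a term order making $\operatorname{in}(f_1\cdots f_c)$ a squarefree monomial. Choose a basis of $E$ for $\M$ adapted to a basis $B$, so each bilinear form $f_j$ has a leading term $x_{i}y_{i}$ that is distinct and uses distinct variables. Then $(f_1\cdots f_c)^{p-1}$ has leading term $\prod (x_iy_i)^{p-1}$, which is not in $\frakm^{[p]}$. Multiplying by a suitable $c$ supported on the complementary variables keeps some squarefree-exponent term of degree less than $p$ surviving. This gives strong $F$-regularity of $R_c$-type localizations, and then $F$-rationality via the Gorenstein test-element argument.

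The consequences are then formal. $F$-rational rings are Cohen--Macaulay and normal. Hara and Smith give that $F$-rational type (for all $p\gg0$, obtained here uniformly) implies rational singularities over $\CC$. For $\Lambda_W$, the cone is Gorenstein and $F$-rational, hence strongly $F$-regular on the punctured cone (Gorenstein plus $F$-rational gives strongly $F$-regular). $\Lambda_W$ is the quotient of $\hat\Lambda_{W,0}\setminus(\text{axes})$ by the torus $\mathbb G_m^2$, so its local rings are direct summands of (localizations of) strongly $F$-regular rings, and therefore strongly $F$-regular. Rational singularities, Cohen--Macaulayness and normality transfer in the same way, since the quotient map is a smooth $\mathbb G_m^2$-torsor.

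The main obstacle is the Fedder-type monomial computation. We need a term order under which the product of the $\rank\M^\perp$ bilinear generators, times a test element $c$, has a monomial with all exponents at most $p-1$. This requires matching each generator to a distinct variable pair via a basis of $\M^\perp$ and its complement, while ensuring that $c$ avoids the minimal prime. I would try lexicographic order induced by a fundamental-circuit presentation of $W^\perp$, as in a reduced row echelon form.
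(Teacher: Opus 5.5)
There is a genuine gap at the foundation: $\hat\Lambda_{W,0}$ is neither a complete intersection nor, for $r=\rank\M\ge2$, Gorenstein.

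The bilinear equations are the $r=\rank\M$ components $q_1,\dots,q_r$ of $Q^\star_W(w,\beta)\in W^\star$, not $\rank\M^\perp$ of them, and they vanish identically when $w=0$. So $\hat\Lambda_W=\Var(q_1,\dots,q_r)$ has the second component $\{0\}\times V^\star=\Var(u_1,\dots,u_r)$, of dimension $\abs E=\dim\hat\Lambda_{W,0}$. The lower-dimensional $W\times\{0\}$ that you discard is not a component at all; it lies inside $\hat\Lambda_{W,0}$.

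Because the complete intersection $I_W=(q_1,\dots,q_r)$ is unmixed with these two components, the cone is cut out by $I_{W,0}=I_W:(u_1,\dots,u_r)=I_W+\ideal{\psi_W}$. This ideal is linked to $(u_1,\dots,u_r)$, and by the resolution \eqref{eq:minres} it is Cohen--Macaulay of type $r$ (Theorem~\ref{thm-hatLambda-eqns} and the remark after it). Hence on the cone you lose three things you rely on:
\begin{enumerate}
\item the explicit generator $(f_1\cdots f_c)^{p-1}$ of $I^{[p]}:I$ modulo $I^{[p]}$;
\item the Gorenstein reduction of $F$-rationality to a Fedder/Glassbrenner splitting test;
\item the upgrade ``Gorenstein and $F$-rational implies strongly $F$-regular'' globally.
\end{enumerate}

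This is not cheaply repaired. Glassbrenner's criterion itself needs no Gorenstein hypothesis, and $Q^{q-1}$ with $Q=q_1\cdots q_r$ does lie in $I_{W,0}^{[q]}:I_{W,0}$. (This follows from the inclusion $I_W^{[q]}:I_W\subseteq I_{W,0}^{[q]}:I_{W,0}$ used for Theorem~\ref{thm-cone-Fpure}.) But that route would prove strong $F$-regularity of the cone, which is more than is claimed, and it rests on exactly the step you leave open. Your leading-term certificate does not survive multiplication by your $c=\prod_j x_j\prod_i\ell_i$. That $c$ contains $x_1\cdots x_r$, so it pushes the leading monomial $\prod_{i\le r}(x_iu_i)^{q-1}$ into $\frakm^{[q]}$. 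A $c$ built only from complementary variables avoids this, but then nothing you have shown makes $R_c$ strongly $F$-regular.

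The missing idea is the paper's deletion--contraction induction on $\abs E$. By Tutte's theorem, either $\M\backslash i$ or $\M/i$ is connected.
\begin{itemize}
\item If $\M\backslash i$ is connected, then $\hat\Lambda_{W\backslash i,0}=\hat\Lambda_{W,0}\cap\Var(x_i)$. $F$-rationality deforms from this hyperplane section and spreads from the origin by openness and gradedness.
\item If $\M/i$ is connected, then $\M^\perp\backslash i$ is connected, so $\hat\Lambda_{W^\perp,0}$ is $F$-rational by the first case and induction. The isomorphism $\DD$ of Lemma~\ref{lem-affine-duality} transfers this to the locus $\prod x_i\ne0$ of $\hat\Lambda_{W,0}$. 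One then descends through primes containing more and more $x_i$ using Lemma~\ref{lem-HMS-SS} with $f=x_i$. Here $R/(x_i)$ is a local ring of $\hat\Lambda_{W\backslash i,0}$, which is $F$-pure by Theorem~\ref{thm-cone-Fpure} and hence $F$-injective.
\end{itemize}

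For part (2), your torsor argument works once you replace global Gorensteinness by the correct local fact. On $\set{u\ne0}$ the ideal $I_{W,0}$ agrees with the complete intersection $I_W$. So the cone is Gorenstein there, and $F$-rationality upgrades to strong $F$-regularity.
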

Moreover, we find that 
$\hat\Lambda_{W,0}$ has Cohen--Macaulay type equal to the
rank of the matroid, and its coordinate ring has a very simple free
resolution over the coordinate ring of $V\times V^\star$
(Theorem~\ref{thm-hatLambda-eqns}.)

\subsection{Outline}

In Section~\S\ref{sec:defs} we recall some definitions as well as basic results, old and new, on configuration polynomials. Section \S\ref{sec:lambda} revisits Bloch's incidence variety $\Lambda_W$.  We characterize when it is smooth
in \S\ref{subsec:Lambda_round}, and we establish the $F$-rationality of the affine version in Section~\S\ref{subsec-affinecone}.  We investigate the relationship between the Nash blow-up of the
configuration hypersurface and Bloch's variety in \S\ref{subsec:X_dual}, using
the results of the previous section to show that $\Lambda_W$ is normal.
In Section \S\ref{sec:tropical},  we develop a resolution of singularities for Bloch's variety, hence for the configuration hypersurface, by tropical methods.

\subsection{Assumptions and reference texts}

If the denominator in a Feynman diagram factors, one can decompose the Feynman integral into factors as well. This happens precisely when the underlying matroid is not connected. Our theorems thus focus on the connected case, although in some of our constructions we do need to pass through disconnected matroids.
Similarly, the basic definitions can be made over any field, but we often require geometric scenarios.  For simplicity, we will assume throughout that our base field $\KK$ is always perfect.  Further assumptions about the field or matroid are indicated at the at the start of  sections/subsections when stronger hypotheses are appropriate.  

Our reference for matroid theory is the book of Oxley~\cite{Oxl11}.
For toric geometry we refer the reader to the book of Cox, Little and Schenck \cite{CLS11}, and for tropical geometry to that of Maclagan and Sturmfels~\cite{MS15}. For matters on the Frobenius morphism we refer to the notes by Ma and Polstra \cite{MaPolstra}.

\subsection{Acknowledgements}

The authors thank the Centro Ennio de Giorgi for its hospitality, where this
project started.  The first three authors also thank the Max Planck
Institute for Mathematics (Bonn) for its support during its development.

\section{Matroids, configurations, and polynomials}\label{sec:defs}

Our starting point is the \emph{configuration polynomial} $\psi_W$, which was
introduced by Bloch, Esnault and Kreimer~\cite{BEK06} and includes the Symanzik
and Kirchhoff polynomials as special cases.  We recall its definition and establish our notation (see \cite[\S2]{DPSW22} for further details).

\subsection{Configurations}

We fix a base field $\KK$ and a finite set $E=\set{1,\dots,n}$.
We denote by 
\[
V\coloneqq\KK^{E}\coloneqq\bigoplus_{i\in E}\KK\cdot x_i
\]
the $n$-dimensional $\KK$-vector space with basis $x=x_1,\dots,x_n$. 
For any subset $F\subseteq E$ the vector space $\KK^F$ with basis $x_F=(x_i)_{i\in F}$ can be realized either as a sub- or a quotient space of $V$.
The assignment $F\mapsto\KK^{F}$ defines two functors from the power set of $E$, covariantly to the category of monomorphisms, and contravariantly to that of epimorphisms of $\KK$-vector spaces.
In particular, we denote the coordinate projection associated to $F\subseteq E$ by
\[
\pi_F\colon \KK^E\onto\KK^F.
\]
A \emph{configuration} is a $\KK$-linear subspace $W\subseteq V$.
We note the inclusion map by
\[
\ell=(\ell_1,\dots,\ell_n)\colon W\into V.
\]
Then $W$ is a $\KK$-linear realization of an underlying \emph{matroid} $\M\coloneqq\M_W$ with set of bases
\[
\B_\M\coloneqq\set{B\subseteq E\mid\pi_B\text{ restricts to an isomorphism }W\to\KK^B}
\]
In this way, any matroid property of $\M_W$ can be considered as a property of a configuration.
In particular, we refer to $W$ as \emph{connected} if $\M_W$ is so.
We often abbreviate the \emph{rank} of $W$ by
\[
r\coloneqq\dim_\KK W=\rank\M_W.
\]
In terms of a basis $w^1,\dots,w^r$ of $W$, the map $\ell$ is given by the transpose $A^\trp$ of the $r\times n$-matrix
\begin{eqnarray}\label{eqn-A-and-w}
A=(w^i_j)\in\KK^{r\times n},\quad w^i_j\coloneqq\ell_j(w^i).
\end{eqnarray}
The bases of $\M_W$ index the maximal subsets of linearly independent columns of $A$.
For $F\subseteq E$, the matroid \emph{deletion} $\M_W\backslash F$ and \emph{contraction} $\M_W/F$ are realized by the corresponding configurations
\begin{eqnarray}\label{eqn-W/F}
W\backslash F\coloneqq\pi_{E\backslash F}(W)\subseteq\KK^{E \backslash F} \quad\text{and}\quad
W/F\coloneqq W\cap \KK^{E\backslash F}\subseteq\KK^{E\backslash F}.
\end{eqnarray}

Denote by $V^\star\coloneqq\Hom_\KK(V,\KK)$ the dual $\KK$-vector space with distinguished basis $y=y_1,\dots,y_n$ dual to $x=x_1,\dots,x_n$.
For $\beta\in V^\star$, we write 
\[
\beta_i\coloneqq y_i(\beta)=\beta(x_i),\quad\beta_F\coloneqq\beta\vert_{\KK^F}
\]
for $i\in E$ and $F\subseteq E$.
Note that $\ell_i$ is the restriction of $y_i$ to $W$.
The \emph{dual configuration} to $W$ is the subspace 
\[
W^\perp\coloneqq(V/W)^\star\subseteq V^\star=\bigoplus_{i\in E}\KK\cdot y_i
\]
with inclusion map $\ell^\perp=(\ell^\perp_1,\dots,\ell^\perp_n)$ realizing the dual matroid $\M_{W^\perp}=(\M_W)^\perp$.
Using duality $V\cong V^{\star\star}$ we may view $x_1,\dots,x_n$ as a basis of $V^{\star\star}$ and $\ell^\perp_i$ as the restriction of $x_i$ to $W^\perp$. 

Finally, for vectors $v\in V$ (or $V^\star$, or their projectivizations), let
\begin{equation}\label{eq:F(v)}
F(v)\coloneqq\set{i\in E\mid v_i=0}.
\end{equation}
For a configuration $W$, the collection of subsets $\L_{\M}:=\set{F(w)\mid w\in W}$
is the set of \emph{flats} of the matroid $\M_W$. 

\subsection{Round matroids}\label{ss:round}

The following type of matroid considered by Geelen, Gerards and Whittle (see \cite{GGW03}) will play a crucial role.


\begin{dfn}\label{def:Msmooth}
A matroid $\matM$ is said to be \emph{round} if every cocircuit (a minimal set that meets every basis of $\matM$) spans $\matM$.
\end{dfn}


\begin{prp}\label{prop:combin_smooth}
A matroid $\M$ on $E$ is round if and only if $\rank(\M\backslash F)=\rank\M$ for all proper flats $F$.
Equivalently, there is no partition $E=E_1\sqcup E_2$ for which neither $E_1$ nor $E_2$ span.
In particular, round matroids are connected.
\end{prp}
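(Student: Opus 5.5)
The plan is to go through complements of cocircuits. The basic fact I will use is that cocircuits of $\M$ are exactly the complements of hyperplanes, i.e.\ of flats of rank $\rank\M-1$. For any $F\subseteq E$, deletion gives $\rank(\M\backslash F)=\rank_\M(E\setminus F)$, so "$\rank(\M\backslash F)=\rank\M$" just says that $E\setminus F$ spans $\M$.

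\emph{First equivalence.} If the rank condition holds for all proper flats, apply it to hyperplanes $H$: each cocircuit $E\setminus H$ then spans, so $\M$ is round. Conversely, let $F$ be a proper flat. Choose a hyperplane $H\supseteq F$, which exists since $F\neq E$. Then $E\setminus F\supseteq E\setminus H$, and $E\setminus H$ is a cocircuit. Roundness says this cocircuit spans, so $E\setminus F$ spans as well.

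\emph{Partition formulation.} Suppose $E=E_1\sqcup E_2$ with neither part spanning. Then $\operatorname{cl}(E_1)$ is a proper flat, and its complement is contained in $E_2$, hence does not span. This violates the flat condition. Conversely, if a proper flat $F$ has $E\setminus F$ not spanning, then $E_1=F$ and $E_2=E\setminus F$ is a partition in which neither part spans; $F$ fails to span because it is a proper flat. So the two conditions are equivalent.

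\emph{Connectedness.} Suppose $\M$ is not connected, so $\M=\M|E_1\oplus\M|E_2$ with both $E_i$ nonempty. Then $\rank E_1+\rank E_2=\rank\M$. If both $E_i$ have positive rank, neither spans, which contradicts the partition formulation. If, say, $\rank E_1=0$, then $E_1$ consists of loops. Here I take the convention that the empty set is a proper flat when $\M$ is loopless, and I deal separately with loops or the degenerate rank-$0$ case, under whatever convention the paper adopts. A loop $e$ lies in every flat, so $F=\operatorname{cl}(\emptyset)$ is a proper flat containing $e$, and $E\setminus F$ spans. A coloop would violate the condition via the hyperplane $E\setminus\{e\}$. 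Checking these degenerate edge cases against the intended conventions is the only real obstacle; the main equivalences are routine.
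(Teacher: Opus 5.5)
Your two equivalences are correct and follow the paper's argument: both hinge on cocircuits being the complements of hyperplanes. Where the paper writes $E\setminus F$ as a union of cocircuits (every flat being an intersection of hyperplanes), you use a single hyperplane $H\supseteq F$, which is enough. Your $\operatorname{cl}(E_1)$ argument also spells out the partition formulation that the paper calls obvious.

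The last part has a gap that no choice of convention can close. Your case where both $E_1$ and $E_2$ have positive rank is fine. In the loop case, however, nothing you write yields a contradiction: that $E\setminus\operatorname{cl}(\emptyset)$ spans is \emph{consistent} with roundness. In fact the claim is false there. Take $U_{1,1}\oplus U_{0,1}$ on $\{1,2\}$ with $2$ a loop; it is realized by $W=\KK\cdot x_1\subseteq\KK^2$, so even $n>r>0$ holds. Its only cocircuit $\{1\}$ spans, its only proper flat $\{2\}$ has spanning complement, and no partition has two non-spanning parts. Yet it is disconnected.

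More generally, adjoining a loop to any nonempty round matroid keeps it round, since loops lie in every flat and in no basis. So the correct statement is that \emph{loopless} round matroids are connected. There every component has positive rank, and your first case is the entire proof. Your coloop remark is also off: in rank one the cocircuit $\{e\}$ does span, and for loopless matroids coloops are already covered by the positive-rank case. Rather than deferring to ``whatever convention the paper adopts,'' you should state the loopless hypothesis explicitly. The paper's ``obvious'' tacitly relies on it as well.
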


\begin{proof}
The  maximal proper flats are precisely the complements of the cocircuits, since no basis can be contained in a proper flat (see \cite[2.1.6.(iii)]{Oxl11}).  
Any flat $F$ is an intersection of maximal proper flats;
see \cite[Prop.~1.7.8]{Oxl11}.
The complement $E\backslash F$ of $F$ is thus a union of cocircuits.

If $\M$ is round and $F$ proper, it follows that $E\backslash F$ spans and $\rank_{\M}(E\backslash F)=\rank_{\M} E$.
Conversely, let $C$ be a cocircuit: then  $C$ is the complement of a maximal proper flat $H$, so the rank condition implies that $C$ spans in $\M$.

This proves the equivalence and the remaining claims are obvious.
\end{proof}


\begin{rmk}\label{rem:combin_smooth}
A non-trivial partition as in Proposition~\ref{prop:combin_smooth} is \enquote{nontrivial on loops} in the terminology of Bloch (see \cite[\S4]{Blo20}).
Based on this partition interpretation of roundness, Kung~\cite{Kun86} calls such matroids non-split.
Borissova~\cite{Bor16} characterized regular round matroids as just those of complete graphs $K_n$ for $n\ge2$, together with the dual matroid of the complete bipartite graph $K_{3,3}$.
\end{rmk}


\begin{exa}\label{ex:uniform}
Since a flat of rank $r-1$ has at least $r-1$ elements, if $\M$ is round, we see it is necessary to have $n\geq 2r-1$. 
The proper flats of the uniform matroid $U_{r,n}$ are subsets of $[n]$ of size at most $r-1$, so for uniform matroids, $n\geq 2r-1$ is also a sufficient condition to be round.
\end{exa}

\subsection{Tori and arrangements}

Each coordinate function $y_i$ on $V$ or $x_i$ on $V^\star$ defines a coordinate hyperplane $\Var(y_i)$ or $\Var(x_i)$.
The intersection of their respective complements are the \emph{coordinate tori}
\[
\hat\TT_E\coloneqq V\setminus\Var(y_1\cdots y_n)
\quad\text{ and }\quad
\hat\TT_{E^\star}\coloneqq V^\star\setminus \Var(x_1\cdots x_n).
\]
We let $\TT_E$ and $\TT_{E^\star}$ denote their respective images in $\PP V$ and $\PP V^\star$.
Note that $i\in E$ is not a loop on $\M_W$ if and only if $\ell_i\ne0$.
In this case, $\Var(\ell_i)=W\cap\Var(y_i)$ is a hyperplane in $W$.
Together they form the  \emph{hyperplane arrangements} 
\[
\A_W\coloneqq\set{\Var(\ell_i)\mid i\in E,\ell_i\ne0}
\quad\text{ and }\quad
\A_{W^\perp}\coloneqq\set{\Var(\ell_i^\perp)\mid i\in E,\ell_i^\perp\ne0}
\]
associated with the configurations $W$ and $W^\perp$, respectively.
Their respective complements in $W$ and $W^\perp$ are the intersections
\[
W^\circ\coloneqq W\cap\hat\TT_E\quad \text{ and }\quad(W^\perp)^\circ\coloneqq W^\perp\cap\hat\TT_{E^\star}. 
\]
Their projective counterparts are 
\[
(\PP W)^\circ \coloneqq \PP(W^\circ)=(\PP W)\cap \TT_E \quad\text{and}\quad
(\PP W^\perp)^\circ \coloneqq \PP((W^\perp)^\circ)=\PP (W^\perp)\cap \TT_{E^\star}. 
\]

The torus $\hat\TT_E$ acts on $V$ by linear transformations and thus on $V^\star$ by the \emph{contragredient action}.
For $t\in\hat\TT_E$, $\beta\in V^\star$ and $v\in V$ it is given by
\begin{equation}\label{eq:contra}
(t\beta)(v)=\beta(t^{-1}v)
\end{equation}
and restricts to an action on $\hat\TT_{E^\star}$.

\subsection{Bilinear forms}\label{subsec:bilinear}

The multiplication map $\hat\TT_E\times \hat\TT_E\to\hat\TT_E$ extends to
a $\KK$-bilinear $\hat\TT_E$-biequivariant map, the \emph{Hadamard product}
\begin{equation}\label{eqn-hadamard}
Q_E\colon V\times V\to V,\quad (u,v)=\bigg(\sum_{i \in E} u_i x_i , \sum_{i \in E} v_i x_i \bigg)\mapsto\sum_{i\in E}u_iv_ix_i.
\end{equation}
Considering the first argument as a parameter, dualizing yields a bilinear map
\[
Q_E^\star\colon V\times V^\star\to V^\star,\quad (u,\beta)\mapsto(v \mapsto \beta\circ Q_E(u,v)).
\]


\begin{lem}\label{lem:Q}
Restricted to $\hat\TT_E$ in the first argument, $Q_E^\star$ becomes the inverse contragredient action, that is, $Q_E^\star(u,\beta)=u^{-1}\beta$ for $u\in\hat\TT_E$ and $\beta\in V^\star$.
\end{lem}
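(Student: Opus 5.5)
This is a direct computation: I evaluate both sides on an arbitrary vector $v\in V$ and compare them coordinatewise in the bases $x$ and $y$. No earlier results are needed, only the definitions \eqref{eqn-hadamard} and \eqref{eq:contra}.

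First I unwind $Q_E^\star$. By definition, $Q_E^\star(u,\beta)(v)=\beta(Q_E(u,v))=\beta\bigl(\sum_{i\in E}u_iv_ix_i\bigr)=\sum_{i\in E}\beta_iu_iv_i$, since $\beta_i=\beta(x_i)$. In the dual basis this reads
\[
Q_E^\star(u,\beta)=\sum_{i\in E}u_i\beta_i\,y_i .
\]
In other words, $Q_E^\star(u,-)$ is the diagonal operator $\diag(u_1,\dots,u_n)$ on $V^\star$.

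Next I unwind the contragredient action \eqref{eq:contra} with $t=u^{-1}\in\hat\TT_E$. The torus acts on $V$ coordinatewise, by $t\cdot v=\sum_i t_iv_ix_i$, and $u^{-1}$ is defined because every $u_i\neq0$ on $\hat\TT_E$. Then $(u^{-1}\beta)(v)=\beta(uv)=\beta\bigl(\sum_i u_iv_ix_i\bigr)=\sum_i\beta_iu_iv_i$. This agrees with the previous expression for every $v\in V$, which proves $Q_E^\star(u,\beta)=u^{-1}\beta$.

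The only delicate point is the convention. One must check that the \enquote{inverse} contragredient action $u^{-1}\beta$ means the contragredient action of the element $u^{-1}$, so that \eqref{eq:contra} produces $\beta(u\,v)$ rather than $\beta(u^{-1}v)$. I would also note that the identification of $\hat\TT_E$ with the group acting on $V$ uses coordinatewise multiplication, which is exactly the Hadamard product restricted to the torus. Once these conventions are fixed, the two computations coincide.
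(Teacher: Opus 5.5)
Your proposal is correct and follows the paper's proof: both evaluate on an arbitrary $v\in V$ and use $Q_E^\star(u,\beta)(v)=\beta(Q_E(u,v))=\beta(uv)=(u^{-1}\beta)(v)$ via \eqref{eq:contra}. You spell out the coordinates and the sign convention for the contragredient action explicitly, which the paper leaves implicit.
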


\begin{proof}
For $u\in\hat\TT_E$, $\beta\in V^\star$ and $v\in V$, we have
\[
Q_E^\star(u,\beta)(v)=\beta\circ Q_E(u,v)=\beta(uv)=(u^{-1}\beta)(v).\qedhere
\]
\end{proof}


Composition of $Q_E$ and $Q^\star_E$ with $W\into V$ and $V^\star\onto W^\star$ leads to bilinear maps
\[
Q_W\colon W\times W\to V,\quad Q_W^\star\colon W\times V^\star\to W^\star.
\]
Considering the first argument as a parameter, $Q_W^\star$ becomes the dual of $Q_W$, that is, 
\[
Q_W^\star(w,\beta)=\beta\circ Q_W(w,-).
\]
We use square brackets $\mat{-}$ to denote matrices of (bi)linear maps with respect to the coordinate basis of $V$, a chosen basis of $W$ and their respective dual bases.
Then $[\ell]=A^\trp$, $[Q_E]=\diag(x_1,\dots,x_n)$, $\mat{Q_E^\star(-,\beta)}=\diag([\beta])$ and
\begin{equation}\label{eq:matrices}
\mat{Q_W}= A\mat{Q_E} A^\trp,\quad
\beta(\mat{Q_W})=
\mat{\beta\circ Q_W}=
\mat{Q_W^\star(-,\beta)}=
A\mat{Q_E^\star(-,\beta)} A^\trp.
\end{equation}

\subsection{Configuration polynomials}

Consider a configuration $W\subseteq V=\KK^E$ of rank $r$.
Its \emph{configuration polynomial} is the homogeneous polynomial of degree $r$ defined (up to a nonzero square in $\KK$ by using a basis of $W$ for the determinant) as
\[
\psi_W\coloneqq\det(Q_W)\in\Sym(V)=\KK[x_1,\dots,x_n].
\]
To avoid triviality, we only consider configuration polynomials for which
$n>r>0$.


\begin{prp}[{\cite[Cor.\ 1.4]{BEK06}}]\label{prop:expand_psi}
The configuration polynomial of a configuration $W\subseteq V$ is squarefree and can be written as 
\[
\psi_W = \sum_{B\in \B_{\M_W}}\det(\pi_B)^2\prod_{i\in B}x_i.
\]
\end{prp}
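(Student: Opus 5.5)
The plan is to expand the determinant $\det(A\,\diag(x_1,\dots,x_n)\,A^\trp)$ using the Cauchy--Binet formula, which is the natural tool for a product of three matrices of sizes $r\times n$, $n\times n$, $n\times r$. By \eqref{eq:matrices}, $\psi_W=\det(AXA^\trp)$ with $X=\diag(x_1,\dots,x_n)$. Write $AX$ as the $r\times n$ matrix whose $j$-th column is $x_j$ times the $j$-th column of $A$. Cauchy--Binet gives
\[
\det\bigl((AX)A^\trp\bigr)=\sum_{\substack{B\subseteq E\\ \abs{B}=r}}\det\bigl((AX)_B\bigr)\det\bigl(A_B\bigr)
=\sum_{\abs{B}=r}\det(A_B)^2\prod_{i\in B}x_i .
\]
Here $A_B$ denotes the $r\times r$ submatrix of columns indexed by $B$, and $\det((AX)_B)=\det(A_B)\prod_{i\in B}x_i$ by multilinearity in columns.

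Next I identify the surviving terms and the coefficients. The minor $\det(A_B)$ is nonzero exactly when the columns indexed by $B$ are independent. Equivalently, the composite $W\into V\overset{\pi_B}{\onto}\KK^B$, whose matrix in the chosen basis of $W$ and the coordinate basis of $\KK^B$ is $A_B^\trp$, is an isomorphism. This is precisely the condition $B\in\B_{\M_W}$, so the sum runs over bases. Moreover $\det(\pi_B)$, meaning the determinant of $\pi_B|_W$ computed in these bases, equals $\det(A_B)$. Changing the basis of $W$ by a matrix $g$ multiplies every $\det(A_B)$ by $\det g$ and $\psi_W$ by $(\det g)^2$. The identity is therefore consistent with $\psi_W$ being defined only up to a nonzero square.

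For squarefreeness, note that each monomial $\prod_{i\in B}x_i$ is squarefree, and distinct bases give distinct monomials, so there is no cancellation. At least one basis exists since $r>0$ and $W\neq0$, and the corresponding coefficients are nonzero. Hence $\psi_W$ is a nonzero polynomial that is multilinear, meaning it has degree at most one in each variable. A multilinear polynomial cannot be divisible by $f^2$ for any nonconstant $f$: such an $f$ involves some variable $x_i$, and then $f^2$, and hence $\psi_W$, would have degree at least $2$ in $x_i$. So $\psi_W$ is squarefree.

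The only step needing care is matching $\det(\pi_B)$ with the minor $\det(A_B)$, up to the basis ambiguity, and this is bookkeeping rather than a genuine obstacle.
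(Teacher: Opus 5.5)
Your proof is correct: Cauchy--Binet applied to $(A\,\diag(x_1,\dots,x_n))\,A^\trp$, the identification of $\det(\pi_B|_W)$ with the minor $\det(A_B)$ (with the correct basis-change bookkeeping), and the multilinearity argument for squarefreeness are all sound, and together they cover both readings of \enquote{squarefree}. The paper gives no proof of its own and simply cites \cite[Cor.~1.4]{BEK06}; your determinant expansion is the standard argument behind that result.
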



\begin{rmk}

The first and second Symanzik polynomials of a graph are configuration polynomials (see \cite[\S2]{BEK06} and \cite[Def.\ 3.6]{Pat10}), cf.~Example~\ref{ex:delA3}.
Moreover, the following conditions are equivalent: a)  $W$ is contained in the coordinate hyperplane $\Var(x_i=0)$; b)  $\psi_W$ does not involve $x_i$ explicitly; c) $i$ is a loop of $\matM_W$; d) $\ell_i=0$. 
\end{rmk}


The configuration polynomial defines an affine/projective \emph{configuration hypersurface}
\[
\hat X_W\coloneqq\Var(\psi_W)=\set{\beta\in V^\star\mid\rank(\beta\circ Q_W)<r}\subseteq V^\star,\quad
X_W\coloneqq\PP\hat X_W\subseteq\PP V^\star
\]
with torus part $X_W^\circ\coloneqq X_W\cap\TT_{E^\star}$. 

\smallskip

Recall that a \emph{very affine variety} is a closed irreducible subvariety of an algebraic torus.


\begin{prp}[{\cite[Prop.~3.8]{DSW21}}]\label{prop:X_very_affine}
If $W\subseteq V$ is a connected configuration, 
then $X_W^\circ\subseteq \TT_{E^\star}$ is a very affine variety with closure $\overline{X_W^\circ}=X_W$.
\end{prp}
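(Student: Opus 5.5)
The claim has two parts: $X_W^\circ$ is irreducible (it is closed in $\TT_{E^\star}$ by definition), and it is dense in $X_W$. Since $\psi_W$ is a polynomial, both follow once we show three things:
\begin{enumerate}
\item $\psi_W$ is irreducible in $\KK[x_1,\dots,x_n]$, or at least its only factors are powers of one irreducible polynomial (it is squarefree by Proposition~\ref{prop:expand_psi});
\item the irreducible factor is not a monomial;
\item hence no coordinate hyperplane $\Var(x_i)$ is a component of $X_W$.
\end{enumerate}
Given these, $X_W$ is an irreducible hypersurface not contained in $\Var(x_1\cdots x_n)$. Its intersection with the torus is then a nonempty open subset of an irreducible variety, so it is irreducible and dense.

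The central step is irreducibility of $\psi_W$ when $\M_W$ is connected, and I would prove it by induction via deletion--contraction. From Proposition~\ref{prop:expand_psi}, $\psi_W$ is multilinear in each variable $x_e$. Splitting the sum over bases according to whether $e\in B$ gives
\[
\psi_W = x_e\,\psi_{W/e}' + \psi_{W\setminus e}',
\]
where the two summands are, up to nonzero scalars, configuration polynomials of the contraction and the deletion. The coefficients $\det(\pi_B)^2$ are compatible with this splitting after a choice of bases.

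Now suppose $\psi_W=fg$ with neither factor constant. Multilinearity forces each variable $x_e$ to appear in at most one of $f,g$. Let $E_f$ and $E_g$ be the sets of variables appearing in each; they are disjoint. No loops occur because $\M$ is connected and $n>r>0$, so every variable appears in $\psi_W$, and therefore $E=E_f\sqcup E_g$ with both parts nonempty. The monomials of $\psi_W$ are exactly $\prod_{i\in B}x_i$ for bases $B$, since the coefficients $\det(\pi_B)^2$ are nonzero. Hence every basis decomposes as $B=B_f\sqcup B_g$, where $B_f$ ranges over the supports of monomials of $f$ and $B_g$ over those of $g$, and every such pair gives a basis.

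This is the step I expect to be the main obstacle: showing that such a product structure on the set of bases makes $\M$ the direct sum of its restrictions to $E_f$ and $E_g$. I would argue through the basis exchange axiom, or equivalently by comparing ranks: if all bases split as $B_f\sqcup B_g$ with the two parts chosen independently, then $\rank(E_f)+\rank(E_g)=\rank(E)$, so $E_f$ is a separator. That contradicts connectedness.

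One caveat is that over a non-closed field the factorization should be taken over the algebraic closure $\overline{\KK}$. The same argument works there, since bases of the matroid do not depend on the field, and it yields absolute irreducibility.

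Finally, $\psi_W$ is not a monomial. A connected matroid with $n>r>0$ has at least two bases, and $\psi_W$ has a monomial for each basis. So $X_W\not\subseteq\Var(x_1\cdots x_n)$, and the density and irreducibility conclusions above follow, which proves Proposition~\ref{prop:X_very_affine}.
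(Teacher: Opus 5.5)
Your proof is correct and complete. The paper does not prove this statement itself; it cites \cite[Prop.~3.8]{DSW21}. The substance of the statement is two facts: $\psi_W$ is irreducible (also over $\ol\KK$) when $\M_W$ is connected, and $\psi_W$ is not a monomial. You establish both directly. Multilinearity forces any factorization $\psi_W=fg$ into disjoint sets of variables. The monomial support of $fg$ is then a product, and the resulting product decomposition $\B_\M=\set{B_f\sqcup B_g}$ yields a separator. Compared with the paper's citation, your version gains self-containedness at no real cost.

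A few points of presentation:
\begin{itemize}
\item The deletion--contraction identity you write down plays no role, since your proof is direct rather than inductive. You can omit that paragraph.
\item The step you flag as the main obstacle is a one-liner. Every basis meets $E_f$ in a set of size $\deg f$: the supports of the monomials of $f$ all have the same size, either because $f$ is homogeneous or because all bases have size $r$ while $B_f$ and $B_g$ vary independently. Hence $\rank(E_f)=\max_{B\in\B_\M}\abs{B\cap E_f}=\deg f$, and similarly $\rank(E_g)=\deg g$. Then $\rank(E_f)+\rank(E_g)=r$ makes the nonempty proper subset $E_f$ a separator by \cite[Prop.~4.2.1]{Oxl11}.
\item Say explicitly that the passage from \enquote{$\psi_W$ is not a monomial} to $X_W\not\subseteq\Var(x_1\cdots x_n)$ uses irreducibility. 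An irreducible hypersurface inside the union of the coordinate hyperplanes equals some $\Var(x_i)$, which would force $\psi_W=c\,x_i$. That is excluded because $\M_W$ has at least two bases.
\end{itemize}
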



\begin{thm}[{\cite[Thm.~4.1]{Pat10}}, {\cite[Main Thm.]{DSW21}}]\label{thm:X_sing}
Let $W\subseteq V$ be a configuration with matroid $\M=\M_W$
Then the non-smooth loci of the configuration hypersurfaces are given by
\[
\hat X_W^\nsm=\set{\beta\in V^\star\mid\rank(\beta\circ Q_W)<\rank\M-1},\quad
X_W^\nsm=\PP X_W^\nsm.
\]
If $\M$ is connected and $\rank\M\ge 2$, then the codimensions in $V^\star$ and $\PP V^\star$ equal $3$.\qed
\end{thm}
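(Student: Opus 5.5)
We would prove the two parts of the statement separately. The description of $\hat X_W^\nsm$ comes from the Jacobian criterion applied to $\psi_W=\det(\beta\circ Q_W)$. The codimension statement comes from a dimension count on an incidence variety, using the matroid structure.

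\emph{Singular locus.} By Proposition~\ref{prop:expand_psi}, $\psi_W$ is squarefree, so $\ideal{\psi_W}$ is radical and $\beta\in\hat X_W$ is singular exactly when all partials $\partial\psi_W/\partial x_i$ vanish at $\beta$. Write $M(\beta)=A\,\diag([\beta])\,A^\trp$ as in \eqref{eq:matrices}. Jacobi's formula gives
\[
\frac{\partial\psi_W}{\partial x_i}(\beta)=\operatorname{tr}\bigl(\operatorname{adj}M(\beta)\cdot A e_ie_i^\trp A^\trp\bigr)=(A^\trp\operatorname{adj}M(\beta)\,A)_{ii}.
\]
If $\rank M(\beta)\le r-2$, then $\operatorname{adj}M(\beta)=0$, so every partial vanishes and $\beta$ is singular. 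If $\rank M(\beta)=r-1$, then since $M(\beta)$ is symmetric, $\operatorname{adj}M(\beta)=c\,kk^\trp$ with $c\ne0$, where $k$ spans the kernel. Let $w\in W$ be the vector with coordinates $k$ in the chosen basis. Then $\partial\psi_W/\partial x_i(\beta)=c\,\ell_i(w)^2$. Because $\ell\colon W\into V$ is injective and $w\ne0$, some $\ell_i(w)\ne0$, so $\beta$ is a smooth point. This argument works in every characteristic. Since $\psi_W$ is homogeneous, the projective statement follows by passing to the cone.

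\emph{Codimension.} We would describe $\hat X_W^\nsm$ as the set of $\beta$ for which some $2$-plane $U\subseteq W$ lies in $\ker(\beta\circ Q_W)$. By Lemma~\ref{lem:Q} and the Hadamard product, this means $Q_E^\star(u,\beta)\in W^\perp$ for all $u\in U$. So we form the incidence variety
\[
Z=\set{(U,\beta)\in\operatorname{Gr}(2,W)\times V^\star\mid Q_E^\star(U,\beta)\subseteq W^\perp},
\]
whose fibres over $\operatorname{Gr}(2,W)$ are linear spaces, and bound $\dim Z$. We would stratify $\operatorname{Gr}(2,W)$ by the combinatorial type of $U$: the flats $F(u)$ attained by $u\in U$, equivalently the rank-$2$ restriction of $\M$ induced by $U$. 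On each stratum we compute the rank of the linear conditions $\beta\mapsto(\beta\circ Q_W(u_1,-),\beta\circ Q_W(u_2,-))\in W^\star\times W^\star$. For generic $U$ a naive count gives codimension $4$. The codimension-$3$ contribution should come from special $U$, for instance those containing a vector vanishing on a large flat, where the conditions drop rank.

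The main obstacle is the lower bound $\codim\ge3$ over all strata. Here connectedness of $\M$ must enter, since for disconnected $\M$ the factorization of $\psi_W$ produces singular loci of codimension $2$. The upper bound should follow by exhibiting one explicit codimension-$3$ component.

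If the stratification becomes unwieldy, we would instead argue by induction on $\abs{E}$ via the deletion–contraction identity $\psi_W=x_e\psi_{W\backslash e}+\psi_{W/e}$. We would choose $e$ so that $\M\backslash e$ or $\M/e$ remains connected, which is always possible by a classical result for connected matroids. We would then compare the corank-$2$ loci on the open set $x_e\ne0$ with those on the hyperplane $x_e=0$, and settle the base case $\rank\M=2$ directly.
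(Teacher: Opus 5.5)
\textbf{Where the proposal stands.} Your first part is correct, complete and characteristic-free. Jacobi's formula gives $\partial\psi_W/\partial x_i(\beta)=[\ell_i]^\trp\operatorname{adj}M(\beta)[\ell_i]$. This vanishes identically when $\operatorname{corank}M(\beta)\ge2$, and equals $c\,\ell_i(w)^2$ with $c\ne0$ and $0\ne w\in\ker$ when the corank is $1$. The paper itself proves nothing here: it imports the description from \cite{Pat10} and the codimension from \cite{DSW21}.

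\textbf{The gap.} The codimension statement, which is the actual content of \cite{DSW21}, is not proved. You name the lower bound $\codim\ge3$ as the main obstacle and then leave it open. Your heuristic is also off by one, and this sends you in the wrong direction. For fixed $U=\langle u_1,u_2\rangle$ there are at most $2r-1$ independent conditions, not $2r$: the $u_2$-entry of $\beta\circ Q_W(u_1,-)$ equals the $u_1$-entry of $\beta\circ Q_W(u_2,-)$. So the expected count is $(2r-1)-2(r-2)=3$. This is exactly the codimension of the corank-$\ge2$ locus $\mathcal D$ in the space of symmetric $r\times r$ matrices.

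Consequently the upper bound needs no explicit component. $\hat X^\nsm_W$ is the preimage of $\mathcal D$ under the linear map $\beta\mapsto A D_{[\beta]}A^\trp$, and it contains $0$ since $r\ge2$. Intersecting the graph of this map with $V^\star\times\mathcal D$ in a smooth ambient space shows every component has codimension $\le3$.

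The real task is to exclude excess-dimensional components coming from special $U$; these genuinely occur for disconnected $\M$. Your stratification offers no mechanism for this. Nor does the fallback as stated: on $\set{x_e\ne0}$, the identity $\psi_W=x_e\psi_{W\backslash e}+\psi_{W/e}$ gives no handle on the corank-$2$ locus of $Q_W$.

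\textbf{How to close it.} Use the same scheme the paper uses in the proof of Theorem~\ref{thm-cone-Frat}: induct on $\abs E$ via Tutte's dichotomy.
\begin{itemize}
\item \emph{Deletion case.} Suppose $\M\backslash e$ is connected. Since $e$ is not a coloop and the matrices agree modulo $x_e$, we have $\hat X^\nsm_W\cap\Var(x_e)=\hat X^\nsm_{W\backslash e}$, which by induction has codimension $3$ in $\Var(x_e)$. Every component $C$ of $\hat X^\nsm_W$ is a cone through $0$, so it meets $\Var(x_e)$ in dimension $\ge\dim C-1$. Hence $\dim C\le n-3$.
\item \emph{Contraction case.} Suppose only $\M/e$ is connected. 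First, no component of codimension $\le2$ can lie in a coordinate hyperplane $\Var(x_i)$. There it would lie in $\hat X^\nsm_{W\backslash i}$, and this has codimension $\ge2$ in $\Var(x_i)$ simply because $\psi_{W\backslash i}$ is a nonzero reduced polynomial. No connectivity is needed for this, and $\M$ has no coloops.
\item \emph{The torus part.} On the torus, the identity $\psi_W(\beta)=c\prod_i\beta_i\cdot\psi_{W^\perp}(\beta^{-1})$ (see the proof of Lemma~\ref{lem-affine-duality}) identifies the singular loci of $\hat X_W$ and $\hat X_{W^\perp}$. Since $\M^\perp\backslash e=(\M/e)^\perp$ is connected, the deletion case applies to $W^\perp$. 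If $\rank\M^\perp=1$, then $\psi_{W^\perp}$ is linear and the torus part is empty.
\end{itemize}
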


\section{Bloch's incidence variety}\label{sec:lambda}

A first step towards constructing an explicit resolution of singularities of $X_W$ appeared in \cite[\S4]{BEK06} and later in work of Bloch~\cite{Blo09,Blo20}.
We recall the latter in \S~\ref{subsec:Lambda} and compare with the former in \S~\ref{subsec:X_dual}. 

\subsection{Bloch's definitions}\label{subsec:Lambda}

Bloch associated to a configuration $W\subseteq V$ an incidence subvariety $\Lambda_W$ of $\PP^{r-1}\times\PP V^\star$, where $\PP^{r-1}\cong\PP W$ by a choice of basis.


\begin{dfn}\label{def:Lambda}
For a configuration $W\subseteq V$, consider the affine variety
\[
\hat\Lambda_W\coloneqq\set{(w,\beta)\in W\times V^\star\mid Q_W^\star(w,\beta)=0}\xhookrightarrow{\ell\times\id}V\times V^\star.
\]
The biprojectivization of $\set{(w,\beta)\in\hat\Lambda_W\mid w,\beta\neq 0}$ defines the variety 
\[
\Lambda_W\subseteq\PP W\times X_W\subseteq\PP W\times \PP V^\star
\xhookrightarrow{\PP\ell\times\id}\PP V\times \PP V^\star.
\]

We will without notice move freely between $\hat\Lambda_W$ and $\Lambda_W$ on one side, 
 and their images under the inclusions induced by 
$\ell$ on the other.
\end{dfn}

Using the matrices \eqref{eq:matrices}, both $\hat\Lambda_W$ and $\Lambda_W$ are defined explicitly by the $r$ equations 
\begin{align}\label{eq:Lambda_mat2}
  \hat\Lambda_W&=\set{(w,\beta)\mid A D_{[\beta]} A^\trp [w]=0}\\
  &=\set{(w,\beta)\mid A D_{A^\trp [w]}[\beta]=0}\nonumber, 
\end{align}
where $D_{[v]}$ denotes the $n\times n$ diagonal matrix with entries $(D_v)_{i,i}=v_i$.


Bloch also gave an alternative more abstract description of $\Lambda_W$ as a scheme.
Consider the coherent sheaf $\sF$ on $\PP W$ defined by the presentation
\begin{equation}\label{eq:Lambda_sheaf}
\begin{tikzcd}[column sep=small]
0\ar{r} & \sO_{\PP W}\otimes_{\KK} W \ar{r}{\varphi} & \sO_{\PP W}(1)\otimes_{\KK} V \ar[r] & \sF\ar[r] & 0,
\end{tikzcd}
\end{equation}
where
\[
\varphi(1\otimes w)=Q_W(w,-)=\sum_{i\in E}w_i\ell_i\otimes x_i
\in W^\star\otimes V
\]
The injectivity of $\varphi$ is due to the fact that the sections $\ell_i\otimes x_i$ of $\sO_{\PP W}(1)\otimes_{\KK} V$ are locally $\sO_{\PP W}$-linearly independent.

\begin{prp}[{\cite[Prop.~3.5.(i)]{Blo20}}]\label{prop:Lambda_ProjSym}
For any configuration $W\subseteq V$, there is an isomorphism of schemes $\Lambda_W\cong\Proj\Sym(\sF)$.
\noproof
\end{prp}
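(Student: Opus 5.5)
Both sides embed in $\PP W\times\PP V^\star$, so the plan is to compare their defining equations directly. The symmetric algebra of a cokernel is the quotient of the symmetric algebra of the middle term by the ideal generated by the image. Applying $\Sym$ to the presentation \eqref{eq:Lambda_sheaf} gives
\[
\Sym(\sF)\cong \Sym\bigl(\sO_{\PP W}(1)\otimes_\KK V\bigr)\big/\bigl(\varphi(\sO_{\PP W}\otimes W)\bigr).
\]
Hence $\Proj\Sym(\sF)$ is a closed subscheme of
\[
\Proj\Sym\bigl(\sO_{\PP W}(1)\otimes V\bigr)\cong\Proj\Sym\bigl(\sO_{\PP W}\otimes V\bigr)=\PP W\times\PP V^\star.
\]
Here twisting by the line bundle $\sO_{\PP W}(1)$ does not change $\Proj$, and $\Proj\Sym(V)=\PP V^\star$, since $V=(V^\star)^\star$ is the space of linear forms on $V^\star$.

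Next, describe the ideal of $\Proj\Sym(\sF)$ in this product. The image of $1\otimes w$ under $\varphi$ is $\sum_i w_i\ell_i\otimes x_i$, viewed as a section of $\sO(1)\otimes V$, that is, of bidegree $(1,1)$. For a basis $w^1,\dots,w^r$ of $W$, the $r$ generators become the bihomogeneous forms
\[
\sum_{i\in E} w^k_i\,\ell_i\, x_i,\qquad k=1,\dots,r,
\]
in the coordinates $\ell$ of $\PP W$ and $x$ of $\PP V^\star$. Evaluated at $(w,\beta)$, the vector of these forms is $A D_{[\beta]}A^\trp[w]$, which is exactly the system \eqref{eq:Lambda_mat2}. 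So, scheme-theoretically, $\Proj\Sym(\sF)$ is the biprojective subscheme cut out by the $r$ bilinear equations of $\hat\Lambda_W$. That subscheme is $\Lambda_W$, provided $\Lambda_W$ is read as the subscheme defined by these equations, i.e.\ the biprojectivization of $\hat\Lambda_W$ with the loci $w=0$ and $\beta=0$ removed, as in Definition~\ref{def:Lambda}. The identification can be checked on the standard affine charts $\ell_j\neq0$ of $\PP W$, where both sides are $\Proj$ of $\KK[\text{chart}][x]$ modulo the same ideal.

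The main point needing care is the twist and its compatibility with the grading: the identification $\Proj\Sym(\mathcal L\otimes V)\cong\Proj\Sym(V)\times$ base is canonical, but it rescales generators by local trivializations of $\mathcal L$. We must check that the relations $\varphi(1\otimes w)$ transform into the stated bihomogeneous forms rather than into forms of some other bidegree. This follows from the standard fact that $\Proj_X\Sym(\mathcal L\otimes\mathcal E)\cong\Proj_X\Sym(\mathcal E)$ compatibly with closed subschemes given by homogeneous ideals; on each chart a generator of $\sO(1)$ is a unit and the relations differ by that unit.

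A second caveat concerns whether $\Lambda_W$ is meant reducedly or scheme-theoretically. Here we take it to be the scheme defined by \eqref{eq:Lambda_mat2}; reducedness is not needed for this isomorphism.
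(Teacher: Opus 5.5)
Your argument is correct and is the standard verification. Right-exactness of $\Sym$ places $\Proj\Sym(\sF)$ inside $\Proj\Sym(\sO_{\PP W}(1)\otimes V)\cong\PP W\times\PP V^\star$, and after untwisting $\sO_{\PP W}(1)$ on the charts $\ell_j\neq0$ the relations $\varphi(1\otimes w^k)$ become exactly the bilinear equations \eqref{eq:Lambda_mat2}. The paper gives no proof of its own here and simply cites Bloch, but this is the same identification it relies on later: it applies $\Proj\Sym$ to the local trivialization of $\sF$ in the proof of Proposition~\ref{prop:Lambda_strata}, and it treats $\Lambda_W$ as the scheme cut out by $I_W$ in \S\ref{subsec-affinecone}.
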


Bloch deduces various properties of $\Lambda_W$ using the two projections
\begin{equation}\label{eq:projections}
\begin{tikzcd}[row sep=small]
\PP V & \PP V\times\PP V^\star\ar{l}[swap]{p_1}\ar{r}{p_2} & \PP V^\star\\
\PP W\ar[u,phantom,sloped,"\subseteq"] & \PP W\times X_W\ar[u,phantom,sloped,"\subseteq"]\ar{l}[swap]{p_1}\ar{l}[swap]{p_1}\ar{r}{p_2} & X_W\ar[u,phantom,sloped,"\subseteq"]
\end{tikzcd}
\end{equation}

\subsection{Stratification by flats and irreducibility}\label{subsec:Lambda_flats}

The first projection in \eqref{eq:projections} induces a stratification of $\Lambda_W$ by flats.
For any flat $F$ of $\M_W$, consider the (affine) strata in $\PP W$ and $\Lambda_W$ indexed by $F$ (cf.~\eqref{eqn-W/F}):
\[
U_F\coloneqq\PP(W/F)^\circ,\qquad \Lambda_W\vert_{U_F}\coloneqq\Lambda_W\cap p_1^{-1}(U_F).
\]
Note that $0\ne w\in W$ has projective image in $U_F$ exactly if $F=F(w)$ (see \eqref{eq:F(v)}).


\begin{lem}\label{lem:nonempty_strata}
Let $W\subseteq V$ be a configuration.
For any flat $F$ of $\M_W$ and $\abs\KK\gg0$, $U_F\ne\emptyset$ exactly if $F$ is proper.
In this case, $\dim U_F=\rank\M-\rank(F)-1$.
\end{lem}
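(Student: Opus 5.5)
We identify $U_F$ with the complement of a hyperplane arrangement in a projective space. By definition, $W/F=W\cap\KK^{E\setminus F}$ is the subspace of vectors $w\in W$ with $w_i=0$ for $i\in F$. Moreover, $U_F=\PP(W/F)^\circ$ consists of the classes of nonzero $w\in W/F$ with $w_i\neq0$ for all $i\in E\setminus F$. So $U_F$ is the complement, in $\PP(W/F)$, of the arrangement cut out by the coordinate functions $y_i|_{W/F}$ for $i\in E\setminus F$.

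The first step is the dimension count $\dim(W/F)=\rank\M-\rank(F)$. The subspace $W/F$ is the kernel of the projection $\pi_F|_W\colon W\to\KK^F$. The image of $\pi_F|_W$ is the configuration $W\setminus(E\setminus F)$, which realizes the restriction of $\M$ to $F$ and so has dimension $\rank(F)$. Hence $\dim W/F=r-\rank(F)$. Once $U_F$ is shown to be nonempty, it is a nonempty open subset of $\PP(W/F)$, which gives $\dim U_F=r-\rank(F)-1$.

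For the characterization, suppose first that $F=E$. Then $W/F=0$, so $\PP(W/F)=\emptyset$ and $U_F=\emptyset$. Now suppose $F$ is a proper flat. Then $\rank(F)<r$, so $W/F\neq0$. For each $i\in E\setminus F$, the restriction $y_i|_{W/F}$ is a nonzero linear form. Indeed, otherwise every $w\in W$ vanishing on $F$ would also vanish at $i$. That would put $i$ in the closure of $F$, and since $F$ is a flat, $i\in F$, a contradiction. (Concretely: because $F=F(w_0)$ for some $w_0\in W$, the linear conditions $w_j=0$ for $j\in F$ together with $w_i=0$ cut out a strictly smaller subspace. Equivalently, $\rank(F\cup\{i\})>\rank(F)$ implies $\dim (W/(F\cup\{i\}))<\dim(W/F)$.)

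So $U_F$ is the complement in the nonzero vector space $W/F$ of finitely many proper hyperplanes, taken modulo scalars. A vector space over a field with $\abs\KK$ larger than the number of hyperplanes is not a union of that many proper subspaces. This is the standard argument: choose a line meeting each hyperplane in at most one point, and note the line has more than $\abs{E}$ points. Hence $U_F\neq\emptyset$ once $\abs\KK>\abs E$.

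The only step that needs care is showing that the linear forms $y_i|_{W/F}$ are nonzero for $i\notin F$. This is exactly where we use that $F$ is a flat, via the rank identity $\dim W/G=r-\rank(G)$ applied to $G=F$ and $G=F\cup\{i\}$. The rest is routine.
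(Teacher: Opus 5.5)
Your proof is correct and follows the same route as the paper. You note $U_E=\emptyset$, and for a proper flat $F$ you show the forms $\ell_i\vert_{W/F}$, $i\in E\backslash F$, are nonzero (the paper phrases this as $\M/F$ being loopless), so their common non-vanishing locus in $\PP(W/F)$ is nonempty for $\abs\KK\gg0$, with the dimension read off from $\dim W/F=\rank\M-\rank(F)$. The existence of your line ``meeting each hyperplane in at most one point'' still needs a line of justification. For instance, the product of these forms is a nonzero polynomial of degree at most $\abs E$, so it cannot vanish at every point of $W/F$ when $\abs\KK>\abs E$. The paper leaves this step implicit as well.
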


\begin{proof}
Note that $U_E=\emptyset$.
If $F$ is proper, then $\M_W/F$ contains no loops (see \cite[\S3.1, Ex.\ 8.(a)]{Oxl11}).
This means that $\ell_i\ne0$ for all $i\in E\backslash F$ and hence $U_F\ne\emptyset$ for $\abs\KK\gg0$. 
The dimension statement follows from the definition of $U_F$.
\end{proof}


As a direct consequence of the description of $\Lambda_W$ in \eqref{eq:Lambda_mat2} we record the following lower bound.

\begin{lem}\label{lem:Lambda_bound}
For any flat $F$ of $\M_W$, the product $U_F \times V(x_i\mid i \in E \backslash F)$ is contained in $\Lambda_W$.
In particular, $\Lambda_W\vert_{U_F}$ is nonempty if $U_F$ is nonempty and $F\ne\emptyset$.
\end{lem}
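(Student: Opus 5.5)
Here $V(x_i\mid i\in E\backslash F)$ is the linear subspace of $\PP V^\star$ where $\beta_i=0$ for all $i\notin F$, i.e.\ $\beta$ is supported on $F$. The plan is to check the defining equations \eqref{eq:Lambda_mat2} directly on such pairs.

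Take $w\in W$ with projective class in $U_F$; then $F(w)=F$, so $w_i=\ell_i(w)=0$ exactly for $i\in F$. Take $\beta\in V^\star\setminus\{0\}$ with $\beta_i=0$ for $i\in E\backslash F$. We must show $Q_W^\star(w,\beta)=0$, i.e.\ $\beta\circ Q_W(w,w')=0$ for every $w'\in W$. Unwinding \eqref{eqn-hadamard},
\[
\beta\bigl(Q_W(w,w')\bigr)=\sum_{i\in E}\beta_i\,w_i\,w'_i .
\]
In this sum every term vanishes: for $i\in F$ we have $w_i=0$, and for $i\notin F$ we have $\beta_i=0$. Equivalently, in the second form of \eqref{eq:Lambda_mat2} the vector $D_{A^\trp[w]}[\beta]$ is already zero. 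Since both $w\neq0$ and $\beta\neq0$, the pair $(w,\beta)$ lies in the nonzero part of $\hat\Lambda_W$. Hence its class lies in $\Lambda_W$, which proves the containment.

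For the second claim, suppose $U_F\neq\emptyset$ and $F\neq\emptyset$. Since $F$ is nonempty, the linear space $V(x_i\mid i\in E\backslash F)\cong\PP\KK^F$ is nonempty; for instance it contains the coordinate point $y_i$ for any $i\in F$. The product $U_F\times V(x_i\mid i\in E\backslash F)$ is therefore nonempty. It maps into $U_F$ under $p_1$, so it lies in $p_1^{-1}(U_F)\cap\Lambda_W=\Lambda_W\vert_{U_F}$.

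There is no real obstacle here; the only care needed is to match the coordinate convention, namely that $\beta_i=x_i(\beta)$, so that $V(x_i)$ means $\beta_i=0$.
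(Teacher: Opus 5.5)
Your proof is correct and follows essentially the paper's argument. Both observe that $w_i=0$ for $i\in F$ and $\beta_i=0$ for $i\in E\backslash F$, so $D_{A^\trp[w]}[\beta]=0$ and the equations \eqref{eq:Lambda_mat2} hold; you additionally spell out the nonemptiness of $V(x_i\mid i\in E\backslash F)\cong\PP\KK^F$ for $F\ne\emptyset$, which the paper leaves implicit.
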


\begin{proof}
For any $w\in W$ with projective image in $U_F$, we have $(A^\trp [w])_j=0$ if and only if $j\in F$. 
For such a $w$ and $\beta\in V(x_i\mid i \in E \backslash F)$ then $D_{A^\trp [w]}[\beta]=0$.
The claimed containment thus follows from \eqref{eq:Lambda_mat2}.
\end{proof}


Bloch~\cite[\S4]{Blo20} shows that $\sF$ is a vector bundle on the strata defined by flats.


\begin{prp}\label{prop:Lambda_strata}
For any configuration $W\subseteq V$ and any flat $F$ of $\M=\M_W$, there is an isomorphism of schemes
\[
\Lambda_W\vert_{U_F}\cong U_F\times\PP^{n_F-1}, \qquad n_F\coloneqq\abs E-\rank(\M\backslash F).
\]
If $\M$ is round, then $\Lambda_W$ is a bundle over $\PP W$ whose fibre is $\PP^{\abs{E}-\rank\M-1}$.
\end{prp}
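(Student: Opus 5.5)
Fix a point $w\in W$ whose projective image lies in $U_F$, so that $F(w)=F$. By \eqref{eq:Lambda_mat2}, the fibre of $\Lambda_W$ over $[w]$ is the projectivization of the kernel of the linear map
\[
V^\star\to W^\star,\qquad \beta\mapsto Q_W^\star(w,\beta)=A\,D_{A^\trp[w]}[\beta].
\]
First I compute the rank of this map. The vector $A^\trp[w]$ has coordinates $w_i=\ell_i(w)$, and these vanish exactly for $i\in F$. The image of the map is therefore the span of the columns $w_i\cdot A_{\bullet,i}$ for $i\in E\setminus F$, and since $w_i\neq 0$ there this equals the span of the columns $A_{\bullet,i}$, $i\in E\setminus F$. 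Its dimension is $\rank_\M(E\setminus F)=\rank(\M\setminus F)$. The kernel then has dimension $n-\rank(\M\setminus F)=n_F$, and it is independent of the chosen point of $U_F$. Consequently the fibre over every point of $U_F$ is $\PP^{n_F-1}$.

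Next I globalize this to an isomorphism of schemes, not merely a fibrewise bijection. On $U_F$ the kernel has the following explicit description. Write $\beta=\beta_F+\beta_{E\setminus F}$. The condition is $\sum_{i\notin F} w_i\beta_i A_{\bullet,i}=0$. Substituting $\gamma_i:=w_i\beta_i$ for $i\notin F$ gives an automorphism of $\KK^{E\setminus F}$ that depends regularly on $w\in U_F$, because the $w_i$ are units on $U_F$. In the new coordinates the condition becomes $\gamma\in K:=\ker(A_{\bullet,E\setminus F})$, a fixed subspace of dimension $\abs{E\setminus F}-\rank(\M\setminus F)$. Hence the kernel bundle over $U_F$ is identified with the trivial bundle with fibre $\KK^F\oplus K$, of dimension $n_F$. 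The coordinate change $\beta\mapsto(\beta_F,(w_i\beta_i)_{i\notin F})$ is a torus rescaling that depends algebraically on $w$ and is homogeneous in $w$. Projectivizing therefore gives $\Lambda_W\vert_{U_F}\cong U_F\times\PP(\KK^F\oplus K)\cong U_F\times\PP^{n_F-1}$.

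The main obstacle is to make this identification an isomorphism of schemes. This means checking that the defining equations of $\Lambda_W$ cut out the kernel bundle scheme-theoretically over $U_F$, with no embedded nilpotent structure. Since the conditions are linear in $\beta$, the subscheme is the projectivization of the kernel of a map of trivial bundles of constant rank, which is a subbundle; the change of coordinates above makes this explicit. There is a further subtlety: $\Lambda_W$ was defined using $w\neq 0$, and over $U_F$ the inhomogeneity in $w$ must be handled. I would do this by working on affine charts $w_j=1$ for a fixed $j\notin F$, or alternatively by invoking Proposition~\ref{prop:Lambda_ProjSym} and observing that $\sF\vert_{U_F}$ is locally free of rank $n_F$, since $\varphi$ has constant rank there.

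Finally, the round case. If $\M$ is round, Proposition~\ref{prop:combin_smooth} gives $\rank(\M\setminus F)=\rank\M$ for every proper flat $F$, so $n_F=\abs E-\rank\M$ is the same on all strata. Thus $\varphi$ has constant rank $r$ at every point of $\PP W$, which is covered by the $U_F$ with $F$ proper (Lemma~\ref{lem:nonempty_strata}). Then $\sF$ is locally free of rank $\abs E-\rank\M$, and by Proposition~\ref{prop:Lambda_ProjSym} $\Lambda_W=\Proj\Sym\sF$ is a projective bundle over $\PP W$ with fibre $\PP^{\abs E-\rank\M-1}$.
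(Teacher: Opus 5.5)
Your proposal is correct and follows essentially the paper's argument: on $U_F$ you fix $j\in E\setminus F$, use the units $\ell_i/\ell_j$ for $i\notin F$ to rescale $\beta_{E\setminus F}$, obtain the constant kernel $\KK^F\oplus\ker(A_{E\setminus F})$ of dimension $n_F$, and in the round case deduce that $\sF$ is locally free and apply Proposition~\ref{prop:Lambda_ProjSym}. The only difference is cosmetic: you work directly with the defining linear equations in $\beta$ (the image of $\varphi$), whereas the paper proves $\sF\vert_{U_F}$ free by splitting off that image with a basis of $\M\backslash F$, which is the same computation in dual form.
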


\begin{proof}
Choose a basis $B=\set{i_1,\dots,i_m}$ of $\M\backslash F$ and vectors $w^1,\dots,w^m\in W$ that project under $\pi_{E\backslash F}$ to a basis of $W\backslash F$ such that $w^j_{i_{k}}=\delta_{j,k}$.
Note that $U_F$ lies in the affine chart of $\PP(W/F)$ defined by $\ell_k=1$, where $k\in E\backslash F$.
This identifies
\[
\sO_{U_F}(1)\cong\sO_{U_F},\quad\ell_i\mapsto\frac{\ell_i}{\ell_k}.
\]
Since $\frac{\ell_i}{\ell_k}\vert_{U_F}=0$ for $i\in F$, we find that
\[
\varphi(1\otimes w)\vert_{U_F}=\sum_{i\in E\backslash F}w_i\frac{\ell_i}{\ell_k}\vert_{U_F}\otimes x_i.
\]
In particular, $\varphi(1\otimes (W\cap\KK^F))\vert_{U_F}=0$ where $\KK^F=\ker\pi_{E\backslash F}$.
Restricted to $U_F$, the image of $\varphi$ is thus generated of $\sO_{U_F}$ by 
\[
\varphi(1\otimes w^j)\vert_{U_F}\equiv\frac{\ell_{i_j}}{\ell_k}\otimes x_{i_j}\mod\ideal{1\otimes x_i\mid i\in E\backslash B},\quad j=1,\dots,m.
\]
Since $\frac{\ell_i}{\ell_k}\in\KK[U_F]^\times$ is a unit for each $i\in E\backslash F$, this shows that $\sF\vert_{U_F}\cong\sO_{U_F}^{n_F}$.
Applying $\Proj\Sym$ yields $\Lambda_W\vert_{U_F}\cong\PP_{U_F}^{n_F-1}=U_F\times\PP^{n_F-1}$ as claimed.

Suppose now that $\M$ is round and hence $n_F=\abs E-\rank\M$ for all flats $F$.
Then $B$ and $w^1,\dots,w^m\in W$ are bases of, respectively, $\M$ and $W$, and $\frac{\ell_i}{\ell_k}\in\sO_{\PP W,w}^\times$ is a unit for each $i\in B$ and $w\in U_F$.
As above, we find that $\sF_w\cong\sO_{\PP W,w}^{n_F}$ for all $w\in\PP W$.
It follows that $\Lambda_W$ is a projective bundle of rank $n_F$ as claimed.
\end{proof}


Bloch~\cite[Prop.~3.5.(ii)]{Blo20} shows that $\Lambda_W$ is an irreducible complete intersection.


\begin{cor}\label{cor:Lambda_facts}
  Let $W\subseteq V$ be a configuration with matroid $\M=\M_W$.
  Then $\Lambda_W$ is a complete intersection of codimension $\rank\M$ in $\PP W\times\PP V^\star$.
If $\M$ is connected and $\abs\KK\gg0$, then $\Lambda_W$ is an integral scheme, the closure of the open stratum $\Lambda_W\vert_{U_\emptyset}$.
Conversely, if $\Lambda_W$ is irreducible and $\M$ is loopless, then $\M$ must be connected.
\end{cor}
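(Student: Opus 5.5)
The plan is to use the stratification of Proposition~\ref{prop:Lambda_strata} and compare dimensions stratum by stratum. $\Lambda_W$ is cut out in $\PP W\times\PP V^\star$ by the $r=\rank\M$ bihomogeneous equations \eqref{eq:Lambda_mat2}. Hence every irreducible component has codimension at most $r$, i.e.\ dimension at least
\[
(r-1)+(n-1)-r=n-2 .
\]
Proposition~\ref{prop:Lambda_strata} and Lemma~\ref{lem:nonempty_strata} give, for each proper flat $F$,
\[
\dim \Lambda_W\vert_{U_F}=(r-\rank F-1)+(n-\rank(\M\backslash F)-1).
\]
For $F=\emptyset$ this is $n-2$, since $\M\backslash\emptyset=\M$ has rank $r$. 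So the first step is to show that every stratum with $F\neq\emptyset$ has dimension at most $n-2$, and strictly less when $\M$ is connected.

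For the complete intersection claim, note that $\rank F+\rank(E\backslash F)\ge r$ by submodularity, with $\rank(E\backslash F)=\rank(\M\backslash F)$. This gives $\dim\Lambda_W\vert_{U_F}\le n-2$ for every $F$, so $\dim\Lambda_W\le n-2$ and the codimension is exactly $r$. Since $\PP W\times\PP V^\star$ is Cohen--Macaulay, a locus cut out by $r$ equations in the expected codimension is a complete intersection, hence Cohen--Macaulay and without embedded components.

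For integrality when $\M$ is connected, equality $\rank F+\rank(E\backslash F)=r$ with $F$ nonempty and proper would be a separator of $\M$. Connectedness rules this out, so all strata with $F\ne\emptyset$ have dimension strictly less than $n-2$. Every irreducible component has dimension at least $n-2$, so none can lie in the union of these smaller strata. Therefore $\Lambda_W$ is the closure of the open stratum $\Lambda_W\vert_{U_\emptyset}\cong U_\emptyset\times\PP^{n-r-1}$, which is irreducible and smooth; here $\abs\KK\gg0$ is used to make $U_\emptyset$ nonempty. Reducedness then follows: a Cohen--Macaulay scheme has no embedded points, and it is generically reduced because the open dense stratum is smooth. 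I expect this scheme-theoretic step to be the main subtlety. One point to check is that the isomorphism in Proposition~\ref{prop:Lambda_strata} is an isomorphism of the scheme-theoretic restriction, which it is, being derived from $\Proj\Sym(\sF)$ via Proposition~\ref{prop:Lambda_ProjSym}, so generic reducedness holds.

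For the converse, suppose $\M$ is loopless and disconnected, and let $F$ be a nonempty proper separator. Then $F$ is a flat, being a union of components, and $U_F\neq\emptyset$ by Lemma~\ref{lem:nonempty_strata}. The formula above gives $\dim\Lambda_W\vert_{U_F}=n-2$. This stratum is disjoint from the open stratum and has the same dimension, so its closure cannot lie in $\overline{\Lambda_W\vert_{U_\emptyset}}$. Hence $\Lambda_W$ has at least two components. Looplessness is what ensures that the open stratum is nonempty, so it genuinely gives a second component.
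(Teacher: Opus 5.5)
Your proposal is correct and follows the paper's proof essentially step by step. It uses the same stratification by flats, the same dimension count with $\rank F+\rank(\M\backslash F)\ge\rank\M$ to get the complete intersection, the same equivalence between strictness of that inequality for nonempty proper flats and the absence of separators, and the same derivation of reducedness from Cohen--Macaulayness together with the smooth open stratum. One small correction for the converse: looplessness is also exactly what makes the separator $F$ a flat (the complement of a loop is a separator that is not a flat), and not only what makes the open stratum nonempty.
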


\begin{proof}
By Lemma~\ref{lem:nonempty_strata} and Proposition~\ref{prop:Lambda_strata} the stratum of $\Lambda_W$ indexed by any proper flat $F$ of $\M$ has dimension
\begin{align*}
  \dim(\Lambda_W\vert_{U_F})&=\dim(W/F)-1+\abs E-\rank(\M\backslash F)-1  \\
  & = \abs E+\rank(\M/F)-\rank(\M\backslash F)-2.
\end{align*}
Its codimension in $\PP W\times\PP V^\star$ therefore equals (see \cite[Prop.~3.1.6]{Oxl11})
\begin{align}
\codim_{\PP W\times\PP V^\star}(\Lambda_W\vert_{U_F})&=\rank\M-\rank(\M/F)+\rank(\M\backslash F)\nonumber\\
&=\rank(F)+\rank(\M\backslash F)\ge\rank\M.\label{eq:Lambda_facts}
\end{align}
In particular, $\codim_{\PP W\times\PP V^\star}\Lambda_W\ge\rank\M$. 
Since $\Lambda_W$ can be defined by $\rank\M$ equations, it follows
that it is a complete intersection of codimension $\rank\M$.
In particular, it is equidimensional and has no embedded components.

Again by Lemma~\ref{lem:nonempty_strata} and Proposition~\ref{prop:Lambda_strata}, the open stratum $\Lambda_W\vert_{U_\emptyset}$ indexed by the empty flat is nonempty, reduced and irreducible.
Its closure is thus a reduced irreducible component of $\Lambda_W$.
If $\Lambda_W$ is irreducible, it is therefore reduced.

Irreducibility of $\Lambda_W$ occurs exactly if the other strata do not contribute additional components, which is to say that the inequality \eqref{eq:Lambda_facts} be strict for all nonempty proper flats. 
This strictness is equivalent to no such flat being a separator of $\M$ (see \cite[Prop.~4.2.1]{Oxl11}).
Thus, $\Lambda_W$ is irreducible if and only if $\matM$ only allows the trivial separators $\emptyset$ and $\matM$, which is equivalent to $\matM$ being connected since separators are by definition unions of components.

For the converse, note that the complement of a loop is a separator but not a flat, whereas all separators of a loopless matroid are flats (see \cite[\S4.1, Ex.~2]{Oxl11}).
\end{proof}

\subsection{Normality and smoothness}\label{subsec:Lambda_smooth}

We now describe the Jacobian matrix of Bloch's incidence variety and find smooth points over the strata by flats.
These are then used to establish normality using Serre's criterion and to relate smoothness to roundness of the matroid.


\begin{lem}\label{lem:Lambda_Jacobian}
Let $W\subseteq V$ be a configuration with matroid $\M=\M_W$.
Then the (transposed) Jacobian matrix $J_W(w, \beta)$ at $(w,\beta)\in W\times V^\star$ obtained from the equations for $\hat{\Lambda}_W$ in \eqref{eq:Lambda_mat2} reads
\[
J_W(w, \beta)=
\left(\begin{array}{c|c}
A D_{[\beta]} A^\trp & A D_{A^\trp [w]} 
\end{array}\right).
\]
Its column space equals that of 
\[
\left(\begin{array}{c|c}
\sum_{i\in F} \beta_i[\ell_i][\ell_i]^\trp & A_{E\backslash F}
\end{array}\right)
=
\left(\begin{array}{c|c}
[\beta_F \circ Q_{W\vert_F}] & A_{E\backslash F}
\end{array}\right),
\]
where $F=F(w)$ and the matrix $A_{E\backslash F}$ is obtained from $A$ by removing the columns indexed by $F$. 
In particular, 
\[
\rank(\M\backslash F)\le\rank J_W(w,\beta)\le\rank({\set{i\in F\mid \beta_i\ne0}\cup E\backslash F}),
\]
and when $\beta_F\not\in\hat X_{W\vert_F}$  the matrix $J_W(w,\beta)$ has full rank: $\rank J_W(w,\beta)=\rank\M$.
\end{lem}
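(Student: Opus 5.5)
Differentiating the $r$ equations $A D_{[\beta]} A^\trp [w]=0$ gives the Jacobian directly. They are bilinear in $([w],[\beta])$, so the derivative in $[w]$ is $A D_{[\beta]}A^\trp$. By the second form $A D_{A^\trp[w]}[\beta]$, the derivative in $[\beta]$ is $A D_{A^\trp [w]}$. This yields the displayed block matrix $J_W(w,\beta)$.

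Next I would simplify the column space. Put $F=F(w)$, so that $(A^\trp[w])_i=\ell_i(w)$ vanishes exactly for $i\in F$. The columns of $A D_{A^\trp[w]}$ are then $\ell_i(w)\,[\ell_i]$. These are zero for $i\in F$ and nonzero multiples of the columns $[\ell_i]$ of $A$ for $i\in E\backslash F$. So the right block spans the same space as $A_{E\backslash F}$.

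For the left block, write $A D_{[\beta]}A^\trp=\sum_{i\in E}\beta_i[\ell_i][\ell_i]^\trp$. Each summand with $i\in E\backslash F$ has columns lying in the span of $[\ell_i]$, which is already contained in the column space of $A_{E\backslash F}$. Elementary column operations, namely subtracting combinations of the columns of $A_{E\backslash F}$, therefore remove these summands. This leaves $\sum_{i\in F}\beta_i[\ell_i][\ell_i]^\trp$, which by \eqref{eq:matrices} restricted to $F$ is the matrix $[\beta_F\circ Q_{W\vert_F}]$ of the bilinear form on $W$ obtained by composing with $\pi_F$.

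The rank bounds then follow. The lower bound holds because $A_{E\backslash F}$ has rank $\rank(\M\backslash F)$. For the upper bound, every column of the simplified matrix lies in the span of $[\ell_i]$ for $i\in E\backslash F$ together with those $i\in F$ having $\beta_i\ne0$; the rank of that set of columns of $A$ is its matroid rank.

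The main obstacle is the full rank claim. Here I would interpret $\beta_F\notin\hat X_{W\vert_F}$ as saying that the form $\beta_F\circ Q_{W|_F}$ is nondegenerate on $W\vert_F$, where $W\vert_F=\pi_F(W)$ and $\dim W\vert_F=\rank F$. The left block factors as $A_F D_{\beta_F} A_F^\trp$. When that middle form is nondegenerate, its column space equals the column space of $A_F$.

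Adjoining $A_{E\backslash F}$ then gives the column space of all of $A$, which has dimension $r=\rank\M$. To make the factorization step precise, I would choose a basis of $W$ adapted to the quotient $W\to W\vert_F$. Then $A_F$ is a full-row-rank block on top of a zero block, and nondegeneracy of the middle square matrix shows that the image of $A_F D_{\beta_F}A_F^\trp$ equals the image of $A_F$.
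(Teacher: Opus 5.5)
Your proposal is correct and follows the paper's proof essentially step by step. You differentiate the bilinear equations, replace the right block by $A_{E\backslash F}$, clear the summands with $i\in E\backslash F$ by column operations, and read off the rank bounds. Your adapted-basis argument for the full-rank claim just makes explicit what the paper compresses into the assertion that the left block then has rank $\rank A_F$: its column space is then that of $A_F$, and adjoining $A_{E\backslash F}$ gives all of $\KK^r$.
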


\begin{proof}
The block matrix expression for $J_W(w,\beta)$ is immediate from \eqref{eq:Lambda_mat2}.
Note that 
\[
A D_{[\beta]} A^\trp=\sum_{i\in E}\beta_i[\ell_i][\ell_i]^\trp
\]
is a sum of square matrices of size $\rank\M$. 
The $j$th column of $\beta_i[\ell_i][\ell_i]^\trp$ is a scaling of $[\ell_i]$ by $\beta_i [\ell_i]_j$.  If $F=F(w)$, then $(A^\trp [w])_j=\ell_j(w)=0$ if and only if $j\in F$ (by \eqref{eq:F(v)}).  That is, the nonzero columns of
$AD_{A^\trp [w]}$ are indexed by $E \backslash F$ are nonzero and agree with the corresponding ones of $A$ up to $\KK^\times$-rescaling.
It follows that the column space of the second block of $J_W(w,\beta)$ agrees with that of $A_{E\backslash F}$.
Using column operations, the summands with indices in $E\backslash F$ in the first block can then be eliminated.
This gives the equality
\[
\sum_{i\in F} \beta_i[\ell_i][\ell_i]^\trp = [\beta_F \circ Q_{W\vert_F}],
\]
and the claimed inequalities follow.
For all $\beta_F\not\in\hat X_{W\vert_F}$, the latter matrix has (full) rank equal to $\rank\M\vert_F=\rank A_F$,
so $J_W(w,\beta)$ does too.
\end{proof}


\begin{lem}\label{lem:Lambda_smooth}
Let $W\subseteq V$ be a configuration with matroid $\M=\M_W$.
Then $\Lambda_W$ has smooth points over $U_F$, that is, $\Lambda_W^\sm\vert_{U_F}\neq\emptyset$, for any proper flat $F\ne\emptyset$ of $\M$ if $\abs{\KK}\gg0$.
\end{lem}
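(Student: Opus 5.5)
We will use the Jacobian criterion from Lemma~\ref{lem:Lambda_Jacobian}, together with the fact from Corollary~\ref{cor:Lambda_facts} that $\Lambda_W$ is a complete intersection of codimension $\rank\M$ cut out by the $r=\rank\M$ bihomogeneous equations \eqref{eq:Lambda_mat2}. At a point $(w,\beta)$ of $\Lambda_W$ where $J_W(w,\beta)$ has full rank $\rank\M$, the scheme $\Lambda_W$ is smooth of the expected dimension. It therefore suffices to produce, for each nonempty proper flat $F$, a point $(w,\beta)\in\Lambda_W$ with $[w]\in U_F$ and $\beta_F\notin\hat X_{W\vert_F}$. Here the full rank is measured in $W\times V^\star$, which is the correct ambient for the affine cone. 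Passing to $\PP W\times\PP V^\star$ loses nothing, since the Euler relations account for the two scaling directions, and the rank of the Jacobian of the homogeneous equations is unchanged.

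The construction goes as follows. Fix $w$ with projective image in $U_F$, which is possible by Lemma~\ref{lem:nonempty_strata} since $F$ is proper and $\abs\KK\gg0$. By Lemma~\ref{lem:Lambda_bound}, every $\beta$ with $\beta_i=0$ for $i\in E\backslash F$ gives $(w,\beta)\in\Lambda_W$. Among these we choose $\beta_F$ generic in $\KK^F$, so that $\psi_{W\vert_F}(\beta_F)\neq0$. This requires $\psi_{W\vert_F}$ to be a nonzero polynomial. By Proposition~\ref{prop:expand_psi} applied to the restriction configuration $W\vert_F$ (the image of $W$ under $\pi_F$, realizing $\M\vert_F$), $\psi_{W\vert_F}$ is a sum over bases of $\M\vert_F$ with nonzero coefficients $\det(\pi_B)^2$ on distinct monomials, so it is nonzero. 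A nonzero polynomial has a nonvanishing point once $\abs\KK\gg0$. We also need $\beta\neq0$ in order to define a projective point; since $F\neq\emptyset$ we may take $\beta_F\ne0$, and genericity gives this automatically when $\rank F>0$. If $\rank F=0$, then $F$ is the closure of the empty set; for $F\ne\emptyset$ this means $F$ consists of loops, and the condition on $\beta_F$ is vacuous.

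With this choice, Lemma~\ref{lem:Lambda_Jacobian} gives $\rank J_W(w,\beta)=\rank\M$, so $(w,\beta)$ is a smooth point of $\Lambda_W$ lying over $U_F$.

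The main obstacle is the smoothness conclusion: full Jacobian rank at a point of a scheme cut out by $r$ equations of codimension $r$ gives smoothness there. We will invoke the Jacobian criterion in the form that applies to the complete intersection $\Lambda_W$, which is why the codimension statement of Corollary~\ref{cor:Lambda_facts} is needed. A second point to handle carefully is the notation $\hat X_{W\vert_F}$: $W\vert_F$ must be read as the configuration in $\KK^F$ whose bilinear form is $\sum_{i\in F}\beta_i[\ell_i][\ell_i]^\trp$, matching the matrix appearing in Lemma~\ref{lem:Lambda_Jacobian}.
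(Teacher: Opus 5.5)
Your proposal is correct and follows the paper's proof: you choose $w$ over $U_F$ via Lemma~\ref{lem:nonempty_strata}, take $\beta$ supported on $F$ with $\beta_F\notin\hat X_{W\vert_F}$, and combine Lemmas~\ref{lem:Lambda_bound} and~\ref{lem:Lambda_Jacobian} to get a point of $\hat\Lambda_W$ with full-rank Jacobian, whose projective image is a smooth point of $\Lambda_W$ over $U_F$. Your extra remarks (nonvanishing of $\psi_{W\vert_F}$ via Proposition~\ref{prop:expand_psi}, the case where $F$ consists of loops, passage from the affine cone) fill in details the paper leaves implicit; the complete-intersection statement is not actually needed, since a Jacobian of rank $r$ for $r$ equations at a point already gives smoothness there.
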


\begin{proof}
Lemma~\ref{lem:nonempty_strata} yields a $w \in W$ with projective image in $U_F$.
Since $F\ne\emptyset$ and $\abs\KK\gg0$, there is an element $\beta'\not\in\hat X_{W\vert_F}$ in the complement of $\hat X_{W\vert_F}$.
Define $\beta\in\PP V^\star$ by $\beta_F\coloneqq\beta'$ and $\beta_{E\backslash F}\coloneqq0$.
Due to Lemmas~\ref{lem:Lambda_bound} and \ref{lem:Lambda_Jacobian}, then $(w, \beta)\in\hat\Lambda_W$ and $J_W(w,\beta)$ has full rank.
It follows that $(w,\beta)$ is a smooth point of $\hat\Lambda_W$.
Its projective image in $\PP W\times\PP V^\star$ is then a smooth point of $\Lambda_W$ over $U_F$.
\end{proof}


\begin{cor}\label{cor:normality}
Let $W\subseteq V$ be a configuration with connected matroid $\M=\M_W$. 
Then $\Lambda_W$ is an integral normal scheme.
\end{cor}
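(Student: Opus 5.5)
Integrality is already contained in Corollary~\ref{cor:Lambda_facts}, so only normality remains, and we obtain it from Serre's criterion. Since $\Lambda_W$ is a complete intersection in the smooth variety $\PP W\times\PP V^\star$ (Corollary~\ref{cor:Lambda_facts}), it is Cohen--Macaulay and in particular satisfies $(S_2)$. It therefore suffices to verify $(R_1)$, namely that the singular locus $\Lambda_W^\nsm$ has codimension at least $2$ in $\Lambda_W$. Equivalently, no irreducible component of the singular locus may have codimension $\le1$.

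To check this we use the stratification by flats from \S\ref{subsec:Lambda_flats}. The strata $\Lambda_W\vert_{U_F}$, for $F$ a proper flat, cover $\Lambda_W$. By Proposition~\ref{prop:Lambda_strata} each stratum is smooth and irreducible of the form $U_F\times\PP^{n_F-1}$. By \eqref{eq:Lambda_facts}, its codimension in $\Lambda_W$ is
\[
c_F\coloneqq\rank(F)+\rank(\M\backslash F)-\rank\M .
\]
Connectedness of $\M$ gives $c_F\ge1$ for every nonempty proper flat $F$, because equality $c_F=0$ would make $F$ a separator. The open stratum $F=\emptyset$ is smooth, since it is open in $\Lambda_W$ and is isomorphic to $U_\emptyset\times\PP^{n_\emptyset-1}$. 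Any codimension-one component of $\Lambda_W^\nsm$ therefore lies inside some stratum closure, and in fact equals the closure of a stratum $\Lambda_W\vert_{U_F}$ with $c_F=1$: an irreducible closed subset of codimension one contained in the union of the strata with $c_F\ge1$ must be dense in one of the strata with $c_F=1$. Lemma~\ref{lem:Lambda_smooth}, however, produces smooth points of $\Lambda_W$ over every $U_F$ with $F$ a nonempty proper flat (for $\abs\KK\gg0$). Since $\Lambda_W^\nsm$ is closed and each stratum is irreducible, $\Lambda_W^\nsm\cap\Lambda_W\vert_{U_F}$ is then a proper closed subset of the stratum. Hence it has codimension $\ge c_F+1\ge2$ in $\Lambda_W$.

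Combining these observations, $\Lambda_W^\nsm$ is a finite union of closed subsets of codimension $\ge2$, so $(R_1)$ holds. Serre's criterion then yields normality.

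The main subtlety is the field hypothesis $\abs\KK\gg0$ used in Lemma~\ref{lem:Lambda_smooth} and in Corollary~\ref{cor:Lambda_facts}. We would handle it by base change to the algebraic closure. Normality descends along the faithfully flat extension $\KK\to\ol\KK$, and since $\KK$ is perfect, the formation of the singular locus and of the strata commutes with this base change. We may therefore assume $\KK$ algebraically closed throughout.
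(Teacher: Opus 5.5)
Your proposal is correct and follows essentially the same route as the paper: integrality from Corollary~\ref{cor:Lambda_facts}, and Serre's criterion reducing normality to regularity in codimension one. Then the smooth open stratum $\Lambda_W\vert_{U_\emptyset}$, the codimension bound $c_F\ge1$ from connectedness, and Lemma~\ref{lem:Lambda_smooth} rule out a codimension-one non-smooth stratum closure, and faithfully flat base change enlarges the field. The only cosmetic differences are that you bound the codimension of $\Lambda_W^\nsm\cap\Lambda_W\vert_{U_F}$ directly instead of arguing by contradiction, and that you make $(S_2)$ explicit via Cohen--Macaulayness of the complete intersection.
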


\begin{proof}
By hypotheses on $\M$ and Corollary~\ref{cor:Lambda_facts}, $\Lambda_W$ is a reduced complete intersection integral scheme.
By using Serre's criteria for normality and reducedness (applied in affine charts), it is enough to check that $\Lambda_W$ is regular in codimension one.  
By way of contradiction, suppose that $\Lambda_W$ has an irreducible subvariety $Z$ of codimension one with generic point in $\Lambda_W^\nsm$.
By Proposition~\ref{prop:Lambda_strata}, $\Lambda_W\vert_{U_\emptyset}$ is smooth and hence $Z\subseteq\ol{\Lambda_W\vert_{U_F}}$ for some proper flat $F\ne\emptyset$, where $\Lambda_W\vert_{U_F}$ is irreducible.
The strict inequality \eqref{eq:Lambda_facts} in the proof of Corollary~\ref{cor:Lambda_facts} shows that $\Lambda_W\vert_{U_F}$ has codimension at least one.
It follows that $Z=\ol{\Lambda_W\vert_{U_F}}$ and then 
\[
\Lambda^\sm_W\supseteq\Lambda^\sm_W\vert_{U_F}\subseteq\Lambda_W\vert_{U_F}\subseteq\ol{\Lambda_W\vert_{U_F}}=Z\subseteq\Lambda_W^{\nsm}=\Lambda\setminus\Lambda_W^{\sm}
\]
implies $\Lambda^\sm_W\vert_{U_F}=\emptyset$, which contradicts Lemma~\ref{lem:Lambda_smooth} if $\abs{\KK}\gg0$.

Finally, by base change, we may assume that $\KK$ is infinite, since 
normality descends along faithfully flat ring maps (see  \cite[\href{https://stacks.math.columbia.edu/tag/033G}{Tag 033G}]{stacks}).
\end{proof}


Proposition~\ref{prop:Lambda_strata} shows in particular that $\Lambda_W$ is smooth if $\M_W$ is round.
We now establish a refined converse statement: the non-smooth strata arise exactly from flats that violate roundness.


\begin{thm}\label{thm:Lambda_smooth_round}
Let $W\subseteq V$ be a configuration with connected matroid $\M = \M_W$.
For any proper flat $F$ of $\M$ and $\abs\KK\gg0$, $\Lambda_W^{\nsm}\vert_{U_F}\neq\emptyset$ if and only if $\rank(\M \backslash F) < \rank\M$. In particular, $\Lambda_W$ is smooth if and only if $\M$ is round.
\end{thm}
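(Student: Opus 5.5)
The plan is to reduce smoothness of $\Lambda_W$ at a point to a rank condition on the Jacobian of Lemma~\ref{lem:Lambda_Jacobian}, and then use the two bounds recorded there. By Corollary~\ref{cor:Lambda_facts}, $\Lambda_W$ is a complete intersection of codimension $r=\rank\M$ in $\PP W\times\PP V^\star$, cut out by the $r$ bihomogeneous equations \eqref{eq:Lambda_mat2}.

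Let $(w,\beta)$ be a point of $\hat\Lambda_W$ with $w\neq0$ and $\beta\neq0$. Near such a point, $\hat\Lambda_W$ avoids the extra component $W\times\set{0}$. Locally it is a $\KK^\times\times\KK^\times$-bundle over $\Lambda_W$: work in an affine chart of $\PP W\times\PP V^\star$ and dehomogenize. Hence near $(w,\beta)$ it is a complete intersection of codimension $r$ in $W\times V^\star$. By the Jacobian criterion for complete intersections, the image of $(w,\beta)$ is a smooth point of $\Lambda_W$ if and only if $\rank J_W(w,\beta)=r$. I expect this reduction to be the main, though routine, obstacle. One has to check carefully that the Euler relations do not interfere, and that codimension is preserved when passing between the biprojective and affine settings.

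Now fix a proper flat $F$ and assume $\abs\KK\gg0$, so that $U_F\neq\emptyset$ by Lemma~\ref{lem:nonempty_strata}. Take $w$ with projective image in $U_F$, so $F(w)=F$.

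Suppose first that $\rank(\M\backslash F)=r$. The lower bound in Lemma~\ref{lem:Lambda_Jacobian} gives $\rank J_W(w,\beta)\ge r$ for every $\beta$ with $(w,\beta)\in\hat\Lambda_W$. Hence every point of $\Lambda_W\vert_{U_F}$ is smooth.

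Suppose instead that $\rank(\M\backslash F)<r$. I will construct $\beta\neq0$ with $\beta_F=0$ and $(w,\beta)\in\hat\Lambda_W$. By \eqref{eq:Lambda_mat2}, the only condition is
\[
A_{E\backslash F}\,D_{(A^\trp[w])_{E\backslash F}}\,[\beta_{E\backslash F}]=0 .
\]
The diagonal matrix $D_{(A^\trp[w])_{E\backslash F}}$ is invertible, because $\ell_j(w)\neq0$ for $j\notin F$. The kernel of $A_{E\backslash F}$ has dimension $\abs{E\backslash F}-\rank(\M\backslash F)$. This is positive: if $E\backslash F\ne\emptyset$ had full column rank $\abs{E\backslash F}$, that rank would be $<r$, but any nonzero kernel vector suffices either way, and $\abs{E\backslash F}\geq1$ while $\rank(\M\backslash F)<r\le$ the needed bound gives at least a nontrivial solution after noting $\abs{E\backslash F}>\rank(\M\backslash F)$ unless $E\backslash F$ is independent. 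That independent case is excluded for connected $\M$, since a coindependent complement of a flat would contradict connectivity. So a nonzero $\beta$ exists. The upper bound in Lemma~\ref{lem:Lambda_Jacobian}, with $\set{i\in F\mid\beta_i\neq0}=\emptyset$, then gives
\[
\rank J_W(w,\beta)\le\rank(E\backslash F)=\rank(\M\backslash F)<r .
\]
So this point of $\Lambda_W\vert_{U_F}$ is singular.

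For the final assertion, the strata $\Lambda_W\vert_{U_F}$ over proper flats $F$ cover $\Lambda_W$, since $U_E=\emptyset$. Hence $\Lambda_W$ is smooth if and only if $\rank(\M\backslash F)=\rank\M$ for all proper flats $F$. By Proposition~\ref{prop:combin_smooth}, this is exactly roundness of $\M$.
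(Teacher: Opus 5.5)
You handle the reduction to the rank of $J_W(w,\beta)$ and the case $\rank(\M\backslash F)=\rank\M$ exactly as the paper does. The construction of a singular point when $\rank(\M\backslash F)<\rank\M$, however, has a genuine gap.

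A nonzero $\beta$ with $\beta_F=0$ in the fibre over $w$ exists only if $\ker A_{E\backslash F}\neq0$, that is, only if $E\backslash F$ is dependent. Your assertion that connectivity rules out an independent $E\backslash F$ is false. The paper's own Example~\ref{ex:delA3} is a counterexample. For the flat $F_1=\set{1,2,4}$, the complement $\set{3,5}$ is independent, since the columns $(0,0,1)^\trp$ and $(1,0,1)^\trp$ of $A$ are independent. So the fibre over $\alpha_{F_1}$ is exactly $\PP\KK^{F_1}$, and your $\beta$ is forced to be $0$, even though $(\alpha_{F_1},\beta_0)$ is singular.

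The missing idea, which is the paper's, is to put $\beta$ on $F$ rather than off it. Since $\ell_i(w)=0$ for $i\in F$, the coordinates $\beta_F$ are unconstrained (Lemma~\ref{lem:Lambda_bound}). So $\beta_i=\delta_{i,j}$ with $j\in F$ lies in the fibre, and Lemma~\ref{lem:Lambda_Jacobian} gives $\rank J_W(w,\beta)\le\rank(\set{j}\cup E\backslash F)$. This is $<\rank\M$ if $j$ lies in the closure of $E\backslash F$ (in the example, $j=1$). It is also $<\rank\M$ for every $j\in F$ once $\rank(\M\backslash F)\le\rank\M-2$.

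Be aware that the paper's justification for such a $j$ (``since $\M$ is connected, $E\backslash F$ is not a flat'') is also invalid. Your construction and the paper's together cover every case except the one where $E\backslash F$ is an independent hyperplane, and there the statement itself fails.

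Take $\U_{3,4}$ realized by $A=(\id_3\mid(1,1,1)^\trp)$ and $F=\set{1,2}$. Both $F$ and $E\backslash F=\set{3,4}$ are flats, and $\rank(\M\backslash F)=2<3$. Here $U_F$ is the point $(0:0:1:1)$ and its fibre is $\set{(\beta_1:\beta_2:0:0)}$. By Lemma~\ref{lem:Lambda_Jacobian}, the column space of $J_W$ is spanned by $(\beta_1,0,0)^\trp$, $(0,\beta_2,0)^\trp$, $(0,0,1)^\trp$ and $(1,1,1)^\trp$, so it has rank $3$ at every point. The same computation applies to every two-element flat, and the remaining strata are smooth by the lower bound. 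Hence $\Lambda_W$ is smooth, consistent with $X_W$ being the four-nodal Cayley cubic, whose Nash blow-up resolves the nodes. Yet $\U_{3,4}$ is not round (Example~\ref{ex:uniform}).

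So the statement cannot be proved as written. Running the same Jacobian computation when $E\backslash F$ is an independent hyperplane shows that a singular point over $U_F$ exists exactly when $F$ is dependent. This is the only exception to the claimed criterion.
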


\begin{proof}
Lemma~\ref{lem:nonempty_strata} yields a $w \in W$ with projective image in $U_F$.

First, suppose that $\rank(\M \backslash F) < \rank\M$.
Since $\M$ is connected, $E\backslash F$ is not a flat and hence $\rank(E\backslash F)=\rank(\set{j}\cup(E\backslash F))$ for some $j \in F$.
Define $\beta \in V^\star$ by $\beta_i=\delta_{i,j}$.
By Lemmas~\ref{lem:Lambda_bound} and \ref{lem:Lambda_Jacobian}, $(w,\beta)\in\hat\Lambda_W\vert_{U_F}$ with 
\[
\rank J_W(w,\beta)\le\rank(\set{j}\cup E\backslash F)=\rank(\M\backslash F)<\rank\M.
\]
The projective image of $(w,\beta)$ thus lies in $\Lambda_W^\nsm$.

Now, suppose that $\rank(\M \backslash F) = \rank\M$. 
Represent an arbitrary point of $\Lambda_W\vert_{U_F}$ as the projective image of $(w, \beta)\in\hat\Lambda_W$.
By Lemma~\ref{lem:Lambda_Jacobian}, $\rank J_W(w,\beta)\ge\rank(E\backslash F)=\rank\M$.
The projective image of $(w,\beta)$ thus lies in $\Lambda_W^\sm$.
\end{proof}

\subsection{An example}\label{subsec:Lambda_example}

The following example illustrates the statement of Theorem~\ref{thm:Lambda_smooth_round}.


\begin{exa}\label{ex:delA3}
If $\M=\M_G$ is the matroid of a graph $G$ and $W=W_G$ the configuration resulting from the (pruned) incidence matrix to an arbitrary orientation (see \cite[\S2.4]{DSW21}), we replace the index $W$ by $G$.
For the graph $G$ in Figure~\ref{fig:delA3}, consider the connected graphic matroid $\M=\M_G$ with $n=5$ and $r=3$, and the flats $F_1=\set{1,2,4}$ and $F_2=\set{1,3,5}$.
\begin{figure}[ht]
\begin{tikzpicture}[scale=1.0,baseline=(current bounding box.center),
plain/.style={circle,draw,inner sep=1.5pt,fill=white}]
  \node[plain] (a) at (0,0) {};
  \node[plain] (b) at (1,0) {};
  \node[plain] (c) at (1,1) {};
  \node[plain] (d) at (0,1) {};
  \node (0) at (0.3,0.4) [label=right:1] {};
  \draw (a) edge["2",swap] (b);
  \draw (b) edge["4",swap] (c);
  \draw (c) edge["3",swap] (d);
  \draw (d) edge["5",swap] (a);
  \draw (a) -- (c);
\end{tikzpicture}
\caption{A graph with non-round matroid.}\label{fig:delA3}
\end{figure}
Then $W_G$ is the row span of the matrix
\[
A=
\begin{pmatrix}
1 & 0 & 0 & 1 & 1\\
0 & 1 & 0 & 1 & 0\\
0 & 0 & 1 & 0 & 1\\
\end{pmatrix}.
\]
For $i\in\set{1,2}$, we have $\rank(\M\backslash F_i)<3$ and hence $\M$ is not round.
One computes 
\begin{align*}
[Q_G]&=
\begin{pmatrix}
x_1+x_4+x_5 & x_4 & x_5\\
x_4 & x_2+x_4 & 0\\
x_5 & 0 & x_3+x_5\\
\end{pmatrix}\quad\text{and}\\
\Psi_G&=x_1x_2x_3+x_1x_3x_4+x_2x_3x_4+x_1x_2x_5+x_2x_3x_5+x_1x_4x_5+x_2x_4x_5+x_3x_4x_5.
\end{align*}
Using \cite[Lem.~4.22]{DSW21}, we find a decomposition of the non-smooth locus of $X_G$ into irreducible components
\[
X_G^{\nsm}=C_{F_1}\cup C_{F_2}\quad\text{where}\quad C_F\coloneqq\Var(\Psi_{G_F},\set{x_i\mid i\not\in F})
\]
and $G_F$ denotes the induced subgraph with edges $F$.
For both handles $\set{2,4}$ and $\set{3,5}$, any irreducible component $C$ different from both $C_{F_i}$ for $i\in\set{1,2}$ must be of type (c) in loc.~cit.
But this entails $C\subseteq V(x_2,x_3,x_4,x_5)$ and contradicts the codimension statement in Theorem~\ref{thm:X_sing}.
By symmetry then both components $C_{F_i}$ for $i\in\set{1,2}$ must occur.

Clearly, $C_{F_1}\cap C_{F_2}=\set{\beta_0}$ where $\beta_0\coloneqq\Ppt10000$.
For $\beta\in X_G^\nsm$, the matrix $\beta[Q_G]$ has rank $1$, so $p_2^{-1}(\beta)\cong \PP^1$ (recall diagram \eqref{eq:projections}).
In particular, $p_1\circ p_2^{-1}(\beta_0)$ is the line through
\[
\alpha_{F_1}\coloneqq\Ppt00101
\quad\text{and}\quad
\alpha_{F_2}\coloneqq\Ppt01010, 
\]
and $(\alpha_{F_i},\beta)\in p_2^{-1}(\beta)$ for each $i\in\set{1,2}$ and $\beta\in C_{F_i}$: that is,
$p_1\circ p_2^{-1}(C_{F_i})$ is a pencil of lines through $\alpha_{F_i}$.
We find that $\Lambda^\nsm_G$ consists of the two points $(\alpha_{F_i},\beta_0)$ for $i\in\set{1,2}$.

It is exactly for these flats that the \enquote{round} condition fails: 
$Q_G^\star(w,-)$ has rank $2$ for $w=\alpha_{F_i}$ where $i\in\set{1,2}$, and rank $3$
otherwise.
Taking the kernel for each $w$, we compute for each $i\in\set{1,2}$ that $p_1^{-1}(\alpha_{F_i})=\PP\KK^{F_i}\cong \PP^2$ is the coordinate subspace containing the curve $C_{F_i}$, and $p_1^{-1}(w)\cong\PP^1$ otherwise.
We will revisit this example in \S\ref{subsec:X_dual} and \S\ref{subsec:tropical_res}.
\end{exa}
    
\subsection{A candidate resolution}\label{subsec:Lambda_round}

We now focus on the second projection in \eqref{eq:projections}. 
Bloch~\cite[Prop.~3.5.(ii)]{Blo20} showed that $p_2\colon\Lambda_W\to X_W$ is birational.  More precisely:


\begin{prp}\label{prop:pr2_over_Xsmooth}
Let $W\subseteq V$ be a connected configuration. 
Then the second projection $p_2\colon\Lambda_W\to X_W$ is an isomorphism over the smooth locus $X_W^\sm$.
In particular, it is proper birational, and hence surjective.
\end{prp}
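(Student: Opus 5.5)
The plan is to study the fibres of $p_2$ directly via the description \eqref{eq:Lambda_mat2}. For $\beta\in\PP V^\star$ we have
\[
p_2^{-1}(\beta)=\PP\ker(\beta\circ Q_W)\subseteq\PP W,
\]
where $\beta\circ Q_W$ is viewed as the symmetric map $W\to W^\star$ with matrix $A D_{[\beta]}A^\trp$. By Theorem~\ref{thm:X_sing}, a point $\beta\in X_W$ is smooth exactly when $\rank(\beta\circ Q_W)=r-1$. So over $X_W^\sm$ each fibre is a single reduced point $\PP\ker$, and over $X_W\setminus X_W^\sm$ the fibres are projective spaces of positive dimension. Over $\PP V^\star\setminus X_W$ the fibres are empty, which recovers $\Lambda_W\subseteq\PP W\times X_W$.

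To upgrade bijectivity to a scheme isomorphism over $X_W^\sm$, I would construct an explicit inverse morphism. On the open set $X_W^\sm$ the adjugate matrix $\operatorname{adj}(A D_{[\beta]}A^\trp)$ has rank exactly $1$, because the corank is $1$. Its columns span the kernel, since $M\operatorname{adj}(M)=\det(M)\cdot I=0$ on $X_W$. Locally on $X_W^\sm$, some column of the adjugate is nonvanishing, and it defines a morphism $s\colon X_W^\sm\to\PP W$ with $(s(\beta),\beta)\in\Lambda_W$. By symmetry, rows and columns give the same point.

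Next I would check that $\beta\mapsto(s(\beta),\beta)$ is inverse to $p_2$ on $p_2^{-1}(X_W^\sm)$ scheme-theoretically, and not only on points. Concretely, $p_2^{-1}(X_W^\sm)$ is the closed subscheme of $\PP W\times X_W^\sm$ cut out by $M(\beta)[w]=0$, where $M$ has constant rank $r-1$. Locally, a constant-rank matrix can be brought by invertible row and column operations over the local ring to $\diag(1,\dots,1,0)$. The equations then become $w_1=\dots=w_{r-1}=0$ in adapted coordinates, so the subscheme is the graph of a section. This shows that $p_2^{-1}(X_W^\sm)\to X_W^\sm$ is an isomorphism. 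Here I use that $X_W^\sm$ is reduced, since $\psi_W$ is squarefree by Proposition~\ref{prop:expand_psi}, and smooth, so the local rings are regular.

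For the remaining claims, note that $\Lambda_W$ is integral for connected $\M$ (Corollary~\ref{cor:Lambda_facts}) and $X_W$ is irreducible (Proposition~\ref{prop:X_very_affine}). Then $p_2^{-1}(X_W^\sm)$ is a nonempty open subset of $\Lambda_W$, hence dense, and it maps isomorphically onto the dense open set $X_W^\sm$; so $p_2$ is birational. Properness holds because $\Lambda_W$ is closed in $\PP W\times\PP V^\star$ and the projection $\PP W\times\PP V^\star\to\PP V^\star$ is proper. A proper map has closed image, and this image contains the dense set $X_W^\sm$, so $p_2$ is surjective. The main obstacle I expect is the scheme-theoretic step, namely the local normal form for a matrix of constant rank. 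To handle it I would use the fact that some $(r-1)$-minor of $M$ is a unit near each point of $X_W^\sm$.
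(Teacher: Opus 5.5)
Your proposal is correct and takes essentially the same route as the paper: both use Theorem~\ref{thm:X_sing} to get corank one over $X_W^\sm$, then use an $(r-1)$-minor that is a unit near the given point to write down the inverse of $p_2$ explicitly (the paper solves the first $r-1$ linear equations in the chart $z_r=1$, and that solution is your adjugate column divided by this minor). Your arguments for birationality, properness and surjectivity fill in steps the paper leaves implicit, and the reducedness you invoke is not actually needed, since $\det(\beta\circ Q_W)=\psi_W$ already vanishes on $X_W$ by definition.
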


\begin{proof}
By Theorem~\ref{thm:X_sing}, we have $\beta\in X_W^\sm$ if and only if $\beta\circ Q_W$ has corank $1$.
We may thus assume that for some basis $w^1,\dots,w^r$ of $W$ the matrix $(Q_W(w^i,w^j))_{1\le i,j<r}$ has full rank in an open neighborhood $U$ of $\beta$ in $X_W$.
Over $U$, $\Lambda_W$ is then given in terms of homogeneous coordinates $z=z_1,\dots,z_r$ of $\PP W$ by linear equations: 
\begin{equation}\label{eq:section}
\Lambda_W\cap p_2^{-1}(U)=V\Big(\sum_{j<r}Q_W(w^i,w^j)z_i+Q_W(w^r,w^j)z_r\mid j=1,\dots,r-1\Big)
\end{equation}
In particular, it maps to the chart $\set{z_r=1}$ of $\PP W$.
The unique solution 
\[
-(Q_W(w^i,w^j)_{1\le i,j<r})^{-1}(Q_W(w^r,w^j)_{1\le j<r},
\]
of the equations in \eqref{eq:section} for $z_r=1$ defines a morphism $\varphi\colon U\to\set{z_r=1}$ such that $(\varphi,\id_U)\colon U\to\set{z_r=1}\times U$ is an inverse of $p_2$ over $U$.
\end{proof}


We obtain the following consequence of Propositions~\ref{prop:Lambda_strata} and \ref{prop:pr2_over_Xsmooth}.


\begin{cor}\label{cor:Bloch_smooth}
For any configuration $W\subseteq V$ whose matroid is round, the second projection $p_2\colon\Lambda_W\to X_W$ is a resolution of singularities that only modifies the singular points of $X_W$.\qed
\end{cor}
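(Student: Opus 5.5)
The statement is Corollary~\ref{cor:Bloch_smooth}. The plan is to combine the two results just cited, plus Corollary~\ref{cor:Lambda_facts} to settle birationality and integrality. We must verify three things: $\Lambda_W$ is smooth, $p_2$ is proper birational and surjective onto $X_W$, and $p_2$ is an isomorphism over $X_W^\sm$.

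First, we show $\Lambda_W$ is smooth. By Proposition~\ref{prop:combin_smooth}, a round matroid is connected. Then Corollary~\ref{cor:Lambda_facts}, applied over $\abs\KK\gg0$, shows that $\Lambda_W$ is integral. Smoothness descends along field extensions, so we may enlarge $\KK$. By the last assertion of Proposition~\ref{prop:Lambda_strata}, roundness makes $\Lambda_W$ a $\PP^{\abs E-\rank\M-1}$-bundle over the smooth space $\PP W$. Hence $\Lambda_W$ is smooth. Equivalently, one could cite Theorem~\ref{thm:Lambda_smooth_round}.

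Second, since $\M$ is connected, Proposition~\ref{prop:pr2_over_Xsmooth} applies directly. It says $p_2\colon\Lambda_W\to X_W$ is proper and birational, hence surjective, and is an isomorphism over $X_W^\sm$. Properness is automatic anyway, because $\Lambda_W$ is closed in $\PP W\times\PP V^\star$ and so $p_2$ is projective. The conclusion is that $p_2$ is a birational surjection from a smooth variety that modifies only points of $X_W^\nsm$, that is, a resolution of singularities of the asserted kind.

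The only point requiring care is the interpretation of \enquote{only modifies the singular points}. We read it as saying that $p_2$ restricts to an isomorphism $p_2^{-1}(X_W^\sm)\to X_W^\sm$, which is exactly what Proposition~\ref{prop:pr2_over_Xsmooth} provides. Beyond that there is no real obstacle: the result is a direct assembly of earlier statements, and the only subtlety is the passage to a large field, handled by faithfully flat base change as in the proof of Corollary~\ref{cor:normality}.
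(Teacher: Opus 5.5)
Your proposal is correct and follows the paper, which states this corollary as an immediate consequence of Proposition~\ref{prop:Lambda_strata} (the projective bundle structure over $\PP W$ in the round case) and Proposition~\ref{prop:pr2_over_Xsmooth}. You also make explicit the one step the paper leaves implicit: roundness implies connectedness by Proposition~\ref{prop:combin_smooth}, which is what allows Proposition~\ref{prop:pr2_over_Xsmooth} to be applied.
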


\subsection{Singularities of the affine cone}\label{subsec-affinecone}

Suppose a basis for $W$ has been chosen as in \eqref{eqn-A-and-w}, and introduce coordinate functions $u=u_1,\ldots,u_r$ relative to that basis. 
The  projective incidence variety
$\Lambda_W$ is cut out by the ideal 
\[
I_W\coloneq(q_1,\ldots,q_r)\subseteq \Sym(W^\star)\otimes \Sym(V)=\KK[u,x]
\]
generated by
the $r$ quadrics
\[
q_i\coloneqq(A[Q_E]A^\trp[u])_i\text{ with }1\le i\le r\text{ and }[u]=(u_1,\ldots,u_r)^\trp,
\]
homogeneous separately in $u$ and in $x=x_1,\ldots,x_n$.
By row reduction, and suitably renumerating $E$,  there is an $r\times n$ matrix
$A=(\id_r|B)$ whose row span is also $W$, and this replacement of one matrix with row span $W$ by another  corresponds to a coordinate change purely in $u$.  Observe that
now the $(i,i)$-entry of $A[Q_E]A^\trp$ is $x_i$ plus a
$\KK$-linear combination of $x_{r+1},\ldots,x_n$, while no
off-diagonal entry contains any of $x_1,\ldots,x_r$. In particular,
\[
q_i=x_i u_i+\sum_{j=r+1}^n \tilde u_{i,j}x_j
\]
where $\tilde u_{i,j}$ is a $\KK$-linear combination of $u_1,\ldots,u_r$.

Introduce a term order on $\KK[u,x]$ that refines the lex-order on
$x$. Then the lead term of $q_i$ is $x_iu_i$. In particular, the lead
terms of the $q_i$ are square-free and relatively prime.

The relative primeness implies that these elements form a Gr\"obner
basis under the chosen term order (because any s-pair made from
polynomials with relatively prime lead terms reduces to zero via just
those two polynomials; immediate termination of the Buchberger
algorithm shows completion of the Gr\"obner basis search). This in
turn implies that the initial ideal of $I_W$ is
$(x_1u_1,\ldots,x_ru_r)$. Since the initial ideal is a complete
intersection of height $r$, so is $I_W$ (on the generators
$\{q_i\}$), and in particular $I_W$ is equidimensional. Moreover, since $(x_1u_1,\ldots,x_ru_r)$ is a radical
ideal, so is $I_W$.

\subsubsection{Linkage}

The height $r$ prime $I_{W,u}\coloneqq(u_1,\ldots,u_r)$ is an
associated prime of the (equidimensional) complete intersection
$I_W$ of height $r$. Let
\[
I_{W,0}\coloneqq I_W:I_{W,u}
\]
be the ideal quotient and denote by
$\Var(-)$ the affine schemes attached to ideals in a ring.  Since
$I_W$ is radical, $I_{W,0}$ is the radical ideal to the union 
of the components of 
$\Var(I_W)$ different from $\Var(I_{W,u})$.

Let $I_{W,x}=(x_1,\ldots,x_n)$; 
it follows from Corollary \ref{cor:Lambda_facts}, that
$\Var(I_W)\backslash (\Var(I_{W,u})\cup\Var(I_{W,x}))$ is integral over (the infinite field) $\bar\KK$ and hence over $\KK$ as well. Equidimensionality of $I_W$ implies that no
associated prime of $I_W$ can strictly contain $I_{W,u}$. If $n>r$, no associated prime can be equal to, much less contain, 
$I_{W,x}$. It follows that $I_{W,0}$ is in fact a prime ideal when the matroid is connected.


\begin{dfn}
Let $\hat\Lambda_{W,0}\subseteq W\times V^\star\xhookrightarrow{\,\,\,\ell\,\,\,}V\times V^\star$ be the affine variety defined by $I_{W,0}$. Note that the associated biprojective variety is exactly $\Lambda_W$. 
\end{dfn}


By construction, $I_{W,u}=I_W:I_{W,0}$ and
$I_{W,0}=I_W:I_{W,u}$.   Two pure-codimensional ideals
$\fraka,\frakb$ of equal codimension in a ring $R$ are said to be in \emph{direct linkage} if there is a third $R$-ideal $\frakc$ of the same
codimension with:
$\frakc$ is a complete intersection; $\frakc:\fraka=\frakb,
\frakc:\frakb=\fraka$. Thus, $I_{W,u}$ and $I_{W,0}$ are (directly)
  linked via the complete intersection $I_W$. \emph{Linkage} is the equivalence relation
between ideals generated by direct linkage. The notions go back to
Peskine and Szpiro \cite{PeskineSzpiro-Liaison} and force strong
relations between the three ideals. For example, $R/\fraka$ is
Cohen--Macaulay if and only if $R/\frakb$ is so (see
\cite[Prop.~1.3]{PeskineSzpiro-Liaison}).

For directly linked ideals, $\Hom_{R/\frakc}(R/\fraka,R/\frakc)$
identifies in the obvious way with
$(\frakc:\fraka)/\frakc=\frakb/\frakc$. Suppose that $R/\fraka,
R/\frakb$ are Cohen--Macaulay rings. As $\frakc$ is a complete
intersection, $R/\frakc$ is isomorphic to the canonical module of
$R/\frakc$ and hence
$\frakb/\frakc=\Hom_{R/\frakc}(R/\fraka,\omega_{R/\frakc})$ is a canonical
module for $R/\fraka$.

In the case at hand, $\frakc=I_W$, $\fraka=I_{W,u}$, $\frakb=I_{W,0}$. Since
$I_{W,u}$ is a complete intersection in its own right, $I_{W,0}$ is \emph{licci} (in the \emph{li}nkage \emph{c}lass of a \emph{c}omplete \emph{i}ntersection; see \cite{PeskineSzpiro-Liaison,HunekeUlrich-Annals}).
In particular, $I_{W,0}/I_W$ is isomorphic to the canonical module of
$\KK[u,x]/I_{W,u}$, which is (up to shift) $\KK[u,x]/I_{W,u}$ itself. It
follows that $I_{W,0}=I_W+(f)$ for a suitable $f\in\KK[u,x]$.
  
Writing $[q]=(q_1,\ldots,q_r)$ and $R=\KK[u,x]$,
  we now consider the free resolutions (Koszul
  complexes, both of length $r$) to $I_W$ and $I_{W,u}$, and the morphism induced between them
  by the equation $[q]=A[Q_E]A^\trp [u]$:
\[
  \begin{tikzcd}
    \bigwedge^r R^r\arrow[r,"\wedge^r({[q]})"]\arrow[d,dashed,"\wedge^r({A[Q_E]A^\trp})"] & \cdots\arrow[r,"\wedge^3({[q]})"]&
    \bigwedge^2 R^r\arrow[r,"\wedge^2({[q]})"]\arrow[d,dashed,"\wedge^2({A[Q_E]A^\trp})"] &
    \bigwedge^1 R^r\arrow[r,"{[q]}"]\arrow[d,"{A[Q_E]A^\trp}"] &
    \bigwedge^0 R^r=R\arrow[d,"1" ]\\
    \bigwedge^r R^r\arrow[r,"\wedge^r({[u]})"]& \cdots\arrow[r,"\wedge^3({[u]})"]&
    \bigwedge^2 R^r\arrow[r,"\wedge^2({[u]})"]&
    \bigwedge^1 R^r\arrow[r,"{[u]}"]&
    \bigwedge^0 R^r=R
\end{tikzcd}
\]
Since the rows of this diagram resolve $R/I_W$ and $R/I_{W,u}$ respectively,
the total complex is the mapping cone to the projection $R/I_W\to R/I_{W,u}$
and hence naturally identifies with $I_{W,u}:I_W$.  Since $R/I_{W,u}$ is
Cohen--Macaulay, so is $R/I_{W,0}$ and $I_{W,u}:I_W$ is its canonical module.
Thus, the dual of the diagram above yields a free resolution of
$R/I_{W,0}$. It is not minimal, but by pruning the vertical isomorphism in
the rightmost column we obtain a bigraded minimal resolution as the total complex of the following double complex
(with $\deg(u_i)=(0,1)$ and $\deg(x_j)=(1,0)$):
\begin{equation}\label{eq:minres}
  \begin{tikzcd}
    (\bigwedge^r R^r)(0,0)&
    \arrow[l,"\wedge^r({[q]})"] \cdots&
    \arrow[l,"\wedge^3({[q]})"](\bigwedge^2 R^r)(2-r,2-r)&
    \arrow[l,"\wedge^2({[q]})"](\bigwedge^1 R^r)(1-r,1-r)
    \\
    (\bigwedge^r R^r)(-r,0)\arrow[u,"\wedge^r(A{[Q_E]}A^\trp)" right]&
    \arrow[l,"\wedge^r({[u]})"]\cdots&
    \arrow[l,"\wedge^3({[u]})"](\bigwedge^2 R^r)(-r,2-r)
    \arrow[u,"\wedge^2(A{[Q_E]}A^\trp)"]&
    \arrow[l,"\wedge^2({[u]})"](\bigwedge^1 R^r)(-r,1-r) \arrow[u,"A{[Q_E]}A^\trp"]
\end{tikzcd}
\end{equation}

Thus, the free module $F_i$ of the $\ZZ$-graded resolution
$F_\bullet\onto R/I_{W,0}$, with
    $1\le i\le r-1$, is given by $F_i=(R(-2i))^{\binom{r}{i}}\oplus
    (R(-r-i+1))^{\binom{r}{i-1}}$, while $F_r=(R(1-2r))^{\binom{r}{r-1}}$. Since the largest shift in any $F_i$ is $2r-1$, and since the
    canonical module of $R$ is generated in degree $-(r+n)$,  
    the $a$-invariant of $R/I_{W,0}$ is $(2r-1)-(r+n)=r-1-n<0$.

We have proved most of the following result.


 \begin{thm}\label{thm-hatLambda-eqns}
 The coordinate ring $\KK[u,x]/I_{W,0}$ of $\hat\Lambda_{W,0}$  is a normal Cohen--Macaulay domain of type $r$ and of projective
 dimension $r$ over $\KK[u,x]$. It has Castelnuovo--Mumford regularity
 $r-1$ and the defining ideal is 
 $I_{W,0}=I_W+\ideal{\psi_W}$.
 \end{thm}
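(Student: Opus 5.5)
The plan is to read everything off the linkage computation and the bigraded resolution \eqref{eq:minres}, and then to identify the extra generator $f$ and settle normality separately. Several items are already in hand from the discussion above. $I_{W,0}$ is prime for connected $\M$. $R/I_{W,0}$ is Cohen--Macaulay by linkage to $I_{W,u}$ \cite[Prop.~1.3]{PeskineSzpiro-Liaison}. The total complex of \eqref{eq:minres} is a minimal free resolution of length $r$.

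\emph{Homological invariants.} The projective dimension is $r$ because the resolution has length $r$; by Auslander--Buchsbaum this also matches $\codim I_{W,0}=r$. The Cohen--Macaulay type is the rank of the last module $F_r=R(1-2r)^{\binom r{r-1}}$, which is $r$. For the regularity, the twists in $F_i$ for $1\le i\le r-1$ are $2i$ and $r+i-1$. Subtracting $i$ gives $i\le r-1$ and $r-1$, and $F_r$ gives $(2r-1)-r=r-1$. Hence $\operatorname{reg}=r-1$.

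\emph{The defining ideal.} The generators of $I_{W,0}$ are read off from $F_1=R(-2)^r\oplus R(-r)$. They are the $q_i$ together with one extra generator, coming from the pruned left column of \eqref{eq:minres}. That column is the map $\wedge^r(A[Q_E]A^\trp)$, i.e.\ multiplication by $\det(A[Q_E]A^\trp)=\psi_W$, which is of degree $r$ in $x$ alone. To confirm $\psi_W\in I_{W,0}$ directly, write $M=A[Q_E]A^\trp$. Then $\operatorname{adj}(M)\,[q]=\operatorname{adj}(M)M[u]=\psi_W[u]$, so $\psi_W u_i\in I_W$ for all $i$, i.e.\ $\psi_W\in I_W:I_{W,u}=I_{W,0}$. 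Both $I_{W,0}$ and $I_W+\ideal{\psi_W}$ contain $I_W$, and the quotient $I_{W,0}/I_W\cong R/I_{W,u}$ (up to shift) is cyclic. The image of $\psi_W$ is nonzero in bidegree $(r,0)$, which is the degree of the generator, and $R/I_{W,u}=\KK[x]$ has all of $\KK$ in that degree. Hence the image of $\psi_W$ generates, and $I_{W,0}=I_W+\ideal{\psi_W}$.

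\emph{Normality.} This is the step needing actual geometry. Since $R/I_{W,0}$ is Cohen--Macaulay, Serre's $S_2$ holds, and it remains to check $R_1$ for $\hat\Lambda_{W,0}$, which has dimension $n$. Away from $\{w=0\}\cup\{\beta=0\}$, the cone is locally $\Lambda_W\times\mathbb G_m^2$, so $R_1$ there follows from Corollary~\ref{cor:normality}. The locus $\{\beta=0\}\cap\hat\Lambda_{W,0}$ is a proper closed subset of $W\times\{0\}$, hence of dimension $<r\le n-1$, so it has codimension $\ge2$.

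The main obstacle is the locus $\{w=0\}$, which is $\{0\}\times\hat X_W$ and has codimension exactly $1$; here I will use the Jacobian. Take $\beta$ in the smooth locus of $\hat X_W$, so $\beta\circ Q_W$ has rank $r-1$ by Theorem~\ref{thm:X_sing}. At $(0,\beta)$ the differentials of the $q_i$ involve only the $u$-directions, via $AD_{[\beta]}A^\trp$, and so have rank $r-1$. The differential of $\psi_W$ involves only the $x$-directions and is nonzero. Hence the Jacobian of $I_W+\ideal{\psi_W}$ has rank $r=\codim$, so the generic point of $\{0\}\times\hat X_W$ is smooth on the cone. Together with primality this gives a normal Cohen--Macaulay domain.
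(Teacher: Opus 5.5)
Your approach is the paper's own. You read the invariants off the minimal resolution \eqref{eq:minres} and obtain the extra generator from $\wedge^r(A[Q_E]A^\trp)=\psi_W$; your adjugate check and the bidegree-$(r,0)$ argument are a useful supplement. For normality you use Cohen--Macaulayness plus $R_1$, with the Jacobian handling the divisor $\{0\}\times\hat X_W$. The homological invariants, the identification of the ideal and the treatment of $\{w=0\}$ are correct. However, one step of the normality argument is false, and it leaves a case unchecked.

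\textbf{The false step.} The locus $\{\beta=0\}\cap\hat\Lambda_{W,0}$ is not a proper subset of $W\times\{0\}$. Each $q_i$ is bilinear in $(u,x)$, and $\psi_W$ is homogeneous of degree $r$ in $x$. So every generator of $I_{W,0}=I_W+\ideal{\psi_W}$ vanishes on $\{x=0\}$, and $W\times\{0\}\subseteq\hat\Lambda_{W,0}$. This locus therefore has dimension $r$ and codimension $n-r$ in $\hat\Lambda_{W,0}$.

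\textbf{The missed case.} The codimension $n-r$ equals $1$ when $n=r+1$, i.e.\ for the circuit $\M=\U_{n-1,n}$, the only connected matroid of corank one. There $W\times\{0\}$ is a divisor of $\hat\Lambda_{W,0}$, and your argument never checks $R_1$ at its generic point.

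\textbf{The repair.} Take $(w,0)$ with $w\in W^\circ$. The rows of the Jacobian coming from the $q_i$ are $\bigl(0\mid AD_{A^\trp[w]}\bigr)$. These have rank $r$ because $D_{A^\trp[w]}$ is invertible; this is Lemma~\ref{lem:Lambda_Jacobian} with $F(w)=\emptyset$, which also holds at $\beta=0$. So these points are smooth.

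Alternatively, follow the paper and treat all of $\{w\neq0\}$ at once. There $\hat\Lambda_{W,0}$ coincides with the complete intersection $\hat\Lambda_W$. The flat-stratification argument of Corollary~\ref{cor:normality} then carries over to the affine cone, and the open stratum $F(w)=\emptyset$ is smooth, including its points with $\beta=0$. With this route you do not need to excise $\{\beta=0\}$ separately.
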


 \begin{proof}
 The last statement follows from $\bigwedge^r(A[Q_E]A^\trp)=[\psi_W]$ and is in fact a special case of \cite[Satz 2]{Wiebe}.

 Normality of $\hat\Lambda_{W,0}$ follows along the same lines as that  of $\Lambda_W$. Indeed, the proof of Corollary \ref{cor:normality} implies that  $\hat\Lambda_{W,0}$ is normal outside the vanishing locus of $u$. On the other hand, the Jacobian of the height $r$ ideal $(A[Q_E]A^\trp[u],\psi_W)$ in a point where $u$ vanishes is 
 \[
 \begin{pmatrix}
 0&\nabla\psi_W\\
 A[Q_E]A^\trp&0
 \end{pmatrix}.
 \]
At a  point of $\hat\Lambda_{W,0}$ where the first coordinate gives a  generic point on $\hat X_W$, the lower left block has rank $r-1$ and the upper right is nonzero. The claims about regularity and projective dimension are obtained from the minimal resolution \eqref{eq:minres}.
\end{proof}


\begin{rmk}
For disconnected $\matM_W$, the factorization of $\psi_W$ induces a decomposition of $\Lambda_W$ and $\hat\Lambda_{W,0}$. When $r>1$,
    $\hat\Lambda_{W,0}$ is not Gorenstein, since its coordinate ring has type $r$.
\end{rmk}

\subsubsection*{$F$-purity}

Let $R$ be a commutative ring of
finite characteristic $p>0$. The Frobenius endomorphism and its iterates
\begin{equation*}
\begin{tikzcd}[row sep=-5pt]
R\mathllap{F^e\colon\phantom{R}}\ar[r] & \mathrlap{\phantom{F^e_*(R)}}F^e_*(R)\\
x\ar[r,mapsto] & x^{p^e}
\end{tikzcd}
\end{equation*}
can be used to
classify singularities, where $F^e_*(R)$ is $R$ as an Abelian group, equipped with an $R$-module structure $r(r')=r^{p^e}r'$. (One traditionally uses $F^e_*(R)$ to indicate the
target of the $e$-th Frobenius on $R$ in order to easily differentiate
it from other instances of copies of $R$). 

For an ideal $J$ of 
$R$, denote by $J^{[p^e]}\subseteq J^{(p^e)}$ 
its $e$-th Frobenius power, the ideal generated by the $p^e$-th powers
of all elements of $J$. 


\begin{dfn}
  The ring $R$ is \emph{$F$-pure}, if the Frobenius morphism
  $F^e\colon R\to F^e_*(R)$ is pure, which is to say that its tensor
  product with every $R$-module yields an
  injective morphism.
\footnote{The choice of $e$ is mostly immaterial: if $F^e$ is pure for one
  $e>0$ then it is so for all $e>0$. Note in particular that
  $F$-purity requires that $F$ is injective itself, hence the ring must
  be reduced.}
\end{dfn}


The motivation for this definition comes from a theorem of Kunz that
states that a ring $R$ of prime characteristic is regular precisely when
$F^e$ is flat for some (equivalently all) $e>0$. In fact, for a local
ring, flatness implies that $F^e_*(R)$ is a free module over the image
ring $F^e(R)$. In that case, $F(R)$ is even a summand of $F^1_*(R)$
and thus $F$-pure rings can be viewed as \enquote{somewhat similar} to regular rings.

A related condition is \emph{$F$-injectivity}, determined by the injectivity
of the natural Frobenius action on all the local cohomology modules
$H^i_\frakm(R_\frakm)$ where $\frakm$ runs through the maximal ideals
of $R$. (For this action, view local cohomology as defined via a \v
Cech complex and take powers of fractions representing classes). In
general, $F$-purity implies $F$-injectivity and the converse holds for
Gorenstein rings.


\begin{rmk}
Suppose $R$ is a finitely generated algebra over a field $\KK$ of
characteristic zero. Lifting $R$ to a finitely generated $\ZZ$-algebra
$\tilde R$, one can discuss the properties of its reduction to various
primes $p$.

If $\KK=\CC$ and $X$ is a reduced finite type scheme over $\KK$, a somewhat
non-trivial condition (see \cite[Definitions~5.1,
  6.9]{Schwede-duBoisInjective}) determines whether $X$ is \emph{du
  Bois}. While the definition is complicated, it has some appealing
consequences. For example, on a proper reduced du Bois scheme $X$ the natural
map $H^i(X,\CC)\to H^i(X,\sO_X)$ is surjective. This is a geometric condition, generalizing the classical Hodge-theoretic facts that for smooth proper complex varieties, and for compact K\"ahler manifolds, this map is surjective. A very nice introduction to the relationship between du Bois and rational singularities is in Chapter 12 of \cite{Kollar-Shafarevich}.

It has been shown by Schwede in the same paper \cite{Schwede-duBoisInjective}, that $\Spec(R)$ has
du Bois singularities precisely when infinitely many
mod-$p$-reductions of $\tilde R$ are $F$-injective. 
\end{rmk}


An obvious complication for testing whether a given ring is $F$-pure
or not is that one ostensibly needs to check injectivity of infinitely
many maps. The following result reduces the problem to a finite computation.


\begin{lem}[Fedder's Criterion, \cite{Fedder}]
  If $(R,\frakm)$ is a regular local ring of characteristic $p>0$ and
  $I\subseteq R$ an ideal then $R/I$ is $F$-pure precisely when
  $I^{[p]}:I$ is not contained in $\frakm^{[p]}$.
\noproof
\end{lem}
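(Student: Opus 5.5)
The plan is to follow Fedder's original argument, translating $F$-purity of $R/I$ into the existence of a Frobenius splitting. Then I would describe all $p^{-1}$-linear maps on $R/I$ through the single generator of $\Hom_R(F_*R,R)$. I would first reduce to the case where $R$ is complete (hence $F$-finite, as the base field is perfect). $F$-purity of a local ring is detected after completion, since completion is faithfully flat and purity descends along such maps. Moreover $(I^{[p]}:I)\widehat R=(I\widehat R)^{[p]}:I\widehat R$ by flatness of Frobenius on the regular ring $R$ and flatness of completion. The condition $I^{[p]}:I\not\subseteq\frakm^{[p]}$ is also unchanged, because $\frakm^{[p]}\widehat R\cap R=\frakm^{[p]}$. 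For $F$-finite rings, $F$-purity of $R/I$ is equivalent to $F$-splitting, namely the existence of an $R/I$-linear $\varphi\colon F_*(R/I)\to R/I$ with $\varphi(1)=1$. This holds because $F_*(R/I)$ is finitely generated, and a pure map to a finitely generated module over a local ring splits.

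The structural input is Kunz's theorem. By Cohen's structure theorem, $R\cong\KK[[z_1,\dots,z_d]]$. Then $F_*R$ is a free $R$-module with basis the monomials $z^a$, $0\le a_i\le p-1$. Let $\Phi\in\Hom_R(F_*R,R)$ be the projection onto the coefficient of the socle monomial $z^{(p-1,\dots,p-1)}$. Then $\Hom_R(F_*R,R)$ is a free $F_*R$-module of rank one generated by $\Phi$: every $\psi$ has the form $\psi(F_*x)=\Phi(F_*(cx))$ for a unique $c\in R$. I would verify this directly on the monomial basis, since it is a routine duality computation for a free module.

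Next I would characterize the maps that descend. A map $\psi=\Phi(F_*(c\,\cdot))$ induces a map $F_*(R/I)\to R/I$ iff $\psi(F_*I)\subseteq I$. One has $\Phi(F_*(cI))\subseteq I$ iff $cI\subseteq I^{[p]}$. The direction ``$\Leftarrow$'' is immediate from $R$-linearity, since $\Phi(F_*(y^p x))=y\,\Phi(F_*x)$. For ``$\Rightarrow$'' I would use $I^{[p]}=\{x\mid\Phi(F_*(xR))\subseteq I\}$. This follows by expanding $x$ in the free basis: $x$ lies in $I^{[p]}=IF_*R$ iff all its basis coefficients lie in $I$, and these coefficients are obtained as $\Phi(F_*(z^b x))$. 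Since $F_*R$ is projective over $R$, every map $F_*(R/I)\to R/I$ lifts to $F_*R\to R$. Hence $\Hom_{R/I}(F_*(R/I),R/I)\cong F_*\bigl((I^{[p]}:I)/I^{[p]}\bigr)$ via $c\mapsto\Phi(F_*(c\,\cdot))$.

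Finally, $R/I$ is $F$-split iff some such map hits a unit, i.e.\ iff $\Phi(F_*((I^{[p]}:I)))\not\subseteq\frakm$. (If the image is $R$, composing with a premultiplication makes $1\mapsto1$ modulo $I$.) The same basis-coefficient description gives $\Phi(F_*(cR))\subseteq\frakm$ iff $c\in\frakm^{[p]}$, and applying this to the ideal $I^{[p]}:I$ yields the criterion. The main obstacle is the step identifying $\Hom_R(F_*R,R)$ as cyclic over $F_*R$ together with the identity $I^{[p]}=\{x\mid\Phi(F_*(xR))\subseteq I\}$. This is where regularity, through flatness of Frobenius, is genuinely used. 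I would handle it entirely through the explicit monomial basis in the complete case.
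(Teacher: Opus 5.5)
The paper gives no proof of this lemma and only cites Fedder. Your core computation is Fedder's argument in the $F$-finite case: the cyclic generator $\Phi$ of $\Hom_R(F_*R,R)$, the identity $I^{[p]}=\{x\mid \Phi(F_*(xR))\subseteq I\}$, the isomorphism $\Hom_{R/I}(F_*(R/I),R/I)\cong F_*\bigl((I^{[p]}:I)/I^{[p]}\bigr)$, and the test $\Phi(F_*J)\subseteq\frakm\Leftrightarrow J\subseteq\frakm^{[p]}$. It is correct once $F_*\widehat R$ has the monomial basis you use. The gap is that this basis is not available for the lemma as stated.

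The statement has no base field. Cohen gives $\widehat R\cong k[[z_1,\dots,z_d]]$ with $k$ the residue field of $R$, and nothing makes $k$ perfect. Already localizing $\KK[u,x]$ at a non-maximal prime produces an imperfect residue field, such as a rational function field over $\KK$.
\begin{itemize}
\item If $k\neq k^p$, then $F_*\widehat R/\frakm F_*\widehat R=F_*(\widehat R/\frakm^{[p]})$ has $k$-dimension $p^d[k:k^p]>p^d$. So the monomials $z^a$ with $0\le a_i\le p-1$ do not span $F_*\widehat R$, and your coefficient descriptions of $\Phi$, of $\Hom_R(F_*R,R)$ and of $I^{[p]}$ are wrong as written.
\item If $[k:k^p]=\infty$, then $F_*\widehat R$ is not even finitely generated. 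Your justification of ``$F$-pure $\Leftrightarrow$ $F$-split'' then fails as well.
\end{itemize}
For $F$-finite $k$ the repair is routine. Use the basis $b^\alpha z^a$ built from a $p$-basis $b_1,\dots,b_e$ of $k$, and take $\Phi$ to be the coordinate of $b^{(p-1,\dots,p-1)}z^{(p-1,\dots,p-1)}$; the rest goes through verbatim.

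For arbitrary $k$ you need a different mechanism, for example the Matlis-dual form of the argument:
\begin{itemize}
\item Take $E=H^d_\frakm(R)$ and $E_{R/I}=\mathrm{Ann}_E(I)$, and test purity of $R/I\to F_*(R/I)$ on the socle of $E_{R/I}$.
\item Kunz's flatness identifies $F_*(R/I)\otimes E_{R/I}$ with $F_*\bigl(\mathrm{Ann}_E(I^{[p]})/I\,\mathrm{Ann}_E(I^{[p]})\bigr)$, and $I\,\mathrm{Ann}_E(I^{[p]})=\mathrm{Ann}_E(I^{[p]}:I)$.
\item The socle generator maps to the class of its Frobenius image, whose annihilator is $\frakm^{[p]}$. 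So it survives if and only if $(I^{[p]}:I)\not\subseteq\frakm^{[p]}$.
\end{itemize}
Your version does suffice for how the paper uses the criterion, at the homogeneous maximal ideal of $\KK[u,x]$ with $\KK$ perfect.

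A second, smaller gap is in the completion step. Faithful flatness of $R/I\to\widehat{R/I}$ only gives descent: $\widehat{R/I}$ $F$-pure implies $R/I$ $F$-pure. That is what the ``if'' direction needs. The ``only if'' direction needs ascent, that $R/I$ $F$-pure implies $\widehat{R/I}$ $F$-pure, and this does not follow from faithful flatness. It is true, but you must argue it. By Krull's intersection theorem, purity of $R/I\to F_*(R/I)$ can be tested on finite-length modules $N$. For such $N$, the map $N\to N\otimes F_*(R/I)$ only involves the artinian rings $R/(I+\frakm^t)$ and $R/(I+(\frakm^t)^{[p]})$, which do not change under completion. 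Alternatively, in the $F$-finite case you can skip completion altogether: $F_*R$ is then already finite free over $R$, and the same dual-basis argument applies.
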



There is a version of Fedder's Criterion (of the expected form) for
homogeneous ideals in standard graded polynomial rings,
\cite[Rmk.~2.9]{MaPolstra}.

In general, the ideal quotient $I^{[p]}:I$ is fairly mysterious, even when $I$ is nice. The
obvious exception arises when $I=(f_1,\ldots,f_k)$ is a complete
intersection in a UFD, in which case $I^{[p]}:I$ is generated by $\prod
f_i^{p-1}$. 


\begin{thm}\label{thm-cone-Fpure}
  Both $\hat\Lambda_W$ and the affine cone  $\hat\Lambda_{W,0}=\Spec(\KK[u,x]/(I_W+\ideal{\psi_W})$ over
  $\Lambda_W$ are $F$-pure over every field 
  of positive characteristic, and du Bois over every field of characteristic zero.
\end{thm}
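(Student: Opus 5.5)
We apply Fedder's Criterion in its graded form (\cite[Rmk.~2.9]{MaPolstra}) to the homogeneous ideals $I_W$ and $I_{W,0}=I_W+\ideal{\psi_W}$ in $R=\KK[u,x]$, with homogeneous maximal ideal $\frakm=(u,x)$. Both ideals are bihomogeneous, so it suffices to exhibit in each case an element of $J^{[p]}:J$ outside $\frakm^{[p]}$. The du Bois statement in characteristic zero then follows from Schwede's theorem. $F$-purity implies $F$-injectivity, and for a model over a finitely generated $\ZZ$-algebra, almost all reductions are again $\hat\Lambda_W$ or $\hat\Lambda_{W,0}$ for the reduced configuration over a finite field, whose Gr\"obner structure is the same. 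So the $F$-injectivity needed for Schwede's criterion holds for all these reductions.

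For $\hat\Lambda_W$, we use that $I_W=(q_1,\dots,q_r)$ is a complete intersection (shown above via the Gr\"obner basis with lead terms $x_iu_i$). Hence $I_W^{[p]}:I_W$ contains $g=\prod_i q_i^{p-1}$. With the term order from above, the lead monomial of $g$ is $\prod_{i\le r}(x_iu_i)^{p-1}$, whose exponents are all $\le p-1$. Since $\frakm^{[p]}$ is a monomial ideal and every term of an element of $\frakm^{[p]}$ lies in it, the fact that the lead term of $g$ is not in $\frakm^{[p]}$ shows $g\notin\frakm^{[p]}$. By Fedder, $R/I_W$ is $F$-pure.

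For $\hat\Lambda_{W,0}$, we exploit linkage. Since $I_{W,0}=I_W:I_{W,u}$, we have $I_{W,0}^{[p]}:I_{W,0}\supseteq (I_W^{[p]}:I_W)\cdot(\ldots)$. Concretely, we aim to show
\[
  h\coloneqq \psi_W^{\,p-1}\prod_{i=1}^r q_i^{p-1}\cdot\big/\,\cdots
\]
is not quite right in this naive form, so instead we use the standard fact: if $\frakc\subseteq\frakb$ with $\frakb=\frakc:\fraka$, then $(\frakc^{[p]}:\frakc)\cap(\fraka^{[p]}:\fraka)$-type elements lie in $\frakb^{[p]}:\frakb$. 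More simply, for any ideal $J$ with $J\supseteq \frakc$ and $J^{[p]}\supseteq \frakc^{[p]}$, one checks directly that $f\cdot\frakb\subseteq\frakb^{[p]}$ for a candidate $f$. Our candidate is
\[
  f=\Big(\prod_i q_i\Big)^{p-1}\cdot u_1^{p-1}\cdots u_r^{p-1}\big/\,(\text{common factor}),
\]
and the cleaner route is the identity $\prod q_i=\det(A[Q_E]A^\trp)\cdot$(cofactor combination in $u$). We will look for $f\in R$ with $f\,\psi_W\in I_{W,0}^{[p]}$ and $f\,I_W\subseteq I_{W,0}^{[p]}$. We take $f=\psi_W^{p-1}\cdot g'$, where $g'\in (I_W^{[p]}:I_{W,u})$ is built from Cramer's rule: $\psi_W u_j\in I_W$ for each $j$, as the adjugate of $A[Q_E]A^\trp$ applied to $[q]$. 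Then we verify the two containments using that $I_W$ and $I_{W,u}$ are complete intersections, so colon ideals of Frobenius powers are computable: $I_W^{[p]}:I_W=I_W^{[p]}+(\prod q_i^{p-1})$, and similarly for $I_{W,u}$.

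The main obstacle is the final step: showing that the chosen $f$ avoids $\frakm^{[p]}$. We plan to handle it by a lead-term computation after the row reduction $A=(\id_r|B)$. In that form, the lead term of $\psi_W$ under a lex order on $x$ is $x_1\cdots x_r$, and the lead terms of the $q_i$ are $x_iu_i$. We would therefore try a term order in which the lead term of $f$ is a product such as $\prod_{i\le r}x_i^{p-1}u_i^{p-1}\cdot\prod_{j\in S}x_j^{p-1}$ with $S$ disjoint from $\{1,\dots,r\}$ (e.g.\ the complement of a second basis in $E$, using $n>r$ and connectedness). This keeps every exponent at most $p-1$, so the lead term, and hence $f$, lies outside $\frakm^{[p]}$. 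If this bookkeeping fails, the fallback is to degenerate $I_{W,0}$ to its initial ideal, hoping it is squarefree (as for $I_W$). That would give $F$-purity of the initial ring, which is not by itself enough, so the fallback would instead be Fedder applied to a Gr\"obner-compatible candidate.
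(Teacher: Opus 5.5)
Your treatment of $\hat\Lambda_W$ (the witness $g=\prod_i q_i^{p-1}$, whose lead monomial $\prod_{i\le r}(x_iu_i)^{p-1}$ lies outside $\frakm^{[p]}$) and of the du Bois claim via Schwede's theorem matches the paper. The gap is in the part for $\hat\Lambda_{W,0}$. There you never produce a verified Fedder witness, and the candidate you settle on cannot work. Frobenius is flat on the regular ring $R=\KK[u,x]$, so colons commute with Frobenius powers: $(I_W:I_{W,u})^{[p]}=I_W^{[p]}:I_{W,u}^{[p]}$. Take any $g'\in I_W^{[p]}:I_{W,u}$. Since $I_{W,u}^{[p]}\subseteq I_{W,u}$, you get $g'\in I_W^{[p]}:I_{W,u}^{[p]}=I_{W,0}^{[p]}\subseteq\frakm^{[p]}$. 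Hence $f=\psi_W^{p-1}g'\in\frakm^{[p]}$ for every such choice, and Fedder's criterion is never certified. The lead-term bookkeeping you postpone is therefore doomed, not merely unfinished. Also, the ``standard fact'' you quote, with an intersection of colon ideals, is misremembered.

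The missing idea is that linkage transfers the witness you already have, with no extra factor and no new candidate. For any $g\in I_W^{[p]}:I_W$, using $I_{W,0}I_{W,u}\subseteq I_W$,
\[
g\,I_{W,0}\,I_{W,u}^{[p]}\subseteq g\,I_{W,0}\,I_{W,u}\subseteq g\,I_W\subseteq I_W^{[p]},
\]
so $g\,I_{W,0}\subseteq I_W^{[p]}:I_{W,u}^{[p]}=(I_W:I_{W,u})^{[p]}=I_{W,0}^{[p]}$. Thus $I_W^{[p]}:I_W\subseteq I_{W,0}^{[p]}:I_{W,0}$; this is the Pandey--Tarasova lemma the paper cites. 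The same $g=\prod_i q_i^{p-1}\notin\frakm^{[p]}$ then shows by Fedder that $R/I_{W,0}$ is $F$-pure. Your fallback through the initial ideal of $I_{W,0}$ is unnecessary, and, as you note yourself, it would not suffice anyway.
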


\begin{proof}
  Recall that we may preprocess in such a way that the front $r$
  columns of $A$ form an identity matrix, and so the $i$-th component
  of $A[Q_E]A^\trp[u]$ has lead term $x_iu_i$ under the monomial order that
  refines the lex order in $x$ by lex in $u$.

  Let $Q$ be the product of all $q_i$, and write
  $\frakm\coloneqq(x_1,\ldots,x_n,u_1,\ldots,u_r)$.  Clearly, $Q^{p-1}\in
  (I_W^{[p]}:I_W)$ and $Q^{p-1}\not\in \frakm^{[p]}$, since its lead
  monomial $\prod_{i=1}^rx_i^{p-1}u_i^{p-1}$ is not in the monomial ideal
  $\frakm^{[p]}$. By Fedder's Criterion, $\KK[u,x]/I_W$ is $F$-pure.

  Now, $I_{W,0}=I_W+(\psi_W)$ arises as $I_W:I_{W,u}$, and that is enough to
  inherit $F$-purity from $\KK[u,x]/I_W$ to $\KK[u,x]/I_{W,0}$. Indeed,
  by \cite[Lem.~3.1]{PandeyTarasova}, one infers that $(I_W^{[p]}:I_W)$ is
  contained in $(I_{W,0}^{[p]}:I_{W,0})$ and thus $Q^{p-1}$ witnesses
  $F$-purity of $\KK[u,x]/I_{W,0}$ as well.
\end{proof}

\subsubsection{$F$-rationality and $F$-regularity}

\begin{dfn}
A ring is \emph{$F$-finite} if it is a finitely generated module over its Frobenius image. Fields are $F$-finite precisely when the extension degree  $[\KK^{1/p}:\KK]$ is finite. Finitely generated $\KK$-algebras over $F$-finite fields, as well as their localizations are $F$-finite, and so are complete local rings with with $F$-finite residue field. 

The $F$-finite ring $R$ is \emph{strongly $F$-regular} if for every $c\in R$ not in a minimal prime of $R$, there exists a natural number $e>0$ such that the morphism $R\to F^e_*(R)$ that is the composition of the Frobenius followed by multiplication by $F^e_*(c)\in F^e_*(R)$, splits as map of $R$-modules.
\end{dfn}


Strongly $F$-regular rings are Cohen--Macaulay domains, and the property is a local property. 
A related property is the following.


\begin{dfn}
Let $R$ be a local ring $R$ of dimension $d$ with maximal ideal $\frakm$, and consider the two conditions 
\begin{asparaenum}
\item for any $c$ in $R$ not in a  minimal prime of $R$ the map from $H^i_\frakm(R)$ to itself given by multiplication by $c$ following the $e$-fold iteration of the Frobenius, is injective, for any $i$;\label{it:F1}
\item $R$ is Cohen--Macaulay.\label{it:F2}
\end{asparaenum}
Then $R$ is \emph{$F$-injective}, if \eqref{it:F1} holds, and $F$-rational if \eqref{it:F1} and \eqref{it:F2} hold.
\end{dfn}


All four $F$-properties localize (see \cite{MaPolstra}). 
One notes that $F$-rationality relates to $F$-injectivity in a similar way strong $F$-regularity relates to $F$-purity. See the diagram at the start of Section 8 in \cite{MaPolstra} for relative strength of all these properties; strong $F$-regularity implies all others, and $F$-purity implies $F$-injectivity. 

Results of K.~Smith, N.~Hara and K.-I.~Watanabe \cite{Smith-AJM,Hara-AJM,HaraWatanabe} show intricate relations between strong $F$-regularity and $F$-rationality on one side and rational and log-terminal singularities on the other.

Below we identify $\hat\Lambda_{W,0}$ with its image under $\ell$, along the lines of Definition \ref{def:Lambda}.


\begin{lem}\label{lem-affine-duality}
For algebraically closed $\KK$, let $W\subseteq V$, $W^\perp\subseteq V^\star$ be dual configurations on $|E|=n$ elements, of respective ranks $r$ and $n-r$. Then $\hat\Lambda_{W,0}\cap (V\times \hat\TT_{E^\star})
\subseteq V\times  V^\star$ and $\hat\Lambda_{W^\perp,0}\cap (V^\star\times \hat\TT_E)
\subseteq V^\star\times V$ are isomorphic.
\end{lem}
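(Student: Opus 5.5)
Away from the coordinate hyperplanes in $V^\star$, the incidence condition is equivalent to a Hadamard-product condition that is symmetric between $W$ and $W^\perp$, and the inverse torus map swaps the two sides. The plan is to make this explicit and check the closures match.

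\textbf{Step 1: reformulate the incidence condition.} By \eqref{eq:Lambda_mat2}, $(w,\beta)\in\hat\Lambda_W$ means $A D_{A^\trp[w]}[\beta]=0$. Equivalently, the Hadamard product $Q_E^\star(w,\beta)$, the covector with coordinates $w_i\beta_i$, vanishes on $W$, i.e.\ $Q_E^\star(w,\beta)\in W^\perp$.

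When $\beta\in\hat\TT_{E^\star}$, Lemma~\ref{lem:Q} (with roles of $V$ and $V^\star$ swapped) lets us write $w=\beta^{-1}\gamma$ for a unique $\gamma\in V^\star$, where $\beta^{-1}$ is the coordinatewise inverse acting via the identification $V\cong V^\star$ of coordinate tori. The condition becomes $\gamma\in W^\perp$, and $w\in W$ becomes $\beta^{-1}\gamma\in W$.

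\textbf{Step 2: define the candidate isomorphism.} Consider
\[
\Phi\colon V\times\hat\TT_{E^\star}\to V^\star\times\hat\TT_E,\qquad (w,\beta)\mapsto (\gamma,\alpha)\coloneqq\bigl((w_i\beta_i)_i,\ (\beta_i^{-1})_i\bigr).
\]
This is an isomorphism of varieties, with inverse $(\gamma,\alpha)\mapsto((\gamma_i\alpha_i)_i,(\alpha_i^{-1})_i)$. It preserves products: $\gamma_i\alpha_i=w_i$, and the coordinatewise product of the new pair is $w\in W$.

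\textbf{Step 3: check that $\Phi$ matches the incidence loci.} The defining condition for $\hat\Lambda_{W^\perp}$ at $(\gamma,\alpha)$ is $\gamma\in W^\perp$ together with $(\gamma_i\alpha_i)_i\in (W^\perp)^\perp=W$. Under $\Phi$, the pair ``$w\in W$, $w\cdot\beta\in W^\perp$'' is sent exactly to ``$\gamma\in W^\perp$, $\gamma\cdot\alpha\in W$''. Hence $\Phi$ restricts to an isomorphism of $\hat\Lambda_W\cap(V\times\hat\TT_{E^\star})$ with $\hat\Lambda_{W^\perp}\cap(V^\star\times\hat\TT_E)$.

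\textbf{Step 4: pass from $\hat\Lambda$ to $\hat\Lambda_0$ (expected main obstacle).} We must show $\Phi$ carries the component $\hat\Lambda_{W,0}$ to $\hat\Lambda_{W^\perp,0}$, not merely $\hat\Lambda_W$ to $\hat\Lambda_{W^\perp}$. Recall $\hat\Lambda_W=(W\times\{0\})\cup\hat\Lambda_{W,0}$ and $\hat\Lambda_{W,0}=\Var(I_W:I_{W,u})$.

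The component $W\times\{0\}$ does not meet $V\times\hat\TT_{E^\star}$, since $\beta=0$ is not in the torus. By the symmetric decomposition of $\hat\Lambda_{W^\perp}$, the extra component there does not meet $V^\star\times\hat\TT_E$ either.

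It remains to identify the restrictions as schemes. Two facts are needed:
\begin{itemize}
\item $I_W$ is radical (shown above via the initial ideal), so $\hat\Lambda_W$ restricted to the open set is a reduced scheme.
\item The ideal quotient $I_W:I_{W,u}$ localizes. Inverting the $x_i$ kills the component $\Var(I_{W,u})$, so $(I_{W,0})_x=(I_W)_x$.
\end{itemize}
Together these show $\hat\Lambda_{W,0}\cap(V\times\hat\TT_{E^\star})=\hat\Lambda_W\cap(V\times\hat\TT_{E^\star})$ as reduced schemes, and likewise on the dual side. Step 3 then gives the claimed isomorphism. Algebraic closedness of $\KK$ is used only to argue pointwise about reduced varieties.
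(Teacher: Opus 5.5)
Your Steps 1--3 are correct, and $\Phi$ is exactly the map $\DD$ used in the paper's proof. The gap is in Step 4, whose key claim is false.

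The component removed by the colon ideal $I_{W,0}=I_W:I_{W,u}$ is $\Var(I_{W,u})=\Var(u_1,\dots,u_r)=\{0\}\times V^\star$, not $W\times\{0\}$. The latter has dimension $r<n$, so it is not a component of the $n$-dimensional complete intersection $\hat\Lambda_W$ at all; it lies inside $\hat\Lambda_{W,0}$. Since $I_{W,u}$ involves no $x_i$, inverting $x_1\cdots x_n$ does not remove this component: $\{0\}\times\hat\TT_{E^\star}$ is a component of $\hat\Lambda_W\cap(V\times\hat\TT_{E^\star})$. Concretely, take $\beta\in\hat\TT_{E^\star}$ with $\psi_W(\beta)\neq 0$. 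Then $(0,\beta)\in\hat\Lambda_W$. But $(0,\beta)\notin\hat\Lambda_{W,0}$, because $\psi_W$ vanishes on $\hat\Lambda_W\setminus\Var(u)$ (there $A D_{[\beta]}A^\trp$ has a nonzero kernel vector) and hence on its closure $\hat\Lambda_{W,0}$. Thus $(I_{W,0})_x\neq(I_W)_x$, and the same failure occurs on the dual side. Step 3 by itself only identifies the larger schemes $\hat\Lambda_W\cap(V\times\hat\TT_{E^\star})$ and $\hat\Lambda_{W^\perp}\cap(V^\star\times\hat\TT_E)$.

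The repair is short. Because $\gamma_i=w_i\beta_i$ with each $\beta_i$ a unit, $\Phi$ maps $\{w=0\}$ onto $\{\gamma=0\}$. In other words, it matches the removed components $\Var(I_{W,u})$ and $\Var(I_{W^\perp,u})$ over the two open sets. Since $I_W$ is radical, $\hat\Lambda_{W,0}$ is the closure of $\hat\Lambda_W\setminus\Var(u)$, and closure commutes with restriction to an open set. Hence $\hat\Lambda_{W,0}\cap(V\times\hat\TT_{E^\star})$ is the closure of $(\hat\Lambda_W\cap(V\times\hat\TT_{E^\star}))\setminus\{w=0\}$, and likewise for $W^\perp$. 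The ambient isomorphism $\Phi$ carries one onto the other, as reduced schemes.

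The paper instead builds the extra equation into its pointwise description of $\hat\Lambda_{W,0}$, using $I_{W,0}=I_W+\ideal{\psi_W}$. It then checks that this equation is preserved via $\psi_W(\beta)=\psi_{W^\perp}(1/\beta)\prod_{i\in E}\beta_i$ (up to a nonzero constant). That identity follows from Proposition~\ref{prop:expand_psi} and the duality of Pl\"ucker coordinates of $W$ and $W^\perp$. Your component-tracking fix avoids needing that identity.
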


\begin{proof}
Since the varieties are integral, it suffices to describe the isomorphism on closed points.

Let $A,A^\perp$ be full-rank matrices with row spans $W,W^\perp$ respectively, and let $D_{[\beta]}$ be a diagonal matrix as before. Then $A$ and $A^\perp$ minimally span each other's kernel, and $(v,\beta)\in \hat\Lambda_{W,0}$ if and only if a) $AD_{[\beta]} A^\trp$ is rank-deficient, b) $v$ is in the row span of $A$, and c) $AD_{[\beta]} [v]=0$. 

Assume that $\beta_i\neq 0$ for all $i\in E$ and define a morphism 
\begin{equation}\label{eqn-duality-map}
\begin{tikzcd}[row sep=-5pt]
\mathllap{\DD\colon}\hat\Lambda_{W,0}\cap (V\times \hat\TT_{E^\star})\ar[r] &
\hat\Lambda_{W^\perp,0}\cap (V^\star\times \hat\TT_E),\\
(v,\beta)\ar[r,mapsto] & (Q_E(\beta,v),1/\beta),\notag
\end{tikzcd}
\end{equation}
where $1/\beta=(1/\beta_1,\ldots,1/\beta_n)$, and $Q_E(-,-)$ is the Hadamard product from \eqref{eqn-hadamard}. Now, a) $A^\perp D_{[1/\beta]}{(A^\perp})^\trp$ is rank-deficient if and only if $AD_{[\beta]}  A^\trp$ is since that is where the determinants $\psi_W(\beta)=\psi_{W^\perp}(1/\beta)\cdot \prod_{i\in E}\beta_i$ vanish. Moreover, if $AD_{[\beta]} [v]=0$ then $A [Q_E(\beta,v)]=AD_{[\beta]} [v]=0$ shows that b) $Q_E(\beta,v)$ is in the row span of $A^\perp$. Finally, $v$ being in the row span of $A$ forces  c)
$A^\perp D_{[1/\beta]}[Q_E(\beta,v)]=A^\perp [v]=0$. Hence the image of $\DD$ is indeed within $\hat\Lambda_{W^\perp,0}\cap (V^\star\times \hat\TT_E)$.
The inverse morphism is given by $(Q_E(\beta^\perp,v^\perp),{\beta^\perp}^{-1})\mapsfrom(v^\perp,\beta^\perp)$.
\end{proof}


The following result of Schwede and Singh will be useful.


\begin{lem}[{\cite[Cor.~A4]{HMS-SS}}]\label{lem-HMS-SS}
An $F$-finite local ring $R$ is $F$-rational provided that  there exists a regular element $f\in R$ 
such that $R/\ideal{f}$  is $F$-injective and $R[1/f]$ is $F$-rational.
\noproof
\end{lem}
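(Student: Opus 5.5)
The plan is the standard deformation argument, run through tight closure of zero in local cohomology. I would use Smith's characterization: an excellent (in particular $F$-finite) local ring $(R,\frakm)$ of dimension $d$ is $F$-rational as soon as $0^*_{H^i_\frakm(R)}=0$ for all $i$. Cohen--Macaulayness is then forced as well, since $R$ is equidimensional here: $R[1/f]$ is $F$-rational, hence normal, and $f$ is regular. So it suffices to fix $i$, put $N\coloneqq 0^*_{H^i_\frakm(R)}$, and show $N=0$.

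\textbf{Step 1 (a test element from $f$).} Since $R$ is $F$-finite and reduced, and $R[1/f]$ is $F$-rational, some power $c=f^m$ is a parameter test element. This follows from the Hochster--Huneke/V\'elez existence theorem for test elements: an element $c$ with $R_c$ $F$-rational has a power that is a parameter test element. Consequently $c\,F^e(\eta)=0$ for every $\eta\in N$ and every $e\ge0$. Moreover $N$ is $F$-stable, since $F(N)\subseteq N$, and $N$ is annihilated by $f^m$.

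\textbf{Step 2 (compatibility of Frobenius with the connecting map).} From $0\to R\xrightarrow{f}R\to R/f\to0$ we obtain the exact sequence
\[
H^{i-1}_\frakm(R/f)\xrightarrow{\delta}H^i_\frakm(R)\xrightarrow{f}H^i_\frakm(R).
\]
Comparing it via Frobenius with the analogous sequence for $f^p$, and then composing with multiplication by $f^{p-1}$, gives the relation
\[
F_R(\delta(\xi))=f^{p-1}\,\delta'(\xi^p)\quad\text{and, after unwinding,}\quad \delta(F_{R/f}(\xi))=f^{p-1}F_R(\delta(\xi)).
\]
I expect this diagram chase, and getting the powers of $f$ exactly right, to be the main technical point.

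\textbf{Step 3 (contradiction).} Suppose $N\ne0$. Since $N$ is killed by $f^m$, it contains some $\eta\ne0$ with $f\eta=0$. Write $\eta=\delta(\xi)$ with $\xi\ne0$. Iterating Step 2 gives
\[
\delta(F^e_{R/f}(\xi))=f^{p^e-1}F^e_R(\eta).
\]
For $p^e-1\ge m$ the right-hand side vanishes by Step 1. Hence $F^e(\xi)\in\ker\delta=f\,H^{i-1}_\frakm(R)/\ldots$, that is, $F^e(\xi)$ lies in the image of $H^{i-1}_\frakm(R)\to H^{i-1}_\frakm(R/f)$.

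To conclude, I would apply the same argument to the $F$-stable submodule $N'\subseteq H^{i-1}_\frakm(R/f)$ generated by such $\xi$ modulo that image. This requires a descending induction on $i$, starting from the top degree, where the image term is handled by the vanishing already established in higher degree. The outcome is that $F^e(\xi)=0$, which contradicts $F$-injectivity of $R/f$. Hence $N=0$, and $R$ is $F$-rational.

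The most delicate part is this final bookkeeping: ensuring that the image of $H^{i-1}_\frakm(R)$ does not obstruct the argument. My first attempt would be to organize the induction on $i$ from $d$ downward, so that tight-closure vanishing is already known in the degrees that feed into the image term.
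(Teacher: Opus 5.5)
The paper gives no proof of this lemma; it quotes it from the Schwede--Singh appendix. So your argument has to stand on its own. Steps 1 and 2 contain the right mechanism. There is one slip: comparing with the $f^p$-sequence gives $F_R(\delta(\xi))=\delta'(\xi^p)$ with no factor $f^{p-1}$, but only the relation $\delta(F_{R/f}(\xi))=f^{p-1}F_R(\delta(\xi))$ is used later, and that one is correct.

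The genuine gap is the end of Step 3. From $\delta(F^e(\xi))=0$ you only learn that $F^e(\xi)$ lies in the image of $\pi\colon H^{i-1}_\frakm(R)\to H^{i-1}_\frakm(R/\ideal{f})$. $F$-injectivity of $R/\ideal{f}$ says nothing about Frobenius on the quotient $H^{i-1}_\frakm(R/\ideal{f})/\operatorname{im}\pi$, so passing to ``$N'$ modulo that image'' leads nowhere. Moreover, the descending induction you propose runs in the wrong direction. The obstruction lives in degree $i-1$, below the degree being treated, so vanishing in degrees $>i$ is irrelevant. Already at the first step $i=d$, the term $\pi(H^{d-1}_\frakm(R))$ is completely uncontrolled. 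What you need is that the \emph{whole} module $H^{i-1}_\frakm(R)$ vanishes, not merely some tight-closure part of it.

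This is obtained by an \emph{ascending} induction. Put $M_i\coloneqq H^i_\frakm(R)$ for $i<d$ and $M_d\coloneqq 0^*_{H^d_\frakm(R)}$. Each $M_i$ is $F$-stable and killed by a fixed power $f^m$:
\begin{itemize}
\item for $M_d$ this is your Step 1;
\item for $i<d$, the Matlis dual of $H^i_\frakm(R)$ is the finitely generated module $\operatorname{Ext}^{n-i}_S(\widehat R,S)$, where $S$ is regular local of dimension $n$ mapping onto $\widehat R$. This module vanishes at every prime not containing $f$, because $R_f$ is Cohen--Macaulay and $R$ is (formally) equidimensional.
\end{itemize}
The base case is $H^0_\frakm(R)=0$, since $f\in\frakm$ is a nonzerodivisor. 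For the inductive step, assume $H^{i-1}_\frakm(R)=0$ and suppose $M_i\neq0$. Apply your Step 3 to some $0\neq\eta\in M_i$ with $f\eta=0$. It yields $F^e(\xi)\in\pi(0)=0$, so $\xi=0$ by $F$-injectivity, hence $\eta=0$, a contradiction. Thus $M_i=0$ for all $i$, which gives Cohen--Macaulayness and then $0^*_{H^d_\frakm(R)}=0$, i.e.\ $F$-rationality.

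Your justification of equidimensionality is also insufficient. Take $R=\KK[[a,b,c]]/\ideal{ab,ac}$ and $f=a+b$: then $f$ is a nonzerodivisor and $R_f$ is regular, yet $R$ is a plane union a line. It is the hypothesis on $R/\ideal{f}$ that rules this out:
\begin{itemize}
\item $F$-injective $F$-finite rings are reduced, hence so is $R$.
\item If $R$ had two minimal primes, normality of $R_f$ forces them to meet only inside $\Var(f)$.
\item Then the images in $R/\ideal{f}$ of one minimal prime and of the intersection of the others meet in a nonzero ideal of square-zero elements. In the example, $R/\ideal{f}\cong\KK[[b,c]]/\ideal{b^2,bc}$.
\end{itemize}
So $R$ is in fact a domain.
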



\begin{thm}\label{thm-cone-Frat}
Let $W$ be a configuration for the matroid $\M$ over an $F$-finite field of positive characteristic. If $\matM$ is connected, then $\hat \Lambda_{W,0}$ is $F$-rational, and in particular normal.
\end{thm}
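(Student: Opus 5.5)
We argue by induction on $n=\abs E$, using Lemma~\ref{lem-HMS-SS} locally at every maximal ideal, with the regular element a coordinate function. Since $F$-rationality localizes and is preserved under faithfully flat base change to the $F$-finite perfect closure issues aside, we may assume $\KK$ algebraically closed wherever Lemma~\ref{lem-affine-duality} is invoked. Write $R=\KK[u,x]/I_{W,0}$, which by Theorem~\ref{thm-hatLambda-eqns} is a Cohen--Macaulay domain. It suffices to prove $F$-rationality at the homogeneous maximal ideal $\frakm$, because $R$ is bigraded and the non-$F$-rational locus is closed and torus-stable; every nonempty closed bihomogeneous subset contains the origin.

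First we deal with the torus part. On $x_1\cdots x_n\ne0$, Lemma~\ref{lem-affine-duality} identifies $\hat\Lambda_{W,0}\cap(V\times\hat\TT_{E^\star})$ with a piece of $\hat\Lambda_{W^\perp,0}$. More usefully, Proposition~\ref{prop:Lambda_toric}-style reasoning, or directly the Jacobian computation of Lemma~\ref{lem:Lambda_Jacobian} combined with Theorem~\ref{thm:X_sing}, should show that $R[1/x_1\cdots x_n]$ is regular away from $u=0$. For $u=0$ we use the Jacobian computation in the proof of Theorem~\ref{thm-hatLambda-eqns}. Points with $u=0$ and $\beta$ singular on $X_W$ need extra care. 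Rather than demanding regularity there, we plan to localize only at a single $x_j$: take $f=x_n$, a regular element since $R$ is a domain and $x_n\notin I_{W,0}$.

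Next, the quotient $R/\ideal{x_n}$. Setting $x_n=0$ in the equations $AD_{[\beta]}A^\trp[u]=0$ and $\psi_W=0$ gives the equations for the deletion data: $\psi_W|_{x_n=0}=\psi_{W/n}$ (the contraction configuration), and the quadrics become those of $\hat\Lambda$ for the configuration $W\backslash n$ or $W/n$ with the extra $u$-variable. We want $R/\ideal{x_n}$ to be $F$-injective. Since $R$ is Cohen--Macaulay, $R/\ideal{x_n}$ is Cohen--Macaulay. It is then enough to show it is $F$-pure, which implies $F$-injective. For this we repeat the Fedder argument of Theorem~\ref{thm-cone-Fpure}: choose the basis so that $n>r$ is not among the pivot columns. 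Then the lead terms $x_iu_i$, $i\le r$, survive modulo $x_n$, the product $Q^{p-1}$ still lies in $(J^{[p]}:J)$ for the complete intersection $J=I_W+\ideal{x_n}$, with lead term $x_n^{p-1}\prod x_i^{p-1}u_i^{p-1}\notin\frakm^{[p]}$, and it descends to the linked ideal $I_{W,0}+\ideal{x_n}$ via \cite[Lem.~3.1]{PandeyTarasova}. \emph{The main obstacle} is verifying that $I_{W,0}+\ideal{x_n}$ is still the link $J:I_{W,u}$, or at least contains it appropriately. This is where we would check that $x_n$ is regular on $R/I_{W,u}$ and on $R/I_W$, so that linkage is preserved modulo $x_n$.

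Finally, $R[1/x_n]$. Inverting $x_n$ and using the torus scaling, $R[1/x_n]\cong R'[x_n^{\pm1}]$ where $R'=R/\ideal{x_n-1}$. We would argue that $R'$ is $F$-rational by induction on $n$, and here is the plan. Localize further at each prime. Where some other $x_j$ vanishes, we repeat the same Lemma~\ref{lem-HMS-SS} step with $f=x_j$. This reduces to the locus where all $x_j\ne0$, which Lemma~\ref{lem-affine-duality} identifies with the dual cone $\hat\Lambda_{W^\perp,0}$ near points with all $u$-coordinates nonzero. At such points the same argument applies with roles of $u$ and $x$ swapped, which is what duality buys. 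Iterating reduces to points where both sides lie in their tori, where Proposition~\ref{prop:Lambda_toric} gives smoothness. Hence $R[1/x_n]$ is $F$-rational, Lemma~\ref{lem-HMS-SS} applies, and normality follows since $F$-rational rings are normal.
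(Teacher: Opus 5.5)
\textbf{There is a genuine gap in your treatment of $R[1/x_n]$.} Your localization at the $x_j$ is essentially the paper's induction on $\nu(\frakp)$. It correctly reduces everything to $F$-rationality of $\hat\Lambda_{W,0}$ along the locus $V\times\hat\TT_{E^\star}$, but you never establish $F$-rationality there.

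Lemma~\ref{lem-affine-duality} does not swap the roles of $u$ and $x$. It imposes no condition on the first factor. Since $\beta$ is a unit, $F(Q_E(\beta,v))=F(v)$, and in particular $\DD(0,\beta)=(0,1/\beta)$. So iterating duality never moves a point with $F(v)\neq\emptyset$ into the big torus, where Proposition~\ref{prop:Lambda_toric} would apply.

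These points genuinely matter. Take $(0,\beta)$ with $\beta\in\hat X_W^{\nsm}\cap\hat\TT_{E^\star}$, which is nonempty in general. There the $x$-derivatives of the $q_i$ vanish, $\nabla\psi_W(\beta)=0$, and $\rank[\beta\circ Q_W]\le r-2$ by Theorem~\ref{thm:X_sing}. Hence $\hat\Lambda_{W,0}$ is singular at such points. Every $x_j$ is a unit there, so localizing at $x_j$ gains nothing. Localizing at a first-factor coordinate $\ell_j$ would require $F$-injectivity of $\KK[u,x]/(I_{W,0}+\ideal{\ell_j})$. That ring is not the coordinate ring of any $\hat\Lambda$ of a minor, and nothing controls it. Relatedly, the induction on $n$ that you announce is never actually invoked.

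\textbf{The missing idea} is to route the induction through the dual configuration, using Tutte's theorem \cite[Thm.~4.3.1]{Oxl11}: for connected $\M$ and any $i$, either $\M\backslash i$ or $\M/i$ is connected.
\begin{itemize}
\item If $\M\backslash i$ is connected, then $\hat\Lambda_{W\backslash i,0}=\hat\Lambda_{W,0}\cap\Var(x_i)$ is $F$-rational by induction. $F$-rationality deforms \cite[Thm.~4.2]{HH-TAMS94}, which gives $F$-rationality near the origin and hence everywhere by gradedness. Lemma~\ref{lem-HMS-SS} is not needed in this case.
\item If $\M/i$ is connected, then $\M^\perp\backslash i=(\M/i)^\perp$ is connected. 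The first case plus induction then makes $\hat\Lambda_{W^\perp,0}$ $F$-rational \emph{everywhere}. Lemma~\ref{lem-affine-duality} transports this to all of $\hat\Lambda_{W,0}\cap(V\times\hat\TT_{E^\star})$, including the points with $u=0$.
\end{itemize}
Only after this does your $x_j$-localization with Lemma~\ref{lem-HMS-SS} finish the proof.

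\textbf{A smaller slip.} With $\psi_W=\sum_B\det(\pi_B)^2\prod_{i\in B}x_i$, setting $x_n=0$ gives $\psi_{W\backslash n}$ (the deletion), not $\psi_{W/n}$. Since $n$ is not a coloop, $\KK[u,x]/(I_{W,0}+\ideal{x_n})$ is just the coordinate ring of $\hat\Lambda_{W\backslash n,0}$. Its $F$-purity is Theorem~\ref{thm-cone-Fpure}, which holds for arbitrary, possibly disconnected, configurations. So your ``main obstacle'' disappears.
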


\begin{proof}
It is enough to show this over algebraically closed fields by a result of V\'elez \cite[p.~440]{Velez1995} (see also \cite{DattaMurayama}). 
The normality claim follows from $F$-rationality in general by \cite[Thm.~4.2]{HH-TAMS94}. 

By Theorem \ref{thm-cone-Fpure}, $\hat\Lambda_{W,0}$ is $F$-pure for all $W$, connected or otherwise, and so  every local ring of $\hat\Lambda_{W,0}$ is also $F$-injective for all $W$. 
The blueprint of the proof of Theorem \ref{thm-cone-Frat} is the same as the one for \cite[Thm.~1.2]{BW2024}, by induction on the number $n=\abs{E}$. The main tool is Tutte's theorem (see \cite[Thm.\ 4.3.1]{Oxl11}) assuring that for a connected matroid $\matM$ and any edge $i\in E$, either $\matM\backslash i$ or $\matM/i$ is connected. 

The inductive base case is when the rank of $\matM_W$ is 1. Then it is immediate that the defining ideal of $\hat\Lambda_{W,0}$ is generated by $\psi_W$. But as $\matM$ is then $\matU_{1,n}$, $\psi_W$ is (linear, hence) smooth.

Now suppose that $M\backslash i$ is connected for some $i\in E$. Then $i$ is not a coloop of $\matM$ and hence $\rank(\matM)=\rank(\matM\backslash i)$. Thus, $[Q_W]$ and $[Q_{W\backslash i}]$ agree modulo $x_i$, and likewise the ideal $I_{W,0}$ describing $\hat\Lambda_{W,0}$ inside $V\times V^\star$ agrees modulo $x_i$ with $I_{W\backslash i,0}$. In other words, $\hat\Lambda_{W\backslash i,0}$ is the hyperplane section $\hat\Lambda_{W,0}\cap\Var(x_i)$ inside $V\times V^\star$. In the local ring at the origin, $F$-rationality of $\hat\Lambda_{W,0}$ then follows from that of $\hat\Lambda_{W\backslash i,0}$ at the origin by \cite[Thm.~4.2]{HH-TAMS94}.
This implies that $\hat\Lambda_{W,0}$ is $F$-rational in a neighborhood of the origin since the $F$-rational locus is open in our rings by a theorem of V\'elez, \cite{Velez1995}.  But since $\hat\Lambda_{W,0}$ is defined by a standard graded ideal, $F$-rationality near the origin implies global $F$-rationality.

We have reduced to showing that $\hat\Lambda_{W,0}$ is $F$-rational if $\matM$ and $\matM/i$ are connected. Since matroid duality exchanges restriction and deletion and also preserves connectedness, $\matM^\perp$ has the connected deletion $\matM^\perp\backslash i=(\matM/i)^\perp$. We may hence assume by induction on $|E|$ that the corresponding $\hat\Lambda_{W^\perp,0}$ is $F$-rational.  By Lemma \ref{lem-affine-duality}, the local rings of $\hat\Lambda_{W,0}$ in which  $\prod_{i\in E}x_i$ is a unit are thus all $F$-rational. 

Let $\hat\Lambda^\Frat_{W,0}$ be the $F$-rational locus of $\hat\Lambda_{W,0}$. 
Let $R=\KK[V\times V^\star]$, and 
for a prime ideal $\frakp$ containing the defining ideal $I_{W,0}$ of $\hat\Lambda_{W,0}$, set
\[
\nu(\frakp)\coloneqq\#\{i\in E\mid x_i\in \frakp\}.
\]
Then define, for $t\in\{0,\ldots,n\}$, the sets 
\[
P_t\coloneqq\{\frakp\in\Spec(R/I_{W,0})\mid \nu(\frakp)\le t\}.
\]
The previous paragraph shows for $t=0$ the implication $[\frakp\in P_t]\Rightarrow[\frakp\in\hat\Lambda^\Frat_{W,0}]$. We show now by induction corresponding statements for all $t$. For, suppose that $\frakp\in P_t\setminus P_{t-1}$ and choose $i\in E$ with $x_i\in\frakp$. Then the prime ideals $\frakq$ in $R$ that correspond to prime ideals in $(R/I_{W,0})[1/x_i]$ have $\nu(\frakq)<t$, and  it follows from induction that $(R/I_{W,0})_\frakq$ is $F$-rational. Since $F$-rationality is a local property,  the ring $(R/I_{W,0})_\frakp[1/x_i]$ is $F$-rational. On the other hand, reduction of $(R/I_{W,0})_\frakp$ by $x_i$ is the localization at $\frakp$ of $R/(I_{W,0}+Rx_i)$. But the latter is the local ring of $\hat\Lambda_{W\backslash i,0}$ at $\frakp (R/(x_i))$, which is $F$-pure and hence $F$-injective. By Lemma \ref{lem-HMS-SS}, $\hat\Lambda_{W,0}$ is $F$-rational at $\frakp$. This concludes the inductive step for the primes in $P_t$, and hence the proof of the theorem.
\end{proof}


\begin{rmk}
Suppose $Y$ is a variety of finite type over a field of  characteristic zero, and $\pi\colon \tilde Y\to Y$ is a resolution of singularities. Then $Y$ has \emph{rational singularities} if the canonical map to the derived direct image $\sO_Y\to R\pi_*(\sO_{\tilde Y})$ is a quasi-isomorphism. This entails normality of $Y$ as well as the vanishing of the  higher derived direct images. A rational singularity is du Bois and Cohen--Macaulay. We refer the reader to \cite{Kempf-survey} for some other aspects of rational singularities.
\end{rmk}


The following result is a relative of Theorem \ref{thm-cone-Fpure}. 
\begin{cor}\label{cor:ratsings}
In characteristic zero, for connected $\matM_W$, 
the affine cone $\hat\Lambda_{W,0}$ has rational singularities and is, in particular, normal.
\end{cor}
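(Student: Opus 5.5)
The plan is to deduce the statement from Theorem~\ref{thm-cone-Frat} by reduction modulo $p$, using Smith's theorem that a variety of \emph{$F$-rational type} has rational singularities \cite{Smith-AJM} (together with Hara's converse in \cite{Hara-AJM}, which is not needed here). Concretely, let $\KK$ have characteristic zero. Since rational singularities are stable under field extension and descend along faithfully flat base change, we may assume $\KK$ is a finitely generated field extension of $\mathbb{Q}$, or even pass to $\CC$ and back. All the data live over a finitely generated $\ZZ$-subalgebra: the matrix $A$ realizing $W$, with entries in some finitely generated $\ZZ$-algebra $S\subseteq\KK$. Over $S$ we consider the ideal $I_W+\ideal{\psi_W}\subseteq S[u,x]$, which gives a spreading out $\tilde R$ of $R=\KK[u,x]/I_{W,0}$, using the description $I_{W,0}=I_W+\ideal{\psi_W}$ of Theorem~\ref{thm-hatLambda-eqns}.

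The key step is to check that for closed points $\mathfrak{s}$ in a dense open subset of $\Spec S$, the fibre $\tilde R\otimes_S\kappa(\mathfrak{s})$ is exactly the ring $\kappa(\mathfrak{s})[u,x]/I_{W_{\mathfrak s},0}$ attached to the reduced configuration $W_{\mathfrak s}$. Here $W_{\mathfrak s}$ is the row span of $A$ mod $\mathfrak s$, and it must again realize the connected matroid $\M$. For this, invert in $S$ all nonzero maximal minors of $A$ (so that the bases are preserved). Then $\M_{W_{\mathfrak s}}=\M$ is connected, and the formula $I_{W_{\mathfrak s},0}=I_{W_{\mathfrak s}}+\ideal{\psi_{W_{\mathfrak s}}}$ holds over every field by Theorem~\ref{thm-hatLambda-eqns}. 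The residue fields $\kappa(\mathfrak s)$ are finite, hence perfect and $F$-finite. Theorem~\ref{thm-cone-Frat} then says each such fibre is $F$-rational. Thus $\hat\Lambda_{W,0}$ has dense $F$-rational type.

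By Smith's theorem, $F$-rational type implies that $\Spec R$ has rational singularities; in particular it is normal and Cohen--Macaulay, consistent with Theorem~\ref{thm-hatLambda-eqns}.

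The main obstacle I expect is bookkeeping in the spreading out. One must ensure that the reductions are \enquote{the same} construction, i.e.\ that flatness/generic freeness of $\tilde R$ over $S$ holds and that reduction commutes with forming $I_{W,0}$. I would handle this via generic freeness after shrinking $S$, and by using the explicit generators from Theorem~\ref{thm-hatLambda-eqns}, which are defined over $S$ uniformly, rather than the colon description. Finally, the passage from $\KK$ to a finitely generated subfield and back uses that rationality of singularities is insensitive to extension of the ground field of characteristic zero.
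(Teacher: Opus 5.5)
Your proposal is correct and takes the same route as the paper. The paper also reduces modulo $p$ to configurations for the same connected matroid, applies Theorem~\ref{thm-cone-Frat} to see that $\hat\Lambda_{W,0}$ has $F$-rational type, and concludes by Smith's theorem; you make precise what the paper compresses into \enquote{lift it to an integer variety}, namely spreading out over a finitely generated $\ZZ$-algebra, inverting the nonzero maximal minors, and using the explicit generators $I_W+\ideal{\psi_W}$ so that reduction commutes with forming $I_{W,0}$. You are also right that only Smith's direction is needed, not Hara's converse.
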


\begin{proof}
Given a configuration $W$ over a field $\KK$ of characteristic zero, lift it to an integer variety.  Then for infinitely many prime numbers $p$ the reduction modulo $p$ will be a configuration for $\matM$, and by Theorem  \ref{thm-cone-Frat} the corresponding $\hat \Lambda_{W,0}$ over $\ZZ/p\ZZ$ is an $F$-rational reduction of the model of $\hat \Lambda_{W,0}$ over $\KK$. This makes $\hat\Lambda_{W,0}$ over $\KK$ a variety of \emph{$F$-rational type}. 
K.~Smith and N.~Hara proved that $F$-rational type is equivalent to  rational singularities (see  \cite{Smith-AJM,TakagiWatanabe,HaraAJM}). 
\end{proof}


\begin{cor}\label{cor:strong_Fregular}
If $\matM$ is connected, then for any of its configurations $W$ over an $F$-finite field $\KK$, the image of $\hat\Lambda_W\setminus\Var(u)$ in $\PP W\times V^\star\subseteq \PP V\times V^\star$ is strongly $F$-regular.
\end{cor}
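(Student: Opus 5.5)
The plan is to deduce strong $F$-regularity from the $F$-rationality of the cone in Theorem~\ref{thm-cone-Frat}, using the $\mathbb{G}_m$-action on the $u$-variables. Let $Y$ denote the image of $\hat\Lambda_W\setminus\Var(u)$ in $\PP W\times V^\star$. Since $\Var(u)=W\times\{0\}$ is exactly the extra component of $\hat\Lambda_W$, we have $\hat\Lambda_W\setminus\Var(u)=\hat\Lambda_{W,0}\setminus\Var(u)$. Then $Y$ is the quotient of the open set $\hat\Lambda_{W,0}\setminus\Var(u)$ by the free scaling action on $u$. On each standard chart $\{u_k\neq0\}$ of $\PP W$, this quotient splits as a product:
\[
\bigl(\hat\Lambda_{W,0}\setminus\Var(u)\bigr)\cap\{u_k\ne0\}\;\cong\;\bigl(Y\cap\{u_k\ne0\}\bigr)\times\mathbb{G}_m .
\]
To see this, set $u_k=1$ to get a slice; it is transverse because the equations are homogeneous in $u$. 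Equivalently, the coordinate ring $(\KK[u,x]/I_{W,0})[1/u_k]$ is the Laurent ring $S[u_k^{\pm1}]$ over its degree-zero part $S$ in the $u$-grading, and $S$ is the coordinate ring of the affine chart $Y\cap\{u_k\ne0\}$.

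\textbf{Step 1: the cone is $F$-rational away from $\Var(u)$.} By Theorem~\ref{thm-cone-Frat}, $\hat\Lambda_{W,0}$ is $F$-rational. $F$-rationality localizes, so every local ring of $\hat\Lambda_{W,0}[1/u_k]$ is $F$-rational. Since $\KK[u,x]/I_{W,0}$ is a domain, the $\ZZ$-grading gives the identification $\hat\Lambda_{W,0}[1/u_k]=S[u_k^{\pm1}]$, and $S\to S[u_k^{\pm1}]$ is faithfully flat with regular fibres. $F$-rationality descends along faithfully flat maps, by the usual argument with tight closure of parameter ideals, or by citing the Hochster--Huneke descent result. Hence $S$ is $F$-rational, and so $Y$ is $F$-rational on every chart.

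\textbf{Step 2: upgrade $F$-rationality to strong $F$-regularity.} This is the main obstacle, since the two notions agree in the Gorenstein, or more generally $\QQ$-Gorenstein, case but not in general. My first attempt is to work locally and use that $Y$ is locally a complete intersection inside the smooth variety $\PP W\times V^\star$. By Corollary~\ref{cor:Lambda_facts}, $\Lambda_W$ is a complete intersection, and the same codimension count applies to $Y$: on $\{u_k\ne0\}$ it is cut out by the $r$ equations $q_i$ in a smooth ambient space, and $\psi_W$ is redundant there because $I_{W,0}[1/u_k]=I_W[1/u_k]$. Complete intersections are Gorenstein, and for Gorenstein rings $F$-rationality is equivalent to strong $F$-regularity, a result of Hochster--Huneke. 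Combining this with Step 1 gives the claim at every local ring.

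\textbf{Step 3: globalize.} Strong $F$-regularity is a local property for $F$-finite rings, and every chart ring is $F$-finite over the $F$-finite field $\KK$. Therefore $Y$ is strongly $F$-regular. The key points to double-check are the equality $I_{W,0}[1/u_k]=I_W[1/u_k]$, which holds because $I_{W,u}$ becomes the unit ideal after inverting $u_k$, and the descent in Step 1.
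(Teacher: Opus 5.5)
Your argument is correct and follows the paper's proof. It uses the same two ingredients: $F$-rationality of $\hat\Lambda_{W,0}$ from Theorem~\ref{thm-cone-Frat}, and the fact that off $\Var(u)$ the ideal $I_{W,0}$ equals the complete intersection $I_W=(q_1,\dots,q_r)$, so Gorenstein plus $F$-rational gives strong $F$-regularity. You add two small things: you make explicit the passage from the cone to its image in $\PP W\times V^\star$ via the faithfully flat map $S\to S[u_k^{\pm1}]$, which the paper leaves implicit, and you skip the paper's preliminary reduction to $\bar\KK$ (descent of strong $F$-regularity along field extensions), which is harmless because Theorem~\ref{thm-cone-Frat} is stated over $F$-finite fields.

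One small slip: the $u_i$ are coordinates on $W$, so $\Var(u)$ is $\{0\}\times V^\star$, not $W\times\{0\}$. Your conclusion $\hat\Lambda_W\setminus\Var(u)=\hat\Lambda_{W,0}\setminus\Var(u)$ is unaffected.
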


\begin{proof}
Strong $F$-regularity descends along field extensions, \cite[Lem.~3.17]{Hashimoto}. We may hence assume that $\KK=\bar\KK$. 

Away from $\Var(u)$, $I_{W,0}$ is the complete intersection of the entries $q_1,\dots,q_r$ of $[Q_W][u]$. But in Gorenstein rings, $F$-rationality is equivalent to strong $F$-regularity (see \cite[Section~4]{MaPolstra}).
\end{proof}

\begin{rmk}
When $\KK = \CC$, Corollary \ref{cor:strong_Fregular} implies that the variety therein, as well as $\Lambda_W \subseteq \PP W \times \PP V^\star$, have rational singularities. A concurrent proof appears in \cite[Cor.~4.1]{BM}, as a byproduct of a study of the singularities and contact loci of a larger genre of incidence varieties. Such methods cannot recover any of our $F$-singularity results, since contact loci are less understood in characteristic $p$.
\end{rmk}

\subsection{Nash blow-up and projective duality}\label{subsec:X_dual}

In this subsection, we assume that $\KK$ is algebraically closed of characteristic zero.
Bloch, Esnault and Kreimer~\cite[\S4]{BEK06} give an interpretation of $X_W$ as a dual variety under a very restrictive (and unstated) assumption that a certain map is an embedding. Details can be found in \cite[Prop.~2.2.2, Thm.~4.4.1]{PattersonThesis}. 
We interpret $X_W$ as a dual variety without any additional hypothesis.
Our investigation reveals the normalization of the Nash blow-up of $X_W$ to be $\Lambda_W$. 


First, we recall the general construction of incidence and dual varieties and Nash blow-ups.
Let $Z \subseteq \PP V$ be a projective variety and consider the projection $p_2\colon\PP V \times \PP V^\star \to \PP V^\star$.
The \emph{projectivized conormal bundle} of its smooth part $Z^\sm \subseteq \PP V$ is given by
\begin{equation*}
    \Xi_Z^\prime \coloneqq \set{(z, \beta) \in Z^{\sm} \times \PP V^\star \mid \beta\vert_{T_zZ}=0}.
\end{equation*}
Its Zariski closure in $\PP V\times\PP V^\star$ is the \emph{incidence variety} of $Z$, 
\begin{equation*}
    \Xi_Z \coloneqq \overline{\Xi_Z^\prime}\subseteq\PP V\times\PP V^\star
\end{equation*}
It is equidimensional of dimension
\begin{equation}\label{eq:dimXi}
\dim(\Xi_Z)=\dim(\Xi'_Z)=n-2
\end{equation}
with the same number of irreducible components as $Z$.
The \emph{dual variety} of $Z$ is given by (see~\cite[Ch.~1, 1.A, 3.A.(1)]{GKZ08}),
\begin{equation}\label{eq:Xi_dual}
Z^\vee \coloneqq\overline{\set{ \beta\in\PP V^\star \mid \exists z \in Z^\sm\colon\beta\vert_{T_zZ}=0}}= p_2(\Xi_Z)\subseteq \PP V^\star.
\end{equation}
Identifying $V=V^{\star\star}$, the biduality theorem (see \cite[Ch.~1, Thm.~1.1, (3.1), (3.2)]{GKZ08}) states that, 
\begin{equation}\label{eq:Xi_bidual}
\Xi_{Z^\vee}=\Xi_{Z},\quad Z^{\vee\vee}=Z.
\end{equation}

A related object is the Nash blow-up of $Z$, defined whenever $Z$ is equidimensional. 
From now on, we restrict to the case where $Z$ is a hypersurface.
The \emph{Gauss map} sends each smooth point of $Z$ to its tangent space:
\begin{equation*}
    \eta_Z: Z^\sm \to \PP V^\star, \quad z \mapsto T_zZ,
\end{equation*}
where we use $T_zZ \subseteq \PP V^\star$ as shorthand for the unique $\beta \in \PP V^\star$ vanishing on $T_zZ$. The \emph{Nash blow-up} is the Zariski closure in $Z\times \PP V^\star$ of its graph $\Gamma_{\eta_Z}=\Xi_Z'$, and agrees with the incidence variety (in the hypersurface case):
\begin{equation*}
    \Xi_Z=\Nash(Z) \coloneqq \overline{\Gamma_{\eta_{Z}}} \subseteq Z \times \PP V^\star.
\end{equation*}

\smallskip

Returning now to the setting of this paper, write $\ol{w}$ (resp. $\ol{v}$) for the image of $w \in W$ inside $\PP W$ (resp. the image of $v \in V$ inside $\PP V$).
Consider the morphism defined by the Hadamard square\
\begin{equation}\label{eq:square}
\iota_W\colon \PP W\to\PP V,\quad {w}={\sum_{j \in E} \ell_j(w) x_j}\mapsto {\sum_{j\in E}(\ell_j(w))^2x_j}={Q_W(w,w)}.
\end{equation}
Note that $\iota_W$ is finite since it is quasifinite and projective.
In \cite{BEK06} this map is assumed to be a closed immersion, which is not true in general.


\begin{exa}
Consider the configuration $W\subseteq V$ spanned by the rows of the matrix
\[
A=\begin{pmatrix}
1 & 1 & 0 & 0\\
0 & 0 & 1 & 1\\
1 & 2 & 4 & 8\\
\end{pmatrix}.
\]
Then $W$ represents the connected uniform matroid $\U_{3,4}$ but $\iota_W$ is not injective.
\end{exa}


The differential of $\iota_W$ is closely related to $Q_E^\star$ as follows.


\begin{lem}\label{lem:iota}
Pick coordinates $z_1,\dots,z_r$ corresponding to a basis $w^1,\dots,w^r$ of $W$.
For $i=1,\dots,r$, $w\in W$ and $\beta\in V^\star$, we have 
\[
\beta(\frac{\partial Q_W(w,w)}{\partial z_i})
=2Q^\star_W(w,\beta)(w^i).
\]
In particular, $(\ol{w},\beta)\in\Lambda_W$ if $(\iota_W(\ol{w}),\beta)\in\Xi_{Z_W}'$.
\end{lem}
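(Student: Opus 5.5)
The identity is a direct computation in coordinates, and the second claim follows from it by unwinding the definition of $\Xi_{Z_W}'$ (here $Z_W$ denotes the image $\iota_W(\PP W)\subseteq\PP V$).

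\emph{Step 1: the derivative.} Write $w=\sum_k z_k w^k$, so that $\ell_j(w)=\sum_k z_k\ell_j(w^k)$. Then $Q_W(w,w)=\sum_{j\in E}\ell_j(w)^2x_j$, and differentiating each coordinate gives
\[
\frac{\partial Q_W(w,w)}{\partial z_i}=\sum_{j\in E}2\ell_j(w)\ell_j(w^i)x_j=2Q_W(w,w^i).
\]
Since $\beta$ is linear, applying it yields $\beta(\partial Q_W(w,w)/\partial z_i)=2\beta(Q_W(w,w^i))$. By the description of $Q_W^\star$ as the dual of $Q_W$ with the first argument as a parameter, $\beta(Q_W(w,w^i))=Q_W^\star(w,\beta)(w^i)$. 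This proves the displayed identity. Symmetry of $Q_E$ is all that is used.

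\emph{Step 2: the incidence consequence.} Suppose $(\iota_W(\ol w),\beta)\in\Xi'_{Z_W}$. Then $\iota_W(\ol w)$ is a smooth point of $Z_W$ and $\beta$ vanishes on the embedded tangent space $T_{\iota_W(\ol w)}Z_W$. The affine cone over the image of $d\iota_W$ at $\ol w$ is spanned by the derivatives $\partial Q_W(w,w)/\partial z_i$ together with $Q_W(w,w)$ itself. Each of these lies in the affine tangent cone of $Z_W$, because $Z_W$ contains the image of $\iota_W$ and all image points are limits along curves in the image. Hence $\beta$ kills every $\partial Q_W(w,w)/\partial z_i$. By Step 1, $Q_W^\star(w,\beta)(w^i)=0$ for all $i$, so $Q_W^\star(w,\beta)=0$. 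Thus $(w,\beta)\in\hat\Lambda_W$ with $w,\beta\ne0$, and so $(\ol w,\beta)\in\Lambda_W$.

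The only subtle point is the containment of the image of the differential in the tangent space. This holds for any morphism into $Z_W$ at points mapping to smooth points, since $T_{\iota(p)}Z_W\supseteq d\iota_p(T_p\PP W)$ holds for morphisms of varieties. For the projective setting, I would either pass to affine cones or work in an affine chart of $\PP V$; there the containment is standard. Characteristic zero, assumed in this subsection, ensures that the factor $2$ is harmless.
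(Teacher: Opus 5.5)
Your proof is correct and follows the paper's argument: the same chain-rule computation $\partial Q_W(w,w)/\partial z_i=2Q_W(w,w^i)$ followed by applying $\beta$, and the same observation that these derivatives lie in (the cone over) $T_{\iota_W(\ol w)}Z_W$ when $\iota_W(\ol w)\in Z_W^\sm$. You spell out details the paper leaves implicit: the tangent-space containment via the differential of the lifted map $w\mapsto Q_W(w,w)$, and the fact that the factor $2$ is harmless in characteristic zero.
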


\begin{proof}
By assumption, $w=\sum_{i=1}^rz_i(w)w^i$ for all $w\in W$.
Hence, $w_j=\sum_{i=1}^rz_i(w)w^i_j$ and $\frac{\partial}{\partial z_i}w_j=w^i_j$ for all $j\in\set{1,\dots,r}$.
The chain rule thus gives
\[
\frac{\partial Q_W(w,w)}{\partial z_i}
= \frac{\partial}{\partial z_i}\sum_{j\in E}w_j^2x_j
= \sum_{j\in E}2w_jw_j^ix_j.
\]
Applying $\beta\in V^*$ yields the claimed equality.
The particular claim follows since the projective image of $\frac{\partial Q_W(w,w)}{\partial z_i}$ belongs to $T_{\iota_W(w)}Z_W$ if $\iota_W(w)\in Z_W^\sm$.
\end{proof}


Since $\PP W$ is complete, $\iota_W$ is a closed morphism.
Its image, denoted by
\[
Z_W\coloneqq \iota_W(\PP W)\subseteq\PP V,
\]
is an irreducible variety with irreducible incidence variety $\Xi_{Z_W}$ projecting to the dual variety $Z_W^\vee$.
We show that $X_W$ and $Z_W$ are mutually dual:


\begin{prp}\label{prop:incidence}
Let $W\subseteq V$ be a connected configuration. 
Then $(\iota_W \times \id)(\Lambda_W) = \Xi_{Z_W}$, $Z_W^\vee=X_W$ and $X_W^\vee = Z_W$.
In particular, $\Xi_{Z_W}=\Xi_{X_W}=\Nash(X_W)$.
\end{prp}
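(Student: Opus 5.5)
The plan is to prove $(\iota_W\times\id)(\Lambda_W)=\Xi_{Z_W}$ first and to read off the duality statements from it. Both sides are irreducible closed subvarieties of $\PP V\times\PP V^\star$. The left side is irreducible and closed because $\Lambda_W$ is integral (Corollary~\ref{cor:Lambda_facts}) and $\iota_W\times\id$ is finite. The right side is irreducible because $Z_W$ is, and it has dimension $n-2$ by \eqref{eq:dimXi}. Since $\Lambda_W$ has codimension $r$ in $\PP W\times\PP V^\star$, its dimension is $(r-1)+(n-1)-r=n-2$, and a finite map preserves dimension. So the two sides have equal dimension, and it suffices to prove the inclusion $\Xi_{Z_W}\subseteq(\iota_W\times\id)(\Lambda_W)$.

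For that inclusion I would use Lemma~\ref{lem:iota}. Let $U\subseteq Z_W^\sm$ be a dense open subset over which the finite map $\iota_W$ is étale; such $U$ exists in characteristic zero by generic smoothness. Take a point $z=\iota_W(\ol w)\in U$ with $\ol w\in\PP W$. Then $T_zZ_W$ is spanned by the images of $\partial Q_W(w,w)/\partial z_i$. If $(z,\beta)\in\Xi'_{Z_W}$, Lemma~\ref{lem:iota} gives $Q_W^\star(w,\beta)=0$, so $(\ol w,\beta)\in\Lambda_W$. Hence $\Xi'_{Z_W}\cap p_1^{-1}(U)\subseteq(\iota_W\times\id)(\Lambda_W)$. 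The left side is dense in $\Xi_{Z_W}$, and the right side is closed, so the inclusion holds. By equality of dimensions and irreducibility, the two varieties are equal.

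The remaining claims follow formally. Applying $p_2$ and using \eqref{eq:Xi_dual} gives
\[
Z_W^\vee=p_2(\Xi_{Z_W})=p_2(\Lambda_W)=X_W,
\]
where the last equality is surjectivity in Proposition~\ref{prop:pr2_over_Xsmooth}. Biduality \eqref{eq:Xi_bidual} then gives $X_W^\vee=Z_W^{\vee\vee}=Z_W$ and $\Xi_{X_W}=\Xi_{Z_W}$. Since $X_W$ is a hypersurface, $\Xi_{X_W}=\Nash(X_W)$, up to the identification $V=V^{\star\star}$ that swaps the factors.

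The main obstacle is the first step, specifically checking that $\dim\Xi_{Z_W}=n-2$. This requires $\dim Z_W=r-1$, which holds because $\iota_W$ is finite, together with the general formula \eqref{eq:dimXi} for the conormal variety. I would also need to confirm that the tangent-space description at $\iota_W(\ol w)$ is valid on a dense open set. This is where characteristic zero enters: generic smoothness makes $d\iota_W$ surjective onto $T_zZ_W$ there. If the dimension count turned out to be delicate, I would instead argue via Proposition~\ref{prop:pr2_over_Xsmooth}. There I would show directly that for $\beta\in X_W^\sm$ with kernel vector $w$, the hyperplane $\beta$ is tangent to $Z_W$ at $\iota_W(\ol w)$, again using Lemma~\ref{lem:iota}.
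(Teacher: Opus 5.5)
Your proposal is correct and follows essentially the paper's argument: Lemma~\ref{lem:iota} supplies the inclusion, the common dimension $n-2$ together with irreducibility upgrades it to an equality, and surjectivity of $p_2$ plus biduality gives the rest. The only cosmetic difference is that you compare images inside $\PP V\times\PP V^\star$, whereas the paper compares $(\iota_W\times\id)^{-1}(\Xi'_{Z_W})$ with an open subset of $\Lambda_W$ and then takes closures.

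Two small remarks. First, the \'etale locus $U$ is unnecessary. Lemma~\ref{lem:iota} only uses that the derivatives $\partial Q_W(w,w)/\partial z_i$ lie in $T_zZ_W$, not that they span it, so you get $\Xi'_{Z_W}\subseteq(\iota_W\times\id)(\Lambda_W)$ outright. Second, $\dim\Xi_{Z_W}=n-2$ holds for any irreducible $Z\subseteq\PP V$ without knowing $\dim Z_W$, and characteristic zero genuinely enters only through the biduality theorem.
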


\begin{proof}
By construction, $\Xi'_{Z_W}\subseteq Z_W^\sm\times\PP V^\star$ and hence
\[
(\iota_W\times\id)^{-1}(\Xi'_{Z_W})\subseteq(\iota_W\times\id)^{-1}(Z_W^\sm\times\PP V^\star)=\iota_W^{-1}(Z_W^\sm)\times\PP V^\star
\]
is a closed subvariety.
It is contained in $\Lambda_W$ by Lemma~\ref{lem:iota}.
As a consequence
\begin{equation}\label{eq:XiZ}
(\iota_W\times\id)^{-1}(\Xi'_{Z_{W}})\subseteq\Lambda_W\cap(\iota_W^{-1}(Z_W^\sm)\times\PP V^\star)\subseteq\Lambda_W
\end{equation}
is a (nonempty) closed subvariety of an open subset of $\Lambda_W$.
By Corollary~\ref{cor:Lambda_facts} and \eqref{eq:dimXi} for $Z=Z_W$, $\Lambda_W$ and the dense open are irreducible of dimension 
\[
\dim\Lambda_W=n-2=\dim\Xi'_{Z_W}.
\]
This equals the dimension of the left hand side of \eqref{eq:XiZ}, due to finiteness of $\iota_W$.
We conclude that the first inclusion in \eqref{eq:XiZ} is in fact an equality.
Since $\iota_W\times\id$ is projective, and hence closed, it commutes with taking Zariski closure.
Using irreducibility of $\Xi_W$ and $\Lambda_W$, it follows that
\begin{align*}
\Xi_{Z_W}=\overline{\Xi'_{Z_W}}
&=\overline{(\iota_W\times\id)\bigl((\iota_W\times\id)^{-1}(\Xi'_{Z_W})\bigr)}\\
&=\overline{(\iota_W\times\id)\bigl(\Lambda_W\cap(\iota_W^{-1}(Z_W^\sm)\times\PP V^\star)\bigr)}\\
&=(\iota_W\times\id)\bigl(\overline{\Lambda_W\cap(\iota_W^{-1}(Z_W^\sm)\times\PP V^\star)}\bigr)=(\iota_W\times\id)(\Lambda_W).
\end{align*}
Using \eqref{eq:Xi_dual} and Proposition~\ref{prop:pr2_over_Xsmooth} we find that
\[
Z_W^\vee=
p_2(\Xi_{Z_W})=
(p_2\circ(\iota_W\times\id))(\Lambda_W)=
p_2(\Lambda_W)=
X_W.
\]
By the biduality theorem \eqref{eq:Xi_bidual}, then
\[
X_W^\vee=Z_W^{\vee\vee}=Z_W,\quad\Xi_{Z_W}=\Xi_{Z_W^\vee}=\Xi_{X_W}=\Nash(X_W).\qedhere
\]
\end{proof}


We interpret $\Lambda_W$ as the normalized Nash blow-up $\wt{\Nash(X_W)}$ of $X_W$, show that both have rational singularities and describe the projection map $\Lambda_W\to X_W$.

\begin{thm} \label{thm:normalize} 
Let $W\subseteq V$ be a connected configuration. 
Then there is the following commutative diagram of irreducible varieties:
\begin{equation}\label{eq:Nash_diag}
\begin{tikzcd}
&\wt{\Nash(X_W)}\ar[dr,"\nu"]\\
\Lambda_W\ar[ur,"\phi","\cong"']\arrow{rr}{\iota_W\times \id}\arrow{dr}[swap]{p_2} && \Nash(X_W)\arrow[swap]{dl}[swap]{p_2} \\
& X_W
\end{tikzcd}
\end{equation}
Both $\Lambda_W$ and $X_W$ have rational singularities and  $R p_{2, *}\mathscr{O}_{\Lambda_W} = \mathscr{O}_{X_W}$.
\end{thm}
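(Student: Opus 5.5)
The plan is to obtain $\phi$ from the universal property of normalization and then read off the singularity statements from the $F$-results of \S\ref{subsec-affinecone}. By Proposition~\ref{prop:incidence}, $(\iota_W\times\id)(\Lambda_W)=\Xi_{Z_W}=\Nash(X_W)$. Hence $\iota_W\times\id$ restricts to a surjective morphism $\Lambda_W\to\Nash(X_W)$, and it is finite because $\iota_W$ is finite.

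Next I will show that this map is birational. Over $X_W^\sm$, Proposition~\ref{prop:pr2_over_Xsmooth} says that $p_2\colon\Lambda_W\to X_W$ is an isomorphism. The map $p_2\colon\Nash(X_W)\to X_W$ is also an isomorphism over $X_W^\sm$, since the Nash blow-up is the closure of the graph of the Gauss map there. Commutativity of the lower triangle, $p_2\circ(\iota_W\times\id)=p_2$, is clear. So $\iota_W\times\id$ is an isomorphism over the dense open $p_2^{-1}(X_W^\sm)$.

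By Corollary~\ref{cor:normality}, $\Lambda_W$ is integral and normal. A finite birational morphism from a normal integral variety onto $\Nash(X_W)$ therefore factors through the normalization $\nu$ by a finite birational morphism $\phi\colon\Lambda_W\to\wt{\Nash(X_W)}$. Since the target is normal, Zariski's main theorem (or simply that a finite birational morphism onto a normal variety is an isomorphism) shows that $\phi$ is an isomorphism. This yields the commutative diagram \eqref{eq:Nash_diag}.

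For the singularities, rationality of $\Lambda_W$ comes from Corollary~\ref{cor:strong_Fregular} and the remark following it. The variety $\Lambda_W$ is locally, up to a smooth factor (dehomogenizing the $\PP V^\star$ coordinates, i.e.\ taking a $\mathbb{G}_m$-quotient of the open set where some $x_j\neq0$), the strongly $F$-regular variety $(\hat\Lambda_W\setminus\Var(u))/\sim$. Reduction mod $p$ then gives strongly $F$-regular, hence $F$-rational, type, so $\Lambda_W$ has rational singularities by Smith–Hara, as in Corollary~\ref{cor:ratsings}.

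It remains to handle $X_W$ and the pushforward. I would argue that $p_2\colon\Lambda_W\to X_W$ is proper and birational (Proposition~\ref{prop:pr2_over_Xsmooth}) with $\Lambda_W$ rational. Composing with a resolution $\wt\Lambda\to\Lambda_W$ gives $R\pi_*\sO_{\wt\Lambda}=Rp_{2,*}\sO_{\Lambda_W}$. So $X_W$ has rational singularities if and only if $X_W$ is normal and $R^ip_{2,*}\sO_{\Lambda_W}=0$ for $i>0$.

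The main obstacle is this vanishing together with normality of $X_W$. I would try to deduce both from the cone: $\hat\Lambda_{W,0}$ has rational singularities (Corollary~\ref{cor:ratsings}). For the vanishing, I would compare the $\mathbb G_m$-quotient in the $u$-direction, whose good quotient of $\hat\Lambda_{W,0}\setminus\Var(u)$ is the affine version of $\Lambda_W$ over $\hat X_W$. Alternatively, I would use the graded coordinate ring of $\hat\Lambda_{W,0}$: its degree-$0$ part in $u$ is $\KK[x]/\ideal{\psi_W}$, since $I_{W,0}=I_W+\ideal{\psi_W}$ and $I_W$ has no $u$-degree-$0$ elements. Invariants of a reductive group acting on a variety with rational singularities again have rational singularities (Boutot). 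The Veronese-type degree-zero subring $\KK[x]/\ideal{\psi_W}$ is exactly such an invariant ring under the $\mathbb G_m$ scaling $u$. Hence $\hat X_W$, and so $X_W$, has rational singularities, and in particular is normal. With both sides rational and $p_2$ proper birational, $Rp_{2,*}\sO_{\Lambda_W}=\sO_{X_W}$ follows from the standard independence of the resolution. The point I would need to check carefully is the identification of the degree-zero invariant ring as stated.
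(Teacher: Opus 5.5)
Your argument is correct. The construction of $\phi$ and the final step are essentially the paper's: $\phi$ is finite and birational onto $\Nash(X_W)$, factors through $\nu$, and is an isomorphism because the target is normal, and $Rp_{2,*}\sO_{\Lambda_W}=\sO_{X_W}$ follows by composing with a resolution $T\to\Lambda_W$. For $\Lambda_W$ you use strongly $F$-regular type via Corollary~\ref{cor:strong_Fregular}, where the paper uses Corollary~\ref{cor:ratsings} plus openness of the rational locus; either works.

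The genuine difference is for $X_W$. The paper simply cites \cite{BW2024}, an external result on configuration hypersurfaces proved by an induction of the same shape as Theorem~\ref{thm-cone-Frat}. You instead deduce it from Corollary~\ref{cor:ratsings} via Boutot's theorem.

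The identification you flagged holds. The ideal $I_{W,0}=I_W+\ideal{\psi_W}$ is bihomogeneous and $I_W\subseteq(u_1,\dots,u_r)$, so the $u$-degree-$0$ part of $I_{W,0}$ is exactly $\psi_W\KK[x]$. Hence $\KK[x]/\ideal{\psi_W}\to\KK[u,x]/I_{W,0}$ is the inclusion of $\mathbb{G}_m$-invariants, split by projection onto $u$-degree $0$. So $\hat X_W=\hat\Lambda_{W,0}/\!/\mathbb{G}_m$ has rational singularities. The same then holds for $X_W$, since $\hat X_W\setminus\{0\}$ is Zariski-locally $X_W\times\mathbb{G}_m$.

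This route makes the theorem self-contained given \S\ref{subsec-affinecone}. It also explains conceptually why $X_W$ inherits rationality from the cone: $\hat X_W$ is a reductive quotient of $\hat\Lambda_{W,0}$. It further makes your preliminary reduction (normality of $X_W$ plus vanishing of $R^ip_{2,*}\sO_{\Lambda_W}$) unnecessary. The cost is that your descent step is specific to characteristic zero. The paper's citation is shorter to write and rests on an independent theorem about $X_W$ itself, not on the cone.
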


\begin{proof}
By Proposition \ref{prop:incidence}, the morphism $\iota_W \times \id$ in \eqref{eq:Nash_diag} is defined and surjective.
The lower triangle trivially commutes.

By Corollary \ref{cor:normality}, $\Lambda_W$ is an irreducible normal variety.
Since $X_W$ is an irreducible variety, so are $\Nash(X_W)$ and its normalization $\wt{\Nash(X_W)}$.
By the universal property of normalization (see \cite[Prop.~12.44]{GortzWedhornAG1}), the morphism $\iota_W \times \id$ in \eqref{eq:Nash_diag} thus uniquely factors through the normalization morphism $\nu$ of $\Nash(X_W)$ as a morphism $\phi$.
Note that $\phi$ is quasi-finite, since $\iota_W \times \id$ is so.

By Proposition \ref{prop:pr2_over_Xsmooth} and construction, $p_2$ on both $\Lambda_W$ and $\Nash(X_W)$ is an isomorphism over $X_W^{\sm}$, whereas $\nu$ is an isomorphism over the normal locus $\Nash(X_W)^{\nor}\supseteq p_2^{-1}(X_W^{\sm})$.
It follows that $\phi$ is an isomorphism over $(p_2 \circ \nu)^{-1}(X_W^\sm)$ and hence birational.
By Zariski's main theorem (see \cite[\S5.6: (N3)$\implies$(N4)]{MumfordOdaAGII}), it follows that $\phi$ is an isomorphism. 

The claims on rational singularities (which is an open condition) follow from Corollary \ref{cor:ratsings} and \cite[Theorems 1.1, 1.2]{BW2024} (later improved in \cite[Theorem 1.1]{BMW2024}), respectively. 

In order to check that $R p_{2,*} \mathscr{O}_{\Lambda_W} = \mathscr{O}_{X_W}$, pick a resolution of singularities $\tau\colon T \to \Lambda_W$.
By Proposition~\ref{prop:pr2_over_Xsmooth}, then also $p_2 \circ \tau\colon T \to X_W$ is a resolution of singularities. 
Consider the Grothendieck spectral sequence with second page
\begin{equation*}
    R^i p_{2,*} (R^j \tau_* \mathscr{O}_T) \implies R^{i+j} (p_2 \circ \tau)_{*} \mathscr{O}_{T}.
\end{equation*}
Since both $\Lambda_W$ and $X_W$ have rational singularities, the spectral sequence simplifies to
\[
Rp_{2, *} \mathscr{O}_{\Lambda_W} = R(p_2 \circ \tau)_{*} \mathscr{O}_T=\mathscr{O}_{X_W}.\qedhere
\]
\end{proof}


\begin{exa} \label{exa:delA3:minExp}
For the graphic configuration from Example \ref{ex:delA3}, we show that, over $\CC$, the birational modification $p_2: \Lambda_W \to X_W$ made the singularities milder.

For local complete intersections $Z$ embedded in a smooth 
$Y$, the singularity invariant par excellence is the minimal exponent $\tilde\alpha(Z, Y)$. This positive rational number is computed locally using Bernstein--Sato polynomials. The philosophy is that the larger the value of $\tilde\alpha(Z, Y) - \codim_Y(Z)$, the milder the singularities. Note that while minimal exponents depend on the embedding, \cite[Prop~4.14]{VFiltrationMinExpLCI} implies that $\tilde\alpha(Z, Y) - \codim_Y(Z)$ does not.

Let us compute $\tilde\alpha(\Lambda_W, \PP W \times \PP V^\star)$. 
Recall that the biprojective variety $\hat\Lambda_W \subseteq  W \times V^\star$ is cut out by the bihomogeneous defining ideal
\begin{equation*}
    \big(\underbrace{(x_1 + x_4 + x_5)u_1 + x_4 u_2 + x_3 u_3}_{=:h_1}, \, \underbrace{x_4u_1 + (x_2 + x_4)u_2}_{=:h_2}, \, \underbrace{x_5u_1 + (x_3 + x_5)u_3}_{=:h_3} \big)
\end{equation*}
inside $\CC[u_1, u_2, u_3, x_1, x_2, x_3, x_4, x_5]$.
Moreover, $\Lambda_W$ has only two singular points:
$(\alpha_{124},\beta_0)$ and $(\alpha_{135},\beta_0)$.  Let $Z =
\Lambda_W \setminus V(u_2, x_1) \subseteq \big(\PP W \times \PP
V^\star) \setminus V(u_2, x_1) \simeq \Spec(\CC[u_1, u_3, x_2, x_3,
  x_4, x_5]$; let $Z^\prime = \Lambda_W \setminus V(u_3, x_1)
\subseteq \big(\PP W \times \PP V^\star) \setminus V(u_3, x_1) \simeq
\Spec \CC[u_1, u_2, x_2, x_3, x_4, x_5]$. We find that $Z$ is
isomorphic to the hypersurface $D_3 \coloneqq (h_3 = 0) \subseteq
\Spec \CC[u_1, u_3, x_3, x_5]$. There is a formula for the
Bernstein--Sato polynomial of a homogeneous polynomial with an
isolated singularity; in this case, the minimal exponent is the number
of variables divided by the degree. So $\tilde\alpha(D_3, \CC^4) = 2$
and $\tilde\alpha(D_3, \CC^4) - \codim_{\CC^4}(D_3) = \tilde\alpha(Z,
\CC^6) - \codim_{\CC^6}(Z)$, which forces $\tilde\alpha(Z, \CC^6) =
4$. On the other hand, $Z^\prime$ is isomorphic to the hypersurface
$D_2 \coloneqq (h_2 = 0) \subseteq \Spec \CC[u_1, u_2, x_2, x_4]$. As
before we see that $\tilde\alpha(D_2, \CC^4) = 2$ and
$\tilde\alpha(Z^\prime, \CC^6) = 4$. These two computations on charts
stitch together, yielding $\tilde\alpha(\Lambda_W, \PP W \times \PP
V^\star) = 4$.

Using Macaulay2 \cite{M2} to compute Bernstein--Sato polynomials on
the five canonical affine patches of $\PP V^\star$ we see that
$\tilde\alpha(X_G, \PP V^\star) = 3/2$. We conclude the singularities
of $\Lambda_W$ are indeed nicer than those of $X_G$:
\begin{align*}
    1 = \tilde\alpha(\Lambda_W, \PP W \times \PP V^\star) - \codim_{\PP W \times \PP V^\star}(\Lambda_W) 
    >\tilde\alpha(X_G, \PP V^\star) - \codim_{\PP V^\star}(X_G) = 1/2 .
\end{align*}

We can say even more about the improvement of singularities using the
language of higher du Bois (resp. rational) singularities. For the
following statements, we use \cite[Thm~0.4]{SaitoOnBFunction},
\cite[Thm~F]{MPHodgeLocalCohomology},
\cite[Thm~1.3]{VFiltrationMinExpLCI}, \cite[Thm~1.1]{MinExpKRat}. From
$\tilde\alpha(X_G, \PP V^\star) - \codim_{\PP V^\star}(X_G) = 1/2$ we
conclude that $X_G$ has rational singularities but not the stronger
property of $1$-du Bois singularities. On the other hand, since
$\tilde\alpha(\Lambda_W, \PP W \times \PP V^\star) - \codim_{\PP W
  \times \PP V^\star}(\Lambda_W) = 1$ , we conclude that $\Lambda_W$
has $1$-du Bois singularities and, in particular, rational
singularities.

We will revisit this example in \S\ref{sec:normal}.
\end{exa}

\subsection{Classes and cohomology}\label{subsec:Lambda_classes}

In this subsection, we assume that $\KK=\CC$.
Results from the previous subsection serve to describe various classes and cohomology rings attached to $\Lambda_W$ in explicit combinatorial terms.


The stratification in Proposition~\ref{prop:Lambda_strata} yields the class of $\Lambda_W$ in the ring of varieties, proving Theorem~\ref{thm:motive_intro} from the introduction.


\begin{cor}\label{cor:Lambda_Grothendieck}
For any configuration $W\subseteq V$, the class of $\Lambda_W$ in the Grothendieck ring of varieties over $\KK$ equals 
\begin{align*}
  [\Lambda_W] &= \sum_{
    \substack{F\in\L_{\M_W} \\ F\neq E}}
  [\PP (W/F)^\circ]\times [\PP^{\abs{E}-\rank(\M_W\backslash F)-1}]\\
  &= \sum_{
    \substack{F\in\L_{\M_W} \\ F\neq E}}
  \overline{\chi}_{\M_W/F}(\LL)\cdot(\LL^{\abs{E}-\rank(\M_W\backslash F)}-1)/(\LL-1), 
\end{align*}
where $\LL=[\AA^1]$ and $\overline{\chi}_\M$ denotes the reduced characteristic polynomial of $\M$.
\end{cor}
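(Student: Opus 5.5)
The plan is to use the stratification of $\Lambda_W$ by flats coming from the first projection $p_1\colon\Lambda_W\to\PP W$. Every point $\ol w\in\PP W$ lies in exactly one stratum $U_F=\PP(W/F)^\circ$, namely the one with $F=F(w)$. So $\PP W$ is the disjoint union of the locally closed subvarieties $U_F$ as $F$ runs over the flats of $\M_W$. Consequently $\Lambda_W$ is the disjoint union of the locally closed pieces $\Lambda_W\vert_{U_F}=\Lambda_W\cap p_1^{-1}(U_F)$. By Lemma~\ref{lem:nonempty_strata}, $U_E=\emptyset$, so only proper flats contribute. The scissor relations in $K_0(\mathrm{Var}_\CC)$ then give $[\Lambda_W]=\sum_{F\ne E}[\Lambda_W\vert_{U_F}]$.

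Next I apply Proposition~\ref{prop:Lambda_strata}, which gives an isomorphism of schemes $\Lambda_W\vert_{U_F}\cong U_F\times\PP^{n_F-1}$ with $n_F=\abs E-\rank(\M\backslash F)$. Since the class of a product is the product of the classes, this yields the first displayed equality. A small point to check is that Proposition~\ref{prop:Lambda_strata} was stated for a scheme-theoretic restriction; classes in $K_0$ depend only on the reduced structure, so this causes no trouble. The identity also makes sense when $\M$ is disconnected, since no irreducibility is used.

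For the second equality, I use $[\PP^{m-1}]=(\LL^m-1)/(\LL-1)$ and the standard fact that the class of a projectivized hyperplane arrangement complement is the reduced characteristic polynomial evaluated at $\LL$. Here $\PP(W/F)^\circ$ is the complement in $\PP(W/F)$ of the arrangement $\A_{W/F}$, whose matroid is $\M/F$. This arrangement is loopless because $F$ is a flat. The fact itself follows by Möbius inversion over the intersection lattice, since each intersection of hyperplanes is a linear space with class a power of $\LL$. This gives $[(W/F)^\circ]=\chi_{\M/F}(\LL)$, and dividing by the free $\mathbb{G}_m$-action (a Zariski-locally trivial torsor) gives $\ol\chi_{\M/F}(\LL)$.

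The main obstacle is this last step: justifying in $K_0(\mathrm{Var})$ that $[W^\circ]=(\LL-1)[\PP W^\circ]$. Because $\mathbb{G}_m$-torsors are Zariski-locally trivial by Hilbert's Theorem 90, the class is multiplicative, and the identity follows.
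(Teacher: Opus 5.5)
Your argument is essentially the paper's: the flat stratification together with Proposition~\ref{prop:Lambda_strata} gives the first equality. The second comes from $[(W/F)^\circ]=\chi_{\M/F}(\LL)$, which the paper cites from Aluffi instead of rederiving by M\"obius inversion, together with $(W/F)^\circ\cong\CC^\times\times\PP(W/F)^\circ$; this is a global product obtained by normalizing $\ell_k=1$ for any $k\notin F$, so no torsor or Hilbert~90 argument is needed.

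The step that actually needs care, which you share with the paper, is the passage from $(\LL-1)[\PP(W/F)^\circ]=(\LL-1)\ol\chi_{\M/F}(\LL)$ to $[\PP(W/F)^\circ]=\ol\chi_{\M/F}(\LL)$. That cancels $\LL-1$, which is not automatic in $K_0(\mathrm{Var}_\CC)$. You can avoid it by running your M\"obius inversion directly on the projective stratification $\PP(W/G)=\bigsqcup U_F$ over flats $G\subseteq F\neq E$. This yields $[U_G]=\sum_{G\subseteq F\neq E}\mu_{\L_{\M}}(G,F)\,[\PP^{\rank(\M/F)-1}]=\ol\chi_{\M/G}(\LL)$ as an identity of polynomials in $\LL$.
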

\begin{proof}
  Aluffi~\cite[Thm.\ 1.1]{Alu13} shows that $[W^\circ]=
  \chi_{\M_W}(\LL)$ for any hyperplane arrangement complement $W^\circ$.
  Since $W^\circ\cong \CC^\times \times\PP W^\circ$ and
  $\chi_\M(t)=(t-1)\cdot\overline{\chi_{\M}}(t)$, the formula follows from
    Proposition~\ref{prop:Lambda_strata}.
\end{proof}


In contrast, the bidegree of $\Lambda_W$ depends only on $n$ and $r$.


\begin{cor}\label{cor:Lambda_Chow}
Let $W\subseteq V$ be a configuration of rank $r>0$. 
Then the class of $\Lambda_W$ in the Chow ring of $\PP V\times \PP V^\star$ is
\[
[\Lambda_W] = [H]^{n-r}([H]+[H^\star])^r,
\]
where $H\subseteq V$ and $H^\star\subseteq V^\star$ denote hyperplanes.
\end{cor}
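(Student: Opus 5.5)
The plan is to use Corollary~\ref{cor:Lambda_facts}: $\Lambda_W$ is a complete intersection of codimension $r$ in $\PP W\times\PP V^\star$. It is cut out by the $r$ bihomogeneous forms $q_1,\dots,q_r$, each of bidegree $(1,1)$, that is, linear in $u$ and linear in $x$. In the Chow ring of $\PP W\times\PP V^\star$, a proper intersection of divisors has class equal to the product of the divisor classes, with multiplicities. Each $q_i$ cuts a divisor of class $h+[H^\star]$, where $h$ is the hyperplane class on $\PP W\cong\PP^{r-1}$. Hence the scheme-theoretic class of $\Lambda_W$ is $(h+[H^\star])^r$. Since $I_W$ is radical (shown in \S\ref{subsec-affinecone} via the squarefree initial ideal), this scheme class has no multiplicities. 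It therefore equals the cycle class of the reduced variety $\Lambda_W$; in the connected case this is the fundamental class of an integral scheme. In the disconnected case one obtains the sum over components, which is still what $[\Lambda_W]$ means.

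Next I push forward along the linear embedding $\PP\ell\times\id\colon\PP W\times\PP V^\star\into\PP V\times\PP V^\star$. Here $\PP W$ is a linear subspace of codimension $n-r$ in $\PP V$. The pullback of $[H]$ is $h$, and the pullback of $[H^\star]$ is $[H^\star]$. By the projection formula, for any class $\alpha$ pulled back from the ambient space, $(\PP\ell\times\id)_*(\alpha)=\alpha\cdot[\PP W\times\PP V^\star]=\alpha\cdot[H]^{n-r}$. Applying this to $\alpha=([H]+[H^\star])^r$, which pulls back to $(h+[H^\star])^r$, gives $[\Lambda_W]=[H]^{n-r}([H]+[H^\star])^r$ in $A^*(\PP V\times\PP V^\star)$.

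The only real point is properness of the intersection, and this is exactly the codimension statement of Corollary~\ref{cor:Lambda_facts}, which holds for any configuration. The hypothesis $r>0$ merely ensures that $\PP W$ is nonempty. I would also remark that the Chow-ring equality needs no connectedness or roundness assumption, which is consistent with the bidegree depending only on $n$ and $r$.
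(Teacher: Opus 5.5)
Your proof is correct and is essentially the paper's argument: the paper views $\Lambda_W$ directly as cut out of $\PP V\times\PP V^\star$ by a regular sequence of $n-r$ forms of bidegree $(1,0)$ (defining $\PP W$) and the $r$ quadrics $q_i$ of bidegree $(1,1)$, then multiplies the divisor classes, whereas you do the same computation in two steps, first in $\PP W\times\PP V^\star$ and then via the projection formula. Your explicit appeal to radicality of $I_W$ to rule out multiplicities is a detail the paper leaves implicit.
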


\begin{proof}
By Corollary~\ref{cor:Lambda_facts}, $\Lambda_W\subseteq\PP V\times \PP V^\star$ is defined by a regular sequence consisting of $n-r$ generators of bidegree $(1,0)$ and $r$ generators of bidegree $(1,1)$.  (See, e.g., \cite[Ex.~8.4.2]{Ful98}).
The claimed equality follows.
\end{proof}


Bloch showed that the mixed Hodge structure on the cohomology of $\Lambda_W$ is mixed Tate (see \cite[Prop.~4.1]{Blo20}), that is, $H^{2k}(\Lambda_W,\CC)$ is pure of weight $2k$ for all $k\geq0$, and odd-dimensional cohomology is zero.
In the round case, the cohomology ring is determined combinatorially by $n$ and $r$.
Additively, this is clear, since a projective bundle over projective space has the Betti numbers of a product.  As an algebra, we have:


\begin{prp}\label{prp:Lambda_cohomology}
  For any round configuration $W\subseteq V$ of rank $r$, the Chow ring and
  cohomology ring of $\Lambda_W$ are given by 
\[
A^\bullet(\Lambda_W)=H^{2\bullet}(\Lambda_W,\ZZ)\cong \ZZ[a,b]/\big\langle a^r,
b^{-r}(b-a)^n\big\rangle,
\]
where $\deg(a)=\deg(b)=1$, and (formally)
\[
b^{-r}(b-a)^n=\sum_{k=0}^r \binom{n}{k}(-a)^k b^{n-r-k}.
\]
\end{prp}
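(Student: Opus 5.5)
The plan is to use the projective bundle structure from Proposition~\ref{prop:Lambda_strata} and apply the projective bundle formula for Chow rings; the only real content is computing the Chern classes of $\sF$ and keeping the sign conventions straight. For round $\M$, the proof of Proposition~\ref{prop:Lambda_strata} shows that the sheaf $\sF$ of \eqref{eq:Lambda_sheaf} is locally free of rank $n-r$ on $\PP W\cong\PP^{r-1}$. Proposition~\ref{prop:Lambda_ProjSym} identifies $\Lambda_W\cong\Proj\Sym(\sF)$. I will write $\pi\colon\Lambda_W\to\PP W$ for the bundle projection, $a=\pi^*c_1(\sO_{\PP W}(1))$ and $b=c_1(\sO_{\Proj\Sym\sF}(1))$. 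Then $A^\bullet(\PP W)=\ZZ[a]/\ideal{a^r}$, and by the projective bundle theorem (\cite[Thm.~3.3]{Ful98}) $A^\bullet(\Lambda_W)$ is free over $A^\bullet(\PP W)$ on $1,b,\dots,b^{n-r-1}$, subject to a single monic relation of degree $n-r$ in $b$.

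To pin down that relation I will use the convention for $\Proj\Sym$, where $\sO(1)$ is a quotient of $\pi^*\sF$. Let $\mathscr{K}$ be the kernel of $\pi^*\sF\onto\sO(1)$; it has rank $n-r-1$. Whitney's formula gives $c(\mathscr{K})=\pi^*c(\sF)\,(1+b)^{-1}$, and the vanishing $c_{n-r}(\mathscr{K})=0$ yields the relation
\[
\sum_{i=0}^{n-r}c_i(\sF)(-b)^{n-r-i}=0 .
\]

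Next I will compute $c(\sF)$ from the exact sequence \eqref{eq:Lambda_sheaf}. It gives $c(\sF)=c(\sO(1))^n/c(\sO)^r=(1+a)^n$, so $c_i(\sF)=\binom{n}{i}a^i$, and these vanish for $i\ge r$ because $a^r=0$. Substituting and multiplying by $(-1)^{n-r}$, the relation becomes $\sum_{k=0}^{r}\binom{n}{k}(-a)^k b^{n-r-k}=0$. This is exactly the formal expression $b^{-r}(b-a)^n$ in the statement: the terms with $k\ge r$ that would carry negative powers of $b$ are killed by $a^r=0$, since $n-r\ge r-1$ by Example~\ref{ex:uniform}. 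This gives the presentation $\ZZ[a,b]/\ideal{a^r,\,b^{-r}(b-a)^n}$.

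Finally, for the comparison with cohomology I will argue that $\Lambda_W$ is a Zariski-locally trivial projective bundle over $\PP^{r-1}$, so it has an affine paving by cells. Hence the cycle map $A^\bullet(\Lambda_W)\to H^{2\bullet}(\Lambda_W,\ZZ)$ is an isomorphism (\cite[Ex.~19.1.11]{Ful98}), and odd cohomology vanishes. Alternatively, the Leray--Hirsch theorem gives the same presentation in cohomology directly.

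The main obstacle is purely bookkeeping. I must check that the sign and dualization conventions in Grothendieck's relation match $\Proj\Sym$ rather than the projectivization by lines. I must also note that $b$ need not equal the pullback of the hyperplane class from $\PP V^\star$: it may differ from it by a multiple of $a$, depending on how $\Lambda_W\subseteq\PP W\times\PP V^\star$ twists $\sO(1)$. Such a change of generator could alter the displayed form of the relation. I would first verify the resulting formula against the case $r=1$, where $\Lambda_W=\PP^{n-2}$ and the relation reads $b^{n-1}=0$; this holds on the nose.
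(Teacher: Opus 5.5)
Your proposal is correct and follows the paper's proof. Both use the projective bundle formula for $\Lambda_W\cong\Proj\Sym\sF$ over $\PP W$, read the total Chern class off \eqref{eq:Lambda_sheaf}, and cite \cite[Ex.~19.1.11]{Ful98} for the cycle map; working with $\sF$ and the kernel of $\pi^*\sF\onto\sO(1)$ rather than with $c(\sF^\vee)=(1-a)^n$, as the paper does, is only dual bookkeeping.

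Your caution about $b$ is well placed. $\sO_{\Proj\Sym\sF}(1)$ is the restriction of $\sO_{\PP W}(1)\boxtimes\sO_{\PP V^\star}(1)$, so $b$ is the pullback of $[H]+[H^\star]$, not of $[H^\star]-[H]$ as the paper states: for $n=3$, $r=2$ the relation gives $b=3a$, while $[H^\star]$ restricts to $2a$ on $\Lambda_W\cong\PP^1$. Since the proposition is an abstract presentation with $b=c_1(\sO(1))$, this does not affect your argument.
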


\begin{proof}
The cycle class map between $A^\bullet$ and $H^{2\bullet}$ is an isomorphism~\cite[Ex.\ 19.1.11(d)]{Ful98}, so let us compute the Chow ring.  By Propositions~\ref{prop:Lambda_ProjSym} and \ref{prop:Lambda_strata}, $\Lambda_W\cong\Proj\Sym\sF$ is a $\PP^{n-r-1}$-bundle over $\PP W$.
Using \cite[Ex.\ 8.3.4]{Ful98},
\[
A^\bullet(\Lambda_W,\ZZ)\cong A^\bullet(\PP W)[b]/(b^{n-r}+c_1(\sF^\vee)b^{n-r-1}+\cdots+c_{n-r}(\sF^\vee)),
\]
where $b\coloneqq\sO_{\Lambda_W}(1)$ is the pullback of $[H^\star]-[H]$.
The presentation \eqref{eq:Lambda_sheaf} of $\sF$ yields the total Chern class
\[
\sum_{k=0}^nc_k(\sF^\vee)=c(\sF^\vee)=c(\sO_{\PP W}(-1)^n)/c(\sO_{\PP W}^r)=(1-a)^n=\sum_{k=0}^n{\binom{n}{k}}(-a)^k,
\]
where $a\coloneqq[H]\in A^\bullet(\PP W)$.
The claimed isomorphism follows.
\end{proof}
\begin{exa}[Example~\ref{ex:delA3}, continued]
  There are $12$ proper flats.  Summing in order of cardinality, 
  Corollary~\ref{cor:Lambda_Grothendieck} gives
  \begin{align*}
    [\Lambda_G] &= (\LL-2)^2[\PP^1] + (\LL-1)[\PP^1] + 4(\LL-2)[\PP^1] +
    4[\PP^1]  + 2[\PP^2]\\
  &=\LL^3+4\LL^2+2\LL+1.  
  \end{align*}
  With this we can also compute the class of the configuration hypersurface
  $[X_G]$ by writing $X_G = (C_{F_1}\triangle C_{F_2})\sqcup
  (C_{F_1}\cap C_{F_2})\sqcup X^\sm_G$.
  We recall that the fibre of $p_2$ is isomorphic to $\PP^1$ over the first
  two sets and a point over the last, so we obtain 
  \begin{align*}
    [X_G] &= [X^{\nsm}_G]+[X^\sm_G]\\
    &= (2\LL+1) + [\Lambda_G] - [p_2^{-1}(C_{F_1}\triangle C_{F_2})] -
    [p_2^{-1}(\beta_0)]\\
    &= (2\LL+1) + (\LL^3+4\LL^2+2\LL+1) - (\LL+1)(2\LL+1)\\
    &=\LL^3+2\LL^2+\LL+1.
  \end{align*}
  In view of the result of Belkale--Brosnan~\cite{BelkaleBrosnan},
  $[X_W]$ is not expected to be an integer class in general.  Exceptions
  occur: for example, in the round case, $[X_W]=[\Lambda_W]$.  For this
  non-round example, the singular locus of $X_G$ is a rational variety, by
  virtue of being one-dimensional, which makes the calculation above possible.
  It is disjoint from
  the torus $\TT_{E^\star}$, which is also not the case in general.
\end{exa}

\section{Tropical resolutions}\label{sec:tropical}

Our goal now is to construct an explicit resolution of singularities for the configuration hypersurface $X_W$ for general configurations $W$
using tropical techniques. 
In this section, we assume $\KK=\CC$, and that configurations $W$ are connected.
Then $X_W$ and $\Lambda_W$ are
irreducible varieties and can be considered as the closure of their
torus parts.  

\subsection{Restriction to tori}\label{sec:tori}

Let $\TT_{E,E^\star}\coloneqq\TT_E\times\TT_{E^\star}$ and $\hat\TT_{E,E^\star}\coloneqq\hat\TT_E\times\hat\TT_{E^\star}$ denote the respective tori in $\PP V\times \PP V^\star$ and $V\times V^\star$. 
Consider the torus parts of $\Lambda_W$ and $\hat\Lambda_W$,
\[
\Lambda_W^\circ\coloneqq\Lambda_W\cap\TT_{E,E^\star}\text{\quad and\quad}\hat\Lambda_W^\circ\coloneqq\hat\Lambda_W\cap\hat\TT_{E,E^\star}.
\]
The graph of the action of $\hat\TT_E$ on $\hat\TT_{E^\star}$ defines an automorphism
\[
\hat m\colon\hat\TT_{E,E^\star}\to\hat \TT_{E,E^\star},\quad(u,\beta)\mapsto(u,u\beta),
\]
equivariant with respect to the subtorus defining $\TT_{E,E^\star}$.
It induces an automorphism
\begin{equation}\label{eq:def_m}
m\colon\TT_{E,E^\star}\to \TT_{E,E^\star},\quad(u,\beta)\mapsto(u,u\beta).
\end{equation}


\begin{rmk}\label{rmk:m_coord_ring}
The automorphism $\hat m$ of $\hat\TT_{E,E^\star}$ corresponds to a bihomogeneous, monomial $\KK$-algebra automorphism of the coordinate ring:
\[
\hat m^\sharp\colon\KK[\hat\TT_{E,E^\star}]\to\KK[\hat\TT_{E,E^\star}],\quad y_i\mapsto y_i,\quad x_i\mapsto x_i/y_i.
\]
This follows from
\[
(u\beta)(x_i)=\beta(u^{-1}x_i)=\beta(y_i(u)^{-1}x_i)=y_i(u)^{-1}\beta(x_i)=(x_i/y_i)(u,\beta).
\]
Then the automorphism $m$ of $\TT_{E,E^\star}$ corresponds to a monomial $\KK$-algebra automorphism of the coordinate ring: 
\[
m^\sharp\colon\KK[\TT_{E,E^\star}]\to\KK[\TT_{E,E^\star}],\quad y_i/y_k\mapsto y_i/y_k,\quad x_i\mapsto (x_i/x_k)/(y_i/y_k).\qedhere
\]
\end{rmk}


\begin{ntn}
For any configuration $W\subseteq V$, we denote by
\[
\Lambda_W^\irr\coloneqq\overline{\Lambda_W^\circ}\subseteq\PP V\times \PP V^\star
\]
the closure of $\Lambda_W^\circ$ in $\PP V\times \PP V^\star$.
\end{ntn}


\begin{prp}\label{prop:Lambda_toric}
For any configuration $W\subseteq V$, $m$ and $\hat m$ induce isomorphisms 
\[
m\colon(\PP W)^\circ\times(\PP W^\perp)^\circ\to\Lambda_W^\circ,\quad
\hat m\colon W^\circ\times(W^\perp)^\circ\to\hat\Lambda_W^\circ.
\]
If $\M_W$ is a connected matroid 
then $\Lambda_W^\circ\subseteq\TT_{E,E^\star}$ is a smooth very affine variety with closure $\Lambda_W^\irr=\Lambda_W$.
\end{prp}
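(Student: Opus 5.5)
We would first work on the affine level with $\hat m$ and then descend to the projective statement. Take $(w,\beta')\in W^\circ\times (W^\perp)^\circ$ and put $\beta\coloneqq w\beta'$, so that $\beta_i=\beta'_i/w_i$ by the contragredient action \eqref{eq:contra}. By Lemma~\ref{lem:Q}, $Q_E^\star(w,\beta)=w^{-1}\beta=\beta'$ as elements of $V^\star$. Restricting to $W$ gives $Q_W^\star(w,\beta)=\beta'\vert_W$, and this vanishes precisely because $\beta'\in W^\perp=(V/W)^\star$. Hence $\hat m(w,\beta')=(w,\beta)\in\hat\Lambda_W$. All coordinates of $w$ and $\beta$ are nonzero, so the point lies in $\hat\Lambda_W^\circ$.

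For the converse, take $(w,\beta)\in\hat\Lambda_W^\circ$ and set $\beta'\coloneqq w^{-1}\beta=Q_E^\star(w,\beta)$. The defining condition $Q_W^\star(w,\beta)=0$ says that $\beta'$ kills $W$, so $\beta'\in W^\perp$. Moreover $\beta'_i=w_i\beta_i\ne0$ for every $i$, so $\beta'\in(W^\perp)^\circ$. Since $\hat m$ is an automorphism of $\hat\TT_{E,E^\star}$, these two constructions are mutually inverse, and $\hat m$ restricts to an isomorphism of the closed subvarieties $W^\circ\times(W^\perp)^\circ\to\hat\Lambda_W^\circ$.

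We then check compatibility with scalings. Scaling $w$ by $s$ and $\beta'$ by $t$ sends $(w,w\beta')$ to $(sw,(t/s)\,w\beta')$. So $\hat m$ intertwines the $(\KK^\times)^2$-action on the source with the corresponding $(\KK^\times)^2$-action on $V\times V^\star$, the two differing by an automorphism of $(\KK^\times)^2$. Passing to quotients by these free actions yields the isomorphism $m\colon(\PP W)^\circ\times(\PP W^\perp)^\circ\to\Lambda_W^\circ$; this is just the descent of $\hat m$ to $m$ already recorded in \eqref{eq:def_m} and Remark~\ref{rmk:m_coord_ring}. The scheme structure on $\Lambda_W^\circ$ is the one cut out by the equations \eqref{eq:Lambda_mat2}, and on the torus these equations are linear in $\beta'$ after the monomial change of coordinates $m^\sharp$. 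Consequently the scheme-theoretic image is reduced and equal to the product, not merely equal to it on points.

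Smoothness then comes for free, because $(\PP W)^\circ\times(\PP W^\perp)^\circ$ is an open subset of a product of projective spaces. It is irreducible once both arrangement complements are nonempty, which holds for $\abs\KK\gg0$ since a connected matroid with $n>r>0$ is loopless and coloopless. It is closed in $\TT_{E,E^\star}$ because it is cut out by linear equations, so it is a smooth very affine variety. For the closure statement we use Corollary~\ref{cor:Lambda_facts}: when $\M$ is connected, $\Lambda_W$ is irreducible of dimension $n-2$, and $\Lambda_W^\circ$ is a nonempty open subset of it (nonempty since $\dim=(r-1)+(n-r-1)=n-2$). Hence its closure $\Lambda_W^\irr$ equals $\Lambda_W$.

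The main point needing care is the scheme-theoretic, rather than set-theoretic, identification in the third step. We would handle it either via the explicit coordinate-ring map $m^\sharp$, which sends the ideal of $\Lambda_W$ localized at the torus onto the ideal generated by the linear forms defining $W^\perp$ in the $\beta'$-coordinates, or, more cheaply, by invoking the reducedness from Corollary~\ref{cor:Lambda_facts}.
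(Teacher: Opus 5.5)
Your proposal is correct and follows essentially the same route as the paper. You use Lemma~\ref{lem:Q} to rewrite $Q_W^\star(w,\beta)=0$ as $w^{-1}\beta\in W^\perp$, descend from $\hat m$ to $m$, and get density from the irreducibility of $\Lambda_W$ in Corollary~\ref{cor:Lambda_facts}; your extra checks (the $(\KK^\times)^2$-equivariance, the linearity of the equations after $m^\sharp$, and nonemptiness via looplessness and colooplessness) fill in details the paper leaves implicit.
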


\begin{proof}
It suffices to prove the first claim for $\hat m$. 
By Definition~\ref{def:Lambda} and Lemma~\ref{lem:Q}, we have $(w,\beta)\in\hat\Lambda_W^\circ=\hat\Lambda_W\cap(\hat\TT_{E}\times\hat\TT_{E^\star})$ if and only if $w\in W\cap\hat\TT_E$ and $\beta\in\hat\TT_{E^\star}$ such that $0=Q_W^\star(w,\beta)=(w^{-1}\beta)\vert_W$.
Equivalently, $(w,w^{-1}\beta)\in(W\cap\hat\TT_E)\times(W^\perp\cap\hat\TT_{E^\star})$, and by definition,  $\hat m(w,w^{-1}\beta)=(w,\beta)$.

Since we assume $n>r>0$, because $\M_W$ is connected, 
it has an edge that is neither loop or coloop. 
Then $\PP W^\circ$ and $(\PP W^\perp)^\circ$ are nonempty and smooth.
The image $\Lambda_W^\circ$ of $m$ is thus dense open in $\Lambda_W$, and smooth due to the first claim.
By Corollary~\ref{cor:Lambda_facts}, $\Lambda_W$ is irreducible.
Then also $\Lambda_W^\circ$ is irreducible with closure $\ol{\Lambda_W^\circ}=\Lambda_W$.
\end{proof}


\begin{cor}\label{cor:Lambda_birat_X}
Let $W\subseteq V$ be a connected configuration. 
Then there is a commutative square of birational maps of irreducible varieties
\[
\begin{tikzcd}
\Lambda_W^\circ\ar{d}{p_2}\ar[hookrightarrow]{r} & \Lambda_W\ar{d}{p_2}\\
X_W^\circ\ar[hookrightarrow]{r} & X_W.
\end{tikzcd}
\]
\end{cor}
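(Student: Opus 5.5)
The plan is to assemble the square from results already established, checking three things in turn: the maps are well defined, the square commutes, and every arrow is birational between irreducible varieties.

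\emph{Irreducibility and the horizontal arrows.} By Corollary~\ref{cor:Lambda_facts}, $\Lambda_W$ is irreducible. By Proposition~\ref{prop:Lambda_toric}, $\Lambda_W^\circ$ is a nonempty open subset of $\Lambda_W$ with closure $\Lambda_W$, hence also irreducible. By Proposition~\ref{prop:X_very_affine}, $X_W^\circ$ is very affine, hence irreducible, and its closure is $X_W$. Both horizontal inclusions are therefore open immersions with dense image, so they are birational.

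\emph{The left vertical arrow and commutativity.} We must check that $p_2(\Lambda_W^\circ)\subseteq X_W^\circ$. This holds because $\Lambda_W^\circ=\Lambda_W\cap(\TT_E\times\TT_{E^\star})$, so $p_2$ maps it into $\TT_{E^\star}$. Also $p_2(\Lambda_W)\subseteq X_W$ by Definition~\ref{def:Lambda}, and $X_W\cap\TT_{E^\star}=X_W^\circ$. Hence $p_2$ restricts to a morphism $\Lambda_W^\circ\to X_W^\circ$, and the square commutes trivially since both vertical maps are restrictions of the same projection.

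\emph{Birationality of the vertical arrows.} By Proposition~\ref{prop:pr2_over_Xsmooth}, the right vertical map $p_2\colon\Lambda_W\to X_W$ is an isomorphism over $X_W^\sm$, so it is birational. For the left one, set $U:=X_W^\sm\cap X_W^\circ$. This is a nonempty open subset of $X_W^\circ$: it is the intersection of two dense opens in the irreducible $X_W$, and $X_W^\sm$ is nonempty because $X_W$ is a reduced variety (indeed $\psi_W$ is squarefree by Proposition~\ref{prop:expand_psi}).

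Over $U$, $p_2$ is an isomorphism $p_2^{-1}(U)\to U$. Since $U\subseteq\TT_{E^\star}$, the preimage $p_2^{-1}(U)$ lies in $\Lambda_W\cap(\PP V\times\TT_{E^\star})$, but not necessarily in $\PP V$'s torus part. So the key step is to shrink further. The set $p_2^{-1}(U)\cap\Lambda_W^\circ$ is a nonempty open subset of $p_2^{-1}(U)$, being the intersection of two dense opens in the irreducible $\Lambda_W$. Let $U'$ be its image under the isomorphism $p_2^{-1}(U)\cong U$; then $U'$ is a nonempty open subset of $U\subseteq X_W^\circ$. Over $U'$, the restriction $\Lambda_W^\circ\cap p_2^{-1}(U')\to U'$ is an isomorphism. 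To see this, note that $p_2^{-1}(U')$ corresponds to $U'$ under the isomorphism, and it lies inside $\Lambda_W^\circ$ by construction of $U'$. Hence the left arrow is birational.

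The only mildly delicate point is this last step: ensuring the fiber over a generic torus point of $X_W$ actually lies in the torus of $\PP V$. It is handled by the density argument just given, using the irreducibility of $\Lambda_W$, rather than by any explicit computation.
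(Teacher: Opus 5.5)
Your proposal is correct and takes essentially the same route as the paper, which also rests on Proposition~\ref{prop:pr2_over_Xsmooth} (the right arrow is birational) and Proposition~\ref{prop:Lambda_toric} (the top inclusion is dense open), and then simply declares the remaining two arrows birational. Your shrinking to an open $U'\subseteq X_W^{\sm}\cap X_W^\circ$ whose preimage lies in $\Lambda_W^\circ$, and your check that $p_2(\Lambda_W^\circ)\subseteq X_W^\circ$, make that final formal step explicit.
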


\begin{proof}
Commutativity holds trivially.
By Propositions~\ref{prop:pr2_over_Xsmooth} and \ref{prop:Lambda_toric}, the right projection and the upper inclusion are birational morphisms of irreducible varieties.
Then the same holds for the remaining morphisms.
\end{proof}


In \S\ref{sec:trop_comp} we will replace $\PP V\times \PP V^\star$ in the definition of $\Lambda_W^\irr$ by a finer $\TT_{E,E^\star}$-toric variety in order to obtain a smooth model of the configuration hypersurface $X_W$ for general configurations $W$.


\begin{prp}\label{prop:semigroup_algebra}
For any configuration $W\subseteq V$, the biprojective coordinate ring of $\Lambda_W^\irr$ is isomorphic to the $\KK$-subalgebra 
\[
L_W\coloneqq\KK[\ell_1,\ldots,\ell_n,\ell^\perp_1/\ell_1,\ldots,\ell^\perp_n/\ell_n]\subset\KK(W\times W^\perp).
\]
\end{prp}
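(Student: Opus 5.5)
The plan is to identify the biprojective coordinate ring of $\Lambda_W^\irr$ with the image of the bigraded restriction map
\[
\KK[x_1,\dots,x_n,y_1,\dots,y_n]\to\KK[\Lambda_W^\circ\text{-cone}]
\]
and then transport it along the isomorphism $\hat m$ of Proposition~\ref{prop:Lambda_toric}.

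First, since $\Lambda_W^\irr$ is the closure of the torus part, its bihomogeneous ideal is the kernel of the map from $\KK[x,y]=\Sym(V^{\star})\otimes\Sym(V)$ (coordinates on $V\times V^\star$) to functions on the cone $\hat\Lambda_W^\circ\subseteq\hat\TT_{E,E^\star}$. To see this, note that a bihomogeneous polynomial vanishes on $\Lambda_W^\irr$ iff it vanishes on the dense subset $\Lambda_W^\circ$, iff it vanishes on $\hat\Lambda_W^\circ$, and the ideal of a closure is generated by such elements. Hence the coordinate ring is the image of $\KK[x,y]\to\KK[\hat\Lambda_W^\circ]$, and this image is a domain whenever $\hat\Lambda_W^\circ$ is irreducible.

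Second, I pull back along $\hat m\colon W^\circ\times(W^\perp)^\circ\to\hat\Lambda_W^\circ$, $(w,\gamma)\mapsto(w,w\gamma)$, which is an isomorphism by Proposition~\ref{prop:Lambda_toric}. By Remark~\ref{rmk:m_coord_ring} (in its inverse form), the coordinate $y_i$ on the $V$-factor restricts to $\ell_i$ on $W$. The coordinate $x_i$ evaluated at $w\gamma$ gives $\gamma(w^{-1}x_i)=\ell^\perp_i(\gamma)/\ell_i(w)$. Therefore the image of $\KK[x,y]$ in $\KK[W^\circ\times (W^\perp)^\circ]\subset\KK(W\times W^\perp)$ is exactly the subalgebra generated by $\ell_i$ and $\ell_i^\perp/\ell_i$, namely $L_W$. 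Injectivity into the function field holds because $W^\circ\times(W^\perp)^\circ$ is a nonempty open subset of the irreducible $W\times W^\perp$, given $n>r>0$; if some $\ell_i=0$, the torus part is empty and the statement degenerates. The bigrading is preserved: $y_i\mapsto\ell_i$ has degree $(1,0)$, and $x_i\mapsto\ell^\perp_i/\ell_i$ has bidegree $(-1,1)$ in $(w,\gamma)$. So $L_W$ acquires a bigrading for which the isomorphism is bigraded, after the linear change of grading $(a,b)\mapsto(a+b,b)$.

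The main obstacle I expect is the bookkeeping in the second step. One has to check the direction of the contragredient action, i.e.\ whether $x_i$ becomes $\ell^\perp_i/\ell_i$ or $\ell_i\ell^\perp_i$; Lemma~\ref{lem:Q} and the formula $\beta=w\gamma$ settle this. One must also state precisely what ``biprojective coordinate ring'' means when $\Lambda_W^\circ$ is taken in $\PP V\times\PP V^\star$ rather than in affine space. I would handle the latter by working with the $\hat{}$ versions and noting that the affine cone over $\Lambda_W^\irr$ is $\overline{\hat\Lambda_W^\circ}$, which holds because the torus part is stable under the scaling by $\KK^\times\times\KK^\times$.
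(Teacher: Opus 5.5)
Your proposal is correct and is essentially the paper's proof. The paper likewise takes the composite $R=\KK[V\times V^\star]\into S\xrightarrow{\hat m^\sharp}S\onto T=\KK[W^\circ\times(W^\perp)^\circ]$, which sends $y_i\mapsto\ell_i$ and $x_i\mapsto\ell_i^\perp/\ell_i$. It then identifies the kernel with the ideal of $\hat\Lambda_W^\circ$ via the isomorphism $\hat m$ of Proposition~\ref{prop:Lambda_toric}, and uses biconicity to pass to $\Lambda_W^{\irr}\subseteq\PP V\times\PP V^\star$. Your explicit bigrading check is a useful addition; note, though, that nonemptiness of the torus part requires $\M_W$ to have no coloops as well as no loops, which is guaranteed by the section's standing connectedness assumption.
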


\begin{proof}
Denote the respective coordinate rings of $V\times V^\star$, $\hat\TT_{E,E^\star}$ and $W^\circ\times(W^\perp)^\circ$ by
\begin{align*}
R&\coloneqq\KK[V\times V^\star]\cong\KK[y_1,\dots,y_n,x_1,\dots,x_n],\\ 
S&\coloneqq\KK[\hat\TT_{E,E^\star}]\cong\KK[y_1^{\pm1},\dots,y_n^{\pm1},x_1^{\pm1}.\dots,x_n^{\pm1}],\\
T&\coloneqq\KK[W^\circ\times(W^\perp)^\circ]\cong\KK[\ell_1^{\pm1},\dots,\ell_n^{\pm1},(\ell^\perp_1)^{\pm1}.\dots,(\ell^\perp_n)^{\pm1}].
\end{align*}
Composing the automorphism $\hat m^\sharp$ from Remark~\ref{rmk:m_coord_ring} with the inclusion $R\into S$ and the restriction to $W^\circ\times(W^\perp)^\circ$ yields a map of $\KK$-algebras 
\[
\phi\colon R\into S\overunderset{\hat m^\sharp}{\cong}{\longrightarrow}S\onto T,\quad y_i\mapsto\ell_i,\quad x_i\mapsto \ell_i^\perp/\ell_i,
\]
with image $L_W$.
Denote by $I\coloneqq \ker\phi$ its kernel.
The induced isomorphism $\ol\phi$ fits into a commutative square of $\KK$-algebras
\[
\begin{tikzcd}
R/I\ar[hook]{d}\ar{r}{\bar\phi}[swap]{\cong} & L_W\ar[hook]{d}\\
S/SI\ar[hook]{r} & T.
\end{tikzcd}
\]
By construction, the bottom map induces the map of coordinate rings associated with the isomorphism
\[
\hat m\colon W^\circ\times(W^\perp)^\circ\to\hat\Lambda_W^\circ
\]
in Proposition~\ref{prop:Lambda_toric}.
Since it is injective and maps onto $T$, it is isomorphism.
Then $SI$ is the ideal of $\hat\Lambda_W^\circ\subseteq\hat\TT_{E,E^\star}$ and pulls back to the ideal $R\cap SI=I$ of $\hat\Lambda_W^\circ\subseteq V\times V^\star$.
Since $\hat\Lambda_W^\circ$ is a biconical subset, $I$ agrees with the ideal of $\Lambda_W^\circ\subseteq\PP V\times\PP V^\star$, defining its closure $\Lambda_W^\irr$.
This turns $R/I$ into the biprojective coordinate ring of $\Lambda_W^\irr$, and $\bar\phi$ becomes the desired isomorphism.
\end{proof}

\subsection{Fans and tropicalization}\label{subsec:fans}

Tropical geometry gives a framework for studying toric closures of very affine varieties, such as $\Lambda_W^\irr$, by means of fans. 
For a very affine variety $Y\subseteq\TT_E$, the tropicalization $\trop(Y)$ (with respect to the trivial valuation on the field) is the underlying set of a rational, polyhedral fan, together with integer weights on the maximal cones.
The fan structure on the tropicalization is not unique, but by choosing a sufficently fine 
structure one may arrange it to be smooth.
We use Proposition~\ref{prop:Lambda_toric} to describe the tropicalization of $\Lambda_W^\circ$ in terms of \emph{Bergman fans}.
We begin by recalling the relevant definitions. 


Consider $e_i\coloneqq x_i$, $i\in E$, as the unit vectors of a lattice $\ZZ^E\subseteq V$.
For each subset $S\subseteq E$, we write an indicator vector $e_S\coloneqq \sum_{i\in S}e_i$.
We denote the co-character lattice of $\TT_E$ and its associated real vector space by
\[
N_{E,\ZZ}\coloneqq \Hom(\KK^\times,\TT_E)\cong\ZZ^E/\ZZ e_E\quad\text{and}\quad N_E\coloneqq N_{E,\ZZ}\otimes_\ZZ\RR\cong\RR^E/\RR e_E.
\]
Negation is an automorphism defined over $\ZZ$
\[
-\colon N_E\to N_E
\]
sending $e_S\mapsto e_{E\backslash S}$ for all $S\subseteq E$.
Define $e_i^\star$, $N_{E,\ZZ}^\star$ and $N_E^\star$ using $\TT_{E^\star}$ and $y_i$ accordingly.
We identify the coordinate tori $\hat\TT_E$ and $\hat\TT_{E^\star}$ by the requirement $x_iy_i=1$.
The resulting identification of co-character lattices then reads
\begin{equation}\label{eq:identify_lattices}
N_{E,\ZZ}^\star=N_{E,\ZZ},\quad e_i^\star=-e_i.
\end{equation}
The purpose of this identification will become clear in the following Lemma~\ref{lem:m_mu} and Proposition~\ref{prop:trop_Lambda}.
We write 
\[
N_{(E,E),\ZZ}\coloneqq N_{E,\ZZ}\oplus N_{E,\ZZ}\quad\text{and}\quad N_{E,E}\coloneqq N_{(E,E),\ZZ}\otimes_\ZZ\RR=N_E\oplus N_E
\]
for the the co-character lattices and associated real vector spaces of both $\TT_E\times\TT_E$ and $\TT_{E,E^\star}$.
The indicator vectors in the two summands of $N_{(E,E),\ZZ}$ will be denoted by $e_S$ and $f_S$, respectively.
We consider the maps
\[
\pi_1,\pi_2 \colon N_{E,E}\to N_{E}\quad\text{and}\quad\delta\colon N_{E,E}\to N_{E}
\]
defined over $\ZZ$, the
two projection maps $\pi_1$ and $\pi_2$ given by $\pi_1(x,y)=x$ and $\pi_2(x,y)=y$, respectively, and the addition map $\delta$ given by $\delta(x,y)=x+y$.


For a rational, polyhedral fan $\Sigma$, we denote by $\abs{\Sigma}$ its support and by $\PP(\Sigma)$ its toric variety.
The simplicial, unimodular fans are called smooth because they correspond to smooth toric varieties.


\begin{exa}\label{exa:fans}
We collect the examples of fans that we will relevant for us.
\begin{enumerate}[(a)]

\item\label{it:PV-fan} The fan $\Gamma_E$ is the smooth complete fan in $N_E$ whose cones are spanned by all proper subsets of the coordinate vectors $\set{e_i\mid i\in E}$.
The associated toric variety $\PP(\Gamma_E)\cong\PP V$ is the projective space of $V$ and $\PP(-\Gamma_E)\cong\PP V^\star$ its dual by \eqref{eq:identify_lattices}.

\item\label{exa:permutohedral_fan}  The \emph{permutohedral fan} $\Sigma_E$ is a smooth complete fan in $N_E$.
Its rays are spanned by indicator vectors $e_S$ for which $S\subseteq E$ is a nonempty, proper subset.
The cones
\[
\sigma_{\bS}\coloneqq\sum_{i=1}^k\RR_{\geq0}\cdot e_{S_i}\subseteq N_E
\]
are indexed by strict flags of $k$ distinct, nonempty proper subsets
\[
\bS=(\emptyset\subsetneq S_1\subsetneq S_2\subsetneq \cdots\subsetneq S_k\subsetneq E),
\]
where $k\geq 0$.
Since $-e_S=e_{E\backslash S}$, $\Sigma_E$ is stable under negation, that is, $-\Sigma_E=\Sigma_E$.

\item\label{exa:Bergman_fan} 
For a matroid $\M$ on $E$ without loops, the \emph{Bergman fan} $\Sigma_{\M}$ (see \cite{AK06,FS05}) is an induced and hence smooth subfan of $\Sigma_E$ of dimension $\rank\M-1$.
Its rays are spanned by indicator vectors $e_F$ for which $F$ is a nonempty, proper flat of $\M$.
The same holds for the subfan $-\Sigma_{\M}$ of $-\Sigma_E=\Sigma_E$.
In particular, $-\Sigma_{\M}\times\Sigma_{\M^\perp}$ is a (smooth) induced subfan of $\Sigma_E\times\Sigma_E$. 

\item \label{exa:bipermutohedral_fan}
The \emph{bipermutohedral fan} $\Sigma_{E,E}$ is a smooth, complete fan in $N_{E,E}$, which refines the product fan $\Sigma_E\times\Sigma_E$ and admits maps of fans (see \cite[Props.~2.2,~2.11]{ADH23})
\[
\pi_1,\pi_2\colon \Sigma_{E,E}\to\Sigma_{E}\quad\text{and}\quad\delta\colon \Sigma_{E,E}\to\Gamma_E.
\]
Its rays are generated by vectors $e_S+f_T$ indexed by \emph{bisubsets} $S\vert T$.
These are, by definition, pairs of nonempty sets $S,T\subseteq E$ for which $S\cup T=E$ and $S\cap T\neq E$.
Its cones
\[
\sigma_{\bS\vert\bT}\coloneqq\sum_{i=1}^k\RR_{\geq0}\cdot e_{S_i\vert T_i}\subseteq N_{E,E}
\]
are indexed by all \emph{biflags} $\bS\vert\bT$.
These are pairs of flags
\[
\bS=(S_1 \subseteq S_2 \subseteq \cdots \subseteq S_k)\quad\text{and}\quad
\bT=(T_1 \supseteq T_2 \supseteq \cdots \supseteq T_k),
\]
regarded as collections of bisubsets $\set{S_i\vert T_i\mid 1\leq i\leq k}$, for which $\bigcup_{i=1}^k(S_i\cap T_i)\neq E$. 
\end{enumerate}
\end{exa}


\begin{lem}\label{lem:m_mu}
The tropicalization of the automorphism \eqref{eq:def_m} is a lattice automorphism
\[
\mu\coloneqq\trop(m)=\id_{N_E}\oplus\,\delta\colon N_{E,E}\to N_{E,E}
\]
which satisfies $\pi_2\circ\mu=\delta$.
\end{lem}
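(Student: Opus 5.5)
The approach is to compute the tropicalization of the monomial map $m$ directly from its pullback on characters, recorded in Remark~\ref{rmk:m_coord_ring}, and then transpose to co-characters. For a torus homomorphism, the induced map on co-character lattices $N=\Hom(\KK^\times,-)$ is the dual of the pullback on character lattices. So it suffices to find, for each co-character $\lambda=(a,b)\in N_{(E,E),\ZZ}$, the co-character $m\circ\lambda$. Equivalently, we can read off $\trop(m)$ on the generators $e_i$ (first factor) and $f_i$ (second factor).

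First I would fix conventions. The first factor $\TT_E\subseteq\PP V$ has co-characters $e_i$, acting by scaling the $i$-th coordinate of $u$. The second factor $\TT_{E^\star}$ has co-characters $e_i^\star$, identified with $-e_i$ via \eqref{eq:identify_lattices}; we write these as $f_i$ in $N_{E,E}$. Take a one-parameter subgroup $\lambda(t)=(t^a,t^b)$, with $a$ in the first summand and $b$ in the second, both written in the $e$-basis after the identification. Under this identification, $\beta$ is scaled by $t^{-b}$ in the coordinates $x_i$, or equivalently by $t^{b}$ in the $y$-coordinates used to identify $\hat\TT_{E^\star}$ with $\hat\TT_E$ through $x_iy_i=1$.

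Next, by Lemma~\ref{lem:Q} and \eqref{eq:contra}, the action $u\beta$ in the coordinates $x_i$ is $x_i(u\beta)=x_i(\beta)/y_i(u)$. This is exactly what Remark~\ref{rmk:m_coord_ring} states. Hence $m(\lambda(t))$ has first component $t^a$ and second component with $x$-coordinates $t^{-b_i-a_i}$. Under the identification $e_i^\star=-e_i$, this second component is the co-character $a+b$. Therefore $\trop(m)(a,b)=(a,a+b)=(a,\delta(a,b))$, which is $\id_{N_E}\oplus\delta$ in the stated sense, namely first coordinate $\pi_1$ and second coordinate $\delta$. The formula is linear with integer inverse $(a,c)\mapsto(a,c-a)$, so it is a lattice automorphism. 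It also descends modulo $\ZZ e_E$ in each factor, because $m$ is equivariant for the diagonal scalings; this matches the passage from $\hat m$ to $m$. The identity $\pi_2\circ\mu=\delta$ is then immediate.

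The only real obstacle is sign bookkeeping: the contragredient action together with the identification $e_i^\star=-e_i$. The point of that convention is precisely that it turns the second component into $a+b$ rather than $b-a$. I would double-check the signs by computing the valuation of the character $x_i/y_i$ from Remark~\ref{rmk:m_coord_ring} along $\lambda$.
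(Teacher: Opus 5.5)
Your proposal is correct and takes essentially the same route as the paper. The paper reads off from Remark~\ref{rmk:m_coord_ring} that $m$ acts on lattice vectors by $(x,y)\mapsto(x,y-x)$, then uses the sign convention $e_i^\star=-e_i$ of \eqref{eq:identify_lattices} to convert this into $(x,y)\mapsto(x,x+y)$. Your one-parameter-subgroup computation is a more explicit version of that argument, and your check that the map preserves $\ZZ e_E\oplus\ZZ e_E$ spells out the descent from $\hat m$ to $m$, which the paper leaves implicit.

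One small notational slip: in the paper, $f_i$ denotes $e_i$ in the second summand, i.e.\ $-e_i^\star$ rather than $e_i^\star$. This is harmless, since your calculation works directly with $b$ in the $e$-basis.
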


\begin{proof}
According to Remark~\ref{rmk:m_coord_ring}, the automorphism $m^\sharp$ corresponding to \eqref{eq:def_m} on coordinate rings is a monomial map defined on exponent vectors by $(x,y)\mapsto(x,y-x)$.
The contragredient action from \eqref{eq:identify_lattices} negates the second coordinate, giving the map (see \cite[p.~87]{MS15})
\[
(x,y)=(x,-(-y))\mapsto(x,x-(-y))=(x,x+y).\qedhere
\]
\end{proof}


\begin{ntn}\label{not:Delta_M}
For any loopless matroid $\M$ on $E$, consider the smooth fan in $N_{E,E}$
\[
\Delta_\M\coloneqq-\mu((-\Sigma_{\M})\times\Sigma_{\M^\perp}).\qedhere
\]
\end{ntn}


\begin{prp}\label{prop:trop_Lambda}
Let $W$ be a configuration whose matroid $\M=\M_W$ has no loops or coloops.
Then $\trop(\Lambda_W^\circ)=\abs{\Delta_\M}$ with all weights equal to $1$.
\end{prp}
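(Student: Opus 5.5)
Via Proposition~\ref{prop:Lambda_toric}, $\Lambda_W^\circ$ is the image of $(\PP W)^\circ\times(\PP W^\perp)^\circ$ under the torus automorphism $m$ of \eqref{eq:def_m}. The plan is to compute the tropicalization of this product, push it through $\trop(m)=\mu$ using Lemma~\ref{lem:m_mu}, and account for the sign conventions \eqref{eq:identify_lattices}. The steps are as follows.

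\emph{Step 1: tropicalizations of the linear factors.} The hypothesis that $\M$ has no loops or coloops means that both $\M$ and $\M^\perp$ are loopless. Hence every coordinate function $\ell_i$ (resp.\ $\ell_i^\perp$) is nonzero, and $(\PP W)^\circ\subseteq\TT_E$ and $(\PP W^\perp)^\circ\subseteq\TT_{E^\star}$ are nonempty linear-space complements. By the theorem of Ardila--Klivans and Feichtner--Sturmfels \cite{AK06,FS05}, $\trop((\PP W)^\circ)=\abs{\Sigma_\M}$ with all weights $1$, up to the choice of min or max convention, which only affects an overall sign. Similarly, $\trop((\PP W^\perp)^\circ)$ is the support of the Bergman fan of $\M^\perp$, computed in the cocharacter lattice $N^\star_{E,\ZZ}$ of $\TT_{E^\star}$. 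Under the identification $e_i^\star=-e_i$ from \eqref{eq:identify_lattices}, this support becomes $-\abs{\Sigma_{\M^\perp}}$ in $N_E$. Tropicalization commutes with products, with weights multiplying, so
\[
\trop\bigl((\PP W)^\circ\times(\PP W^\perp)^\circ\bigr)=\abs{\Sigma_\M}\times\bigl(-\abs{\Sigma_{\M^\perp}}\bigr)=-\bigl(\abs{-\Sigma_\M}\times\abs{\Sigma_{\M^\perp}}\bigr),
\]
with all weights equal to $1$.

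\emph{Step 2: applying the automorphism.} The map $m$ is a monomial automorphism of the torus $\TT_{E,E^\star}$. Tropicalization is functorial for such maps: $\trop(m(Y))=\trop(m)(\trop(Y))$, and since $\trop(m)$ is a lattice automorphism, weights are preserved. By Lemma~\ref{lem:m_mu}, $\trop(m)=\mu$. Because $\mu$ is linear, it commutes with negation, and we obtain
\[
\trop(\Lambda_W^\circ)=\mu\bigl(-(\abs{-\Sigma_\M}\times\abs{\Sigma_{\M^\perp}})\bigr)=\abs{-\mu((-\Sigma_\M)\times\Sigma_{\M^\perp})}=\abs{\Delta_\M},
\]
again with all weights $1$. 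This matches Notation~\ref{not:Delta_M}.

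\emph{Main obstacle: sign bookkeeping.} The substantive content is the classical Bergman fan theorem together with functoriality; the delicate part is keeping the signs consistent. Three conventions interact: the min versus max convention in \cite{MS15}, the contragredient identification \eqref{eq:identify_lattices}, and the sign convention already built into Lemma~\ref{lem:m_mu}. To pin these down, I would test the final formula on a single ray. Take a point of $\Lambda_W^\circ$ degenerating along a one-parameter subgroup in which $w$ approaches a flat $F$ of $\M$ while $\beta$ stays generic. I would then check directly, via the monomial map $x_i\mapsto x_i/y_i$ of Remark~\ref{rmk:m_coord_ring}, which of $\pm\mu(-e_F,0)$ appears, and fix all the signs accordingly. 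A global sign ambiguity is in any case harmless for the support only if $\Delta_\M=-\Delta_\M$, which need not hold. This is why the explicit check is needed rather than appealing to symmetry.
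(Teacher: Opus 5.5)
Your proposal is correct and follows the paper's proof essentially step for step. You write $\Lambda_W^\circ=m((\PP W)^\circ\times(\PP W^\perp)^\circ)$ via Proposition~\ref{prop:Lambda_toric}, apply the Bergman fan theorem to both factors (with the sign flip from \eqref{eq:identify_lattices} on the dual factor), and push forward along $\trop(m)=\mu$, noting that products and lattice automorphisms preserve weights. Your extra ray-by-ray sign check is a harmless sanity test but not needed: with the min convention of \cite{MS15}, the signs already come out exactly as you wrote them.
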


\begin{proof}
By Proposition~\ref{prop:Lambda_toric} and Lemma~\ref{lem:m_mu}, we have (see \cite[Cor.\ 3.2.13, (5.5.7)]{MS15})
\begin{align*}
  \trop(\Lambda_W^\circ)&=\trop(m)\big(
  \trop((\PP W)^\circ \times (\PP W^\perp)^\circ)\big)\\
&=\mu\big(\trop((\PP W)^\circ)\times\trop((\PP W^\perp)^\circ)\big).
\end{align*}
The set $\trop((\PP W)^\circ)\subseteq N_E$ is the support of the Bergman fan $\Sigma_{\M}$ (see \cite[Lem.~6.5]{FS05}, \cite[Thm.~1]{AK06}). 
Similarly, using \eqref{eq:identify_lattices}, we identify $\trop((\PP W^\perp)^\circ)\subseteq N_E^\star$ with the support of the fan $-\Sigma_{\M^\perp}$ in $N_E$.
The weights on Bergman fans all equal $1$ (see \cite[Prop.~5.2]{AHK18}). 
This property is preserved under products of fans (see \cite[Ex.\ 2.9.(iii)]{GKM09}) and under lattice automorphisms.
The claim follows.
\end{proof}

\subsection{Tropical compactifications}\label{sec:trop_comp}

We recall the following notions introduced by Tevelev~\cite{Tev07}.


\begin{dfn}
A compactification $\overline{Y}$ of a very affine variety $Y\subseteq\TT$ in a $\TT$-toric variety $\PP$ is called \emph{tropical}/\emph{sch\"on} if $\overline{Y}$ is proper and the torus multiplication map $\TT\times\overline{Y}\to\PP$ is surjective and flat/smooth.
If such a compactification exists, then also $Y\subseteq\TT$ is called \emph{tropical}/\emph{sch\"on}.
\end{dfn}


\begin{rmk}
Hacking~\cite[Lem.~2.7]{Hac08} observed that $\ol Y$ being schön is equivalent to smoothness of $\ol Y\cap O$ for all orbits $O$ in $\PP$. In particular, $Y=\ol Y\cap\TT$ must be smooth in order for $\ol Y$ to be sch\"on. 
\end{rmk}


Tevelev proved the following fundamental results.


\begin{thm}[{\cite[Thms.~1.2, 1.4, Prop.~2.5]{Tev07}}]
Any very affine variety $Y\subseteq\TT$ tropically compactifies in some $\TT$-toric variety $\PP$.
If it does so in $\PP=\PP(\Sigma)$ for some rational, polyhedral fan $\Sigma$, then $\abs\Sigma=\trop(Y)$, and it also does so in $\PP(\Sigma')$ for any refinement $\Sigma'$ of $\Sigma$.
In particular, there is always a tropical compactification in a smooth $\PP$.
If one tropical compactification of $Y$ is sch\"on, then all are.
\noproof
\end{thm}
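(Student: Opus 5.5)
The statement to prove is Tevelev's theorem, quoted from \cite{Tev07}: existence of a tropical compactification, $\abs\Sigma=\trop(Y)$ for any such fan $\Sigma$, stability under refinement, and the independence of the schön property from the chosen compactification. The plan follows Tevelev's original argument in three stages: existence via a Hilbert scheme and a Gröbner fan, then the support and refinement statements, then schönness via a common refinement.

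For existence, embed $Y\subseteq\TT\subseteq\PP^N$ equivariantly and let $\overline{Y}$ be the closure of $Y$ in $\PP^N$. The torus $\TT$ acts on the Hilbert scheme of $\PP^N$, and we consider the orbit closure $\overline{\TT\cdot[\overline{Y}]}$. This orbit closure is a (possibly non-normal) projective toric variety, and its normalization has a fan $\Sigma$ given by the Gröbner fan of the ideal of $Y$ restricted to $\trop(Y)$. Over $\PP(\Sigma)$ we pull back the universal family. The fibres over the orbit closure are the translates $t\cdot\overline{Y}$, so the universal family is flat over it. A standard descent argument then shows that the closure of $Y$ in $\PP(\Sigma)$ is proper and that the multiplication map $\TT\times\overline{Y}\to\PP(\Sigma)$ is flat. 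Surjectivity follows because every orbit of $\PP(\Sigma)$ corresponds to a cone of the Gröbner fan whose initial ideal is not the unit ideal, so $\overline{Y}$ meets every orbit.

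For the support statement, the key tool is the fundamental theorem of tropical geometry: a cone $\sigma$ of $\Sigma$ has its orbit $O_\sigma$ meeting $\overline{Y}$ if and only if $\relint\sigma$ meets $\trop(Y)$, since $\overline{Y}\cap O_\sigma$ is cut out by initial forms. Surjectivity of the multiplication map forces every orbit to meet $\overline{Y}$, giving $\abs\Sigma\subseteq\trop(Y)$. Properness of $\overline{Y}$ gives the reverse inclusion via the valuative criterion: every $\KK((t))$-point of $Y$ has a limit, so its valuation lies in $\abs\Sigma$. For a refinement $\Sigma'\to\Sigma$, the morphism $\PP(\Sigma')\to\PP(\Sigma)$ is proper and toric. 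Flatness of $\TT\times\overline{Y}'\to\PP(\Sigma')$ should then follow by base change, using that locally on orbits $\overline{Y}$ is isomorphic to $(\overline{Y}\cap O)\times(\text{toric affine chart})$ after the flat multiplication. Combined with resolution of toric singularities, this yields a compactification in a smooth $\PP$.

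The step I expect to be the main obstacle is the independence of schönness, and this is where the third stage does its work. The approach is to reduce to comparing two compactifications via a common refinement, using the previous step. Tropicality gives local product structures $\overline{Y}\cap U_\sigma\cong(\overline{Y}\cap O_\sigma)\times$ (torus factor) étale-locally, so schönness is equivalent to smoothness of each $\overline{Y}\cap O_\sigma$, as in Hacking's observation. The key claim to establish is that $\overline{Y}\cap O_\sigma$ depends, up to a torus factor, only on the initial degeneration at a point of $\relint\sigma$. A refinement therefore replaces each stratum by a product of an old stratum with a torus, which preserves smoothness in both directions.
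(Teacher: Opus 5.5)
The genuine gap is in your second stage. Surjectivity of $\TT\times\ol Y\to\PP(\Sigma)$ only says that every orbit meets $\ol Y$, i.e.\ $\relint\sigma\cap\trop(Y)\neq\emptyset$ for every cone $\sigma$ of $\Sigma$. It does not give $\sigma\subseteq\trop(Y)$.

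Here is a counterexample. Take $Y=\Var(x+y+x^2+y^2)\subseteq(\KK^\times)^2$. Then $\trop(Y)$ is the union of the rays through $e_1$, $e_2$, $e_1+e_2$ and $-e_1-e_2$. Let $\Sigma$ be the fan with maximal cones $\RR_{\geq0}e_1+\RR_{\geq0}e_2$ and $\RR_{\geq0}(-e_1-e_2)$. The closure $\ol Y$ is proper and meets every orbit, yet $\abs\Sigma\neq\trop(Y)$. What fails is flatness: $\ol Y$ contains the torus-fixed point, and the fibre of the multiplication map there is $2$-dimensional rather than $1$-dimensional.

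So flatness must enter, and the natural fix is to prove the refinement statement first.
\begin{itemize}
\item Let $f\colon\PP(\Sigma')\to\PP(\Sigma)$ be a refinement. The substitution $(t,y,p')\mapsto(t,t^{-1}p')$ identifies the base change of $\TT\times\ol Y\to\PP(\Sigma)$ along $f$ with the multiplication map $\TT\times f^{-1}(\ol Y)\to\PP(\Sigma')$. This map is therefore flat and surjective, and $f^{-1}(\ol Y)$ is proper.
\item A scheme flat over the integral $\PP(\Sigma')$ has all its associated points over the generic point. Hence $f^{-1}(\ol Y)$ equals the closure $\ol Y'$ of $Y$. This replaces your vague local-product justification.
\item Now take a rational ray $\rho\subseteq\abs\Sigma$ and choose a refinement $\Sigma'$ containing $\rho$ as a cone. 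Surjectivity forces $\ol Y'\cap O_\rho\neq\emptyset$, so $\rho\subseteq\trop(Y)$. Density of rational rays and closedness of $\trop(Y)$ then give $\abs\Sigma\subseteq\trop(Y)$.
\end{itemize}
Your third stage needs a common refinement, so it depends on this. Once you have $\ol Y'=f^{-1}(\ol Y)$, that stage is immediate. Indeed, $\ol Y'\cap O_{\sigma'}$ is the preimage of $\ol Y\cap O_\sigma$ under the split surjection of tori $O_{\sigma'}\to O_\sigma$, where $\sigma$ is the cone of $\Sigma$ with $\relint\sigma'\subseteq\relint\sigma$. No statement about initial degenerations is needed.

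The paper quotes this theorem from Tevelev without proof. Your existence stage follows Tevelev's strategy but misstates several points.
\begin{itemize}
\item The normalization $P$ of $\ol{\TT\cdot[\ol Y]}$ is projective, so its fan is complete and $\ol Y$ typically misses some of its orbits. The fan you describe belongs to the subvariety $\TT\cdot\ol Y\subseteq P$. That subvariety is open because the multiplication map is flat, and this is where surjectivity comes from.
\item Flatness of the pulled-back universal family $\mathcal U\to P$ holds by construction of the Hilbert scheme. It is not a consequence of fibres being translates, since the boundary fibres are flat limits.
\item The step you call a standard descent argument is the heart of the proof. The projection $\mathcal U\cap(P\times\TT)\to\TT$ is $\TT$-equivariant. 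So $(s,p)\mapsto(sp,s)$ identifies $\mathcal U\cap(P\times\TT)$ with $\TT\times F$, where $F\subseteq P$ is the fibre over $1$. Under this identification the flat projection to $P$ becomes the multiplication map of $F$. Flatness again shows $F=\ol{Y^{-1}}$, and inversion then gives the compactification of $Y$.
\item You must also handle the stabilizer of $[\ol Y]$, since the orbit is a torsor under $\TT/\mathrm{Stab}$ rather than under $\TT$.
\end{itemize}
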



Our starting point is the following observation.


\begin{prp}\label{prop:Lambda_schoen}
For any configuration $W\subseteq V$, the very affine variety $\Lambda_W^\circ\subseteq\TT_{E,E^\star}$ is sch\"on.
\end{prp}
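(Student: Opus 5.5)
The plan is to reduce the claim to the known schönness of linear spaces, using the monomial isomorphism of Proposition~\ref{prop:Lambda_toric}. That proposition says the torus automorphism $m$ of $\TT_{E,E^\star}$ restricts to an isomorphism $(\PP W)^\circ\times(\PP W^\perp)^\circ\to\Lambda_W^\circ$. Its hypotheses (connectedness) are only needed for the closure statement, and the isomorphism itself holds for any $W$. Because $m$ is a group automorphism of the torus, it carries schön subvarieties to schön subvarieties. Concretely, if $\ol Y\subseteq\PP(\Sigma)$ is a schön compactification of $Y$, then $m$ extends to an isomorphism of toric varieties $\PP(\Sigma)\to\PP(\trop(m)(\Sigma))$ with $\trop(m)=\mu$ from Lemma~\ref{lem:m_mu}. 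This extension is equivariant for $m$, so it identifies the multiplication maps $\TT\times\ol Y\to\PP(\Sigma)$ and $\TT\times m(\ol Y)\to\PP(\mu\Sigma)$. Hence smoothness and surjectivity are preserved. It therefore suffices to show that $(\PP W)^\circ\times(\PP W^\perp)^\circ\subseteq\TT_E\times\TT_{E^\star}$ is schön.

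Next I will show that products of schön varieties are schön. Take compactifications $\ol Y_1\subseteq\PP(\Sigma_1)$ and $\ol Y_2\subseteq\PP(\Sigma_2)$. Then $\ol Y_1\times\ol Y_2\subseteq\PP(\Sigma_1\times\Sigma_2)=\PP(\Sigma_1)\times\PP(\Sigma_2)$ is proper. Its multiplication map is the product of the two multiplication maps, and a product of smooth surjective morphisms is smooth and surjective. Alternatively, I can use Hacking's criterion: each torus orbit of the product is a product of orbits $O_1\times O_2$, and $(\ol Y_1\times\ol Y_2)\cap(O_1\times O_2)=(\ol Y_1\cap O_1)\times(\ol Y_2\cap O_2)$ is a product of smooth varieties.

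It remains to know that a projective hyperplane arrangement complement $(\PP W)^\circ\subseteq\TT_E$, and likewise $(\PP W^\perp)^\circ\subseteq\TT_{E^\star}$, is schön. This is the classical fact due to Tevelev (see also Feichtner--Sturmfels): the closure of $(\PP W)^\circ$ in the toric variety of the Bergman fan $\Sigma_{\M}$, or already in the permutohedral variety, is the wonderful compactification. Its intersection with the torus orbit indexed by a flag of flats $F_1\subsetneq\cdots\subsetneq F_k$ is a product of arrangement complements of the minors $\M\vert_{F_{i+1}}/F_i$. These are smooth, so Hacking's criterion applies. I will cite this rather than reprove it. Loops and coloops cause no trouble. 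A loop $i$ makes $\ell_i=0$, so the complement is empty and the statement is vacuous. Alternatively, one may adopt the standing nontriviality conventions of the section.

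The main obstacle is in the first step: the orbit structure has to match under $m$. Since $m$ is a monomial automorphism of the torus, it does not preserve the product decomposition. It is nevertheless a torus automorphism, so it induces an isomorphism between $\PP(\Sigma)$ and $\PP(\mu\Sigma)$ that sends orbits to orbits. I therefore expect this step to be a formal check via the functoriality of toric varieties under lattice isomorphisms; no new geometric input should be needed. Combining the three steps, $\Lambda_W^\circ=m\bigl((\PP W)^\circ\times(\PP W^\perp)^\circ\bigr)$ is schön, and its schön compactification is supported on the fan $\mu(\Sigma_{\M}\times(-\Sigma_{\M^\perp}))$, consistent with Proposition~\ref{prop:trop_Lambda}.
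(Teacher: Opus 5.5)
Your proposal is correct and follows the same route as the paper. The paper's proof likewise cites Tevelev's theorem that hyperplane arrangement complements are sch\"on, uses Proposition~\ref{prop:Lambda_toric} to identify $\Lambda_W^\circ$ with $(\PP W)^\circ\times(\PP W^\perp)^\circ$ via the torus automorphism $m$, and notes that sch\"onness is preserved by products and by torus automorphisms; you additionally spell out the routine checks of these two preservation statements, which the paper leaves implicit.
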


\begin{proof}
Any hyperplane arrangement complement $(\PP W)^\circ\subseteq\TT_E$ is schön due to \cite[Thm.~1.5]{Tev07}.
By Proposition~\ref{prop:Lambda_toric}, $\Lambda_W^\circ\subseteq\TT_{E,E^\star}$ is isomorphic to a product of such under the torus automorphism in \eqref{eq:def_m}.
Sch\"on compactifications are preserved under products and torus automorphisms.
The claim follows.
\end{proof}


A key result due to Luxton and Qu makes it easy to find schön compactifications.

\begin{thm}[{\cite[Thm.~1.5]{LQ11}}]\label{thm:schoen_fan}
Let $Y\subseteq\TT$ be a schön very affine variety.
Then any rational, polyhedral fan $\Sigma$ with $\abs\Sigma=\trop(Y)$ gives rise to a sch\"on compactification $\ol Y$ in $\PP(\Sigma)$.\noproof
\end{thm}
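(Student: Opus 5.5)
This result is quoted from \cite{LQ11} rather than proved in the paper. If one wanted a self-contained argument, I would reduce it to Tevelev's theorem together with Hacking's orbitwise criterion, arguing in three steps.

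\textbf{Step 1: a sch\"on refinement.} By Tevelev's theorem, $Y$ has a tropical compactification in some $\PP(\Sigma_0)$ with $\abs{\Sigma_0}=\trop(Y)$, and it is sch\"on because $Y$ is. Since $\abs\Sigma=\abs{\Sigma_0}$, take a common refinement $\Sigma'$ of $\Sigma$ and $\Sigma_0$. By Tevelev again, the closure $\ol Y'$ of $Y$ in $\PP(\Sigma')$ is a tropical, hence sch\"on, compactification. Hacking's remark then says that $\ol Y'\cap O_{\sigma'}$ is smooth for every cone $\sigma'$ of $\Sigma'$.

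Next I would translate this into initial degenerations. For a tropical compactification and $w\in\relint\sigma'$, one has $\operatorname{in}_w(Y)\cong(\ol Y'\cap O_{\sigma'})\times T_{\sigma'}$, where $T_{\sigma'}$ is the subtorus of $\TT$ generated by the span of $\sigma'$. So $\operatorname{in}_w(Y)$ is smooth for \emph{every} $w\in\trop(Y)$. This property is intrinsic to $Y$ and $w$ and does not depend on any fan.

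\textbf{Step 2: the closure in $\PP(\Sigma)$ is proper.} Let $\ol Y$ be the closure of $Y$ in $\PP(\Sigma)$. It is the image of $\ol Y'$ under the proper birational toric morphism $\PP(\Sigma')\to\PP(\Sigma)$. Properness of $\ol Y$ follows from Tevelev's criterion, since $\trop(Y)\subseteq\abs\Sigma$.

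\textbf{Step 3: flatness of the multiplication map (main obstacle).} The remaining and hardest point is that $\TT\times\ol Y\to\PP(\Sigma)$ is flat; this is exactly where a coarse fan could in principle fail. I would work on an affine chart $U_\sigma$ for $\sigma\in\Sigma$ and fix $w\in\relint\sigma$. On this chart, $\ol Y\cap U_\sigma$ is cut out by the ideal $I\cap\KK[\sigma^\vee\cap M]$. Degenerating along the one-parameter subgroup given by $w$ identifies $\ol Y\cap U_\sigma$ locally with a Gr\"obner family whose special fibre is defined by $\operatorname{in}_w(I)$. The key claim I would aim for is that $\operatorname{in}_w(I)$ is the same for all $w\in\relint\sigma$, and that it is pulled back from $O_\sigma$. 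To prove it, I would use Step 1: every cone $\sigma'\subseteq\sigma$ of $\Sigma'$ already gives a flat chart, and the initial ideals of adjacent cones of $\Sigma'$ glue compatibly across $\relint\sigma$. Granting the claim, $\ol Y\cap U_\sigma\cong(\ol Y\cap O_\sigma)\times\AA$, locally in the \'etale sense along the orbit directions. This gives flatness of multiplication over $U_\sigma$, so $\ol Y$ is tropical.

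Once flatness holds, Hacking's criterion reduces sch\"onness to smoothness of each $\ol Y\cap O_\sigma$. This follows from $\operatorname{in}_w(Y)\cong(\ol Y\cap O_\sigma)\times T_\sigma$ and the smoothness established in Step 1. If the gluing argument in Step 3 turns out to be too delicate, the fallback is to invoke the argument of \cite[Thm.~1.5]{LQ11} directly for this step.
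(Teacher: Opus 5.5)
The paper gives no proof of this statement; it only cites \cite[Thm.~1.5]{LQ11}. So the question is whether your sketch works on its own. Steps~1 and~2 are fine. Step~3, however, has a genuine gap, and it sits exactly where the content of the theorem lies.

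Constancy of $\operatorname{in}_w(I)$ on $\relint\sigma$ is not a technical lemma: it is forced by the conclusion. If $\ol Y\subseteq\PP(\Sigma)$ is tropical, then base change of the flat multiplication map shows that $\{(t,x)\mid\lambda_w(t)x\in\ol Y\}$ is flat over $\AA^1$. Hence $\operatorname{in}_w(Y)$ is the preimage of $\ol Y\cap O_\sigma$ under $x\mapsto x\cdot\lim_{t\to0}\lambda_w(t)$, and this does not depend on $w\in\relint\sigma$.

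Your justification is that the cones of $\Sigma'$ inside $\sigma$ give flat charts whose initial ideals \enquote{glue compatibly}. This never uses the smoothness you obtained in Step~1. A tropical refinement $\Sigma'$ exists for \emph{every} very affine variety, so the same reasoning would give constancy without the sch\"on hypothesis, and that is false. For instance, with the $\min$ convention, let
\[
Y=\Var\bigl((x_4-1)^2-x_1,\;x_3-1-x_2(x_4-1)\bigr)\subseteq(\CC^\times)^4 .
\]
Near $e_1$, $\trop(Y)$ is the plane $\RR e_1+\RR e_2$, and $\mathrm{cone}(e_2,2e_1-e_2)$ is a cone of a fan structure on $\trop(Y)$ with $e_1$ in its relative interior. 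However, $\operatorname{in}_{e_1}(I)=\bigl((x_4-1)^2,\;x_3-1-x_2(x_4-1)\bigr)$ is a double structure on $\{x_3=x_4=1\}$ that is not stable under scaling $x_2$. Correspondingly, $\operatorname{in}_{e_1+\varepsilon e_2}(I)=(x_3-1,(x_4-1)^2)$ differs from $\operatorname{in}_{e_1-\varepsilon e_2}(I)=(x_4-1,(x_3-1)^2)$.

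The missing idea is how smoothness, in fact just reducedness, of the initial degenerations enters. One way to make it work:
\begin{enumerate}
\item For $w\in\relint\sigma$, $\trop(\operatorname{in}_wY)=\st_w\trop(Y)$ is invariant under translation by $\RR\sigma$.
\item Every irreducible component $Z$ of $\operatorname{in}_wY$ has dimension $\dim Y$. Hence every maximal cone of $\trop(Z)$ spans a space containing $\RR\sigma$.
\item Tropicalization commutes with the quotient $\TT\to\TT/T_\sigma$ and preserves dimension. So the closure of the image of $Z$ has dimension $\dim Z-\dim\sigma$, and $Z$ is a union of $T_\sigma$-orbits.
\item By Step~1, $\operatorname{in}_wY$ is reduced, so it is $T_\sigma$-stable as a scheme. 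Thus $\operatorname{in}_u(\operatorname{in}_wI)=\operatorname{in}_wI$ for all $u\in\RR\sigma$.
\item Since $\operatorname{in}_{w+\varepsilon u}(I)=\operatorname{in}_u(\operatorname{in}_wI)$ for small $\varepsilon>0$, this gives local, hence global, constancy on $\relint\sigma$.
\end{enumerate}
In the example above, only the reduced structure of $\operatorname{in}_{e_1}(Y)$ is $T_\sigma$-stable, which is exactly why it fails there.

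Even granted constancy, your passage to flatness of $\TT\times\ol Y\to\PP(\Sigma)$ is not derived. The \enquote{\'etale-local product} description of $\ol Y\cap U_\sigma$ is essentially the sch\"on conclusion restated, so this step also needs its own argument.
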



For our purpose, the following well-known consequence is useful.

\begin{cor}\label{cor:schoen_smooth}
Let $Y\subseteq\TT$ be a schön very affine variety.
Then any smooth rational, polyhedral fan $\Sigma$ with $\abs\Sigma=\trop(Y)$ gives rise to a smooth sch\"on compactification $\ol Y$ in $\PP(\Sigma)$.
\end{cor}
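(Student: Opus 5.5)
By Theorem~\ref{thm:schoen_fan}, the compactification $\ol Y$ of $Y$ in $\PP(\Sigma)$ is schön. The plan is to show that $\ol Y$ is smooth by checking it locally on each torus orbit, using the definition of schön: the multiplication map $m\colon \TT\times\ol Y\to\PP(\Sigma)$ is smooth and surjective.

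First I record that $\PP(\Sigma)$ is smooth, since $\Sigma$ is simplicial and unimodular (see \cite{CLS11}). Next I use a standard fact: if $f\colon X\to S$ is a smooth morphism and $S$ is smooth over $\KK$, then $X$ is smooth over $\KK$, because compositions of smooth morphisms are smooth. Applied to $m$, this shows that $\TT\times\ol Y$ is smooth over $\KK$.

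The remaining step is to descend smoothness from $\TT\times\ol Y$ to $\ol Y$. Consider the projection $q\colon\TT\times\ol Y\to\ol Y$. It is smooth, surjective and faithfully flat, as a base change of $\TT\to\Spec\KK$. Smoothness over $\KK$ descends along faithfully flat morphisms; alternatively, over our field $\KK=\CC$, regularity descends along flat local homomorphisms \cite[\href{https://stacks.math.columbia.edu/tag/033D}{Tag 033D}]{stacks}. More concretely, choose any $t\in\TT(\KK)$. The slice $\{t\}\times\ol Y\cong\ol Y$ is then a closed subscheme of the smooth variety $\TT\times\ol Y$, cut out locally by a regular sequence of $\dim\TT$ coordinates transverse to $q$. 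It is a fibre of the smooth projection $\TT\times\ol Y\to\TT$ only if $\ol Y$ is smooth, so the flat-descent argument is the cleaner route: for each point $y\in\ol Y$ and any preimage $z=(t,y)$, the ring map $\sO_{\ol Y,y}\to\sO_{\TT\times\ol Y,z}$ is flat and local with regular target, hence $\sO_{\ol Y,y}$ is regular.

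Properness of $\ol Y$ is not needed for smoothness, although it is part of the schön condition. The only point where some care is needed is this descent step, and it is a standard citation. Alternatively, Hacking's criterion gives the conclusion orbit by orbit: each $\ol Y\cap O$ is smooth, and $\ol Y$ meets each orbit transversally in the smooth toric variety $\PP(\Sigma)$.
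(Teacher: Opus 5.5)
Your argument is correct and is essentially the paper's proof. Smoothness of $\PP(\Sigma)$ and of the multiplication map makes $\TT\times\ol Y$ smooth over $\KK$, and smoothness then descends to $\ol Y$ along the smooth surjective projection $\TT\times\ol Y\to\ol Y$. The paper descends smoothness directly via \cite[Lemma 02K5]{stacks}, whereas you pass through regularity of local rings under flat local maps, which comes to the same thing over $\CC$. Your aside on the slice $\{t\}\times\ol Y$ is garbled: that slice is always a fibre of $\TT\times\ol Y\to\TT$, and it is this projection that is smooth only when $\ol Y$ is smooth. Nothing in your argument depends on the aside.
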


\begin{proof}
By hypothesis and Theorem~\ref{thm:schoen_fan}, both $\PP\coloneqq\PP(\Sigma)\to\Spec\KK$ and $\TT\times\overline{Y}\to\PP$ are smooth, hence so is the composite $\TT\times\overline{Y}\to\Spec\KK$. 
Base change of the smooth map $\TT\to\Spec\KK$ along $\ol Y\to\Spec\KK$ shows that $\TT\times\overline{Y}\to\overline{Y}$ is surjective and smooth.
Then it follows from the diagram
\[
\begin{tikzcd}[column sep=small]
\TT\times\ol{Y}\ar[rr]\ar[dr] &&\ol{Y}\ar[dl] \\
& \Spec\KK
\end{tikzcd}
\]
that $\overline{Y}\to\Spec\KK$ is too (see \cite[\href{https://stacks.math.columbia.edu/tag/02K5}{Lemma 02K5}]{stacks}).
\end{proof}


In order to obtain a smooth model of the configuration hypersurface $X_W$ for general configurations $W$, we now replace $\PP V\times \PP V^\star$ in the definition of $\Lambda_W^\irr$ with a general $\TT_{E,E^\star}$-toric variety (see \S\ref{sec:tori}).


\begin{ntn}\label{ntn:toric_Lambda}
For any configuration $W\subseteq V$ and any rational, polyhedral fan $\Sigma$ in $N_{E,E}$, we denote by
\[
\Lambda_W(\Sigma)\coloneqq\Lambda_W(\PP(\Sigma))\coloneqq\overline{\Lambda_W^\circ}\subseteq\PP(\Sigma)
\]
the irreducible subvariety obtained as the closure of $\Lambda_W^\circ$ in the $\TT_{E,E^\star}$-toric variety $\PP(\Sigma)$ (see Proposition~\ref{prop:Lambda_toric}).
\end{ntn}


\begin{rmk}\label{rmk:Lambda_orig}
For any connected configuration $W\subseteq V$, we have 
\[
\Lambda_W=\Lambda_W(\PP V\times\PP V^\star)=\Lambda_W(\Gamma_E\times(-\Gamma_E)),
\]
due to Proposition~\ref{prop:Lambda_toric} (see Example~\ref{exa:fans}.\eqref{it:PV-fan}).
\end{rmk}


We are interested in fans $\Sigma$ for which $\Lambda_W(\Sigma)$ is not only
smooth, but also maps to $X_W$.  The correct notion is the following.


\begin{dfn}\label{def:trop_res}
We will say that a smooth fan $\Delta$ in $N_{E,E}$ defines a \emph{tropical resolution} for a matroid $\M$, or for a configuration $W\subseteq V$ with matroid $\M=\M_W$, if $\abs\Delta=\abs{\Delta_\M}$ and the projection $-\pi_2\colon N_{E,E}\to N_E$ defines a map of fans $\Delta\to\Gamma_E$.
We call it \emph{biprojective}, if also $\pi_1$ defines such a map of fans. 
Equivalently, this means that the identity of $N_{E,E}$ defines a map of fans
$\Delta\to\Gamma_E\times(-\Gamma_E)$.
\end{dfn}


We note that the smooth fan $\Delta_\M$ itself is not in general a tropical
resolution, because it fails the condition on $\pi_2$.
In \S\ref{sec:normal}, we rectify the defect by producing a suitable refinement.


\begin{thm}\label{thm:trop_res}
Let $W\subseteq V$ be a connected configuration. 
If $\Delta$ defines a tropical resolution for $W$, then $\Lambda_W(\Delta)$ is a smooth schön compactification of $\Lambda_W^\circ\subseteq\TT_{E,E^\star}$ and $\pi_2\colon N_{E,E}\to N_E$ induces a birational surjection $p\colon\Lambda_W(\Delta)\to X_W$.
\end{thm}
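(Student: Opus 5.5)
Smoothness and the sch\"on property come first and are essentially formal. By Proposition~\ref{prop:Lambda_schoen}, $\Lambda_W^\circ\subseteq\TT_{E,E^\star}$ is sch\"on. Connectedness of $W$ together with $n>r>0$ rules out loops and coloops, so Proposition~\ref{prop:trop_Lambda} gives $\trop(\Lambda_W^\circ)=\abs{\Delta_\M}=\abs\Delta$. Since $\Delta$ is smooth, Corollary~\ref{cor:schoen_smooth} shows that $\Lambda_W(\Delta)$ is a smooth sch\"on compactification. Properness of $\Lambda_W(\Delta)$ is part of the definition of a tropical compactification, which Tevelev's theory and Theorem~\ref{thm:schoen_fan} supply.

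Next I construct the morphism $p$. The linear map $-\pi_2\colon N_{E,E}\to N_E$ is a map of fans $\Delta\to\Gamma_E$ by hypothesis. Using the identification \eqref{eq:identify_lattices} and Example~\ref{exa:fans}\eqref{it:PV-fan}, this is the same as $\pi_2$ being a map of fans $\Delta\to-\Gamma_E$, with $\PP(-\Gamma_E)\cong\PP V^\star$. It therefore induces a toric morphism $\PP(\Delta)\to\PP V^\star$. On tori this morphism is the second projection $\TT_{E,E^\star}\to\TT_{E^\star}$, because $\pi_2$ is the cocharacter map of that projection. Restricting to $\Lambda_W(\Delta)$ gives a morphism $p$ that agrees with $p_2$ on $\Lambda_W^\circ$. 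Hence $p(\Lambda_W^\circ)=p_2(\Lambda_W^\circ)\subseteq X_W$. As $X_W$ is closed and $\Lambda_W(\Delta)=\overline{\Lambda_W^\circ}$, the image of $p$ lies in $X_W$.

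For birationality, note that $p|_{\Lambda_W^\circ}=p_2|_{\Lambda_W^\circ}\colon\Lambda_W^\circ\to X_W^\circ$. This restriction is birational by Corollary~\ref{cor:Lambda_birat_X}, and $\Lambda_W^\circ$ is dense open in the irreducible variety $\Lambda_W(\Delta)$. So $p$ is birational onto $X_W$, which is irreducible by Proposition~\ref{prop:X_very_affine}. For surjectivity, $\Lambda_W(\Delta)$ is proper, so $p$ is closed. Its image is closed and contains the dense subset $p_2(\Lambda_W^\circ)$ of $X_W$, hence equals $X_W$.

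I expect the main obstacle to be the sign and lattice bookkeeping. One must check that the convention $e_i^\star=-e_i$, together with $\mu$ from Lemma~\ref{lem:m_mu}, makes the toric morphism induced by $-\pi_2$ into $\PP(\Gamma_E)$ coincide with $p_2$ into $\PP V^\star$ in the coordinates of \S\ref{sec:tori}, rather than with some twisted version. I will verify this directly on characters, using Remark~\ref{rmk:m_coord_ring}: the functions $y_i/y_k$ on $\TT_{E^\star}$ must pull back to the corresponding coordinate functions on $\TT_{E,E^\star}$. The second point needing care is that properness of $\Lambda_W(\Delta)$ requires $\abs\Delta=\trop(\Lambda_W^\circ)$ exactly, which is where the support condition in Definition~\ref{def:trop_res} is used.
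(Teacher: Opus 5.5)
Your proposal is correct and follows the paper's proof essentially step by step. Smoothness and schönness come from Propositions~\ref{prop:Lambda_schoen}, \ref{prop:trop_Lambda} and Corollary~\ref{cor:schoen_smooth}, then $\pi_2$ induces a toric morphism to $\PP V^\star$, birationality comes from Corollary~\ref{cor:Lambda_birat_X}, and surjectivity from properness. The sign check you flag as the main obstacle is immediate: $\pi_2$ is by construction the cocharacter map of the projection $\TT_{E,E^\star}\to\TT_{E^\star}$, whose characters are the $x_i/x_k$ rather than the $y_i/y_k$, so $m$ and Remark~\ref{rmk:m_coord_ring} play no role there.
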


\begin{proof}
By Propositions~\ref{prop:trop_Lambda}, \ref{prop:Lambda_schoen} and Corollary~\ref{cor:schoen_smooth}, $\Lambda_W^\circ\subseteq\TT_{E,E^\star}$ is sch\"on, and the smooth $\TT_{E,E^\star}$-toric variety $\PP(\Delta)$ makes $\Lambda_W(\Delta)$ a smooth tropical compactification.

By hypothesis, $\pi_2$ defines a toric morphism $p\colon\PP(\Delta)\to\PP V^\star$ (see Example~\ref{exa:fans}.\eqref{it:PV-fan} and \cite[Thm.~3.4.11]{CLS11}), which fits into a commutative diagram:
\[
\begin{tikzcd}
\Lambda_W^\circ\ar{d}{}\ar[hook]{r} & \Lambda_W(\Delta)\ar[dashed]{d}\ar[hook]{r} & \PP(\Delta)\ar{d}{p}\\
X_W^\circ\ar[hook]{r} & X_W\ar[hook]{r} & \PP V^\star 
\end{tikzcd}
\]
Each row contains a very affine variety (left) and its closure (middle) in a toric variety (right) (see Proposition~\ref{prop:X_very_affine}).
The left vertical map is birational (see Corollary~\ref{cor:Lambda_birat_X}).
Since $p$ is continuous, it induces the dashed map, which is closed because
$\Lambda_W(\Delta)$ is proper.
The image of $\Lambda_W(\Delta)$ is then closed in $X_W$, and contains a dense open subset.
It follows that the dashed map is a birational surjection.
\end{proof}


The construction of $\Lambda_W(\Sigma)$ is functorial in the following sense.

\begin{prp}\label{prop:trop_res_natural}
Let $W\subseteq V$ be a connected configuration. 
Suppose that the identity of $N_{E,E}$ defines a map $\Sigma\to\Sigma'$ of rational, polyhedral fans such that $\pi_2$ defines a map of fans $\Sigma'\to-\Gamma_E$.
Then there is an induced commutative diagram:
\[
\begin{tikzcd}[column sep=small]
\Lambda_W(\Sigma)\ar{rr}{f}\ar["p",dr,swap] && \Lambda_W(\Sigma')\ar["p'",dl]\\
& X_W
\end{tikzcd}
\]
If $\abs{\Delta_{\M}}\subseteq\abs{\Sigma}$, then $f$ and $p$ are birational surjections.
\end{prp}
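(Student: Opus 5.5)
The plan is to build $f$ and $p$ as restrictions of toric morphisms and then argue birationality and surjectivity as in the proof of Theorem~\ref{thm:trop_res}.

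\textbf{Constructing the maps.} Since the identity of $N_{E,E}$ is a map of fans $\Sigma\to\Sigma'$, it induces a $\TT_{E,E^\star}$-equivariant toric morphism $F\colon\PP(\Sigma)\to\PP(\Sigma')$ restricting to the identity on $\TT_{E,E^\star}$ (see \cite[Thm.~3.4.11]{CLS11}). Likewise $\pi_2\colon\Sigma'\to-\Gamma_E$ induces a toric morphism $P'\colon\PP(\Sigma')\to\PP V^\star$ (Example~\ref{exa:fans}.\eqref{it:PV-fan}). The composite $\pi_2\colon\Sigma\to-\Gamma_E$ is then also a map of fans, and it induces $P=P'\circ F$. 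All three maps act compatibly on tori, and on $\Lambda_W^\circ$ they are the identity, respectively the second projection to $X_W^\circ$. By continuity, $F$ maps the closure $\Lambda_W(\Sigma)$ of $\Lambda_W^\circ$ into the closure $\Lambda_W(\Sigma')$. By continuity again, $P'$ maps $\Lambda_W(\Sigma')$ into $\ol{X_W^\circ}=X_W$ (Proposition~\ref{prop:X_very_affine}). Setting $f=F|$, $p'=P'|$ and $p=P|$ gives the diagram, which commutes because $P=P'\circ F$.

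\textbf{Birationality and surjectivity when $\abs{\Delta_\M}\subseteq\abs\Sigma$.} By Proposition~\ref{prop:trop_Lambda}, $\trop(\Lambda_W^\circ)=\abs{\Delta_\M}\subseteq\abs\Sigma\subseteq\abs{\Sigma'}$, since the identity being a map of fans forces $\abs\Sigma\subseteq\abs{\Sigma'}$. By Tevelev's theory \cite{Tev07}, the closure of a very affine variety in a toric variety whose support contains the tropicalization is proper. Thus both $\Lambda_W(\Sigma)$ and $\Lambda_W(\Sigma')$ are complete, and $f$ and $p$ are closed maps.

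\begin{itemize}
\item $f$ restricts to the identity on the dense open subsets $\Lambda_W^\circ$, so it is birational. Its image is closed and contains a dense subset of the irreducible variety $\Lambda_W(\Sigma')$, so $f$ is surjective.
\item $p$ restricted to $\Lambda_W^\circ$ is the birational map $\Lambda_W^\circ\to X_W^\circ$ of Corollary~\ref{cor:Lambda_birat_X}. Its image is closed in the irreducible variety $X_W$ and contains $X_W^\circ$, which is dense there, so $p$ is surjective and birational, exactly as in Theorem~\ref{thm:trop_res}.
\end{itemize}

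\textbf{Main obstacle.} The delicate point is properness, specifically the precise statement that $\ol Y\subseteq\PP(\Sigma)$ is proper whenever $\trop(Y)\subseteq\abs\Sigma$. I would cite \cite[Prop.~2.3]{Tev07}, or argue directly with the valuative criterion: any $\KK((t))$-point of $Y$ has a valuation lying in $\trop(Y)$, hence in some cone of $\Sigma$, and therefore has a limit in the corresponding affine chart of $\PP(\Sigma)$.
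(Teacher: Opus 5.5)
Your proof is essentially the paper's. The maps come from continuity of the toric morphisms induced by the fan maps, with $p'\circ f=p$, and \cite[Prop.~2.3]{Tev07} together with $\trop(\Lambda_W^\circ)=\abs{\Delta_\M}\subseteq\abs\Sigma$ makes $\Lambda_W(\Sigma)$ proper, so the images of $f$ and $p$ are closed and hence everything.

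One claim should be corrected. You say the image of $p$ contains $X_W^\circ$, but nothing justifies this a priori, and in fact $p_2(\Lambda_W^\circ)$ can be a proper subset of $X_W^\circ$. In Example~\ref{ex:delA3}, take the point $\beta\in X_W^\circ$ with $(\beta_1,\dots,\beta_5)=(1,-1,-1,1,1)$. Its kernel $\ker(\beta\circ Q_W)$ is spanned by the $w$ with $(\ell_1(w),\dots,\ell_5(w))=(0,1,-1,1,-1)$, so $w\notin W^\circ$ and $\beta\notin p_2(\Lambda_W^\circ)$. What birationality of $\Lambda_W^\circ\to X_W^\circ$ actually gives is a dense subset of $X_W^\circ$, hence of $X_W$. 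That is all your closedness argument needs, and it is how the paper phrases it in Theorem~\ref{thm:trop_res}.
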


\begin{proof}
We argue as in the proof of Theorem~\ref{thm:trop_res}.
Since $f$ is the identity on $\Lambda_W^\circ$, the maps exist by continuity of the induced toric morphisms, and $p'\circ f=p$.

By a result of Tevelev (see \cite[Prop.~2.3]{Tev07}), the condition on supports $\trop(\Lambda_W^\circ)=\abs{\Delta_\M}\subseteq\abs{\Sigma}$ (see Proposition~\ref{prop:trop_Lambda}) makes $\Lambda_W(\Sigma)$ proper and thus $f$ and $p$ surjective.
\end{proof}

\begin{cor}\label{cor:trop_res_factor}
Let $W\subseteq V$ be a connected configuration. 
If $\Delta$ defines a biprojective tropical resolution for $W$, then the morphism $p\colon\Lambda_W(\Delta)\to X_W$ induced by $\pi_2$ factors through $p_2\colon\Lambda_W\to X_W$.
\end{cor}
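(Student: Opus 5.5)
Biprojectivity says that the identity of $N_{E,E}$ defines a map of fans $\Delta\to\Gamma_E\times(-\Gamma_E)$. I would apply Proposition~\ref{prop:trop_res_natural} with $\Sigma=\Delta$ and $\Sigma'=\Gamma_E\times(-\Gamma_E)$. Its hypothesis, that $\pi_2$ defines a map of fans $\Sigma'\to-\Gamma_E$, is immediate, since $\pi_2$ is just the projection of a product fan onto its second factor.

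The proposition then gives a commutative triangle
\[
\begin{tikzcd}[column sep=small]
\Lambda_W(\Delta)\ar{rr}{f}\ar["p",dr,swap] && \Lambda_W(\Gamma_E\times(-\Gamma_E))\ar["p'",dl]\\
& X_W
\end{tikzcd}
\]
By Remark~\ref{rmk:Lambda_orig}, and using connectedness of $W$, the target of $f$ equals $\Lambda_W$ itself.

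It remains to identify $p'$ with Bloch's projection $p_2$. The toric morphism $\PP(\Gamma_E\times(-\Gamma_E))\to\PP(-\Gamma_E)$ induced by $\pi_2$ is, under Example~\ref{exa:fans}.\eqref{it:PV-fan} and the identification \eqref{eq:identify_lattices}, exactly the second projection $\PP V\times\PP V^\star\to\PP V^\star$. Its restriction to $\Lambda_W$ is therefore $p_2$ from \eqref{eq:projections}. Hence $p=p_2\circ f$, which is the desired factorization. Moreover, since $\abs{\Delta}=\abs{\Delta_\M}$, the proposition also tells us that $f$ is a birational surjection.

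The main point needing care is consistency of conventions. We must check that the morphism $p$ of Theorem~\ref{thm:trop_res}, induced by $\pi_2$ (equivalently by $-\pi_2$ to $\Gamma_E$, as in Definition~\ref{def:trop_res}), agrees with the one that $\pi_2$ induces through $\Gamma_E\times(-\Gamma_E)$. This holds because both come from the same lattice map to the same fan $-\Gamma_E\cong\Gamma_{E^\star}$. All maps involved agree on the dense torus $\Lambda_W^\circ$ and the varieties are separated, so any such identification extends uniquely from the torus to the closures.
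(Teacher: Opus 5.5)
Your proposal is correct and follows the paper's proof: the paper likewise applies Proposition~\ref{prop:trop_res_natural} with $\Sigma=\Delta$ and $\Sigma'=\Gamma_E\times(-\Gamma_E)$, and uses Remark~\ref{rmk:Lambda_orig} to identify $\Lambda_W(\Sigma')$ with $\Lambda_W$. Your additional checks, identifying $p'$ with $p_2$ and reconciling the sign conventions, spell out details the paper leaves implicit.
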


\begin{proof}
Apply Proposition~\ref{prop:trop_res_natural} to $\Sigma=\Delta$ and $\Sigma'=\Gamma_E\times(-\Gamma_E)$ (see Remark~\ref{rmk:Lambda_orig}).
The required map of fans is given by definition of a biprojective tropical resolution.
\end{proof}

\subsection{Square conormal fans}\label{sec:normal}

Now we give a combinatorial recipe to construct a tropical resolution for
any matroid.
Our construction uses the bipermutohedral fan (see Example~\ref{exa:fans}.\eqref{exa:bipermutohedral_fan}).

  
\begin{dfn}\label{dfn:normal}
For a matroid $\M$ on $E$ without loops or coloops, we define the \emph{square conormal fan} of $\M$ as the (cone-wise) intersection of fans
\[
\Sigma_{-\M,\M^\perp}\coloneqq((-\Sigma_{\M})\times\Sigma_{\M^\perp})\cap\Sigma_{E,E}.
\]
\end{dfn}


\begin{rmk}
The definition of the square conormal fan closely resembles that of the \emph{conormal fan} $\Sigma_{\M,\M^\perp}=(\Sigma_{\M}\times\Sigma_{\M^\perp})\cap\Sigma_{E,E}$, which plays an important role in \cite{ADH23};
however, these two fans are not isomorphic.  They are tropicalizations of
incidence varieties coming from the Hadamard square immersion~\eqref{eq:square} 
in the former case, and from the logarithmic immersion of $\PP W$ given by $x\mapsto \log\abs{x}$ in the latter.
\end{rmk}


\begin{dfn}
By a \emph{square biflat} $F\subseteq G$ of a matroid $\M$ on $E$ we mean a pair $(F,G)$ of flats $F\in \L_{\M}$ and $G\in \L_{\M^\perp}$, such that and $(E\backslash F)|G$ is a bisubset: that is, $F\subseteq G$, and if $F=\emptyset$, then $\emptyset\neq G\subsetneq E$.
\end{dfn}


\begin{prp}\label{prop:conormal}
The square conormal fan $\Sigma_{-\M,\M^\perp}$ is the induced subfan of $\Sigma_{E,E}$ whose rays are indexed by square biflats $F\subseteq G$. 
In particular, $\Sigma_{-\M,\M^\perp}$ is smooth.
\end{prp}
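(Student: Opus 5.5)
The approach is a direct cone-by-cone comparison. Both fans are built from indicator vectors, so the task reduces to the combinatorics of flags of flats versus biflags.

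First I identify the rays. A ray of $\Sigma_{E,E}$ is spanned by $e_S+f_T$ for a bisubset $S\vert T$. Since $\Sigma_{E,E}$ refines $\Sigma_E\times\Sigma_E$ and $\pi_1,\pi_2$ are maps of fans, the ray $e_S+f_T$ lies in $(-\Sigma_\M)\times\Sigma_{\M^\perp}$ exactly when $e_S\in\abs{-\Sigma_\M}$ and $f_T\in\abs{\Sigma_{\M^\perp}}$, where vectors $e_E=0$ are allowed. By Example~\ref{exa:fans}.\eqref{exa:Bergman_fan} and $-e_S=e_{E\backslash S}$, the first condition means that $F\coloneqq E\backslash S$ is a flat of $\M$ (possibly $\emptyset$, when $S=E$). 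The second means that $G\coloneqq T$ is a flat of $\M^\perp$ (possibly $E$). Here I use that $e_A$ lies in the support of a Bergman fan only if $A$ is a flat. This holds because the support is the set of $w$ whose every min-set (level set filtration) consists of flats, and for an indicator vector this forces $A$ to be a flat.

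The bisubset conditions then translate as follows. The condition $S\cup T=E$ becomes $F\subseteq G$. Nonemptiness of $S$ and $T$, together with $S\cap T\ne E$, becomes the stated proviso in the case $F=\emptyset$; when $F\ne\emptyset$, the relation $F\subseteq G$ already gives $G\neq\emptyset$. This shows that the rays are exactly the square biflats.

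Next I show that the intersection is the induced subfan on these rays. Let $\sigma_{\bS\vert\bT}$ be a cone of $\Sigma_{E,E}$ all of whose rays are square biflats. I need $\sigma_{\bS\vert\bT}\subseteq(-\Sigma_\M)\times\Sigma_{\M^\perp}$. Since $\pi_1$ and $\pi_2$ map cones of $\Sigma_{E,E}$ into cones of $\Sigma_E$, the image $\pi_1(\sigma)$ is generated by $e_{S_1},\dots,e_{S_k}$ with $S_1\subseteq\cdots\subseteq S_k$. Hence $-\pi_1(\sigma)$ is generated by the indicator vectors of the nested flats $E\backslash S_i$, so it is a cone of $\Sigma_\M$, and the analogous statement holds for $\pi_2$ with the decreasing chain $T_i$ of flats of $\M^\perp$. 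The cone $\sigma$ then lies in the product of these two cones, since an element $\sum c_i(e_{S_i}+f_{T_i})$ has its components in the respective images.

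Conversely, let $\tau$ be a cone of $\Sigma_{E,E}$ contained in the product. Then each of its rays lies in the product and is therefore a square biflat ray. It follows that the cone-wise intersection $\{\sigma\cap\tau\}$ consists exactly of the cones of $\Sigma_{E,E}$ spanned by square biflat rays. For this I still need that the intersection of a cone of $\Sigma_{E,E}$ with a product cone is again a face of a cone of $\Sigma_{E,E}$. I expect this to be the main obstacle. The plan is to use that $\Sigma_{E,E}$ refines $\Sigma_E\times\Sigma_E$, and that $(-\Sigma_\M)\times\Sigma_{\M^\perp}$ is an induced subfan of $\Sigma_E\times\Sigma_E$, so each of its cones is a union of cones of $\Sigma_{E,E}$. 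The cone-wise intersection is then a union of cones of $\Sigma_{E,E}$ closed under faces, and the induced characterization follows.

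Smoothness is immediate, because a subfan of the smooth fan $\Sigma_{E,E}$ is smooth.
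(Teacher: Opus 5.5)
Your proposal is correct and is essentially the paper's argument, which proves the statement as Lemma~\ref{lem:induced} for two arbitrary induced subfans of $\Sigma_E$. There, rays are identified by noting that an indicator vector lying in a cone $\sigma_{\bS}$ of the smooth fan $\Sigma_E$ is $0$ or one of its generators. Your level-set description of $\abs{\Sigma_\M}$ does the same job, though with rays $e_F$ it is the superlevel sets $\set{i\mid w_i\geq t}$, not the min-sets, that must be flats. A cone $\sigma_{\bS\vert\bT}$ with all rays in the product is then placed inside the product of the cones spanned by the $e_{S_i}$ and by the $f_{T_i}$, exactly as you do with $\pi_1(\sigma)\times\pi_2(\sigma)$.

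For the step you flag as the obstacle, the clean argument goes as follows. A cone $\tau$ of $\Sigma_{E,E}$ lies in some cone $\rho'$ of $\Sigma_E\times\Sigma_E$. Since $(-\Sigma_\M)\times\Sigma_{\M^\perp}$ is a subfan of $\Sigma_E\times\Sigma_E$, for each of its cones $\rho$ the intersection $\rho\cap\rho'$ is a face of $\rho'$. Hence $\rho\cap\tau=(\rho\cap\rho')\cap\tau$ is a face of $\tau$, and so a cone of $\Sigma_{E,E}$.
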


\begin{proof}
The fan $-\Sigma_\M$ is an induced subfan of $\Sigma_E$ (see Example~\ref{exa:fans}.\eqref{exa:Bergman_fan}).
Combined with the identity $-e_F=e_{E\backslash F}$ for $F\in\L_{\M}$, the following Lemma~\ref{lem:induced} yields the claim.
\end{proof}


The next lemma makes use of the fact that any nonzero incidence vector $e_S$ in $\abs{\Sigma_E}$ spans a ray of $\Sigma_E$.

\begin{lem}\label{lem:induced}
For any two induced subfans $\Sigma_1$ and $\Sigma_2$ of $\Sigma_E$, $(\Sigma_1\times\Sigma_2)\cap\Sigma_{E,E}$ is the subfan of $\Sigma_{E,E}$ induced by the rays indexed by bisubsets $S|T$ for which $e_S\in\abs{\Sigma_1}$ and $f_S\in\abs{\Sigma_2}$.
\end{lem}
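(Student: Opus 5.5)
The plan is to compare the two fans cone by cone. The statement presumably means $f_T\in\abs{\Sigma_2}$ rather than $f_S$, and I will prove that version. Recall that the cones of $\Sigma_{E,E}$ are the $\sigma_{\bS\vert\bT}$, spanned by the rays $e_{S_i}+f_{T_i}$. The projections satisfy $\pi_1(e_S+f_T)=e_S$ and $\pi_2(e_S+f_T)=e_T$, and $\pi_1,\pi_2$ are maps of fans $\Sigma_{E,E}\to\Sigma_E$ by Example~\ref{exa:fans}.\eqref{exa:bipermutohedral_fan}. Since $\Sigma_{E,E}$ refines $\Sigma_E\times\Sigma_E$, each cone $\sigma=\sigma_{\bS\vert\bT}$ lies in a unique smallest product cone $\tau_1\times\tau_2$ of $\Sigma_E\times\Sigma_E$. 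Here $\tau_1$ is the cone of $\Sigma_E$ spanned by $\{e_{S_i}\}$ (excluding $S_i=E$, which gives $0$ in $N_E$), and $\tau_2$ is spanned by the $e_{T_i}$. This holds because $\bS$ and $\bT$ are chains, so these vectors span cones of $\Sigma_E$, and $\pi_j(\sigma)$ is exactly that cone.

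First step: I claim that $\sigma\subseteq\abs{\Sigma_1\times\Sigma_2}$ holds iff $\pi_1(\sigma)\subseteq\abs{\Sigma_1}$ and $\pi_2(\sigma)\subseteq\abs{\Sigma_2}$. Since $\Sigma_j$ is a subfan of $\Sigma_E$, this in turn holds iff $\tau_1\in\Sigma_1$ and $\tau_2\in\Sigma_2$. The key point is that a cone of $\Sigma_E$ whose relative interior meets $\abs{\Sigma_j}$ must lie in $\Sigma_j$, because $\Sigma_j$ is a subfan of a fan. Moreover $\pi_j(\sigma)$ is a full cone of $\Sigma_E$, and a relative interior point of $\sigma$ maps into its relative interior. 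So when $\sigma\subseteq\abs{\Sigma_1\times\Sigma_2}$, the whole cone $\sigma$ lies in the cone $\tau_1\times\tau_2$ of $\Sigma_1\times\Sigma_2$. Hence the cone-wise intersection $(\Sigma_1\times\Sigma_2)\cap\Sigma_{E,E}$ consists exactly of these cones $\sigma$.

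Second step: use inducedness. Because $\Sigma_j$ is induced, $\tau_j\in\Sigma_j$ iff every ray of $\tau_j$ lies in $\Sigma_j$. The rays are $e_{S_i}$, respectively $e_{T_i}$, and any nonzero indicator vector in $\abs{\Sigma_E}$ spans a ray of $\Sigma_E$, so this is the condition $e_{S_i}\in\abs{\Sigma_1}$ and $e_{T_i}\in\abs{\Sigma_2}$ for all $i$. Trivial terms with $S_i=E$ or $T_i=E$ give $0$ and impose nothing, consistent with $0\in\abs{\Sigma_j}$. This condition is a condition on each ray $e_{S_i}+f_{T_i}$ of $\sigma$ separately. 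Therefore the intersection is the subfan of $\Sigma_{E,E}$ induced by the bisubsets $S\vert T$ with $e_S\in\abs{\Sigma_1}$ and $f_T\in\abs{\Sigma_2}$. A cone of $\Sigma_{E,E}$ belongs to it iff all its rays do, and induced subfans of the smooth fan $\Sigma_{E,E}$ are smooth.

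The main obstacle is the first step: making precise that $\pi_j(\sigma_{\bS\vert\bT})$ is exactly a cone of $\Sigma_E$ and that relative interiors map to relative interiors. I would verify this directly, without citing \cite{ADH23}. A point $\sum_i a_i(e_{S_i}+f_{T_i})$ with all $a_i>0$ projects to $\sum_i a_ie_{S_i}$. After merging equal $S_i$ in the chain $\bS$, the coefficients of the distinct proper members remain positive, so the image point lies in the relative interior of $\tau_1$.
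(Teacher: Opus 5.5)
Your proposal is correct, including the reading $f_T$ for $f_S$. It follows essentially the paper's argument: both place $\sigma_{\bS\vert\bT}$ inside the product of the $\Sigma_E$-cones spanned by the $e_{S_i}$ and the $e_{T_i}$ and then invoke inducedness of $\Sigma_1,\Sigma_2$, except that you use a relative-interior argument where the paper uses unimodularity of $\Sigma_E$ to see that $e_S\in\abs{\Sigma_1}$ spans a ray of $\Sigma_1$. The one step you leave implicit, which the paper states, is that every cone $\rho\cap\sigma$ of the cone-wise intersection is itself a cone of $\Sigma_{E,E}$. This holds because $\sigma$ lies in a cone $\tau$ of $\Sigma_E\times\Sigma_E$, so $\rho\cap\tau$ is a face of $\tau$, and a hyperplane supporting that face also cuts out $\rho\cap\sigma$ as a face of $\sigma$.
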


\begin{proof}
By hypothesis, $\Sigma_1\times\Sigma_2$ is an induced subfan of $\Sigma_E\times\Sigma_E$.
In particular, $\Sigma_1\times\Sigma_2$ consists of cones of $\Sigma_E\times\Sigma_E$, and the latter is refined
by $\Sigma_{E,E}$.
Each cone of $(\Sigma_1\times\Sigma_2)\cap\Sigma_{E,E}$ is thus a cone of $\Sigma_{E,E}$.

Suppose that $S|T$ is a bisubset such that $e_S+f_T$ spans a ray in $\abs{\Sigma_1\times\Sigma_2}=\abs{\Sigma_1}\times\abs{\Sigma_2}$. 
Then $e_S$ lies in a cone $\sigma_\bS$ of $\Sigma_1$ as in Example~\ref{exa:fans}.\eqref{exa:Bergman_fan}. 
Since $\Sigma_E$ is smooth, $e_S\in\ideal{e_{S_1},\dots,e_{S_k}}_\NN$ and hence $S\in\set{\emptyset,S_1,\dots,S_k}$.
Thus, $e_S$ is zero or spans a ray of $\Sigma_1$.

Let now $\sigma=\sigma_{\bS\vert\bT}$ be any cone of $\Sigma_{E,E}$ as in Example~\ref{exa:fans}.\eqref{exa:bipermutohedral_fan} with all its rays in $\abs{\Sigma_1\times\Sigma_2}$. 
Then $e_{S_1},\dots,e_{S_k}$ span a cone $\sigma_1$ of $\Sigma_E$.
By the above, it is a cone of the induced subfan $\Sigma_1$.
With a similar cone $\sigma_2$ of $\Sigma_2$, the cone $\sigma_1\times\sigma_2$ of $\Sigma_1\times\Sigma_2$ then contains $\sigma$.
This makes $\sigma$ a cone of $(\Sigma_1\times\Sigma_2)\cap\Sigma_{E,E}$ and the claim follows.
\end{proof}


\begin{cor}\label{cor:normalcones}
The cones $\sigma_{\bF\vert\bG}$ of $\Sigma_{-\M,\M^\perp}$ are indexed by biflags 
\[
\bF\vert\bG=\set{(E\backslash F_i,G_i)\mid 1\leq i\leq k}
\]
where $F_i\in\L_{\M}$ and $G_i\in\L_{\M^\perp}$.
Explicitly, this means that
\[
\begin{tikzcd}[row sep=small,column sep=small]
E \arrow[r,phantom,sloped,"\supsetneq"] & F_1
\arrow[r,phantom,sloped,"\supseteq"]\arrow[d,phantom,sloped,"\subseteq"]
& F_2
\arrow[r,phantom,sloped,"\supseteq"]\arrow[d,phantom,sloped,"\subseteq"]
& \cdots \arrow[r,phantom,sloped,"\supseteq"] & F_k
\arrow[d,phantom,sloped,"\subseteq"]\\ & G_1
\arrow[r,phantom,sloped,"\supseteq"] & G_2
\arrow[r,phantom,sloped,"\supseteq"] & \cdots
\arrow[r,phantom,sloped,"\supseteq"] & G_k
\arrow[r,phantom,sloped,"\supsetneq"] & \emptyset
\end{tikzcd}
\quad\text{and}\quad \bigcup_{i=1}^k(_i\backslash F_i\neq E.
\]
\end{cor}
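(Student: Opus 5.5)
This follows from Proposition~\ref{prop:conormal} together with the description of the cones of $\Sigma_{E,E}$ in Example~\ref{exa:fans}.\eqref{exa:bipermutohedral_fan}. Since $\Sigma_{-\M,\M^\perp}$ is the subfan of $\Sigma_{E,E}$ induced on the rays indexed by square biflats, its cones are exactly the cones $\sigma_{\bS\vert\bT}$ of $\Sigma_{E,E}$ all of whose rays are such rays. So I will take a biflag $\bS\vert\bT$ of $\Sigma_{E,E}$ and determine when every bisubset $S_i\vert T_i$ is of the form $(E\backslash F_i)\vert G_i$ for a square biflat $F_i\subseteq G_i$.

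First I translate the rays. By Lemma~\ref{lem:induced} and the identity $-e_F=e_{E\backslash F}$, a bisubset $S\vert T$ spans a ray of $\Sigma_{-\M,\M^\perp}$ exactly when $E\backslash S=:F\in\L_\M$ and $T=:G\in\L_{\M^\perp}$. The bisubset condition $S\cup T=E$ then becomes $F\subseteq G$. The condition that $S$ and $T$ be nonempty becomes $F\neq E$ and $G\neq\emptyset$.

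Next I translate the flag conditions. The chain $S_1\subseteq\cdots\subseteq S_k$ is equivalent to $F_1\supseteq\cdots\supseteq F_k$, and $T_1\supseteq\cdots\supseteq T_k$ is $G_1\supseteq\cdots\supseteq G_k$. The strict inequalities $E\supsetneq F_1$ and $G_k\supsetneq\emptyset$ in the diagram are just the nonemptiness of $S_1$ and $T_k$, which by monotonicity is equivalent to nonemptiness of all $S_i$ and $T_i$. Finally, $S_i\cap T_i=G_i\backslash F_i$, so the biflag condition $\bigcup_i(S_i\cap T_i)\neq E$ is $\bigcup_i(G_i\backslash F_i)\neq E$; this is what the displayed condition (with a typo in the subscript) asserts.

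The only point needing care is that $\Sigma_{-\M,\M^\perp}$ contains no other cones. That is, a cone of $\Sigma_{E,E}$ lies in it if and only if all its rays do. This is precisely the "induced subfan" content of Proposition~\ref{prop:conormal}, established via Lemma~\ref{lem:induced}, so I invoke it rather than reprove it. The rest is bookkeeping.
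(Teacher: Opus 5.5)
Your proposal is correct and follows the paper's proof: both deduce the statement from Proposition~\ref{prop:conormal} and the identity $-e_F=e_{E\backslash F}$, translating the biflag conditions of Example~\ref{exa:fans}.\eqref{exa:bipermutohedral_fan} via $S_i=E\backslash F_i$ and $T_i=G_i$. You also spell out the bookkeeping the paper leaves implicit, including the correct reading $\bigcup_i(G_i\backslash F_i)\neq E$ of the garbled final condition.
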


\begin{proof}
Using the identity $-e_F=e_{E\backslash F}$, this follows from Proposition~\ref{prop:conormal} and a reformulation of the conditions on biflags in Example~\ref{exa:fans}\eqref{exa:bipermutohedral_fan}.
\end{proof}


\begin{ntn}\label{not:Delta_M_refined}
For any loopless matroid $\M$ on $E$, consider the smooth fan in $N_{E,E}$
\[
\wt\Delta_{\M}\coloneqq-\mu(\Sigma_{-\M,\M^\perp}).\qedhere
\]
\end{ntn}


Up to isomorphism, then, the square conormal fan refines $\Delta_{\M}$.


\begin{thm}\label{thm:normal_resolve}
The fan $\wt\Delta_{\M}$ defines a biprojective tropical resolution for $\M$.
\end{thm}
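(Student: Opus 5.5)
The plan is to check the three requirements of Definition~\ref{def:trop_res} for $\wt\Delta_\M=-\mu(\Sigma_{-\M,\M^\perp})$ one at a time: smoothness, the support condition, and the two maps of fans. Throughout I use that $-\mu$ is a lattice automorphism of $N_{(E,E),\ZZ}$, by Lemma~\ref{lem:m_mu} together with negation. Such an automorphism carries smooth fans to smooth fans, preserves refinement of fans, and commutes with taking supports.

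\textbf{Smoothness.} Proposition~\ref{prop:conormal} says that $\Sigma_{-\M,\M^\perp}$ is an induced subfan of the smooth fan $\Sigma_{E,E}$, so it is smooth. Applying the automorphism $-\mu$ then shows that $\wt\Delta_\M$ is smooth.

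\textbf{Support.} I will show $\abs{\Sigma_{-\M,\M^\perp}}=\abs{(-\Sigma_\M)\times\Sigma_{\M^\perp}}$; applying $-\mu$ then gives $\abs{\wt\Delta_\M}=\abs{\Delta_\M}$ (Notation~\ref{not:Delta_M}).
\begin{itemize}
\item The fan $\Sigma_{E,E}$ is complete and refines $\Sigma_E\times\Sigma_E$.
\item The product $(-\Sigma_\M)\times\Sigma_{\M^\perp}$ is a subfan of $\Sigma_E\times\Sigma_E$ (Example~\ref{exa:fans}).
\item Hence every cone $\tau$ of the product is a union of cones of $\Sigma_{E,E}$, each of which is contained in $\tau$.
\end{itemize}
Therefore the cone-wise intersection in Definition~\ref{dfn:normal} covers the whole support of the product. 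The reverse inclusion is trivial.

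\textbf{The map of fans for $\pi_2$.} Lemma~\ref{lem:m_mu} gives $\pi_2\circ\mu=\delta$, so
\[
-\pi_2\circ(-\mu)=\pi_2\circ\mu=\delta .
\]
By \cite[Props.~2.2,~2.11]{ADH23}, as recorded in Example~\ref{exa:fans}, $\delta$ is a map of fans $\Sigma_{E,E}\to\Gamma_E$, and so is its restriction to the subfan $\Sigma_{-\M,\M^\perp}$. Transporting along $-\mu$ shows that $-\pi_2$ maps every cone of $\wt\Delta_\M$ into a cone of $\Gamma_E$. This proves that $\wt\Delta_\M$ is a tropical resolution.

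\textbf{Biprojectivity.} Since $\mu=\id_{N_E}\oplus\delta$, we have $\pi_1\circ(-\mu)=-\pi_1$. So it suffices to show that $-\pi_1$ maps each cone $\sigma_{\bF\vert\bG}$ of $\Sigma_{-\M,\M^\perp}$ into a cone of $\Gamma_E$. I expect this to be the only step where the specific combinatorics matter, since $\pi_1\colon\Sigma_{E,E}\to\Sigma_E$ alone only lands in permutohedral cones, not in cones of $\Gamma_E$.
\begin{itemize}
\item By Corollary~\ref{cor:normalcones}, the rays of $\sigma_{\bF\vert\bG}$ are $e_{E\backslash F_i}+f_{G_i}$, with a chain of flats $E\supsetneq F_1\supseteq\cdots\supseteq F_k$.
\item Using $-e_{E\backslash F}=e_F$, applying $-\pi_1$ to these rays gives the vectors $e_{F_i}$, or $0$ when $F_i=\emptyset$.
\item Each $e_{F_i}$ is a nonnegative combination of the coordinate vectors $e_j$ with $j\in F_1$.
\item Because $F_1\subsetneq E$ is proper, the set $\set{e_j\mid j\in F_1}$ spans a cone of $\Gamma_E$.
\end{itemize}
Hence $-\pi_1(\sigma_{\bF\vert\bG})$ lies in that cone, so $\pi_1$ maps $\wt\Delta_\M$ into $\Gamma_E$. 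Together with the $\pi_2$ statement, the identity of $N_{E,E}$ defines a map of fans $\wt\Delta_\M\to\Gamma_E\times(-\Gamma_E)$, which is biprojectivity.
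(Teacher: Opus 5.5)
Your proof is correct and follows the paper's argument. The $\pi_2$ condition comes from $-\pi_2\circ(-\mu)=\pi_2\circ\mu=\delta$ together with the map of fans $\delta\colon\Sigma_{E,E}\to\Gamma_E$, and the $\pi_1$ condition from the first component of $\mu$ being the identity. The only difference is that you verify the $\pi_1$ condition directly on the rays from Corollary~\ref{cor:normalcones}, while the paper factors it through $(-\Sigma_\M)\times\Sigma_{\M^\perp}\subseteq\Sigma_E\times\Sigma_E$.

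One correction to your aside: permutohedral cones do lie in cones of $\Gamma_E$, since $\sigma_{\bS}\subseteq\RR_{\geq0}\set{e_j\mid j\in S_k}$ with $S_k\neq E$. So this step needs no matroid combinatorics. Indeed, your own check only uses $F_1\neq E$, which holds for every biflag because $S_1\neq\emptyset$.
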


\begin{proof}
  We need to check that
  $\pi_1\times\pi_2\colon \wt\Delta_{\M}\to\Gamma_E\times(-\Gamma_E)$ is in
  fact a map of fans.  This can be seen from the following commutative diagram:
\[
\begin{tikzcd}
& \Gamma_E\\
\wt\Delta_{\M}\ar[ur,"-\pi_2"]\ar[two heads,d] & \Sigma_{-\M,\M^\perp}\ar[two heads,d]\ar[hookrightarrow,r]\ar{l}{\cong}[swap]{-\mu}\ar[u,"\delta",swap] & \Sigma_{E,E}\ar[two heads,d]\ar[ul,"\delta",swap] \\
\Delta_\M\ar[dr,"\pi_1",swap] & (-\Sigma_{\M})\times\Sigma_{\M^\perp}\ar[d,"-\pi_1"]\ar[hookrightarrow,r]\ar{l}{\cong}[swap]{-\mu} & \Sigma_E\times\Sigma_E\ar[dl,"-\pi_1"] \\
& \Gamma_E
\end{tikzcd}
\]
The arrows with hook denote induced subfans (see Example~\ref{exa:fans}.\eqref{exa:Bergman_fan} and Proposition~\ref{prop:conormal}), the ones with double head refinements (see Example~\ref{exa:fans}.\eqref{exa:bipermutohedral_fan} and Definition~\ref{dfn:normal}).
The left square is given by definition of $\Delta_\M$ and $\wt\Delta_{\M}$ (see Notations~\ref{not:Delta_M} and \ref{not:Delta_M_refined}).
In particular, $\abs{\wt\Delta_{\M}}=\abs{\Delta_\M}$.
The right square realizes Definition~\ref{dfn:normal} of $\Sigma_{-\M,\M^\perp}$.
The map $\delta$ is the defining feature of $\Sigma_{E,E}$ (see Example~\ref{exa:fans}.\eqref{exa:bipermutohedral_fan}).
Commutativity of the upper left triangle is due to the identity $\pi_2\circ\mu=\delta$ from Lemma~\ref{lem:m_mu}. 
The lower triangles exist trivially because the first component of $\mu$ is the identity.
\end{proof}

\subsection{Resolution of configuration hypersurfaces}\label{subsec:tropical_res}

In this last subsection, we examine the tropical resolution 
given by the square conormal fan.
By composing with the normalized Nash blow-up, we arrive at an explicit resolution of the configuration hypersurface, proving Theorem~\ref{thm:trop_res_intro} from the introduction.


\begin{dfn}\label{def:trop_lambda}
For any configuration $W\subseteq V$, set $\wt\Lambda_W\coloneqq\Lambda_W(\wt\Delta_{\M_W})$.
\end{dfn}


\begin{cor}\label{cor:trop_res}
Let $W\subseteq V$ be a connected configuration. 
Then $\wt\Lambda_W$ is a smooth schön compactification of $\Lambda_W^\circ\subseteq\TT_{E,E^\star}$ and the projection $\pi_2\colon N_{E,E}\to N_E$ induces a resolution of singularities of $X_W$, which factors through $\Lambda_W$:
\begin{equation}\label{eq:pqp2}
\begin{tikzcd}
\wt\Lambda_W\ar[rr,"q"]\ar[dr,"p",swap] && \Lambda_W\ar[dl,"p_2"] \\
& X_W
\end{tikzcd}
\end{equation}
\end{cor}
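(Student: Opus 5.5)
The plan is to assemble Corollary~\ref{cor:trop_res} from results already in place, with no new geometric input. By Theorem~\ref{thm:normal_resolve}, the fan $\wt\Delta_{\M}$ defines a biprojective tropical resolution for $\M=\M_W$. Before using this I will check the hypotheses. Since $W$ is connected and $n>r>0$, the matroid $\M$ has no loops or coloops. Hence $\Sigma_{-\M,\M^\perp}$, $\Delta_\M$ and $\wt\Delta_\M$ are all defined, and Proposition~\ref{prop:trop_Lambda} gives $\trop(\Lambda_W^\circ)=\abs{\Delta_\M}=\abs{\wt\Delta_\M}$.

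Applying Theorem~\ref{thm:trop_res} with $\Delta=\wt\Delta_\M$ then shows two things. First, $\wt\Lambda_W=\Lambda_W(\wt\Delta_\M)$ is a smooth schön compactification of $\Lambda_W^\circ$. Second, $\pi_2$ induces a birational surjection $p\colon\wt\Lambda_W\to X_W$. Since the source is smooth and irreducible (it is the closure of the irreducible $\Lambda_W^\circ$), $p$ is a resolution of singularities in the sense of Theorem~\ref{thm:trop_res_intro}. I will also note that $p$ is proper, since $\wt\Lambda_W$ is complete.

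For the factorization, biprojectivity means that the identity of $N_{E,E}$ is a map of fans $\wt\Delta_\M\to\Gamma_E\times(-\Gamma_E)$. Corollary~\ref{cor:trop_res_factor} (that is, Proposition~\ref{prop:trop_res_natural} with $\Sigma'=\Gamma_E\times(-\Gamma_E)$, together with Remark~\ref{rmk:Lambda_orig} identifying $\Lambda_W(\Gamma_E\times(-\Gamma_E))=\Lambda_W$) yields a morphism $q\colon\wt\Lambda_W\to\Lambda_W$ with $p_2\circ q=p$. Because $\abs{\Delta_\M}\subseteq\abs{\wt\Delta_\M}$, the same proposition also shows that $q$ is a birational surjection, which makes the triangle \eqref{eq:pqp2} commute.

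The only delicate point is sign bookkeeping. Definition~\ref{def:trop_res} is phrased with $-\pi_2$ mapping to $\Gamma_E$, while Proposition~\ref{prop:trop_res_natural} asks that $\pi_2$ map to $-\Gamma_E$, and $\PP V^\star=\PP(-\Gamma_E)$ via \eqref{eq:identify_lattices}. I expect these to agree directly, but I will check that the toric morphism $\PP(\wt\Delta_\M)\to\PP V\times\PP V^\star$ is the one restricting to the identity on $\TT_{E,E^\star}$, so that it carries $\Lambda_W^\circ$ into itself.
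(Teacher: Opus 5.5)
Your proposal is correct and matches the paper's proof, which simply combines Theorem~\ref{thm:normal_resolve} with Theorem~\ref{thm:trop_res} and Corollary~\ref{cor:trop_res_factor}. The sign check you flag is harmless: saying that $-\pi_2$ maps $\wt\Delta_\M$ to $\Gamma_E$ is the same as saying that $\pi_2$ maps it to $-\Gamma_E$, and $\PP(-\Gamma_E)=\PP V^\star$.
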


\begin{proof}
Combine Theorems~\ref{thm:trop_res} and \ref{thm:normal_resolve} using Corollary~\ref{cor:trop_res_factor}.
\end{proof}


As noted in the introduction, a key feature of a (sch\"on) tropical
compactification is its simple normal crossings boundary.
For each square
biflat $F\subseteq G$, there is a ray $e_{E\backslash F}+f_{G\backslash F}$ in
$\wt\Delta_W$ that indexes a torus-invariant divisor in $\PP(\wt\Delta_W)$
which we will denote $D_W^{F\subseteq G}$.
We further let $\wt D_W^{F\subseteq G}=D_W^{F\subseteq G}\cap \wt\Lambda_W$.


\begin{prp}\label{prop:trop_boundary}
Let $W\subseteq V$ be a connected configuration. 
Then
\[
\wt\Lambda_W\setminus\Lambda_W^\circ = \bigcup_{F\subseteq G} \wt D_W^{F\subseteq G},
\]
the union running over all  all square biflats $F\subseteq G$.
Each $\wt D_W^{F\subseteq G}$ is a smooth divisor, and 
$\bigcap_{1\leq i\leq k}D_{F_i\subseteq G_i}\ne\emptyset$ if and only if the bisubsets $(E\backslash F_i)\vert G_i$, $1\leq i\leq k$, form a biflag.
\end{prp}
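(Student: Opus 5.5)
We will derive Proposition~\ref{prop:trop_boundary} from the general structure theory of sch\"on compactifications (Tevelev, Hacking), applied to the smooth fan $\wt\Delta_\M$. By Corollary~\ref{cor:trop_res}, $\wt\Lambda_W\subseteq\PP(\wt\Delta_\M)$ is a smooth sch\"on compactification of $\Lambda_W^\circ$. The multiplication map $\TT_{E,E^\star}\times\wt\Lambda_W\to\PP(\wt\Delta_\M)$ is then smooth and surjective. Hence for every torus orbit $O_\sigma$, $\sigma\in\wt\Delta_\M$, the intersection $\wt\Lambda_W\cap O_\sigma$ is smooth. It is also nonempty, of codimension $\dim\sigma$ in $\wt\Lambda_W$: surjectivity gives nonemptiness, and flatness gives the equality of codimensions. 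This is the key input; everything else is bookkeeping.

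\emph{Boundary decomposition.} The boundary $\PP(\wt\Delta_\M)\setminus\TT_{E,E^\star}$ is the union of the torus-invariant prime divisors $D_\rho$ over the rays $\rho$ of $\wt\Delta_\M$. Intersecting with $\wt\Lambda_W$, and using $\wt\Lambda_W\cap\TT_{E,E^\star}=\Lambda_W^\circ$ (closure of a closed subvariety of the torus), gives $\wt\Lambda_W\setminus\Lambda_W^\circ=\bigcup_\rho (D_\rho\cap\wt\Lambda_W)$. By Proposition~\ref{prop:conormal} and Notation~\ref{not:Delta_M_refined}, the rays of $\wt\Delta_\M=-\mu(\Sigma_{-\M,\M^\perp})$ are the images of the rays $e_{E\backslash F}+f_G$ indexed by square biflats $F\subseteq G$. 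We will check that $-\mu(e_{E\backslash F}+f_G)$ agrees, modulo $e_E$, $f_E$ and sign conventions, with the stated generator $e_{E\backslash F}+f_{G\backslash F}$. This computation uses $\mu=\id\oplus\delta$, the relations $-e_S=e_{E\backslash S}$ in $N_E$, and $F\subseteq G$. It is only a labelling check and is routine. This yields the union over square biflats.

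\emph{Smoothness and intersections.} Since $\PP(\wt\Delta_\M)$ is smooth and $\wt\Lambda_W$ meets orbits properly, $\wt D^{F\subseteq G}_W$ is the closure of $\wt\Lambda_W\cap O_\rho$. To see it is a smooth divisor, we use that smoothness of the multiplication map gives, locally, $\wt\Lambda_W\times\TT\cong$ smooth over the affine chart $U_\sigma\cong\AA^k\times\TT'$. Consequently the $D_\rho$ restrict to $\wt\Lambda_W$ as a simple normal crossings divisor; in particular each $\wt D^{F\subseteq G}_W$ is smooth of codimension one. Note that Hacking's criterion gives smoothness of the strata $\wt\Lambda_W\cap O_\sigma$, and transversality then gives smoothness of their closures. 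For the intersection statement, we have $\bigcap_i D_{\rho_i}\neq\emptyset$ in $\PP(\wt\Delta_\M)$ iff $\rho_1,\dots,\rho_k$ span a cone of $\wt\Delta_\M$. In that case the intersection is $\overline{O_\sigma}$, which meets $\wt\Lambda_W$ by surjectivity. Conversely, an empty intersection in the ambient space forces an empty intersection in $\wt\Lambda_W$. Finally, Corollary~\ref{cor:normalcones} identifies the cones of $\Sigma_{-\M,\M^\perp}$ (hence of $\wt\Delta_\M$ via $-\mu$) with biflags $(E\backslash F_i)\vert G_i$.

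The main obstacle is the smoothness of each $\wt D^{F\subseteq G}_W$ as a \emph{closed} divisor, rather than just its open stratum. I would handle it by the local product structure coming from smoothness of $\TT\times\wt\Lambda_W\to\PP(\wt\Delta_\M)$. Concretely, étale/smooth locally, $\wt\Lambda_W\cap U_\sigma$ is the pullback of coordinate hyperplanes along a smooth map to $\AA^k$, hence a union of smooth transversal divisors.
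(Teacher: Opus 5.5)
Your proposal is correct and follows the same route as the paper: sch\"onness of $\wt\Lambda_W$ from Corollary~\ref{cor:trop_res}, Hacking/Tevelev for smooth orbit intersections of the expected dimension, and smoothness of the multiplication map to see that the smooth toric divisors $D_W^{F\subseteq G}$ meet $\wt\Lambda_W$ transversally; you also make explicit the ray--biflat bookkeeping and the cone criterion via Corollary~\ref{cor:normalcones}, which the paper leaves implicit. The sign you hedge on is genuine, since $-\mu(e_{E\backslash F}+f_G)=e_F-f_{G\backslash F}$ is the negative of the generator $e_{E\backslash F}+f_{G\backslash F}$ written before the proposition, but only the bijection between rays of $\wt\Delta_\M$ and square biflats matters for the argument.
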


\begin{proof}
By Corollary~\ref{cor:trop_res}, $\tilde\Lambda_W$ is schön.
By results of Tevelev and Hacking (see \cite[Thm.~2.4, Lem.~2.7]{Hac08}) it intersects the $\TT$-orbits in smooth varieties of expected dimension.

Under the schön hypothesis, the multiplication map in the toric variety is
smooth, which implies that the torus-invariant divisors in $\PP(\wt\Delta_W)$
intersect $\wt\Lambda_W$ transversely.  The divisor $D_W^{F\subseteq G}$ is
is a toric subvariety of $\PP(\wt\Delta_W)$, hence smooth; these facts together
imply that the intersection $\wt D_W^{F\subseteq G}$ is also smooth.
\end{proof}


The fibres of the maps $\wt\Lambda_W\to\Lambda_W\to X_W$ are controlled by the ambient toric variety.  
By definition, the first map is an isomorphism over the torus, and $\Lambda_W^\circ$ is smooth by Proposition~\ref{prop:Lambda_toric}. 
Now we consider its boundary by looking at its intersections with coordinate subspaces.

\begin{lem}\label{lem:strata}
Let $W\subseteq V$ be a configuration with connected matroid $\M=\M_W$. 
For any point $(w,\beta)\in\Lambda_W\setminus\Lambda_W^\circ$, the pair of sets $F(w)\subseteq F(w)\cup F(\beta)$ is a square biflat.
\end{lem}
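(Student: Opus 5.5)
The plan is to read everything off the defining equation of $\hat\Lambda_W$. The key observation is that the Hadamard-type product $\gamma\coloneqq\sum_{i\in E}w_i\beta_i\,y_i$ lies in $W^\perp$ and satisfies $F(\gamma)=F(w)\cup F(\beta)$.

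Set $F\coloneqq F(w)$ and $G\coloneqq F(w)\cup F(\beta)$. Pick representatives $(w,\beta)\in\hat\Lambda_W$ with $w\neq0$ and $\beta\neq0$; by Definition~\ref{def:Lambda} we have $w\in W$. By \eqref{eq:F(v)} and the definition $\L_\M=\set{F(w)\mid w\in W}$, $F$ is a flat of $\M$. It is proper because $w\neq0$.

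Next I show that $G$ is a flat of $\M^\perp$. The defining condition $Q_W^\star(w,\beta)=0$ says that for every $v\in W$ we have
\[
0=\beta\circ Q_W(w,v)=\sum_{i\in E}\beta_iw_iv_i=\gamma(v).
\]
Hence $\gamma\in(V/W)^\star=W^\perp$. Since $\gamma_i=w_i\beta_i$, its zero set is exactly $F(\gamma)=F(w)\cup F(\beta)=G$. The flats of the dual matroid $\M^\perp=\M_{W^\perp}$ are the sets $F(\gamma)$ for $\gamma\in W^\perp$, by the same description of flats applied to the configuration $W^\perp$. Thus $G\in\L_{\M^\perp}$. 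This is the only step with any content, and it amounts to unwinding Lemma~\ref{lem:Q} and \eqref{eq:Lambda_mat2}.

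It remains to check the square biflat condition. Clearly $F\subseteq G$. Suppose $F=\emptyset$; then I must show $\emptyset\neq G\subsetneq E$.
\begin{itemize}
\item[(i)] Since $(w,\beta)\notin\Lambda_W^\circ=\Lambda_W\cap\TT_{E,E^\star}$, some coordinate of $w$ or of $\beta$ vanishes, so $G\neq\emptyset$.
\item[(ii)] Since $F=\emptyset$, all $w_i\neq0$, and $\beta\neq0$ gives some $\gamma_i=w_i\beta_i\neq0$. Hence $G\neq E$.
\end{itemize}
For completeness, I also check the bisubset condition for $(E\backslash F)\vert G$ in general:
\begin{itemize}
\item[(a)] $E\backslash F\neq\emptyset$ since $F$ is proper.
\item[(b)] $G\neq\emptyset$ by the torus argument in (i).
\item[(c)] $(E\backslash F)\cup G=E$ because $F\subseteq G$.
\item[(d)] $(E\backslash F)\cap G=G\backslash F\neq E$: this holds automatically when $F\neq\emptyset$, and by (ii) when $F=\emptyset$.
\end{itemize}
I expect no real obstacle. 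The only care needed is to work with honest representatives in $\hat\Lambda_W$ rather than $\Lambda_W^\irr$; this is harmless since $\Lambda_W$ is defined by the equations themselves.
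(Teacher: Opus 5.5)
Your proof is correct and follows the paper's argument essentially verbatim. The paper likewise observes that $Q_W^\star(w,\beta)=0$ means the coordinatewise product $w\beta=Q_E^\star(w,\beta)$ lies in $W^\perp$, so $F(w\beta)=F(w)\cup F(\beta)\in\L_{\M^\perp}$, and it then handles the case $F(w)=\emptyset$ exactly as in your (i) and (ii). Your explicit check of all four bisubset conditions, in particular $F\neq E$ from $w\neq0$, is slightly more thorough than the paper, which relies on the abbreviated reformulation given in the definition of square biflats.
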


\begin{proof}
We may work affinely, so let $(w,\beta)\in\hat\Lambda_W\backslash\hat\Lambda_W^\circ$ with $w\ne0$ and $\beta\ne0$.
Then $w\in W$ and $F(w)$ is a flat of $\M$.
By definition of $\hat\Lambda_W$, $Q_W^\star(w,\beta)=0$.
Equivalently, $w\beta:=Q^\star_E(w,\beta)\in W^\perp$, where
\[
(w\beta)_i=\beta_w(x_i)=\beta\circ Q_E(w,x_i)=\beta\left(\sum_{i\in E}w_ix_i\right)=\sum_{i\in E}w_i\beta_i.
\]
Thus, $F(w\beta)=F(w)\cup F(\beta)$ is a flat of $\M^\perp$.
If $F(w)=\emptyset$, then $F(w\beta)=F(\beta)$ is neither empty since $\beta\not\in\hat\TT_{E^\star}$, nor equal to $E$ since $\beta\neq 0$.
The conclusion follows.
\end{proof}


For any configuration $W\subseteq V$ and each pair $(F,S)\in \L_{\M_W}\times 2^E$ let 
\begin{align*}
\Lambda_W^{F,S} &\coloneqq \Lambda_W\cap \big(\PP(\KK^{E\backslash F})\times \PP(\KK^{E\backslash S})\big)\\
&= \set{(w,\beta)\in\Lambda_W\mid F(w)\supseteq F,\, F(\beta)\supseteq S}.
\end{align*}
Its preimage in $\wt\Lambda_W$ is an intersection with a toric variety:


\begin{prp}\label{prop:fibres} 
Let $W\subseteq V$ be a configuration with connected matroid $\M=\M_W$. 
Then the preimage under $q$ from \eqref{eq:pqp2} of $\Lambda_W^{F,S}$ for $(F,S)\in\L_{\M_W}\times 2^E$ is
\[
q^{-1}(\Lambda_W^{F,S})=\wt\Lambda_W\cap
\PP(\wt\Delta_\M^{F,S}),
\]
where $\wt\Delta_\M^{F,S}$ is the subfan of $\wt\Delta_\M$ induced on rays
indexed by square biflats $F'\subseteq G'$, where $F'\subseteq F$ and
$G'\backslash F'\subseteq S$.
\end{prp}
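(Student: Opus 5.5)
The plan is to reduce the statement to the toric morphism $\PP(\wt\Delta_\M)\to\PP V\times\PP V^\star$ induced by the identity of $N_{E,E}$. By Theorem~\ref{thm:normal_resolve} this is a map of fans $\wt\Delta_\M\to\Gamma_E\times(-\Gamma_E)$, and $q$ is its restriction to $\wt\Lambda_W$. The set $\PP(\KK^{E\backslash F})\times\PP(\KK^{E\backslash S})$ is the closed torus-invariant subvariety of $\PP V\times\PP V^\star$ cut out by $y_i=0$ for $i\in F$ and $x_j=0$ for $j\in S$. Its toric preimage is therefore torus invariant and closed, namely a union of orbit closures. So
\[
q^{-1}(\Lambda_W^{F,S})=\wt\Lambda_W\cap\Phi^{-1}\big(\PP(\KK^{E\backslash F})\times\PP(\KK^{E\backslash S})\big),
\]
where $\Phi$ is the toric morphism. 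It suffices to show that this toric preimage, intersected with $\wt\Lambda_W$, is the union of the orbits $O(\sigma)$ for $\sigma\in\wt\Delta_\M^{F,S}$. Since $\wt\Lambda_W$ meets only orbits of cones of $\wt\Delta_\M$, we may work orbit by orbit.

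For a cone $\sigma$ of $\wt\Delta_\M$ with relative interior point $v$, the orbit $O(\sigma)$ maps into the orbit of $\PP V\times\PP V^\star$ indexed by the smallest cone of $\Gamma_E\times(-\Gamma_E)$ containing $v$. That orbit is $\{(w,\beta)\mid F(w)=A,\ F(\beta)=B\}$, where $A$ and $B$ are the supports of the relevant coordinates of $\pi_1(v)$ and $-\pi_2(v)$. The bookkeeping is then explicit. A ray of $\wt\Delta_\M$ indexed by the square biflat $F'\subseteq G'$ is $-\mu(-e_{F'}+f_{G'})$, using $e_{E\backslash F'}=-e_{F'}$ and Proposition~\ref{prop:conormal}. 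Its first component is $e_{F'}$ and its second is $-\delta(-e_{F'}+f_{G'})=e_{F'}-e_{G'}$. Under the identification $e_i^\star=-e_i$ and for $F'\subseteq G'$, this becomes the vector $e^\star_{G'\backslash F'}$ in the dual coordinates, modulo $e_E$. In $\Gamma_E$ the ray $e_{F'}$ forces $y_i=0$ exactly on $F'$, and the dual ray forces $x_j=0$ exactly on $G'\backslash F'$. This matches Lemma~\ref{lem:strata}, which gives $F(\beta)\supseteq F(w\beta)\backslash F(w)$. For a cone spanned by several such rays, the coordinates that vanish on the image orbit are the unions $\bigcup F'_i$ and $\bigcup(G'_i\backslash F'_i)$. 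Hence $O(\sigma)$ maps into $\PP(\KK^{E\backslash F})\times\PP(\KK^{E\backslash S})$ if and only if every ray of $\sigma$ satisfies $F'_i\subseteq F$ and $G'_i\backslash F'_i\subseteq S$. Because the condition is imposed ray by ray, the set of such cones is precisely the induced subfan $\wt\Delta_\M^{F,S}$. Its toric variety is open in $\PP(\wt\Delta_\M)$, so one must be careful about which union of orbits is meant; the resulting union is the closed set $\Phi^{-1}(\cdots)$.

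The main obstacle is the sign and identification bookkeeping. It has to be checked that $-\mu$ of a ray really gives first coordinate $e_{F'}$, and that the second coordinate lands in the cone of $-\Gamma_E$ spanned by $e^\star_j$ for $j\in G'\backslash F'$, rather than in its complement modulo $e_E$. I would first verify this on Example~\ref{ex:delA3}. After that, interpreting $\PP(\wt\Delta_\M^{F,S})$ as the corresponding union of orbits in the statement is formal.
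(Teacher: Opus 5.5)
\textbf{There is a genuine gap: the key ``if and only if'' step has the containments reversed.} The rest of your setup is correct and follows the paper's route. You reduce to the toric morphism $\Phi\colon\PP(\wt\Delta_\M)\to\PP V\times\PP V^\star$, and your signs are right. The ray indexed by $F'\subseteq G'$ is $e_{F'}-f_{G'\backslash F'}$; the paper writes $e_{F'}+f_{G'\backslash F'}$ and $f_j$ in $\sigma_{F,S}$, which agrees up to the sign convention on the second factor. The orbit of a cone with rays indexed by $F'_i\subseteq G'_i$ maps into the orbit $\{F(w)=\bigcup_iF'_i,\ F(\beta)=\bigcup_i(G'_i\backslash F'_i)\}$.

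The problem comes next. Since $\PP(\KK^{E\backslash F})\times\PP(\KK^{E\backslash S})=\{F(w)\supseteq F,\ F(\beta)\supseteq S\}$, that orbit lies in it if and only if $\bigcup_iF'_i\supseteq F$ and $\bigcup_i(G'_i\backslash F'_i)\supseteq S$. This is the opposite of what you state. Your ray-by-ray condition $F'_i\subseteq F$, $G'_i\backslash F'_i\subseteq S$ instead says that the orbit maps into the affine chart $U=\{F(w)\subseteq F,\ F(\beta)\subseteq S\}$ of $\sigma_{F,S}$. That is why you end up with the open set $\PP(\wt\Delta_\M^{F,S})=\Phi^{-1}(U)$. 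Your closing claim that this union of orbits is the closed set $\Phi^{-1}(\PP(\KK^{E\backslash F})\times\PP(\KK^{E\backslash S}))$ is false.

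Two checks expose this:
\begin{itemize}
\item In Example~\ref{ex:delA3}, which you proposed to test on, $\emptyset\subseteq 24$ is a ray of $\wt\Delta_\M^{1,2345}$. Its orbit meets $\wt\Lambda_W$, but it maps to points with $F(w)=\emptyset$, so not into $\Lambda_W^{1,2345}$.
\item For $F=S=\emptyset$ and $\abs E\geq3$, the left side is all of $\wt\Lambda_W$. But $\wt\Delta_\M^{\emptyset,\emptyset}$ has no rays, so the right side is only $\Lambda_W^\circ$.
\end{itemize}

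So the statement, read literally (closed $\Lambda_W^{F,S}$, open toric variety of a subfan), is not true, and no repair of this step will prove it. The paper's own proof takes the same step. It asserts, via \cite[Lem.~3.3.21]{CLS11}, that the preimage of $\PP(\st(\sigma_{F,S}))$ is the toric variety of the subfan induced on rays in $\sigma_{F,S}$. This is the same conflation of the orbit closure of $\sigma_{F,S}$ with its affine chart.

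What your argument does prove, completely, is $q^{-1}(\Lambda_W\cap U)=\wt\Lambda_W\cap\PP(\wt\Delta_\M^{F,S})$ for $F,S\neq E$. This is an open neighbourhood of the preimage of the stratum $\{F(w)=F,\ F(\beta)=S\}$. That preimage is the union of $\wt\Lambda_W\cap O(\tau)$ over cones $\tau$ of $\wt\Delta_\M^{F,S}$ with $\bigcup_iF'_i=F$ and $\bigcup_i(G'_i\backslash F'_i)=S$. If you want the closed set $\Lambda_W^{F,S}$ itself, its preimage is the union of $\wt\Lambda_W\cap O(\tau)$ over all cones $\tau$ of $\wt\Delta_\M$ with $\bigcup_iF'_i\supseteq F$ and $\bigcup_i(G'_i\backslash F'_i)\supseteq S$. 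That set is closed but is not cut out by an induced subfan.
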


\begin{proof}
By construction, $\Lambda_W^{F,S}$ is the intersection of $\Lambda_W$ with the toric variety given by the star of the cone
\begin{equation}\label{eq:cone}
\sigma_{F,S}\coloneqq\RR_{\geq0}\big(\set{e_i\mid i\in F}\cup\set{f_j\mid j\in S}\big).
\end{equation}
Then $q^{-1}(\PP(\st(\sigma_{F,S})))$ is the toric variety given by the induced subfan of $\wt\Delta_\M$ on rays that intersect $\relint\sigma_{F,S}$ (see,  e.g., \cite[Lem.\ 3.3.21]{CLS11}). 
Rays of $\wt\Delta_\M$ are spanned by vectors  $e_{F'}+f_{G'\backslash F'}$, where $F'\subseteq G'$ is a square biflat. 
Such a ray intersects $\relint\sigma_{F,S}$ if and only if $F'\subseteq F$ and $G'\backslash F'\subseteq S$. 
The conclusion follows from Proposition~\ref{prop:trop_boundary}.
\end{proof}


\begin{exa}[Example~\ref{ex:delA3}, continued]
In this example, $\M=\M_W$ was not round, so $\Lambda_W$ was not smooth.
Here we describe the resolution $\wt\Lambda_W$ given by the square
conormal fan.  First, the (proper) flats of $\M$ and $\M^\perp$, respectively,
equal
\[
\set{1,2,3,4,5,124,135,23,25,34,45}\text{~and~}
\set{1,24,35},
\]
so the square biflats are
\[
\begin{array}{llllllll}
  i\subseteq E, & 124\subseteq E, & 135  \subseteq E, & 23 \subseteq E, & 25 \subseteq E,  & 34 \subseteq E, & 45 \subseteq E, &\\
  1\subseteq 1, & 2\subseteq 24, & 3\subseteq 35, & 4\subseteq 24, & 5\subseteq 25,
  & \emptyset\subseteq 1, & \emptyset\subseteq 24, & \emptyset\subseteq 35
\end{array}
\]
for $i=1,\ldots,5$.  The square biflat $F\subseteq G$ gives a ray
in the direction $-e_F+f_G$ in $\Sigma_{-\M,\M^\perp}$, which maps to
$e_F-f_{G\backslash F}$ under the the linear isomorphism
$-\mu\colon \Sigma_{-\M,\M^\perp}\cong \wt\Delta_{\M}$.
There are $56$ maximal cones.  

Fibres of $q\colon\wt\Lambda_W\to\Lambda_W$ can be understood by
restricting the map of ambient toric varieties. 
For example, we recall that $\Lambda_W$ has two singular points, $(\alpha_{124},\beta_0)$ and $(\alpha_{135},\beta_0)$, which are the strata $\Lambda_W^{124,2345}$ and $\Lambda_W^{135,2345}$ respectively, while
\[
\Lambda_W^{1,2345}=\set{(w_{st},\beta_0)\mid (s:t)\in\PP^1}
\]
is a line inside $\Lambda_W$, where
$w_{st}\coloneqq s\alpha_{124}+t\alpha_{135}$.

Using Proposition~\ref{prop:fibres}, we find four square biflats $\emptyset\subseteq 24$, $\emptyset\subseteq 35$, $1\subseteq 1$ and $1\subseteq E$
indexing the rays of the $2$-dimensional fan $\Delta_{W;1,2345}$, shown in bold in Figure~\ref{fig:delA3b}.  Then
$q^{-1}(\Lambda_W^{1,2345})$ 
is a union of four divisors in $\wt\Lambda_W$ that meet along
three curves.

At $t=0$, the fan $\Delta_{W;124,2345}\supseteq\Delta_{W;1,2345}$ is obtained
by adding three more rays indexed by $2\subseteq 24$, $4\subseteq 24$, and
$124\subseteq E$: the last ray is in every maximal cone.  This fan has dimension
$3$, so $q^{-1}(\set{(\alpha_{124},\beta_0)})$ is
a union of seven smooth divisors in $\Lambda_W$ that meet in pairs along 11
curves and in threes at five points.  For $s=0$, the situation is the same
up to symmetry.  The fans for the
toric varieties intersecting $q^{-1}(\Lambda_W^{1,2345})$
are shown schematically in Figure~\ref{fig:delA3b}.

\begin{figure}[ht]
\begin{tikzpicture}[scale=1.0,baseline=(current bounding box.center),
plain/.style={circle,draw,inner sep=1.5pt,fill=white},
root/.style={circle,draw,inner sep=1.5pt,fill=black},
every node/.style={circle,draw,minimum size=2mm,fill=black,inner sep=0pt},
    x={(1cm,-0.15cm)},
    y={(0.65cm,0.3cm)},
    z={(0cm,2.2cm)}  
]

  \coordinate (c)  at (0,0,0)   {};
  \coordinate (b)  at (0,-2,0)  {};
  \coordinate (lm) at (-2,1,0)  {};
  \coordinate (rm) at (2,1,0)   {};
  \coordinate (ll) at (-4.5,2,0)  {};
  \coordinate (lu) at (-2.5,-1,0)  {};
  \coordinate (rl) at (4,1,0)   {};
  \coordinate (ru) at (2,3,0)   {};
  \coordinate (nw) at (-1,0.5,1)  {};
  \coordinate (se) at (1,0.5,-1)  {};

  \node[label={[yshift=10pt]right:{\scriptsize $1\subseteq E$}}] at (c)   {};
  \node[label={below left:{\scriptsize $\emptyset\subseteq 35$}}] at (lm)  {};
  \node[label={above:{\scriptsize $\emptyset\subseteq 24$}}] at (rm)   {};
  \node[label={below:{\scriptsize $1\subseteq 1$}}] at (b)    {};
  \node[label={below left:{\scriptsize $3\subseteq 35$}}] at (ll)   {};
  \node[label={[xshift=5pt]below:{\scriptsize $5\subseteq 35$}}] at (lu)   {};
  \node[label={[xshift=5pt]right:{\scriptsize $2\subseteq 24$}}] at (rl)   {};
  \node[label={[xshift=5pt,yshift=-2pt]above:{\scriptsize $4\subseteq 24$}}] at (ru)   {};

  \fill[red,opacity=0.3] (nw) -- (c) -- (lm) -- cycle;
  \fill[red,opacity=0.3] (nw) -- (lm) -- (ll) -- cycle;
  \fill[red,opacity=0.3] (nw) -- (lm) -- (lu) -- cycle;
  \fill[red,opacity=0.3] (nw) -- (c) -- (rm) -- cycle;
  \fill[red,opacity=0.3] (nw) -- (c) -- (b) -- cycle;

  \fill[blue,opacity=0.3] (se) -- (c) -- (lm) -- cycle;
  \fill[blue,opacity=0.3] (se) -- (c) -- (rm) -- cycle;
  \fill[blue,opacity=0.3] (se) -- (rm) -- (rl) -- cycle;
  \fill[blue,opacity=0.3] (se) -- (rm) -- (ru) -- cycle;
  \fill[blue,opacity=0.3] (se) -- (c) -- (b) -- cycle;

  \draw[ultra thick] (c) -- (lm);
  \draw[ultra thick] (c) -- (rm);
  \draw[ultra thick] (c) -- (b);
  \draw (lm) -- (ll);
  \draw (lm) -- (lu);
  \draw (rm) -- (rl);
  \draw (rm) -- (ru);

  \draw (se) -- (c);
  \draw (se) -- (lm);
  \draw (se) -- (rm);
  \draw (se) -- (rl);
  \draw (se) -- (ru);
  \draw (se) -- (b);
  
  \draw (nw) -- (c);
  \draw (nw) -- (lm);
  \draw (nw) -- (rm);
  \draw (nw) -- (ll);
  \draw (nw) -- (lu);
  \draw (nw) -- (b);

  \node[fill=red, label={[xshift=1pt,yshift=5pt]right:{\scriptsize $135\subseteq E$}}] at (nw) {};
  \node[fill=blue, label=below right:{\scriptsize $124\subseteq E$}] at (se) {};

\end{tikzpicture}
\caption{Boundary structure of $q^{-1}(\Lambda_W^{1,2345})$}\label{fig:delA3b}
\end{figure}
\end{exa}
\printbibliography
\end{document}